\documentclass[11pt,reqno]{amsart}
\usepackage{t1enc}
\usepackage[latin1]{inputenc}
\usepackage{amsmath,latexsym,amssymb,graphicx,dsfont,amsthm,amsfonts}
\usepackage{hyperref}
\usepackage{color}
\usepackage{tikz}
\usetikzlibrary{arrows.meta}
\usepackage{yfonts}
\usepackage{mathrsfs}
\usepackage{umoline,comment}

\frenchspacing
\setlength{\parindent}{0pt}
\setlength{\parskip}{5pt plus 2pt minus 1pt}
\setcounter{secnumdepth}{3}
\setcounter{tocdepth}{3}

\setlength{\oddsidemargin}{5mm}
\setlength{\evensidemargin}{5mm}
\setlength{\textwidth}{150mm}
\setlength{\headheight}{0mm}
\setlength{\headsep}{12mm}
\setlength{\topmargin}{0mm}

\newtheorem{theorem}{Theorem}[section]
\newtheorem{lemma}[theorem]{Lemma}
\newtheorem{corollary}[theorem]{Corollary}
\newtheorem{proposition}[theorem]{Proposition}

\newtheorem{example}[theorem]{Example}

\newtheorem{remark}[theorem]{Remark}

\newcommand{\E}{\mathbb{E}}
\newcommand{\Prob}{\mathbb{P}}
\renewcommand{\P}{\mathbb{P}}

\newcommand{\calI}{\mathcal{I}}
\newcommand{\calR}{\mathcal{R}}

\newcommand{\N}{\mathbb{N}}
\newcommand{\Z}{\mathbb{Z}}

\newcommand{\W}[1]{\mathbf{W}_{#1}}

\newcommand{\e}[1]{\mathbf{e}_{#1}}

\renewcommand{\t}{\delta}


\newcommand{\TT}{\mathbf{T}}
\newcommand{\Rn}{\widetilde{\mathcal{R}}_{\textrm{norm}}}
\newcommand{\ii}{\mathbf{i}}

\numberwithin{equation}{section}

\tikzset{
    partial ellipse/.style args={#1:#2:#3}{
        insert path={+ (#1:#3) arc (#1:#2:#3)}
    }
}

\begin{document}

\title[Capacity of the Range of Rand Walks on Free Products]{Asymptotic Capacity of the Range of Random Walks on Free Products of Graphs}
\author{Lorenz A. Gilch}

\address{Lorenz A. Gilch: University of Passau, Innstr. 33, 94032 Passau, Germany}

\email{Lorenz.Gilch@uni-passau.de}
\urladdr{http://www.math.tugraz.at/$\sim$gilch/}
\date{\today}
\subjclass[2010]{Primary: 60J10; Secondary: 20E06} 
\keywords{capacity, free product, random walk, range, central limit theorem}

\maketitle

\begin{abstract}
In this article we prove existence of the asymptotic capacity of the range of random walks on free products of graphs. In particular, we will show that the asymptotic capacity of the range is almost surely constant and strictly positive. Furthermore, we provide a central limit theorem for the capacity of the range and show that it varies real-analytically in terms of finitely supported probability measures of constant support.
\end{abstract}

\section{Introduction}

Suppose we are given finite or countable sets $V_1$ and $V_2$ with $|V_i|\geq 2$, $i\in\{1,2\}$, and distinguished vertices $o_1\in V_1$ and $o_2\in V_2$. The free product of $V_1$ and $V_2$ is given by $V:=V_1\ast V_2$, the set of all finite words $x_1\dots x_n$ over the alphabet $\bigl(V_1\setminus\{o_1\}\bigr) \cup \bigl(V_2\setminus\{o_2\}\bigr)$ such that no two consecutive letters $x_j$ and $x_{j+1}$ arise from the same $V_i\setminus\{o_i\}$, $i\in\{1,2\}$. Furthermore, let $P_1$ and $P_2$ be transition matrices on $V_1$ and $V_2$. Consider now a time-homogeneous,  transient Markov chain $(X_n)_{n\in\N_0}$ starting at the empty word $o$ whose transition matrix arises as a convex combination of some versions of $P_1$ and $P_2$ shifted to $V$ (see Section \ref{subsec:rw-on-fp} for the exact definition). For better visualization, we may think of graphs $\mathcal{G}_1$, $\mathcal{G}_2$ and $\mathcal{G}$ with vertex sets $V_1$, $V_2$ and $V$ such that there is an oriented edge connecting the vertices $x$ and $y$ if and only if the corresponding transition probability (w.r.t. $P_1$, $P_2$ and $P$) of going from $x$ to $y$ in one step is strictly positive. 
\par
For $A\subseteq V$, denote by $S_A:=\inf\{m\in\N \mid  X_m\in A\bigr\rbrace \in[1,\infty]$ the  first returning time to $A$. The capacity of the set $A$ is then defined as
$$
\mathrm{Cap}(A) := \sum_{x\in A} \P\bigl[ S_A=\infty \mid X_0=x\bigr].
$$
The number $\mathrm{Cap}(A)$ is a measure for the size and density of the set $A$ and can be regarded as a mathematical analogue of the ability of $A$ to hold electrical charge. In particular, the  denser the set $A$ the heavier it is to never visit $A$ again when starting at an ``inner'' point which has many elements of $A$ in its neighbourhood. In this article we are interested to study the asymptotic behaviour of the capacity of the range at time $n\in\N$ as $n\to\infty$. Recall that  the range of the random walk $(X_n)_{n\in\N_0}$ at time $n$ is given by the set
$$
\mathbf{R}_n:= \{X_0,X_1,\dots,X_n\},
$$
the set of vertices visited up to time $n$. The \textit{asymptotic capacity of the range of the random walk $(X_n)_{n\in\N_0}$ on $V$} is given by
$$
\lim_{n\to\infty} \frac1n \mathrm{Cap}(\mathbf{R}_n),
$$
provided the limit exists. The aim of this article is to show that the asymptotic capacity exists, is almost surely constant and strictly positive. In particular, we link the asymptotic capacity with the rate of escape of the underlying random walk (see (\ref{equ:capacity-formula}) in the proof of Theorem \ref{th:capacity-existence}). Moreover, we will provide a central limit theorem for the capacity of the range and show that it varies real-analytically, when the transition probabilities depend on finitely many  parameters.
\par
The range of random walks has been studied in great variety in the past. Let us outline some main results. Dvoretzky and Erd\"os \cite{dvoretzky-erdos:51} proved a strong law of large numbers for the range of simple random walk on $\Z^d$, $d\geq 2$. This result was generalized to arbitrary random walks on $\Z^d$, $d\geq 1$, by Spitzer \cite{spitzer:76}. Jain and Orey \cite{jain-orey:68} proved a central limit theorem for the range on $\Z^d$. For simple random walk on regular trees with $N+1$ branches, Chen, Yan and Zhou \cite{chen:97} calculated that the asymptotic range is given by $(N-1)/N$. For random walks on finitely generated groups with identity $e$, Guivarc'h \cite{guivarch} provided the nice formula 
$$
\lim_{n\to\infty} \frac{|\mathbf{R}_n|}{n}=1-\P[\exists n\in\N: X_n=e\mid X_0=e].
$$ 
The capacity of the range of random walks has been studied mainly on $\Z^d$ and groups. Jain and Orey \cite{jain-orey:68} proved existence of the asymptotic capacity of the range for random walks on the integer lattice $\Z^d$, $d\geq 3$, where the asymptotic capacity is strictly positive if and only if $d\geq 5$. Lawler \cite{lawler:91} gave estimates for intersection probabilities of random walks which in turn allow to estimate the capacity of the range. More recently, some topics have been studied which are related to the capacity of the range of random walks and have given some momentum to the study of the capacity, e.g., Sznitman \cite{sznitman:10} studied random interlacements, Asselah and Schapira \cite{asselah-schapira:17} investigated the geometry of random walks under localization constraints. 
For the capacity of the range of random walks on $\Z^d$, Asselah, Schapira and Sousi \cite{asselah-schapira-sousi:18,asselah-schapira-sousi:19} proved 
a central limit theorem. Further results are due to Chang \cite{chang:17} and Schapira \cite{schapira:20}. 
\par
Recently, Mrazovi\'c, Sandri\'c and \v{S}ebek \cite{mrazovic-sandric-sebek:21} proved that the asymptotic capacity of the range of symmetric simple random walks on finitely generated groups exists. They applied Kingman's subadditive theorem for proving existence of the asymptotic capacity of the range, and they also provided a central limit theorem for the asymptotic capacity. The goal of this paper is to go beyond group-invariant random walks and to derive analogous statements on existence of the asymptotic capacity of the range. In our case of general free products we have no group operation on $V$, and therefore we can \textit{not} apply Kingman's subadditive ergodic theorem. Thus, existence of the asymptotic capacity of the range is not guaranteed a-priori. Since the asymptotic capacity of the range is an important random walk characteristic number, studying existence for random walks on general free products deserves its own right.
\par
Free products form an important class of (graph) structures whose importance is due to Stallings' Splitting Theorem (see Stallings \cite{stallings:71}): a finitely generated group $\Gamma$ has more than one (geometric) end if and only if $\Gamma$ admits a non-trivial decomposition as a free product by amalgamation (amalgam) or as an HNN extension over a finite subgroup; see, e.g., Lyndon and Schupp \cite{lyndon-schupp} for more information on amalgams and HNN extensions. We recall that free products are amalgams over the trivial subgroup. In this article we consider free products of graphs which form a generalization of free products of groups. In particular, we have no underlying group-invariant random walk and the reasoning of the group case cannot be applied which makes it necessary to follow other approaches. Let us note that free products can -- at least to some extent -- also be used to model random walks on regular languages, First-In First-Out queues or stacks.
\par
Random walks on free products have been studied to a great extent throughout the last decades. Asymptotic behaviour of return probabilities of random walks on free products has been studied, e.g., by Gerl and Woess \cite{gerl-woess}, Woess \cite{woess3}, Sawyer \cite{sawyer78}, Cartwright and Soardi \cite{cartwright-soardi87}, Lalley \cite{lalley93,lalley:04} and Candellero and G. \cite{candellero-gilch}. Explicit formula for the drift and the asymptotic entropy have been computed by 
Mairesse and Math\'eus \cite{mairesse1} for random walks on free products of finite groups, while G. \cite{gilch:07,gilch:11} calculated different formulas for both characteristic numbers for random walks on free products of graphs. The spectral radius of random walks on some classes of free products of graphs has been studied by \mbox{Shi et al. \cite{sidoravicius:18}.}
Finally, the range of random walks on (general) free products has been studied in G. \cite{gilch:22}, where existence of the limit $\lim_{n\to\infty} \mathbf{R}_n/n$ has been proven, including a central limit theorem. This work will serve as a main reference for the current article, but the current article will go far beyond the scope of \cite{gilch:22}.
\par
The aim of this article is to go a step beyond the group case when studying the asymptotic capacity of the range of random walks. 
Throughout this paper we make the basic assumption that the Green function $G(z):=\sum_{n\geq 0} \P[X_n=o| X_0=o]z^n$, \mbox{$z\in\mathbb{C}$,} has radius of convergence $\mathscr{R}$ strictly bigger than $1$. This is equivalent to 
$$
\varrho:=\limsup_{n\to\infty} \mathbb{P}[X_n=o| X_0=o]^\frac1n <1.
$$
This assumption ensures transience of the underlying random walk $(X_n)_{n\in\N_0}$ and excludes  degenerate cases.
Our main result guarantees existence of the asymptotic capacity of the range of $(X_n)_{n\in\mathbb{N}_0}$:
\begin{theorem}\label{th:capacity-existence}
Assume that $\varrho<1$. Then there exists a constant \mbox{$\mathfrak{c}\in (0,1]$} such that
$$
\lim_{n\to\infty} \frac{\mathrm{Cap}(\mathbf{R}_n)}{n}=\mathfrak{c} \quad \textrm{almost surely}.
$$
\end{theorem}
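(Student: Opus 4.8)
The plan is to transfer the block decomposition of the range developed in \cite{gilch:22}, to replace the cardinality by the capacity, and to use the tree-like cut-vertex structure of the free product to show that $\mathrm{Cap}(\mathbf R_n)$, though not additive over the blocks, nonetheless splits into a sum of \emph{local} contributions controlled by an ergodic theorem. Recall from \cite{gilch:22} that under $\varrho<1$ the walk is transient with strictly positive speed and converges a.s.\ to an infinite word $X_\infty=w_1w_2\cdots$; writing $\W k:=w_1\cdots w_k$, one has $|X_n|/n\to\gamma\in(0,\infty)$ a.s.\ (with $|\cdot|$ the block length), the exit times $\e k$ are increasing with $\e k/k\to1/\gamma$ a.s., and the successive blocks of the walk form (after the first) a stationary ergodic sequence with finite first moments. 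Put $\mathbf R:=\{X_0,X_1,\dots\}$, $C(w):=\{v\in V\mid w\text{ is a prefix of }v\}$, and $\mathbf B_k:=C(\W k)\setminus C(\W{k+1})$; then $\E\,|\mathbf R\cap\mathbf B_0|<\infty$. The defining property of the free product is the cut-vertex property: every path in $\mathcal G$ joining $C(w)$ to its complement passes through $w$. Since $V=\bigsqcup_{k\ge0}\mathbf B_k$ we obtain $\mathbf R_n=\bigsqcup_{k\ge0}(\mathbf R_n\cap\mathbf B_k)$, hence the \emph{exact} identity
$$
\mathrm{Cap}(\mathbf R_n)=\sum_{k\ge0}\Phi_k^{(n)},\qquad\Phi_k^{(n)}:=\sum_{x\in\mathbf R_n\cap\mathbf B_k}\P\bigl[S_{\mathbf R_n}=\infty\mid X_0=x\bigr].
$$

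The crucial and genuinely new step is that each $\Phi_k^{(n)}$ is a \emph{local} functional. Fix $x\in\mathbf R_n\cap\mathbf B_k$. By the cut-vertex property, the walk started at $x$ reaches $\mathbf R_n\setminus C(\W k)$ only after visiting $\W k$, reaches $\mathbf R_n\cap C(\W{k+1})$ only after visiting $\W{k+1}$, and, if $x=\W k$, enters $\mathbf B_{k-1}$ only through $\W{k-1}$; hence from $x$ the event $\{S_{\mathbf R_n}=\infty\}$ coincides with never hitting $(\mathbf R_n\cap\mathbf B_k)\cup(\{\W{k-1},\W{k+1}\}\cap\mathbf R_n)$, and on this event the walk stays inside the subgraph spanned by $\mathbf B_{k-1}\cup\mathbf B_k$ together with their attached virgin cones (for $x\neq\W k$: inside $\mathbf B_k$ and its cones only). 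Consequently $\Phi_k^{(n)}$ is a fixed measurable function of the two consecutive blocks $k-1,k$ and of $\mathbf 1\{\W{k+1}\in\mathbf R_n\}$. Once $n>\e{k+2}$ the block $k$ is finalised ($\mathbf R_n\cap\mathbf B_k=\mathbf R\cap\mathbf B_k$ and $\W{k+1}\in\mathbf R_n$), so $\Phi_k^{(n)}$ stabilises to a two-block functional $\Phi_k^{(\infty)}\le|\mathbf R\cap\mathbf B_k|$, which has finite mean. Since $(\Phi_k^{(\infty)})_{k\ge0}$ is then (asymptotically) stationary and ergodic, Birkhoff's theorem gives $\frac1K\sum_{k=0}^{K-1}\Phi_k^{(\infty)}\to\mathfrak a$ a.s.\ for a deterministic $\mathfrak a\ge0$.

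To pass to $\mathrm{Cap}(\mathbf R_n)/n$, fix $\eps>0$ and set $K_n:=\lfloor(1-\eps)\gamma n\rfloor$. Since the $\e k$ increase and $\e k/k\to1/\gamma$, for all large $n$ every block of index $\le K_n$ is finalised at time $n$, so $\sum_{k\le K_n}\Phi_k^{(n)}=\sum_{k\le K_n}\Phi_k^{(\infty)}\sim(1-\eps)\gamma\mathfrak a\,n$. For the remaining indices, using $\Phi_k^{(n)}\le|\mathbf R_n\cap\mathbf B_k|\le|\mathbf R\cap\mathbf B_k|$, that $\mathbf R_n\cap\mathbf B_k=\emptyset$ whenever $k>M_n:=\max_{m\le n}|X_m|$, and that $M_n/n\to\gamma$, the strong law yields $\sum_{k>K_n}\Phi_k^{(n)}\le\sum_{K_n<k\le M_n}|\mathbf R\cap\mathbf B_k|\le\bigl(\eps\gamma\,\E|\mathbf R\cap\mathbf B_0|+o(1)\bigr)n$. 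Dividing by $n$ and letting $\eps\downarrow0$ gives $\mathrm{Cap}(\mathbf R_n)/n\to\gamma\mathfrak a=:\mathfrak c$ a.s.; being a deterministic limit, $\mathfrak c$ is almost surely constant, and $\mathfrak c\le1$ because $\mathrm{Cap}(\mathbf R_n)\le|\mathbf R_n|\le n+1$.

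Finally $\mathfrak c>0$, i.e.\ $\mathfrak a>0$: under the standing (non-degeneracy) assumptions of Section~\ref{subsec:rw-on-fp} there is a fixed probability $p>0$ that a given block $k$ carries an excursion stepping into a sub-cone of $\mathbf B_k$ which is later abandoned, leaving a frontier vertex $x^\ast\in\mathbf R\cap\mathbf B_k$ with $\mathbf R\cap C(x^\ast)=\{x^\ast\}$; from a child $x'\in C(x^\ast)\setminus\{x^\ast\}$ the walk avoids $\mathbf R_n$ if and only if it never returns to $x^\ast$, which has probability $\ge c_0>0$ uniformly (finitely many cone types up to free-product isomorphism and transient walk), so on this stationary positive-probability event $\Phi_k^{(\infty)}\ge c_0\,\P[X_1=x'\mid X_0=x^\ast]\ge c_0c_1>0$, whence $\mathfrak a\ge pc_0c_1>0$ by Birkhoff's theorem. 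The main obstacle throughout is the localisation step of the second paragraph: capacity, unlike cardinality, is not additive over blocks, and it is precisely the cut-vertex geometry of the free product that repairs additivity by collapsing each $\P[S_{\mathbf R_n}=\infty\mid X_0=x]$ to a hitting probability of a set supported on two consecutive blocks; the not-yet-finalised region near the tip of the walk at time $n$ is a secondary technicality, absorbed by the first-moment bound $\E|\mathbf R\cap\mathbf B_0|<\infty$.
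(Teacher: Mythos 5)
Your proposal follows essentially the same route as the paper: decompose $\mathbf{R}_n$ along the cone slices $\mathbf{B}_k=C(X_{\e{k}})\setminus C(X_{\e{k+1}})$, use the cut-vertex property of the free product to collapse each $\P_x[S_{\mathbf{R}_n}=\infty]$ to a local functional of neighbouring blocks, apply an ergodic theorem to the block sequence, and then transfer from block averages back to $\mathrm{Cap}(\mathbf{R}_n)/n$. Two points deserve flagging. First, the intermediate claim that a walk started at $x=\W{k}$ ``enters $\mathbf{B}_{k-1}$ only through $\W{k-1}$'' is not correct: from $\W{k}=\W{k-1}w_k$ the walk may step directly to a sibling $\W{k-1}w'$ in the same $V_{\delta(w_k)}$-copy, which already lies in $\mathbf{B}_{k-1}$, without visiting $\W{k-1}$. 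Consequently the event $\{S_{\mathbf{R}_n}=\infty\}$ from $\W{k}$ is not equivalent to avoiding $(\mathbf{R}_n\cap\mathbf{B}_k)\cup\{\W{k-1},\W{k+1}\}$ as written; one must also avoid $\mathbf{R}_n\cap\mathbf{B}_{k-1}$. This does not damage your conclusion (the contribution is still a function of blocks $k-1,k$), but the intermediate characterization is wrong as stated, and it is exactly this bookkeeping that the paper carries out carefully in Proposition~\ref{prop:capacity-decomposition} by splitting the contribution of each cut-vertex $X_{\e{i}}$ between $\mathcal{C}_{i-1}$ and $\mathcal{C}_i$. Second, ``the blocks form a stationary ergodic sequence'' is imprecise: the walk is not started in equilibrium, and what is actually available (Proposition~\ref{prop:pos-rec-process}, from \cite{gilch:22}) is that $(\mathbf{W}_k,\psi_k)_k$ is an irreducible positive-recurrent Markov chain; the paper therefore invokes the ergodic theorem for positive-recurrent Markov chains rather than Birkhoff for stationary sequences. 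Beyond these, the routes coincide in spirit; the paper handles finiteness of the block limit by contradiction against $|\mathbf{R}_n|\le n+1$ rather than first proving $\E|\mathbf{R}\cap\mathbf{B}_0|<\infty$, and transfers from $\e{k}$-averages to $n$-averages using Lemma~\ref{lem:Cap-difference} plus $|\mathbf{R}_{\e{\mathbf{k}(n)+1}}\setminus\mathbf{R}_{\e{\mathbf{k}(n)}}|/\e{\mathbf{k}(n)}\to 0$, whereas your cutoff-$K_n$ argument achieves the same end; the positivity arguments differ in detail (abandoned sub-cone frontier vertex versus a prescribed short path $o\to g_1\to g_1g_2$ with a sibling step from $g_1g_2$) but both isolate a state of positive $\pi$-measure on which $\mathcal{C}_1>0$.
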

We also provide a central limit theorem w.r.t. the capacity of the range:
\begin{theorem}\label{th:clt}
Assume that $\varrho<1$. Furthermore, assume that there are $x_0\in V$ and $\kappa\in\N$ such that $\mathbb{P}[X_\kappa=x_0| X_0=x_0]>0$.Then the asymptotic capacity satisfies the following central limit theorem:
$$
\frac{\mathrm{Cap}(\mathbf{R}_n)-n\cdot \mathfrak{c}}{\sigma \cdot \sqrt{n}} \xrightarrow{d} N(0,1),
$$
where $\sigma^2>0$ is given by (\ref{equ:sigma}).
\end{theorem}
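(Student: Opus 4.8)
The plan is to combine the regeneration-block factorisation of the trajectory from \cite{gilch:22} with a central limit theorem for weakly dependent stationary sequences and a renewal-type inversion; since $V$ carries no group operation, the argument must replace the group invariance used in \cite{mrazovic-sandric-sebek:21} by the free-product geometry, and the non-additivity of $\mathrm{Cap}(\cdot)$ pushes it well beyond the range estimates of \cite{gilch:22}. First I would recall the exit-time decomposition: by the tree-like structure of the free product there are a.s. finite times $0=\e{0}<\e{1}<\e{2}<\cdots$ at which the walk passes a cut point of the free-product tree and never returns across it. The pieces $\mathcal{B}_k:=\bigl(X_{\e{k-1}},\dots,X_{\e{k}}\bigr)$ together with the letter appended at time $\e{k}$ form, for $k\ge2$, an i.i.d. sequence of blocks (the first block $\mathcal{B}_1$, which starts at $o$, has a different law, immaterial for the asymptotics), and the standing hypothesis $\varrho<1$ supplies the decisive uniform exponential moment bound $\mathbb{E}\bigl[e^{\theta\tau_k}\bigr]<\infty$ for some $\theta>0$, where $\tau_k:=\e{k}-\e{k-1}$. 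Setting $\ell:=\mathbb{E}[\tau_2]\in(0,\infty)$ and $\k(n):=\max\{k:\e{k}\le n\}$ for the number of completed blocks up to time $n$, one has from \cite{gilch:22} the convergences $\e{k}/k\to\ell$ and $\k(n)/n\to1/\ell$ almost surely, as well as a renewal CLT for $\k(n)$.

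Next, and this is the heart of the matter, I would decompose $\mathrm{Cap}(\mathbf{R}_n)=\sum_{x\in\mathbf{R}_n}\P[S_{\mathbf{R}_n}=\infty\mid X_0=x]$ into block contributions. Replacing $\mathbf{R}_n$ by $\mathbf{R}_{\e{\k(n)}}$ changes $\mathrm{Cap}$ only by the contribution of the incomplete last segment $[\e{\k(n)},n]$, whose length has exponential moments uniformly in $n$, so this correction is $L^1$-bounded uniformly in $n$. Now $\mathbf{R}_{\e{m}}=\mathcal{B}_1\cup\cdots\cup\mathcal{B}_m$, and for a vertex $x$ in block $\mathcal{B}_j$ a walk from $x$ can leave $\mathcal{B}_j$ only through one of its two bounding cut points and never crosses back; hence the escape event splits into an ``escape forward past block $m$'' part, which, summed over $x\in\mathbf{R}_{\e{m}}$, contributes only $O(1)$ by transience (geometric in $m-j$), and a ``backward/sideways'' part whose value $\zeta_j$ depends on the blocks at distance $r$ from $\mathcal{B}_j$ only with strength decaying geometrically in $r$ (again by transience and $\varrho<1$). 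Passing to a bi-infinite i.i.d. environment, $\zeta_j$ agrees up to an error geometrically small in $\min(j,m-j)$ with a genuinely stationary sequence $(\widetilde{\zeta}_j)_{j\in\Z}$ of this type, and one arrives at
$$
\mathrm{Cap}(\mathbf{R}_n)=\sum_{j=1}^{\k(n)}\widetilde{\zeta}_j+E_n,\qquad \sup_{n\ge1}\mathbb{E}|E_n|<\infty,
$$
so that $E_n/\sqrt{n}\to0$ in probability. Establishing this localisation — the geometric decay of the influence of far-away blocks on $\P[S_{\mathbf{R}_{\e{m}}}=\infty\mid X_0=x]$ — is, I expect, the main obstacle, because $\mathrm{Cap}$ is not a vertex-wise local functional the way $|\mathbf{R}_n|$ is in \cite{gilch:22}; it calls for a careful combination of the free-product geometry with Green-function decay estimates.

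With this decomposition in hand I would prove a bivariate central limit theorem for the partial sums $\bigl(\sum_{j\le m}\widetilde{\zeta}_j,\ \e{m}\bigr)=\bigl(\sum_{j\le m}\widetilde{\zeta}_j,\ \sum_{j\le m}\tau_j\bigr)$. The pairs $(\widetilde{\zeta}_j,\tau_j)$ form a stationary $\R^2$-valued sequence with finite moments of all orders and geometrically decaying dependence, so the central limit theorem for such sequences — via a martingale approximation, or by truncating $\widetilde{\zeta}_j$ to a function of $\mathcal{B}_{j-L},\dots,\mathcal{B}_{j+L}$, applying the central limit theorem for $L$-dependent sequences and letting $L\to\infty$ while always recentering by the true mean $m\,\mathbb{E}[\widetilde{\zeta}_1]$ — yields
$$
\frac{1}{\sqrt{m}}\Bigl(\sum_{j=1}^{m}\widetilde{\zeta}_j-m\,\mathbb{E}[\widetilde{\zeta}_1],\ \e{m}-m\,\ell\Bigr)\ \xrightarrow{\ d\ }\ N\bigl(0,\Sigma\bigr)
$$
for the long-run covariance matrix $\Sigma$ of $(\widetilde{\zeta}_1,\tau_1)$. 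Birkhoff's ergodic theorem gives $\tfrac{1}{n}\sum_{j\le\k(n)}\widetilde{\zeta}_j\to\mathbb{E}[\widetilde{\zeta}_1]/\ell$, which together with Theorem \ref{th:capacity-existence} and $E_n/n\to0$ forces the identity $\mathbb{E}[\widetilde{\zeta}_1]=\mathfrak{c}\,\ell$, in accordance with formula (\ref{equ:capacity-formula}).

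Finally, I would pass from the block scale to the time scale. From $\e{\k(n)}\le n<\e{\k(n)+1}$ and $\k(n)=n/\ell+O(\sqrt{n})$ in probability, the standard renewal-inversion argument — the one used for the central limit theorems of \cite{gilch:22} — converts the bivariate CLT into
$$
\frac{\sum_{j\le\k(n)}\widetilde{\zeta}_j-n\,\mathfrak{c}}{\sqrt{n}}\ \xrightarrow{\ d\ }\ N\!\Bigl(0,\ \tfrac{1}{\ell}\bigl(\Sigma_{11}-2\mathfrak{c}\,\Sigma_{12}+\mathfrak{c}^2\,\Sigma_{22}\bigr)\Bigr),
$$
and combining this with $E_n/\sqrt{n}\to0$ and Slutsky's theorem gives the assertion with $\sigma^2=\tfrac{1}{\ell}\bigl(\Sigma_{11}-2\mathfrak{c}\,\Sigma_{12}+\mathfrak{c}^2\,\Sigma_{22}\bigr)$, the quantity appearing in (\ref{equ:sigma}). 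It remains to check $\sigma^2>0$, i.e., that $\widetilde{\zeta}_1-\mathfrak{c}\,\tau_1$ is non-degenerate; this is precisely where the extra hypothesis $\P[X_\kappa=x_0\mid X_0=x_0]>0$ enters, as it lets one exhibit two block realisations of equal time-length but different capacity contribution (equivalently: were $\sigma^2=0$, the sequence $\mathrm{Cap}(\mathbf{R}_n)-n\mathfrak{c}$ would be tight, which one excludes by an explicit local perturbation of the trajectory).
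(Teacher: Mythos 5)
Your overall skeleton (block decomposition, CLT for the block sums, renewal inversion, Slutsky, non-degeneracy via the hypothesis on $x_0,\kappa$) matches the paper's, and the final variance formula you write reduces to the paper's (\ref{equ:sigma}) in the i.i.d.\ case. But two steps are off, and one of them is a genuine gap.

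First, your claim that the exit-time blocks $\mathcal{B}_k=(X_{\e{k-1}},\dots,X_{\e{k}})$ together with the new letter are i.i.d.\ for $k\geq 2$ is false. The law of the $k$-th block depends on the letter $\W{k-1}$ (and on the set $\psi_{k-1}$ of vertices already visited in that sub-cone), so what one obtains is only a positive-recurrent Markov chain $(\W{k},\psi_k)_k$, as Proposition \ref{prop:pos-rec-process} records. To get genuine i.i.d.\ increments the paper grinds this down one level further: it fixes a reference letter $\mathfrak{g}\in V_1^\times$ and stops the exit-time chain at the random indices $\tau_j$ where $\W{\tau_j}=\mathfrak{g}$ and $\e{\tau_j}=T_{X_{\e{\tau_j}}}$. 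The resulting super-blocks between $\TT_{j-1}=\e{\tau_{j-1}}$ and $\TT_j=\e{\tau_j}$ are i.i.d., and this is what lets the paper apply the CLT for i.i.d.\ sums with a random index (Billingsley) directly, without invoking any CLT for weakly dependent or $L$-dependent sequences. Your route through mixing/$L$-dependence could in principle be made to work at the level of the Markov chain $(\W{k},\psi_k)$, but it is strictly harder and in fact unnecessary here.

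Second, and more seriously, you treat the localization of the capacity to blocks as an approximate statement, with errors ``geometrically small in $\min(j,m-j)$,'' and you flag proving this decay as ``the main obstacle.'' In the free-product setting this is not an approximation at all: it is an exact identity. Any path from a vertex $x$ in the interior of block $j$ to a vertex outside that block must pass through one of the two cut vertices $X_{\e{j}}$ or $X_{\e{j+1}}$, which both belong to $\mathcal{R}_j$; hence $\P_x[S_{\mathbf{R}_{\e{k}}}=\infty]=\P_x[S_{\mathcal{R}_j}=\infty]$ exactly, with no error term, and Proposition \ref{prop:capacity-decomposition} records the precise additive decomposition $\mathrm{Cap}(\mathbf{R}_{\e{k}})=\mathcal{C}_0^\ast+\sum_{i=0}^{k-1}\mathcal{C}_i+\mathcal{O}_k$ after careful bookkeeping of the boundary contributions at the cut vertices. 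You have therefore identified the right place where something must be proved, but misdiagnosed its nature: the key observation is exact tree-like cut-vertex separation, not Green-function decay, and your proposal leaves precisely that ingredient unproved. Once the exact decomposition and the i.i.d.\ regeneration increments $\mathcal{D}_i=\widetilde{\mathcal{C}}_i-\mathfrak{c}\cdot(\TT_i-\TT_{i-1})$ are in hand, the rest of your outline (renewal inversion, Slutsky, non-degeneracy via the two loops of equal vertex set but different length built from $x_0,\kappa$) lines up with the paper's proof.
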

The assumption of existence of $x_0\in V$ and $\kappa\in\N$ with $\mathbb{P}[X_\kappa=x_0| X_0=x_0]>0$ excludes once again degenerate cases, where a central limit theorem might not exist, see \mbox{Remark \ref{rem:degen-cases}} for a counterexample.
\par
We remark that Theorem \ref{th:capacity-existence} and \ref{th:clt} formulate   new results for random walks on free products of groups, since in \cite{mrazovic-sandric-sebek:21} only symmetric simple random walk on finitely generated groups were considered.
\par
The third main result shows that the capacity of the range varies real-analytically in terms of probability measures of constant support. Here, we assume that there are finitely many values $p_1,\dots,p_d\in (0,1)$, $d\in\N$, such that all strictly positive single-step transition probabilities $p(x,y)$, $x,y\in V$, satisfy $p(x,y)\in \{p_1,\dots,p_d\}$. If we let vary the values $p_1,\dots,p_d$ among all positive values which still allow well-defined random walks on $V$, we may then regard $\mathfrak{c}$ as a function in $(p_1,\dots,p_d)$. Then:
\begin{theorem}\label{th:analyticity}
Assume that $\varrho<1$. Suppose that the single step transition probabilities of the random walk $(X_n)_{n\in\N_0}$ take only $d\in\N$ many non-negative values $p_1,\dots,p_d\in(0,1)$. Then the mapping
$$
(p_1,\dots,p_d) \mapsto \mathfrak{c}=\mathfrak{c}(p_1,\dots,p_d)
$$
varies real-analytically.
\end{theorem}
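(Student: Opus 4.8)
The plan is to exploit the closed formula for $\mathfrak{c}$ obtained in the proof of Theorem \ref{th:capacity-existence} (equation (\ref{equ:capacity-formula})), which expresses $\mathfrak{c}$ via the regeneration/exit-point structure of the walk on the free product as (essentially) the expected capacity contribution of a single regeneration block divided by the expected length of a regeneration block. Since the latter is the reciprocal of the rate of escape, it suffices to prove that (i) the expected block length and (ii) the expected block capacity both depend real-analytically on $(p_1,\dots,p_d)$, and that the denominator does not vanish, which is immediate since the rate of escape is strictly positive. Throughout one works on a complex neighbourhood of a fixed admissible parameter vector: the condition $\varrho<1$ is open, so it persists on such a neighbourhood, and the radius of convergence $\mathscr{R}$ of the Green function varies continuously and stays $>1$, so $z=1$ remains strictly inside the domain of analyticity of every generating function involved.

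First I would record the analyticity of the elementary building blocks. The first-passage generating functions $F_i(x,y\mid z)$ on the factors $\mathcal{G}_i$, the associated stopped Green functions, and the last-exit and first-entrance generating functions attached to the free-product structure all satisfy finite polynomial systems whose coefficients are polynomials in $z$ and in $p_1,\dots,p_d$. By the analytic implicit function theorem these solutions are jointly holomorphic in $(z,p_1,\dots,p_d)$ on a neighbourhood of $\{|z|\le 1\}$ times a neighbourhood of the given parameter point — the relevant Jacobian is invertible precisely because $\varrho<1$ places us in the strictly subcritical regime, away from the branch points of these algebraic functions. Specialising $z=1$ then gives real-analytic dependence on $(p_1,\dots,p_d)$ of all quantities entering the exit-point Markov chain: its transition probabilities, its unique stationary distribution (a ratio of analytic quantities by Cramer's rule), and in particular the expected block length / rate of escape, the latter being also a consequence of G. \cite{gilch:11}.

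The real work is (ii): the expected block capacity is not a single generating function but an expectation of an avoidance functional, namely a sum over the vertices created in one block of probabilities of never returning to the relevant set. I would express each such escape probability $\P[S_A=\infty\mid X_0=x]$ through the generating-function machinery — via the Green function of the walk killed on $A$, or via a renewal identity involving the $F$- and last-exit functions — so that, after analytic continuation of the formal identities, the expected block capacity becomes a convergent series of real-analytic functions of $(p_1,\dots,p_d)$. The assumption $\varrho<1$ furnishes a uniform geometric majorant for this series on a fixed complex neighbourhood of the parameter point, the block sizes and the avoidance probabilities decaying exponentially with a rate uniform on the neighbourhood; Weierstrass' theorem on locally uniform limits of holomorphic functions then shows the sum is holomorphic, hence real-analytic on the reals. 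Combining, $\mathfrak{c}$ is a ratio of real-analytic functions with non-vanishing denominator, so it is real-analytic.

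The main obstacle is exactly the uniform geometric control in the preceding paragraph: one must verify that when the real parameters are replaced by nearby complex values the (no longer probabilistic) series still converges uniformly with an exponentially small tail. This requires careful bookkeeping of the exit-point decomposition together with the exponential decay inherited from $\varrho<1$, analogous to but more delicate than the corresponding estimates for the range in G. \cite{gilch:22}, because here the summands carry escape-probability weights rather than mere cardinalities.
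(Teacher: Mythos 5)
Your high-level plan agrees with the paper: both start from the formula $\mathfrak{c}=\mathbb{E}[\widetilde{\mathcal{C}}_1]/\mathbb{E}[\TT_1-\TT_0]$ (Lemma~\ref{lem:VarD>0}(i)), take the denominator's analyticity as known from \cite{gilch:22}, and reduce the theorem to showing that $\mathbb{E}[\widetilde{\mathcal{C}}_1]$ is real-analytic in $(p_1,\dots,p_d)$. The routes to the latter are genuinely different, and that is where your sketch has two gaps worth flagging.

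First, your method of attack is complex continuation of the transition parameters themselves together with the analytic implicit function theorem applied to ``finite polynomial systems.'' But the free factors $V_1,V_2$ are allowed to be countably infinite (Section~\ref{subsec:free-products}), so the first-passage and last-exit generating functions on a factor do not in general satisfy a finite algebraic system, and the implicit-function-theorem step has no foothold. The paper avoids this issue entirely via the device in Remark~\ref{rem:procedure-analyticity}: one never complexifies the $p_j$'s; instead one constructs a single power series $\mathcal{E}(z)=\sum_m a_m z^m$ with $\mathcal{E}(1)=\mathbb{E}[\widetilde{\mathcal{C}}_1]$, whose coefficient $a_m$ is a sum of monomials $a(n_1,\dots,n_d)\,p_1^{n_1}\cdots p_d^{n_d}$ of total degree exactly $m$ (Propositions~\ref{prop:multivariate1}--\ref{prop:multivariate}). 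Then a radius of convergence $>1$ in $z$ automatically yields convergence of the multivariate series at $(p_1(1+\delta),\dots,p_d(1+\delta))$, hence real-analyticity --- this needs no complex perturbation and no implicit function theorem, only that each path probability is a monomial of degree equal to its length.

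Second, the phrase ``uniform geometric majorant for this series on a fixed complex neighbourhood,'' which you rightly identify as the crux, is precisely where the paper does the hard work, and your sketch passes over it. The uniformity is not immediate from $\varrho<1$: the sum ranges over all possible block configurations $R\in\mathcal{W}$ and all increments $x_1\in V_{2,\mathfrak{g}}$, with summands weighted by escape probabilities $\overline{U}(x,R|z)$ and $\overline{U}_i(\,\cdot\,,\mathfrak{g}R|z)$. The paper secures the required bounds via a sequence of delicate free-product estimates culminating in $\sup_{x,y\in V}G(x,y|\varrho)<\infty$ for some $\varrho>1$ (Lemmas~\ref{lem:sup-L-L}--\ref{lem:r1} and Proposition~\ref{prop:G-convergence1}), which then gives $U(x,R|\varrho)\le |R|\,M_0$ uniformly (Proposition~\ref{prop:U-convergence2}); together with the exponential moments of $\TT_1-\TT_0$ and the bound $|R|\le \TT_1-\TT_0+1$, this gives radius of convergence $>1$ for $\mathcal{E}(z)$ (Propositions~\ref{prop:E(I)}, \ref{prop:E0-E1}). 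The key nontrivial step, Lemma~\ref{lem:sup-L-L}, that $\sup_{x\in V_1^\times,y\in V_2^\times}L(o,x|\varrho_0)L(o,y|\varrho_0)<1$ for some $\varrho_0>1$, is proved by a geometric argument comparing boundary probabilities against periodic prefixes $w_k=xy\cdots xy$ and has no analogue in your sketch; it is not a consequence of $\varrho<1$ alone in any obvious way. In short: your reduction and choice of generating-function language are right, but the proposal substitutes heuristics at exactly the two points (monomial degree structure vs.\ complexification; uniform Green function bounds) where the paper's argument requires concrete and non-routine work.
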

We note that, for random walks on groups beyond free products, it is unknown whether the asymptotic capacity of the range varies real-analytically. In particular, this problem is open in the case of $\mathbb{Z}^d$ for $d\geq 2$. This underlines the innovative character of \mbox{Theorem \ref{th:analyticity}.} The proof heavily involves generating function techniques, which requires a deep understanding of the interplay of the generating functions on $V$ and on the free factors $V_i$.

The paper is organized as follows: in Section \ref{sec:rw-on-fp} we give a short introduction to free products on which we define a natural class of random walks, and we introduce some basic notation. In Section \ref{sec:existence} we derive existence of the asymptotic capacity of the range for random walks on free products, while in Section \ref{sec:clt}   the proposed central limit theorem is proven. In \mbox{Section \ref{sec:analyticity}} we prove the real-analytic behaviour of $\mathfrak{c}$. 
 Finally, in Section \ref{sec:remarks}, we give some additional remarks about made assumptions and the capacity.

\section{Random Walks on Free Products}
\label{sec:rw-on-fp}
\subsection{Free Products of Graphs}
\label{subsec:free-products}


Let $V_1$ and $V_2$ be finite or countable sets with at least two elements. We assume that $V_1\cap V_2=\emptyset$ and we exclude the case $|V_1|=|V_2|=2$; see Remark \ref{rem:case2x2}. For each $i\in\calI:=\{1,2\}$, we 
select a distinguished element $o_i$ of $V_i$, which we call  the ``root'' of $V_i$. 
On each $V_i$ consider a random walk with transition matrix $P_i=(p_i(x,y))_{x,y\in V_i}$.
The corresponding $n$-step transition probabilities are denoted by $p_i^{(n)}(x, y)$, where $x, y\in V_i$. Since only those elements of $V_i$ will be of interest, which can be reached from $o_i$, we may assume w.l.o.g. that, for every $i\in\calI$ and every $x\in V_i$, there exists some $n_x\in\N$ such that $p_i^{(n_x)}(o_i,x) > 0$. Furthermore, for sake of simplicity, we assume $p_i(x, x)= 0$ for every $i\in\calI$ and $x\in V_i$; this assumption can be lifted without restrictions but the general proof would reduce the readability of the proofs; see  \cite[Section 6]{gilch:22}.
%
%
\par
For better visualization, we may think of rooted graphs $\mathcal{X}_i$
with vertex sets $V_i$ and roots $o_i$ such that there is an oriented edge $x\to y$ if and only if $p_i(x, y) > 0$.
\par
For $i\in\calI$, set $V_i^\times:= V_i\setminus\{o_i\}$ and $V^\times_\ast := V_1^\times \cup V_2^\times$. The \textit{free product of $V_1$ and $V_2$} is given by the set
\begin{equation}\label{equ:free-product}
V:=V_1\ast V_2 := \bigl\lbrace x_1x_2\dots x_n \,\bigl|\, n\in\N, x_j\in V^\times_\ast, x_j\in V_k^\times \Rightarrow x_{j+1}\notin V_k^\times\bigr\rbrace \cup\{o\},
\end{equation}
the set of all finite words over the alphabet $V^\times_\ast$ such that no two consecutive letters come from the same $V_i^\times$,
where $o$ describes the empty word. Note that $V_i^\times\subseteq V$ and we may consider $o_i$  as the ``empty word'' of $V_i$. Throughout this paper we will use the representation in (\ref{equ:free-product}) for elements in $V$.
\par
Observe that there is a natural partial composition law on $V$: if $u=u_1\dots  u_m\in V$ and $v = v_1\dots v_n \in V$ with $u_m\in V_i^\times$, $i\in\calI$, and $v_1\notin V_i^\times$, then $uv\in V$ stands for their concatenation as words. In particular, we set $uo_i := u$ for all $i\in\calI$ and $o_iu := u$; hence, $o_i$ is also identified with the empty word  $o$. 
Since concatenation of words is only partially defined, concatenation is \textit{not} a group operation on $V$; in particular, standard arguments from the group case like Kingman's subadditive ergodic theorem (as used in \cite{mrazovic-sandric-sebek:21}) can \textit{not} be applied directly. 
\par
The \textit{word length} of a word $u = u_1\dots u_m$ is defined as $\Vert u\Vert:= m$. Additionally, we set $\Vert o\Vert := 0$. The \textit{type} $\t(u)$ of $u$ is defined to be $i\in\calI$ if $u_m\in V_i^\times$; we set $\t(o) := 0$. 
\par
The set $V$ can again be identified as the vertex set of a graph $\mathcal{X}$ which is constructed inductively as follows: take copies of $\mathcal{X}_1$ and $\mathcal{X}_2$ and glue them together at their roots to one single common root, which becomes $o$; inductively, at each vertex $v=v_1\dots v_k$ with $v_k\in V_i$ added in the step before attach a copy of $\mathcal{X}_j$, $j\in\calI \setminus\{i\}$, where $v$ is identified with $o_j$ from the new copy of $\mathcal{X}_j$. Then $\mathcal{X}$ is the \textit{free product of the graphs $\mathcal{X}_1$ and $\mathcal{X}_2$}. The underlying graph structure of free products allows us to define paths: 
a \textit{path} of length $n\in\N$ in $\mathcal{X}$ is  a sequence of vertices $(z_0,z_1,\dots,z_n)$ in $V$ such that there is an oriented edge in $\mathcal{X}$ from $z_{i-1}$ to $z_i$ for each $i\in\{1,\dots,n\}$. 
Recall that, for each $x\in V$, there is a path from $o$ to $x$ by construction of $V$ and the assumption made at the beginning of this subsection.

\begin{example}\label{ex:free-product}
Consider the sets $V_1=\{o_1,a\}$ and $V_2=\{o_2,b,c\}$ equipped with  the following graph structure:
\begin{center}
\begin{tikzpicture}[scale=1]

\coordinate[label=left:$\mathcal{X}_1:$] (f) at (-0.5,0);
\coordinate[label=above:$o_1$] (e) at (0,0);
\coordinate[label=above:$a$] (a) at (2,0);

\coordinate[label=left:$\mathcal{X}_2:$] (g) at (4.5,0);
\coordinate[label=above:$o_2$] (e2) at (5,0);
\coordinate[label=above:$b$] (b) at (7,1);
\coordinate[label=below:$c$] (c) at (7,-1);

\fill[red] (e) circle (2pt);
\fill[red] (a) circle (2pt);
\fill[red] (b) circle (2pt);
\fill[red] (c) circle (2pt);
\fill[red] (e2) circle (2pt);
\draw[{Latex[length=3mm]}-{Latex[length=3mm]},very thick] (e) -- (a);
\draw[-{Latex[length=3mm]},very thick,blue] (e2) -- (b);
\draw[-{Latex[length=3mm]},very thick,green] (c) -- (e2);
\draw[{Latex[length=3mm]}-{Latex[length=3mm]},very thick,orange] (b) -- (c);

\end{tikzpicture}
\end{center}
The graph $\mathcal{X}$ of the free product $V_1\ast V_2$ has then the following structure:
\begin{center}
\begin{tikzpicture}[scale=1.4]
\coordinate[label=above:$o$] (e) at (0,0);
\coordinate[label=above:$a$] (a) at (2,0);
\coordinate[label=170:$ab$] (ab) at (3,1);
\coordinate[label=10:$ac$] (ab2) at (3,-1);
\coordinate[label=170:$aba$] (aba) at (4,2);
\coordinate[label=left:$abab$] (abab) at (4,3);
\coordinate[label=below:$abac$] (abab2) at (5,2);
\coordinate[label=left:$aca$] (ab2a) at (4,-2);
\coordinate[label=right:$acab$] (ab2ab) at (4,-3);
\coordinate[label=above:$acac$] (ab2ab2) at (5,-2);

\coordinate[label=above:$b$] (b) at (-1,1);
\coordinate[label=below:$c$] (b2) at (-1,-1);

\coordinate[label=below:$ba$] (ba) at (-2,1);
\coordinate[label=below:$bab$] (bab) at (-3,1);
\coordinate[label=right:$bac$] (bab2) at (-2,2);

\coordinate[label=above:$ca$] (b2a) at (-2,-1);
\coordinate[label=above:$cab$] (b2ab) at (-3,-1);
\coordinate[label=right:$cac$] (b2ab2) at (-2,-2);

\draw[{Latex[length=3mm]}-{Latex[length=3mm]},very thick] (e) -- (a);
\draw[-{Latex[length=3mm]},very thick,blue] (e) -- (b);
\draw[{Latex[length=3mm]}-{Latex[length=3mm]},very thick,orange] (b) -- (b2);
\draw[-{Latex[length=3mm]},very thick,green] (b2) -- (e);
\draw[-{Latex[length=3mm]},very thick,blue] (a) -- (ab);
\draw[{Latex[length=3mm]}-{Latex[length=3mm]},very thick,orange] (ab) -- (ab2);
\draw[-{Latex[length=3mm]},very thick,green] (ab2) -- (a);
\draw[{Latex[length=3mm]}-{Latex[length=3mm]},very thick] (ab) -- (aba);
\draw[-{Latex[length=3mm]},very thick,blue] (aba) -- (abab);
\draw[{Latex[length=3mm]}-{Latex[length=3mm]},very thick,orange] (abab) -- (abab2);
\draw[-{Latex[length=3mm]},very thick,green] (abab2) -- (aba);
\draw[{Latex[length=3mm]}-{Latex[length=3mm]},very thick] (abab) -- (4,4);
\draw[{Latex[length=3mm]}-{Latex[length=3mm]},very thick] (abab2) -- (6,2);
\draw[{Latex[length=3mm]}-{Latex[length=3mm]},very thick] (ab2) -- (ab2a);
\draw[-{Latex[length=3mm]},very thick,blue] (ab2a) -- (ab2ab);
\draw[{Latex[length=3mm]}-{Latex[length=3mm]},very thick,orange] (ab2ab) -- (ab2ab2);
\draw[-{Latex[length=3mm]},very thick,green] (ab2ab2) -- (ab2a);
\draw[{Latex[length=3mm]}-{Latex[length=3mm]},very thick] (ab2ab2) -- (6,-2);
\draw[{Latex[length=3mm]}-{Latex[length=3mm]},very thick] (ab2ab) -- (4,-4);
\draw[{Latex[length=3mm]}-{Latex[length=3mm]},very thick] (b) -- (ba);
\draw[-{Latex[length=3mm]},very thick,blue] (ba) -- (bab);
\draw[{Latex[length=3mm]}-{Latex[length=3mm]},very thick,orange] (bab) -- (bab2);
\draw[-{Latex[length=3mm]},very thick,green] (bab2) -- (ba);
\draw[{Latex[length=3mm]}-{Latex[length=3mm]},very thick] (bab2) --+ (0,1);
\draw[{Latex[length=3mm]}-{Latex[length=3mm]},very thick] (bab) --+ (-1,0);

\draw[{Latex[length=3mm]}-{Latex[length=3mm]},very thick] (b2) -- (b2a);
\draw[-{Latex[length=3mm]},very thick,blue] (b2a) -- (b2ab);
\draw[{Latex[length=3mm]}-{Latex[length=3mm]},very thick,orange] (b2ab) -- (b2ab2);
\draw[-{Latex[length=3mm]},very thick,green] (b2ab2) -- (b2a);
\draw[{Latex[length=3mm]}-{Latex[length=3mm]},very thick] (b2ab2) --+ (0,-1);
\draw[{Latex[length=3mm]}-{Latex[length=3mm]},very thick] (b2ab) --+ (-1,0);

\draw[dashed] (3,-4.2) -- (2.5,-0.5) -- (6.5,-1.5);
\node at (4.5,-1.3) {$C(ac)$};

\fill[red] (e) circle (2pt);
\fill[red] (a) circle (2pt);
\fill[red] (b) circle (2pt);
\fill[red] (b2) circle (2pt);
\fill[red] (ab) circle (2pt);
\fill[red] (ab2) circle (2pt);
\fill[red] (aba) circle (2pt);
\fill[red] (abab) circle (2pt);
\fill[red] (abab2) circle (2pt);
\fill[red] (ab2a) circle (2pt);
\fill[red] (ab2ab) circle (2pt);
\fill[red] (ab2ab2) circle (2pt);
\fill[red] (ba) circle (2pt);
\fill[red] (bab) circle (2pt);
\fill[red] (bab2) circle (2pt);
\fill[red] (b2a) circle (2pt);
\fill[red] (b2ab) circle (2pt);
\fill[red] (b2ab2) circle (2pt);

\end{tikzpicture}
\end{center}
\end{example}

The tree-like graph structure of free products motivates the following definition: the \textit{cone} rooted at $x\in V$ is given by the set
$$
C(x):=\bigl\lbrace y\in V \mid y \textrm{ has prefix } x\bigr\rbrace.
$$
In particular, for all $y \in C(x)$, each path from $o$ to $y$ has to pass through $x$. Moreover, we have $C(o)=V$. 
E.g., in Example \ref{ex:free-product} we have 
$C(ac)=\{ac,aca,acab,acac,\dots\}$, the set of all words inside the dashed cone.

\subsection{Random Walks on Free Products}
\label{subsec:rw-on-fp}

We now construct a natural random walk on $V$ arising from $P_1$ and $P_2$. For this purpose, we lift the transition matrices $P_1$ and $P_2$ to transition matrices $\bar P_i=\bigl(\bar p_i(x,y)\bigr)_{x,y\in V}$, $i\in\calI$, \mbox{on $V$:} if $x\in V$ with $\t(x)\neq i$ and $v,w\in V_i$, then $\bar p_i(xv,xw):=p_i(v,w)$. Otherwise, we set $\bar p_i(x,y):=0$. Choose $\alpha\in (0,1)$. Then we define a new transition matrix $P$ on $V$  by
$$
P=\alpha \cdot \bar P_1 + (1-\alpha)\cdot \bar P_2,
$$
which governs a nearest neighbour random walk on $\mathcal{X}$. We may interpret the random walk as follows: if the random walker stands at some vertex  $x\in V$ with $\t(x)=i\in\calI$, he first tosses a coin  and afterwards -- in dependence of the outcome of the coin toss --  he either performs one step within the copy of $\mathcal{X}_i$ to which $x$ belongs according to $\bar P_i$ \textit{or}   one step into the new copy of $\mathcal{X}_j$, $j\in\calI\setminus\{i\}$, attached at $x$ according to $\bar P_j$. The sequence of random variables $(X_n)_{n\in\N_0}$ with $X_0:=o$ describes the random walk on $V$ governed by $P$, where $X_n$ denotes the random walker's position at time $n\in\N_0$.  For $x, y\in V$, the correspondung single and $n$-step transition probabilities are denoted by $p(x, y)$ and $p^{(n)}(x, y)$. Thus, $P$ governs a nearest neighbour random walk on the graph $\mathcal{X}$, where $P$ arises from a convex combination of the nearest neighbour random walks on the graphs $\mathcal{X}_1$ and $\mathcal{X}_2$. This definition ensures that every path $(w_0,\dots,w_n)$ in $\mathcal{X}$ has strictly positive probability $\P\bigl[X_1=w_1,\dots,X_n=w_n|X_0=w_0\bigr]>0$ to be performed.
We use the notation \mbox{$\P_x[\,\cdot\,]:=\P[\,\cdot\, \mid X_0=x]$} for $x\in V$.
\par
The spectral radius at $o$ is defined as
$$
\varrho:= \limsup_{n\to\infty} p^{(n)}(o,o)^{1/n}.
$$
As a \textit{basic assumption} throughout this paper we assume that 
$$
\varrho <1.
$$
Equivalently, the Green function $G(o,o|z):=\sum_{n\geq 0} p^{(n)}(o,o)\,z^n$, $z\in\mathbb{C}$, has radius of convergence $\mathscr{R}$ strictly bigger than $1$.
This assumption implies \textit{transience} of the random walk governed by $P$ and excludes degenerate cases; in particular, the recurrent case $|V_1|=|V_2|=2$ is excluded, see Remark \ref{rem:case2x2} . If one out of $P_1$ and $P_2$ is not irreducible, then $\varrho <1$ (easy to check!). If, e.g., $P_1$ and $P_2$ govern irreducible and reversible random walks, then $\varrho<1$; see \cite[Theorem 10.3]{woess}. Note that it is possible to construct null-recurrent random walks with $\mathscr{R}=1$ and $|V_1|>2=|V_2|$:
\begin{example}\label{ex:recurrent-product}
Consider the sets $V_1=\{o_1,a_1,a_2\}$ and $V_2=\{o_2,b_1,b_2\}$ equipped with  the following graph structure:
\begin{center}
\begin{tikzpicture}[scale=.8]

\coordinate[label=left:$\mathcal{X}_2:$] (f) at (-0.5,0);
\coordinate[label=above:$o_2$] (o1) at (0,0);
\coordinate[label=above:$a_1$] (a1) at (2,1);
\coordinate[label=below:$a_2$] (a2) at (2,-1);

\coordinate[label=left:$\mathcal{X}_2:$] (g) at (4.5,0);
\coordinate[label=above:$o_2$] (o2) at (5,0);
\coordinate[label=above:$b_1$] (b1) at (7,1);
\coordinate[label=below:$b_2$] (b2) at (7,-1);

\fill[red] (o1) circle (2pt);
\fill[red] (a1) circle (2pt);
\fill[red] (a2) circle (2pt);
\fill[red] (b1) circle (2pt);
\fill[red] (b2) circle (2pt);
\fill[red] (o2) circle (2pt);
\draw[{Latex[length=3mm]}-{Latex[length=3mm]},very thick] (o1) -- (a1);
\draw[{Latex[length=3mm]}-{Latex[length=3mm]},very thick] (o1) -- (a2);
\draw[{Latex[length=3mm]}-{Latex[length=3mm]},very thick,blue] (o2) -- (b1);
\draw[{Latex[length=3mm]}-{Latex[length=3mm]},very thick,blue] (o2) -- (b2);

\end{tikzpicture}
\end{center}
The graph $\mathcal{X}$ of the free product $V_1\ast V_2$ has then the following structure:
\begin{center}
\begin{tikzpicture}[scale=1.4]
\coordinate[label=above:$o$] (e) at (0,0);
\coordinate[label=above:$a_1$] (a1) at (2,1);
\coordinate[label=above:$a_2$] (a2) at (2,0);
\coordinate[label=above:$a_1b_1\ $] (a1b1) at (4,1);
\coordinate[label=above:$a_1b_2\ \ $] (a1b2) at (4,2);
\coordinate[label=above:$a_1b_2a_1$] (a1b2a1) at (6,2.5);
\coordinate[label=above:$a_1b_2a_2$] (a1b2a2) at (6,2);
\coordinate[label=above:$a_1b_1a_1$] (a1b1a2) at (6,1.5);
\coordinate[label=above:$a_1b_1a_1$] (a1b1a1) at (6,1);
\coordinate[label=above:$\ \ a_2b_1$] (a2b1) at (4,-1);
\coordinate[label=above:$a_2b_2$] (a2b2) at (4,0);
\coordinate[label=above:$a_2b_2a_1$] (a2b2a1) at (6,-0.5);
\coordinate[label=above:$a_2b_2a_2$] (a2b2a2) at (6,0);
\coordinate[label=above:$a_2b_1a_1$] (a2b1a2) at (6,-1);
\coordinate[label=above:$a_2b_1a_1$] (a2b1a1) at (6,-1.5);
\coordinate[label=above:$b_1$] (b1) at (-2,1);
\coordinate[label=above:$b_2$] (b2) at (-2,0);

\draw[{Latex[length=3mm]}-{Latex[length=3mm]},very thick] (e) -- (a1);
\draw[{Latex[length=3mm]}-{Latex[length=3mm]},very thick] (e) -- (a2);
\draw[{Latex[length=3mm]}-{Latex[length=3mm]},very thick,blue] (a1) -- (a1b2);
\draw[{Latex[length=3mm]}-{Latex[length=3mm]},very thick,blue] (a1) -- (a1b1);
\draw[{Latex[length=3mm]}-{Latex[length=3mm]},very thick] (a1b2) -- (a1b2a1);
\draw[{Latex[length=3mm]}-{Latex[length=3mm]},very thick] (a1b2) -- (a1b2a2);
\draw[{Latex[length=3mm]}-{Latex[length=3mm]},very thick] (a1b1) -- (a1b1a1);
\draw[{Latex[length=3mm]}-{Latex[length=3mm]},very thick] (a1b1) -- (a1b1a2);
\draw[{Latex[length=3mm]}-{Latex[length=3mm]},very thick,blue] (a2) -- (a2b2);
\draw[{Latex[length=3mm]}-{Latex[length=3mm]},very thick,blue] (a2) -- (a2b1);
\draw[{Latex[length=3mm]}-{Latex[length=3mm]},very thick] (a2b2) -- (a2b2a1);
\draw[{Latex[length=3mm]}-{Latex[length=3mm]},very thick] (a2b2) -- (a2b2a2);
\draw[{Latex[length=3mm]}-{Latex[length=3mm]},very thick] (a2b1) -- (a2b1a1);
\draw[{Latex[length=3mm]}-{Latex[length=3mm]},very thick] (a2b1) -- (a2b1a2);

\draw[{Latex[length=3mm]}-{Latex[length=3mm]},very thick,blue] (e) -- (b1);
\draw[{Latex[length=3mm]}-{Latex[length=3mm]},very thick,blue] (e) -- (b2);

\draw[dashed,blue] (a1b1a1) --+ (1,0.25);
\draw[dashed,blue] (a1b1a1) --+ (1,0);
\draw[dashed,blue] (a1b1a2) --+ (1,0.25);
\draw[dashed,blue] (a1b1a2) --+ (1,0);
\draw[dashed,blue] (a1b2a1) --+ (1,0.25);
\draw[dashed,blue] (a1b2a2) --+ (1,0);
\draw[dashed,blue] (a1b2a2) --+ (1,0.25);
\draw[dashed,blue] (a1b2a2) --+ (1,0);
\draw[dashed,blue] (a2b1a1) --+ (1,0.25);
\draw[dashed,blue] (a2b1a1) --+ (1,0);
\draw[dashed,blue] (a2b1a2) --+ (1,0.25);
\draw[dashed,blue] (a2b1a2) --+ (1,0);
\draw[dashed,blue] (a2b2a1) --+ (1,0.25);
\draw[dashed,blue] (a2b2a2) --+ (1,0);
\draw[dashed,blue] (a2b2a2) --+ (1,0.25);
\draw[dashed,blue] (a2b2a2) --+ (1,0);
\draw[dashed] (b1) --+ (-1,0.5);
\draw[dashed] (b1) --+ (-1,0);
\draw[dashed] (b2) --+ (-1,0.5);
\draw[dashed] (b2) --+ (-1,0);

%
%

\fill[red] (e) circle (2pt);
\fill[red] (a1) circle (2pt);
\fill[red] (a2) circle (2pt);
\fill[red] (a1b1) circle (2pt);
\fill[red] (a1b2) circle (2pt);
\fill[red] (a1b1a1) circle (2pt);
\fill[red] (a1b1a2) circle (2pt);
\fill[red] (a1b2a1) circle (2pt);
\fill[red] (a1b2a2) circle (2pt);
\fill[red] (a2b1) circle (2pt);
\fill[red] (a2b2) circle (2pt);
\fill[red] (a2b1a1) circle (2pt);
\fill[red] (a2b1a2) circle (2pt);
\fill[red] (a2b2a1) circle (2pt);
\fill[red] (a2b2a2) circle (2pt);

\fill[red] (b1) circle (2pt);
\fill[red] (b2) circle (2pt);

\end{tikzpicture}
\end{center}

Set $\alpha_1=\alpha_2=\frac12$. Now it is easy to see that the process $(\Vert X_n\Vert)_{n\in\N_0}$ is an irreducible, null-recurrent random walk on $N_0$, which in turn implies that $(X_n)_{n\in\N_0}$ is null-recurrent. Hence, $R=1$.
\end{example}
~\\
Denote by $V_\infty$ the set of \textit{infinite} words $y_1y_2y_3\dots$ over the alphabet $V^\times_\ast$ such that no two consecutive letters arise from the same $V_i^\times$. For $x\in V$ and $y\in V_\infty$, denote by $x\wedge y$ the common prefix of maximal length of $x$ and $y$.
In \cite[Proposition 2.5]{gilch:07} it is shown that the random walk $(X_n)_{n\in\N_0}$ converges to some $V_\infty$-valued random variable $X_\infty$ in the sense that the length of the common prefix of $X_n$ and $X_\infty$ tends to infinity almost surely. In other words, $\lim_{n\to\infty} \Vert X_n\wedge X_\infty\Vert=\infty$ almost surely. 
We will make use of these results in the proofs later. For more details, we refer to \cite{gilch:07}.
\par
Important random walk characteristic numbers are given by the \textit{rate of escape} (or \textit{drift}) and the \textit{asymptotic range} of random walks. In \cite[Theorem 3.3]{gilch:07}  it was shown that there exists a strictly positive number $\ell\in(0,1]$, the \textit{rate of escape w.r.t. the word length (or block length)} of $(X_n)_{n\in\N_0}$, such that 
$$
\ell = \lim_{n\to\infty} \frac{\Vert X_n\Vert}{n} \quad \textrm{almost surely.}
$$
The set of vertices visited by the random walk $(X_n)_{n\in\N_0}$ until time $n$ is given by 
$$
\mathbf{R}_n:=\{X_0,X_1,\dots,X_n\}.
$$
The recent paper \cite[Theorem 1.1]{gilch:22} has proven existence of a strictly positive number $\mathfrak{r}\in(0,1]$, the \textit{(asymptotic) range} of the random walk $(X_n)_{n\in\N_0}$, such that
\begin{equation}\label{equ:range}
\mathfrak{r} = \lim_{n\to\infty} \frac{\bigl|\mathbf{R}_n\bigr|}{n} \quad \textrm{almost surely.}
\end{equation}
While $\ell$ measures the speed at which the random walk escapes to infinity, $\mathfrak{r}$ measures the speed at which new vertices are visited and it serves as a measure for how much of the graph is explored by the random walk. Both characteristic numbers will come into play later.

We mention two final remarks. The following equation will be important, which states that probabilities of paths within a cone depend only on their relative location to the cone's root:
\begin{lemma}\label{lem:cone-probs}
Let  $n\in\mathbb{N}$ and $w\in V$ and $w_1,\dots,w_n\in C(w)$. Write $w_i=ww_i'$ for \mbox{$i\in\{1,\dots,n\}$.} Then:
$$
\P_w[X_1=w_1,\dots,X_n=w_n] = \P_o[X_1=w'_1,\dots,X_n=w'_n].
$$
\end{lemma}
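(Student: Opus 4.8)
The plan is to reduce the assertion about $n$-step path probabilities to a statement about single transition probabilities, and then to read the latter off the construction of $P$ from the lifted matrices $\bar P_1,\bar P_2$. Since $(X_n)_{n\in\N_0}$ is a time-homogeneous Markov chain, the Markov property gives
$$
\P_w[X_1=w_1,\dots,X_n=w_n]=\prod_{m=1}^{n}p(w_{m-1},w_m),\qquad w_0:=w ,
$$
and likewise $\P_o[X_1=w_1',\dots,X_n=w_n']=\prod_{m=1}^{n}p(w_{m-1}',w_m')$ with $w_0':=o$. Hence it is enough to establish the single-step identity: for all $u,v\in C(w)$, writing $u=wu'$ and $v=wv'$ (with $u'=o$ when $u=w$), one has $p(u,v)=p(u',v')$. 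This also disposes of the cases in which $w_{m-1}$ and $w_m$ are not neighbours in $\mathcal{X}$, since then both sides vanish; so no nearest-neighbour hypothesis on the $w_i$ is needed.

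To prove the single-step identity I would use $p=\alpha\,\bar p_1+(1-\alpha)\,\bar p_2$ and show $\bar p_k(wu',wv')=\bar p_k(u',v')$ for each $k\in\calI$. Recall that every word $x\in V$ factors uniquely as $x=\underline{x}\cdot s$, where $s$ is the longest suffix of $x$ lying in $V_k$ — which, since consecutive letters of a free-product word have different types, is either $o_k$ or a single letter of $V_k^{\times}$ — and $\t(\underline x)\neq k$; by definition of the lifted matrix, $\bar p_k(x,y)=p_k(s,t)$ if $y=\underline x\cdot t$ for some $t\in V_k$, and $\bar p_k(x,y)=0$ otherwise. The point is that prepending $w$ does not alter this decomposition: because $wu'\in V$ is a legal word, the first letter of $u'$ has type $\neq\t(w)$, so $wu'$ ends in the same block as $u'$ and therefore has the same maximal $V_k$-suffix $s$; its prefix part is $w\,\underline{u'}$, and $\t(w\,\underline{u'})\neq k$ (if $\underline{u'}\neq o$ this is just $\t(\underline{u'})\neq k$; if $\underline{u'}=o$, then $u'$ is a single letter of $V_k^{\times}$, so $\t(w)\neq k$). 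Consequently $\bar p_k(wu',wv')$ and $\bar p_k(u',v')$ are governed by the same condition on $v'$ and take the same value $p_k(s,t)$; forming the convex combination yields $p(wu',wv')=p(u',v')$.

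The only case requiring separate — though short — attention, and the conceptual heart of the matter, is the cone root, i.e. $u=w$ and $u'=o$: a step out of $w$ that stays in $C(w)$ can only enter the fresh copy of $\mathcal{X}_j$, $j\in\calI\setminus\{\t(w)\}$, attached at $w$ (steps inside the $\mathcal{X}_{\t(w)}$-copy containing $w$ leave $C(w)$), and this has to correspond exactly to the steps available from $o$ into its first $\mathcal{X}_j$-copy. Here both $\bar p_k(w,wv')$ and $\bar p_k(o,v')$ force the prefix part to be $w$ resp.\ $o$ and the $V_k$-suffix to be $o_k$, so both equal $p_k(o_k,v')$ if $v'$ is a letter of $V_k^{\times}$ adjacent to $o_k$ in $\mathcal{X}_k$ and vanish otherwise; once more this uses that the first letter of $v'$ has type $\neq\t(w)$ because $wv'\in V$. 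With the single-step identity in hand for all $u,v\in C(w)$, the two product formulas above match factor by factor and the lemma follows. I expect the factorisations and the generic interior case to be routine; the actual work is the bookkeeping showing that the $V_k$-decomposition is invariant under prepending the prefix $w$, together with this root case.
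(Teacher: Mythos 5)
Your argument is correct. The reduction via the Markov property to the single-step identity $p(wu',wv')=p(u',v')$, together with the check that the $V_k$-suffix decomposition of $u'$ is inherited by $wu'$ (prefix becomes $w\underline{u'}$, suffix unchanged), is exactly the right bookkeeping, and you handle the degenerate subcase $\underline{u'}=o$ correctly via the constraint that the first letter of $u'$ has type $\neq\t(w)$. Two small imprecisions worth flagging. In the root case ($u'=o$) the phrase ``force the $V_k$-suffix to be $o_k$'' is literally true only for $k\neq\t(w)$; for $k=\t(w)$ the $V_k$-suffix of $w$ is its last letter $g$, not $o_k$. Your conclusion is nevertheless right, since $v'$ being $o$ or beginning with a letter of type $\neq\t(w)$ makes both $\bar p_{\t(w)}(w,wv')$ and $\bar p_{\t(w)}(o,v')$ vanish, but the stated reason does not quite cover that $k$. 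Relatedly, the subcase $u'=v'=o$, $k=\t(w)$ is precisely where the standing assumption $p_i(x,x)=0$ is used: one is comparing $p_{\t(w)}(g,g)$ with $p_{\t(w)}(o_{\t(w)},o_{\t(w)})$, which would not match in general without that hypothesis. The paper itself defers the proof to \cite[Lemma~3.2]{gilch:22}, so there is no internal argument to compare against, but your route is the natural and expected one.
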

\begin{proof}
See \cite[Lemma 3.2]{gilch:22}.
\end{proof}
Due to the structure of the free product and Lemma \ref{lem:cone-probs} it is easy to verify that, for all $i\in\mathcal{I}$ and all $x\in V\setminus\{o\}$ with $\delta(x)=i$, the probability 
\begin{equation}\label{equ:xi}
\xi_i:=\mathbb{P}\bigl[ \exists n\in\mathbb{N}: X_n\notin C(x)\mid X_0=x\bigr]
\end{equation}
does not depend on $x$. In \cite[Lemma 2.3]{gilch:07} it is shown that $\xi_i<1$ for each $i\in\mathcal{I}$.

\subsection{Asymptotic Capacity of the Range}

We recall the definition of the capacity of a subset $A\subseteq V$.
For $A\subseteq V$, the stopping time of the first return into the set $A$ is defined as
$$
S_A :=\inf\bigl\lbrace m\in\mathbb{N} \mid X_m\in A\bigr\rbrace \in \mathbb{N}\cup\{\infty\}.
$$
The \textit{capacity} of the set $A$ is then defined as
$$
\mathrm{Cap}(A):= \sum_{x\in A} \P_x\bigl[ S_A=\infty\bigr].
$$
This number is a mathematical analogue from physics of the ability of $A$ to store an electrical charge.
\par
If there exists a constant $\mathfrak{c}\in [0,1]$ such that
$$
\lim_{n\to\infty} \frac{\mathrm{Cap}(\mathbf{R}_n)}{n}=\mathfrak{c} \quad \textrm{almost surely},
$$
then we call $\mathfrak{c}$ the \textit{asymptotic capacity of the range} of the random walk $(X_n)_{n\in\N_0}$ on $V$. It measures the asymptotic increase (per unit of time) of the capacity of the range.
The goal of this article is to prove existence of the asymptotic capacity of the range of random walks on free products;
 see \mbox{Theorem \ref{th:capacity-existence}.} 
\par
Existence of the asymptotic capacity of the range of symmetric random walks on finitely generated groups was shown in \cite{mrazovic-sandric-sebek:21} with the help of  Kingman's subadditive ergodic theorem. Since we have only a partial composition law on $V$ and therefore no group operation on $V$, we cannot apply the reasoning from the group case (in particular, Kingman's subadditive ergodic theorem cannot be applied). This was the starting point for the present article to study existence of the asymptotic capacity of the range for general free products of graphs, which form an important class of graphs. Important pre-work has been done in the article \cite{gilch:22},  which  will serve as a base reference for our proofs. 
\par
The following little lemma will be very helpful in the proof of Theorem \ref{th:capacity-existence}:
\begin{lemma}\label{lem:Cap-difference}
For any  finite sets $A,B\subset V$ with $A\subseteq B$, we have
$$
\mathrm{Cap}(B)-\mathrm{Cap}(A) \leq |B\setminus A|.
$$
\end{lemma}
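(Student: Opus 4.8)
The plan is a direct computation from the definition of capacity, exploiting the monotonicity of the first return time in the target set. First I would write
$$
\mathrm{Cap}(B)-\mathrm{Cap}(A) = \sum_{x\in B}\P_x[S_B=\infty] - \sum_{x\in A}\P_x[S_A=\infty]
$$
and split the first sum according to $B = A \cupdot (B\setminus A)$, giving
$$
\mathrm{Cap}(B)-\mathrm{Cap}(A) = \sum_{x\in A}\Bigl(\P_x[S_B=\infty]-\P_x[S_A=\infty]\Bigr) + \sum_{x\in B\setminus A}\P_x[S_B=\infty].
$$

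The key observation is that $A\subseteq B$ forces $S_B\le S_A$ pointwise (any time the walk hits $A$ it has already hit $B$), hence $\{S_A=\infty\}\subseteq\{S_B=\infty\}$ and therefore $\P_x[S_B=\infty]\ge\P_x[S_A=\infty]$ for every $x$. Wait — that inequality has the wrong sign for the first sum. Instead note that $\P_x[S_B=\infty]\le\P_x[S_A=\infty]$ is what we actually get, since $\{S_B=\infty\}\subseteq\{S_A=\infty\}$: if the walk never returns to the larger set $B$, it certainly never returns to the subset $A$. Thus each summand in the first sum is $\le 0$, so the first sum is $\le 0$. For the second sum, each probability $\P_x[S_B=\infty]$ is trivially bounded above by $1$, so $\sum_{x\in B\setminus A}\P_x[S_B=\infty]\le |B\setminus A|$.

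Combining the two bounds yields $\mathrm{Cap}(B)-\mathrm{Cap}(A)\le |B\setminus A|$, as claimed. There is no real obstacle here: the only point requiring care is getting the direction of the event inclusion right, namely that $\{S_B=\infty\}\subseteq\{S_A=\infty\}$ when $A\subseteq B$, which makes the $A$-indexed part of the difference non-positive and reduces everything to the trivial bound $\P_x[\cdot]\le 1$ on the remaining $|B\setminus A|$ terms. Finiteness of $A$ and $B$ is used only to ensure all sums are well defined and finite.
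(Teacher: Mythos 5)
Your proof is correct and follows exactly the same decomposition and key observation as the paper: split the sum over $B$ into $A$ and $B\setminus A$, note that $\{S_B=\infty\}\subseteq\{S_A=\infty\}$ makes the $A$-part non-positive, and bound the $B\setminus A$-part trivially by its cardinality. The brief self-correction about the direction of the inclusion does not affect the final argument.
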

\begin{proof}
Let $A,B\subset V$ be finite sets with $A\subseteq B$. Then $S_B=\infty$ implies $S_A=\infty$, which implies $\P_x\bigl[ S_B=\infty\bigr]  \leq \P_x\bigl[ S_A=\infty\bigr]$ for $x\in A$. This yields:
\begin{eqnarray*}
\mathrm{Cap}(B)-\mathrm{Cap}(A) &=& \sum_{x\in B} \P_x\bigl[ S_B=\infty\bigr] - \sum_{x\in A} \P_x\bigl[ S_A=\infty\bigr] \\
&=&  \sum_{x\in B\setminus A} \underbrace{\P_x\bigl[ S_B=\infty\bigr]}_{\leq 1} + \sum_{x\in A} \Bigl( \underbrace{\P_x\bigl[ S_B=\infty\bigr]  - \P_x\bigl[ S_A=\infty\bigr] }_{\leq 0}\Bigr)\leq |B\setminus A|.
\end{eqnarray*}
\end{proof}

Finally, we explain briefly why we may exclude the case $|V_1|=|V_2|=2$ in the following sections:
\begin{remark}\label{rem:case2x2}
If $V_1=\{o_1,a\}$ and $V_2=\{o_2,b\}$, then $V=V_1\ast V_2$ becomes the free product $\bigl(\Z/(2\Z)\bigr) \times \bigl(\Z/(2\Z)\bigr)$ and the random walk on $V$ is recurrent. This yields $\mathfrak{r}=\lim_{n\to\infty}|\mathbf{R}_n|/n=0$. Since $\mathrm{Cap}(\mathbf{R}_n)\leq |\mathbf{R}_n|$, we obtain in this case $\mathfrak{c}=0$.
\end{remark}


\section{Existence of the Asymptotic Capacity}
\label{sec:existence}

In this section we will prove existence of the asymptotic capacity of the range of the random walk $(X_n)_{n\in\N_0}$ on $V$. For this purpose, we introduce exit times which track the random walk's path to infinity; compare, e.g., with \cite{gilch:07,gilch:22}. Some concepts of \cite{gilch:22} will be crucial for our proofs, but the reasoning in the current article goes far beyond the scope of  \cite{gilch:22}.
\par
Denote by $X_n^{(k)}$ the projection of $X_n$ onto the first $k$ letters.
The \textit{exit times} are defined as follows: set $\e{0}:=0$, and for $k\geq 1$:
\begin{eqnarray*}
\e{k} &:=&\inf \bigl\lbrace m>0\mid  \forall n\geq m: X_n^{(k)} \textrm{ is constant}  \bigr\rbrace \\
&=&\inf \bigl\lbrace m>0 \mid \Vert X_m\Vert = k, \forall n\geq m: X_n\in C(X_m)\bigr\rbrace.
\end{eqnarray*}
That is, the random time $\e{k}$ denotes 
the first instant of time from which onwards the random walk remains in the cone $C(X_{\e{k}})$.   In \cite[Proposition 2.5]{gilch:07} it is shown that  \mbox{$\Vert X_n\Vert\to\infty$} almost surely as $n\to\infty$, yielding $\e{k}<\infty$ almost surely and $\e{k+1}>\e{k}$ for all $k\in\mathbb{N}$. This motivates to consider only those random walk trajectories such that $\e{k}<\infty$ for all $k\in\N_0$: denote by $\Omega_0$ the set of all random walk trajectories $\omega=(x_0,x_1,\dots)\in V^{\N_0}$ such that $p(x_i,x_{i+1})>0$ for all $i\in\N_0$ and $\lim_{n\to\infty} \Vert x_n\Vert=\infty$;  the latter property implies $\e{k}(\omega)<\infty$ for all $k\in\N_0$, and we have $\P(\Omega_0)=1$. 
\begin{remark}
Note that the exit times used in \cite[Section 3.1]{gilch:22} were informally (and supposed to be also formally) defined as above, but the formal definition in that article was erroneous. This does not affect the results in  \cite{gilch:22}, since our definition above was used in the proofs of \cite{gilch:22}.
\end{remark}

We collect some useful results: by \cite[Proposition 3.2, Theorem 3.3]{gilch:07}, we have
\begin{equation}\label{equ:speed}
\ell=\lim_{n\to\infty} \frac{\Vert X_n\Vert}{n}=\lim_{k\to\infty} \frac{k}{\e{k}}\in (0,1]\quad \textrm{ almost surely.}
\end{equation}
For $n\in\N_0$, set
$$
\mathbf{k}(n) := \max\{k\in\N_0 \mid \e{k}\leq n\},
$$
that is, $\e{\mathbf{k}(n)}$ is the maximal exit time at time $n$. In \cite[Eq. (3.2)]{gilch:22} it is shown that  
\begin{equation}\label{equ:convergence-e_k(n)}
\lim_{n\to\infty}\frac{\e{\mathbf{k}(n)}}{n} = 1 \quad \textrm{almost surely.}
\end{equation}
If $X_{\e{k}}=g_1\dots g_k$, then we set 
$\W{k}:=g_k$.
\par
Since the random walk enters finally the cone $C(X_{\e{1}})$ and stays therein, then finally enters $C(X_{\e{2}})\subset C(X_{\e{1}})$ and stays therein for forever and so on, the idea is now to construct a partition of $\mathbf{R}_n$ into disjoint subsets of
$$
C(X_{\e{k}})\setminus \bigl(C(X_{\e{k+1}}) \cup \{X_{\e{k}}\}\bigr),\quad  k\leq \mathbf{k}(n), 
$$
and some remaining parts. This allows us to decompose $\mathrm{Cap}(\mathbf{R}_n)$ according to the partition of $\mathbf{R}_n$ which in turn will enable us to control the asymptotic increase of $\mathrm{Cap}(\mathbf{R}_n)$ as $n\to\infty$.
\par 
We start by decomposing $\mathbf{R}_{\e{k}}$, $k\in\N$. For this purpose, we define several random sets and quantities in the following.
Denote by $C_0\subset V$ the random set of words which start with a letter in $V_{\t(X_{\e{1}})}$ including $o$. Let  
\begin{eqnarray*}
&& \mathcal{R}_0^{(I)}:= \mathbf{R}_{\e{1}} \cap \Bigl( C_0\setminus \bigl( C(X_{\e{1}})  \cup \{o\}\bigr)\Bigr),\\
\textrm{and  for } k\geq 1: && \mathcal{R}_k^{(I)}:= \mathbf{R}_{\e{k+1}} \cap \Bigl( C(X_{\e{k}})\setminus \bigl(C(X_{\e{k+1}}) \cup \{X_{\e{k}}\}\bigr)\Bigr)
\end{eqnarray*}
be the \text{``interior part of the range''} between two consecutive exit time points. Set
\begin{eqnarray*}
&&\mathcal{R}_0 := \mathcal{R}_0^{(I)} \cup \bigl\lbrace o,X_{\e{1}}\bigr\rbrace, \\ 
\textrm{and  for } k\geq 1: && \mathcal{R}_k := \mathcal{R}_k^{(I)} \cup \bigl\lbrace X_{\e{k}},X_{\e{k+1}}\bigr\rbrace.
\end{eqnarray*} 
Observe that we have established the following disjoint decomposition of $\mathbf{R}_{\e{k}}$, $k\geq 1$:
\begin{equation}\label{equ:R_ek_decomposition}
\mathbf{R}_{\e{k}} = \bigl(\mathbf{R}_{\e{1}}\cap \overline{C_0}\bigr) \ \cup\ \bigcup_{i=0}^{k-1} \bigl(\mathcal{R}_{i}\setminus \{X_{\e{i+1}}\}\bigr)\  \cup \ \bigl(\mathbf{R}_{\e{k}}\cap C(X_{\e{k}})\bigr).
\end{equation}
We remark that, from time $\e{1}$ on, the set $\mathbf{R}_{\e{1}}\cap \overline{C_0}$ does not change any more, since every path from $C(X_{\e{1}})$ to $\overline{C_0}$ has to leave $C(X_{\e{1}})$, but the random walk does not leave $C(X_{\e{1}})$ anymore after time $\e{1}$.
\par
The next step is to decompose $\mathrm{Cap}(\mathbf{R}_{\e{k}})$ according to the decomposition of $\mathbf{R}_{\e{k}}$. In the following we will evaluate random variables and random sets at arbitrary $\omega\in\Omega_0$ in order to ensure clarity of notations. Observe that, for $\omega\in\Omega_0$,
\begin{equation}\label{equ:Cap-Rek}
\mathrm{Cap}\bigl(\mathbf{R}_{\e{k}}(\omega)\bigr)=\sum_{x\in \mathbf{R}_{\e{k}}(\omega)} \P_x\bigl[ S_{\mathbf{R}_{\e{k}}(\omega)}=\infty\bigr].
\end{equation}
We define for $k\in\N_0$ and $\omega\in\Omega_0$:
$$
\mathcal{C}_k^{(I)}(\omega):= \sum_{x\in \mathcal{R}_k^{(I)}(\omega)} \P_x\bigl[ S_{\mathcal{R}_{k}(\omega)}=\infty\bigr]
$$
and set
\begin{eqnarray*}
\mathcal{C}_0(\omega)&:=& \mathcal{C}_0^{(I)}(\omega) +  \P_{o}\bigl[ S_{\mathcal{R}_{0}(\omega)}=\infty,\forall n\geq 1: X_n\in C_0(\omega)\bigr] \\
&& \quad +  \P_{X_{\e{1}}}\bigl[ S_{\mathcal{R}_{0}(\omega)}=\infty,\forall n\geq 1: X_n\notin C\bigl(X_{\e{1}}(\omega)\bigr)\bigr], \\
\textrm{ for } k\geq 1: \ 
\mathcal{C}_k(\omega)&:=& \mathcal{C}_k^{(I)}(\omega) +  \P_{X_{\e{k}}(\omega)}\bigl[ S_{\mathcal{R}_k(\omega)}=\infty,\forall n\geq 1: X_n\in C\bigl(X_{\e{k}}(\omega)\bigr)\bigr] \\
&& \quad +  \P_{X_{\e{k+1}}(\omega)}\bigl[ S_{\mathcal{R}_k(\omega)}=\infty,\forall n\geq 1: X_n\notin C\bigl(X_{\e{k+1}}(\omega)\bigr)\bigr].
\end{eqnarray*}
Additionally, we define
$$
\mathcal{C}_0^\ast(\omega):= \sum_{x\in \mathbf{R}_{\e{1}}(\omega)\cap\overline{C_0(\omega)}} \P_x\bigl[ S_{\mathbf{R}_{\e{1}}(\omega)}=\infty\bigr] +  \P_{o}\bigl[ S_{\mathbf{R}_{\e{1}}(\omega)}=\infty,\forall n\geq 1: X_n\notin C_0(\omega)\bigr]
$$
and
\begin{eqnarray*}
\mathcal{O}_k(\omega) &:=&  \sum_{x\in \mathbf{R}_{\e{k}}(\omega) \cap C\bigl(X_{\e{k}}(\omega)\bigr)\setminus\{X_{\e{k}}(\omega)\}} \P_x\bigl[ S_{\mathbf{R}_{\e{k}}(\omega)}=\infty\bigr] \\
&& \quad +  \P_{X_{\e{k}}(\omega)}\bigl[ S_{\mathbf{R}_{\e{k}}(\omega)}=\infty,\forall n\geq 1: X_n\in C\bigl(X_{\e{k}}(\omega)\bigr)\bigr].
\end{eqnarray*}
The decomposition of $\mathbf{R}_{\e{k}}$ in (\ref{equ:R_ek_decomposition}) leads to the following decomposition of $\mathrm{Cap}(\mathbf{R}_{\e{k}})$, 
which will be one of the the main keys for the proofs later:
\begin{proposition}\label{prop:capacity-decomposition}
For all $k\in\N$,
\begin{equation}\label{equ:Cap_R_ek}
\mathrm{Cap}(\mathbf{R}_{\e{k}}) = \mathcal{C}_0^\ast + \sum_{i=0}^{k-1} \mathcal{C}_i + \mathcal{O}_k \quad \textrm{almost surely}.
\end{equation}
\end{proposition}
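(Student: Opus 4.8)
Fix $\omega\in\Omega_0$ and start, as in \eqref{equ:Cap-Rek}, from $\mathrm{Cap}(\mathbf{R}_{\e{k}})=\sum_{x\in\mathbf{R}_{\e{k}}}\P_x[S_{\mathbf{R}_{\e{k}}}=\infty]$. The plan is to split this sum along the disjoint decomposition \eqref{equ:R_ek_decomposition} into the blocks $\mathbf{R}_{\e{1}}\cap\overline{C_0}$, the singletons $\{X_{\e{i}}\}$ (with $X_{\e{0}}:=o$) and the interior parts $\mathcal{R}_i^{(I)}$ for $0\le i\le k-1$, and $\mathbf{R}_{\e{k}}\cap C(X_{\e{k}})$, and then to identify the contribution of each block with one of $\mathcal{C}_0^\ast$, $\mathcal{C}_i$, $\mathcal{O}_k$. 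For the boundary vertices $o$, $X_{\e{m}}$ ($1\le m\le k-1$) and $X_{\e{k}}$ the summand $\P_x[S_{\mathbf{R}_{\e{k}}}=\infty]$ will first be split according to the side through which the walk leaves the surrounding cone, i.e.\ according to whether the trajectory stays in $C_0$, resp.\ $C(X_{\e{m}})$, resp.\ $C(X_{\e{k}})$, for all $n\ge1$ or not.

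The one structural fact driving everything is the \emph{gateway property} of free products: for every $y\in V$ each path leaving the cone $C(y)$ must pass through $y$, i.e.\ $y$ separates $C(y)$ from $V\setminus C(y)$. Combined with the fact that after time $\e{j}$ the walk never leaves $C(X_{\e{j}})$, this yields two things I will use repeatedly. First, the range has already stabilized off each cone: $\mathbf{R}_{\e{k}}\cap\overline{C_0}=\mathbf{R}_{\e{1}}\cap\overline{C_0}$ and $\mathbf{R}_{\e{k}}\cap D_i=\mathbf{R}_{\e{i+1}}\cap D_i=\mathcal{R}_i^{(I)}$, where $D_i:=C(X_{\e{i}})\setminus\bigl(C(X_{\e{i+1}})\cup\{X_{\e{i}}\}\bigr)$ and $D_0:=C_0\setminus\bigl(C(X_{\e{1}})\cup\{o\}\bigr)$, because $\overline{C_0}$, resp.\ $D_i$, is disjoint from $C(X_{\e{1}})$, resp.\ $C(X_{\e{i+1}})$. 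Second, on $\{S_{\mathbf{R}_{\e{k}}}=\infty\}$ the walk never hits any vertex of $\mathbf{R}_{\e{k}}$, in particular never hits the relevant gateway vertex, hence cannot cross into the neighbouring region. Concretely, for $x\in\mathcal{R}_i^{(I)}\subseteq D_i$ the walk started at $x$ never hits $X_{\e{i}}\in\mathbf{R}_{\e{k}}$, so it stays in $D_i$ forever and a fortiori never hits $X_{\e{i+1}}$; conversely $S_{\mathcal{R}_i}=\infty$ forces the same behaviour together with avoidance of $\mathcal{R}_i^{(I)}=\mathbf{R}_{\e{k}}\cap D_i$. Hence $\{S_{\mathbf{R}_{\e{k}}}=\infty\}=\{S_{\mathcal{R}_i}=\infty\}$ on $\{X_0=x\}$, and $\sum_{x\in\mathcal{R}_i^{(I)}}\P_x[S_{\mathbf{R}_{\e{k}}}=\infty]=\mathcal{C}_i^{(I)}$.

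The boundary vertices are handled by the same mechanism. For $x\in\mathbf{R}_{\e{1}}\cap\overline{C_0}$, the gateway $o$ gives $\P_x[S_{\mathbf{R}_{\e{k}}}=\infty]=\P_x[S_{\mathbf{R}_{\e{1}}}=\infty]$, whose sum is the first part of $\mathcal{C}_0^\ast$. For $x=o$: on $\{S_{\mathbf{R}_{\e{k}}}=\infty\}$ the walk never returns to $o$, the only gateway between $C_0$ and $\overline{C_0}$, so on $\{\exists n\ge1:X_n\notin C_0\}$ the walk in fact has $X_n\notin C_0$ for all $n\ge1$; replacing $\mathbf{R}_{\e{k}}$ by $\mathbf{R}_{\e{1}}$ on $\overline{C_0}$ turns this part into $\P_o[S_{\mathbf{R}_{\e{1}}}=\infty,\ \forall n\ge1:X_n\notin C_0]$, the second part of $\mathcal{C}_0^\ast$, while on $\{\forall n\ge1:X_n\in C_0\}$ reaching $C(X_{\e{1}})$ requires hitting $X_{\e{1}}$, so $\mathbf{R}_{\e{k}}$ may be replaced by $\mathcal{R}_0=\mathcal{R}_0^{(I)}\cup\{o,X_{\e{1}}\}$, producing the first boundary term of $\mathcal{C}_0$. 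For $x=X_{\e{m}}$, $1\le m\le k-1$: on $\{\forall n\ge1:X_n\in C(X_{\e{m}})\}$ reaching $C(X_{\e{m+1}})$ needs $X_{\e{m+1}}\in\mathbf{R}_{\e{k}}$, so $\mathbf{R}_{\e{k}}$ may be replaced by $\mathcal{R}_m$, giving the first boundary term of $\mathcal{C}_m$; on the complementary event, $X_{\e{m-1}}\in\mathbf{R}_{\e{k}}$ being avoided, the walk leaves $C(X_{\e{m}})$ at time $1$ into $D_{m-1}$ and stays there, giving $\P_{X_{\e{m}}}[S_{\mathcal{R}_{m-1}}=\infty,\ \forall n\ge1:X_n\notin C(X_{\e{m}})]$, the second boundary term of $\mathcal{C}_{m-1}$ (note $X_{\e{m}}=X_{\e{(m-1)+1}}$). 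Finally the vertices of $\mathbf{R}_{\e{k}}\cap C(X_{\e{k}})$ other than $X_{\e{k}}$ contribute their summands unchanged to the first part of $\mathcal{O}_k$, and $x=X_{\e{k}}$, split the same way, produces the second part of $\mathcal{O}_k$ and the second boundary term of $\mathcal{C}_{k-1}$. Collecting everything gives exactly $\mathcal{C}_0^\ast+\sum_{i=0}^{k-1}\mathcal{C}_i+\mathcal{O}_k$.

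I expect the main obstacle to be organisational rather than conceptual: one must check that every vertex of $\mathbf{R}_{\e{k}}$ is counted exactly once, that the escape-direction splits at the three kinds of boundary vertices ($o$, the $X_{\e{m}}$, and $X_{\e{k}}$) are paired with the correct boundary terms of $\mathcal{C}_0^\ast$, the $\mathcal{C}_i$ and $\mathcal{O}_k$, and that in each split the part of $\mathbf{R}_{\e{k}}$ the restricted walk can actually reach coincides with the corresponding localized set $\mathcal{R}_i$ (or $\mathbf{R}_{\e{1}}$). Each such verification rests only on the two facts that a single vertex separates a cone from its complement and that the range does not grow outside $C(X_{\e{j}})$ after time $\e{j}$, so no individual step is hard; the care is entirely in not losing or double-counting a term.
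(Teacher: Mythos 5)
Your proposal is correct and takes essentially the same approach as the paper: decompose $\mathrm{Cap}(\mathbf{R}_{\e{k}})$ along \eqref{equ:R_ek_decomposition}, split the boundary vertices $o$, $X_{\e{m}}$, $X_{\e{k}}$ according to escape direction, and use the separation/gateway property of cones to replace $\mathbf{R}_{\e{k}}$ by the localized sets $\mathcal{R}_i$ (or $\mathbf{R}_{\e{1}}$). One small presentational slip: when arguing that a walk started at $x\in\mathcal{R}_i^{(I)}$ on $\{S_{\mathbf{R}_{\e{k}}}=\infty\}$ stays in $D_i$, you need to invoke avoidance of \emph{both} $X_{\e{i}}$ and $X_{\e{i+1}}$ (both lie in $\mathbf{R}_{\e{k}}$) before concluding it stays in $D_i$ — your ``a fortiori never hits $X_{\e{i+1}}$'' inverts that order — but since the premise is already available, this does not affect correctness.
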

\begin{proof}
Let  $k\in\N$, and take $\omega\in\Omega_0$ such that $\lim_{n\to\infty} \Vert X_n(\omega)\Vert=\infty$. 
We check that the summands on the right hand sides of (\ref{equ:Cap_R_ek}) (evaluated at $\omega$) and (\ref{equ:Cap-Rek})
are the same by using the decomposition of $\mathbf{R}_{\e{k}}$ in (\ref{equ:R_ek_decomposition}).
Let  $x\in \mathcal{R}_i^{(I)}(\omega)$  for some $i\in\{0,1,\dots,k-1\}$, that is, $x\in C\bigl(X_{\e{i}}(\omega)\bigr)\setminus\{X_{\e{i}}(\omega)\}$ but $x\notin C\bigl(X_{\e{i+1}}(\omega)\bigr)$.
Then $\P_x\bigl[S_{\mathbf{R}_{\e{k}}(\omega)}=\infty\bigr]$ is already determined by $\mathcal{R}_i(\omega)$ via
 $$
 \P_x\bigl[S_{\mathbf{R}_{\e{k}}(\omega)}=\infty\bigr] = \P_x\bigl[S_{\mathcal{R}_i(\omega)} =\infty\bigr],
 $$
since every path from $x$ to $\mathbf{R}_{\e{k}}(\omega)\setminus \mathcal{R}_i(\omega)$ has to pass through either $X_{\e{i}}(\omega)\in \mathcal{R}_i(\omega)$ or $X_{\e{i+1}}(\omega)\in \mathcal{R}_i(\omega)$. Therefore, in this case  the summand $\P_x \bigl[ S_{\mathbf{R}_{\e{k}}(\omega)}=\infty\bigr]$ is -- by definition of $\mathcal{C}_i$ -- counted in $\mathcal{C}_i^{(I)}(\omega)$, and thus in  $\mathcal{C}_i(\omega)$.
\par 
If $x= X_{\e{i}}(\omega)$ for some $i\in\{0,1,\dots,k-1\}$, then
\begin{eqnarray*}
\P_x \bigl[ S_{\mathbf{R}_{\e{k}}(\omega)}=\infty\bigr] &=&  \P_x \bigl[ S_{\mathbf{R}_{\e{k}}(\omega)}=\infty,\forall n\geq 1: X_n\notin C\bigl(X_{\e{i}}(\omega)\bigr)\bigr] \\
&&\quad + \P_x \bigl[ S_{\mathbf{R}_{\e{k}}(\omega)}=\infty,\forall n\geq 1: X_n\in C\bigl(X_{\e{i}}(\omega)\bigr)\bigr]\\
&=& \P_x \bigl[ S_{\mathcal{R}_{i-1}(\omega)}=\infty,\forall n\geq 1: X_n\notin C\bigl(X_{\e{i}}(\omega)\bigr)\bigr] \\
&&\quad + \P_x \bigl[ S_{\mathcal{R}_{i}(\omega)}=\infty,\forall n\geq 1: X_n\in C\bigl(X_{\e{i}}(\omega)\bigr)\bigr], 
\end{eqnarray*}
since every path from $C\bigl(X_{\e{i}}(\omega)\bigr)\setminus \{X_{\e{i}}(\omega)\}$ to its complement (and vice versa) has to pass through $X_{\e{i}}(\omega)$. The first summand on the rightmost hand side is counted in $\mathcal{C}_{i-1}(\omega)$, while the second summand is counted in $\mathcal{C}_{i}(\omega)$.
\par
Consider now the case $x\in\mathbf{R}_{\e{1}}(\omega)\cap\overline{C_0(\omega)}$. Since $\mathbf{R}_{\e{k}}\setminus \mathbf{R}_{\e{1}}\subset C(X_{\e{1}}) \subset C_0$ and every path from $x\in \overline{C_0(\omega)}$ to $\mathbf{R}_{\e{k}}\setminus \mathbf{R}_{\e{1}}$ has to pass through $X_{\e{1}}\in \mathbf{R}_{\e{1}}$, we have
$$
\P_x \bigl[ S_{\mathbf{R}_{\e{k}}(\omega)}=\infty\bigr]  = \P_x \bigl[ S_{\mathbf{R}_{\e{1}}(\omega)}=\infty\bigr]. 
$$
This probability is counted in $\mathcal{C}_0^\ast(\omega)$. An analogous argument gives in the case $x=o$:
\begin{eqnarray*}
&&\P_o \bigl[ S_{\mathbf{R}_{\e{k}}(\omega)}=\infty,\forall n\geq 1: X_n\notin C_0(\omega)\bigr]  = \P_o \bigl[ S_{\mathbf{R}_{\e{1}}(\omega)}=\infty,\forall n\geq 1: X_n\notin C_0(\omega)\bigr] \\
\textrm{and} &&\P_o \bigl[ S_{\mathbf{R}_{\e{k}}(\omega)}=\infty,\forall n\geq 1: X_n\in C_0(\omega)\bigr]  = \P_o \bigl[ S_{\mathcal{R}_{0}(\omega)}=\infty,\forall n\geq 1: X_n\in C_0(\omega)\bigr] .
\end{eqnarray*}
This implies
\begin{eqnarray*}
&&\P_o \bigl[ S_{\mathbf{R}_{\e{k}}(\omega)}=\infty\bigr] \\
&=& \P_o \bigl[ S_{\mathbf{R}_{\e{k}}(\omega)}=\infty,\forall n\geq 1: X_n\notin C_0(\omega)\bigr] + \P_o \bigl[ S_{\mathbf{R}_{\e{k}}(\omega)}=\infty,\forall n\geq 1: X_n\in C_0(\omega)\bigr]\\
&=& \P_o \bigl[ S_{\mathbf{R}_{\e{1}}(\omega)}=\infty,\forall n\geq 1: X_n\notin C_0(\omega)\bigr] + \P_o \bigl[ S_{\mathcal{R}_{0}(\omega)}=\infty,\forall n\geq 1: X_n\in C_0(\omega)\bigr];
\end{eqnarray*}
the first summand on the rightmost hand side is counted in $\mathcal{C}_0^\ast(\omega)$, while the second one is counted in $\mathcal{C}_0(\omega)$.
\par
If $x\in \mathbf{R}_{\e{k}}(\omega)\cap C\bigl(X_{\e{k}}(\omega)\bigr)\setminus \{X_{\e{k}}(\omega)\}$, then $\P_x \bigl[ S_{\mathbf{R}_{\e{k}}(\omega)}=\infty\bigr]$ is counted in $\mathcal{O}_k(\omega)$; in the case $x= X_{\e{k}}(\omega)$, we get with an analogous argument as above that
\begin{eqnarray*}
&&\P_{X_{\e{k}}(\omega)} \bigl[ S_{\mathbf{R}_{\e{k}}(\omega)}=\infty\bigr]\\
& =& \P_{X_{\e{k}}(\omega)} \bigl[ S_{\mathbf{R}_{\e{k}}(\omega)}=\infty,\forall n\geq 1: X_n\notin C\bigl(X_{\e{k}}(\omega)\bigr)\bigr] \\
&&\quad + \P_{X_{\e{k}}(\omega)} \bigl[ S_{\mathbf{R}_{\e{k}}(\omega)}=\infty,\forall n\geq 1: X_n\in C\bigl(X_{\e{k}}(\omega)\bigr)\bigr]\\
&=& \P_{X_{\e{k}}(\omega)} \bigl[ S_{\mathcal{R}_{k-1}(\omega)}=\infty,\forall n\geq 1: X_n\notin C\bigl(X_{\e{k}}(\omega)\bigr)\bigr] \\
&& \quad + \P_{X_{\e{k}}(\omega)} \bigl[ S_{\mathbf{R}_{\e{k}}(\omega)}=\infty,\forall n\geq 1: X_n\in C\bigl(X_{\e{k}}(\omega)\bigr)\bigr],
\end{eqnarray*}
that is, the first summand on the rightmost hand side is counted in $\mathcal{C}_{k-1}(\omega)$, while the second one is counted in $\mathcal{O}_k(\omega)$.
\par
Since we have compared all summands in (\ref{equ:Cap-Rek}) and (\ref{equ:Cap_R_ek}), we have proven the proposed equality.
\end{proof}
The following two corollaries will be needed in  the proof of Proposition \ref{prop:capacity-exit-decomposition}.
\begin{corollary}\label{cor:C0ast-convergence}
$$
\lim_{k\to\infty} \frac{\mathcal{C}_0+\mathcal{C}_0^\ast}{k}=0 \quad \textrm{almost surely.}
$$
\end{corollary}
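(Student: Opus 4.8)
The plan is to observe that neither $\mathcal{C}_0$ nor $\mathcal{C}_0^\ast$ depends on $k$ at all. By inspecting the definitions, both quantities are measurable functions of the random walk trajectory only up to the first exit time $\e{1}$: every set and every event occurring in the defining expressions is built from $\mathbf{R}_{\e{1}}$, $\mathcal{R}_0^{(I)}$, $\mathcal{R}_0$, $C_0$ and $C(X_{\e{1}})$, none of which involves $k$. Hence $\mathcal{C}_0 + \mathcal{C}_0^\ast$ is a single random variable, not a sequence indexed by $k$.

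Next I would note that this random variable is almost surely finite. Indeed, since each term $\P_x[\,\cdot\,]$ is a probability and hence at most $1$, we get $\mathcal{C}_0^\ast \le |\mathbf{R}_{\e{1}}\cap\overline{C_0}| + 1$ and, similarly, $\mathcal{C}_0 \le |\mathcal{R}_0^{(I)}| + 2 \le |\mathbf{R}_{\e{1}}| + 2$, so that $\mathcal{C}_0 + \mathcal{C}_0^\ast \le 2\,|\mathbf{R}_{\e{1}}| + 3$. By \cite[Proposition 2.5]{gilch:07} we have $\Vert X_n\Vert\to\infty$ almost surely, so $\e{1}<\infty$ almost surely, and therefore $|\mathbf{R}_{\e{1}}| \le \e{1}+1 < \infty$ almost surely. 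Consequently $\mathcal{C}_0 + \mathcal{C}_0^\ast$ is almost surely finite.

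Finally, for every $\omega\in\Omega_0$ with $\e{1}(\omega)<\infty$ the numerator $\mathcal{C}_0(\omega) + \mathcal{C}_0^\ast(\omega)$ is a fixed finite constant independent of $k$, so dividing by $k$ and letting $k\to\infty$ yields the claimed limit $0$. There is no real obstacle here; the only point requiring care is to confirm by inspection of Proposition \ref{prop:capacity-decomposition} and the surrounding definitions that no hidden $k$-dependence is present, after which the corollary is an immediate bookkeeping consequence that isolates the two $k$-independent boundary contributions to $\mathrm{Cap}(\mathbf{R}_{\e{k}})$.
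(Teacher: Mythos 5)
Your proof is correct and follows essentially the same route as the paper: both arguments bound $\mathcal{C}_0+\mathcal{C}_0^\ast$ by an almost surely finite, $k$-independent quantity controlled by $|\mathbf{R}_{\e{1}}|\le\e{1}+1<\infty$, and then divide by $k$. The paper states the slightly sharper bound $\mathcal{C}_0+\mathcal{C}_0^\ast\le|\mathbf{R}_{\e{1}}|$ (using that the index sets involved are disjoint subsets of $\mathbf{R}_{\e{1}}$), but your looser bound $2|\mathbf{R}_{\e{1}}|+3$ suffices just as well.
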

\begin{proof}
Since $\mathcal{C}_0+\mathcal{C}_0^\ast \leq |\mathbf{R}_{\e{1}}| \leq \e{1}+1<\infty$ almost surely, the claim follows immediately.
\end{proof}

\begin{corollary}\label{lem:Ok-convergence}
$$
 \lim_{k\to\infty} \frac{\mathcal{O}_{k}}{k}=0 \quad \textrm{almost surely.}
 $$
\end{corollary}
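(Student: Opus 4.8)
The plan is to dominate the nonnegative quantity $\mathcal{O}_k$ by $\bigl|\mathbf{R}_{\e{k}}\cap C(X_{\e{k}})\bigr|$ and then to show that this cardinality grows sublinearly in $k$ on $\Omega_0$. For the first step, note that every summand in the definition of $\mathcal{O}_k(\omega)$ is a probability, hence bounded by $1$; since $X_{\e{k}}(\omega)\in \mathbf{R}_{\e{k}}(\omega)\cap C\bigl(X_{\e{k}}(\omega)\bigr)$, this yields, for $\omega\in\Omega_0$,
$$
\mathcal{O}_k(\omega)\ \le\ \bigl|\mathbf{R}_{\e{k}}(\omega)\cap C\bigl(X_{\e{k}}(\omega)\bigr)\setminus\{X_{\e{k}}(\omega)\}\bigr|+1\ =\ \bigl|\mathbf{R}_{\e{k}}(\omega)\cap C\bigl(X_{\e{k}}(\omega)\bigr)\bigr| .
$$
Hence it suffices to prove $\frac1k\bigl|\mathbf{R}_{\e{k}}\cap C(X_{\e{k}})\bigr|\to 0$ almost surely.

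For the second step I would use that every vertex of $C(X_{\e{k}})$ has word length at least $\Vert X_{\e{k}}\Vert=k$, so that
$$
\bigl|\mathbf{R}_{\e{k}}\cap C(X_{\e{k}})\bigr|\ \le\ \bigl|\{x\in\mathbf{R}_{\e{k}}:\Vert x\Vert\ge k\}\bigr|\ \le\ \bigl|\{m\in\{0,\dots,\e{k}\}:\Vert X_m\Vert\ge k\}\bigr| ,
$$
the last inequality because $\mathbf{R}_{\e{k}}=\{X_0,\dots,X_{\e{k}}\}$. Now fix $\eps>0$. By (\ref{equ:speed}) we have $\Vert X_m\Vert/m\to\ell\in(0,1]$ almost surely, so almost surely $\Vert X_m\Vert\le(\ell+\eps)m$ for all $m$ exceeding some finite (random) $M_\eps$; for such $m$ the condition $\Vert X_m\Vert\ge k$ forces $m\ge k/(\ell+\eps)$. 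Therefore
$$
\bigl|\{m\le\e{k}:\Vert X_m\Vert\ge k\}\bigr|\ \le\ M_\eps+1+\max\Bigl\{0,\ \e{k}-\tfrac{k}{\ell+\eps}\Bigr\} .
$$
Dividing by $k$, letting $k\to\infty$, and using $\e{k}/k\to1/\ell$ (again (\ref{equ:speed})), we obtain $\limsup_{k\to\infty}\frac1k\bigl|\mathbf{R}_{\e{k}}\cap C(X_{\e{k}})\bigr|\le\frac1\ell-\frac1{\ell+\eps}$. Since $\eps>0$ is arbitrary and $\frac1\ell-\frac1{\ell+\eps}\to0$ as $\eps\downarrow0$, and since $\mathcal{O}_k\ge0$, it follows that $\mathcal{O}_k/k\to0$ almost surely.

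I do not expect a real obstacle: this is the same ``bound the capacity by a cardinality'' idea used for Corollary \ref{cor:C0ast-convergence}, combined with the elementary fact that, since the walk escapes linearly at speed $\ell$ and $\e{k}\sim k/\ell$, it can spend only $o(k)$ time steps at word length $\ge k$ before time $\e{k}$. The only mild point is that the threshold $M_\eps$ is random, but that is harmless since $M_\eps/k\to0$ and the whole estimate is performed $\omega$ by $\omega$ on $\Omega_0$.
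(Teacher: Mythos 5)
Your proof is correct. The first step (bounding each probability summand in $\mathcal{O}_k$ by $1$ to get $\mathcal{O}_k\le\bigl|\mathbf{R}_{\e{k}}\cap C(X_{\e{k}})\bigr|$) is exactly the paper's reduction; the paper sets $\mathbf{O}_k:=\bigl|\mathbf{R}_{\e{k}}\cap C(X_{\e{k}})\bigr|$ and observes $0\le\mathcal{O}_k\le\mathbf{O}_k$. Where you genuinely diverge is in establishing $\mathbf{O}_k/k\to0$: the paper cites \cite[Corollary 3.13]{gilch:22} outright, whereas you give a self-contained argument. Your argument observes that every element of $C(X_{\e{k}})$ has word length at least $k$, injects the visited vertices into time steps, and then uses the rate-of-escape statement (\ref{equ:speed}) in both directions ($\Vert X_m\Vert/m\to\ell$ and $\e{k}/k\to1/\ell$) to squeeze the count of time steps before $\e{k}$ at which the walk already sits at word length $\ge k$ down to $o(k)$. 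That is a different and more elementary route for the second half; what it buys is independence from the external corollary, at the cost of a slightly longer $\varepsilon$-argument. One trivially harmless point: your bound on the number of integers in $[k/(\ell+\eps),\e{k}]$ should have a $+1$, i.e.\ $\max\{0,\e{k}-k/(\ell+\eps)+1\}$, but this vanishes after dividing by $k$ and does not affect the conclusion.
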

\begin{proof}
Define $\mathbf{O}_k:=\bigl| \mathbf{R}_{\e{k}}\cap C(X_{\e{k}})\bigr|$.
By \cite[Corollary 3.13]{gilch:22}, we have $\mathbf{O}_k/k\to 0$ almost surely as $k\to\infty$; we note that in \cite[Corollary 3.13]{gilch:22} the common prefix of the elements in $\mathbf{R}_{\e{k}}\cap C(X_{\e{k}})$ are cancelled which obviously does not affect the cardinality of this set. Since $0\leq \mathcal{O}_{k}\leq \mathbf{O}_k$,  the claim follows.
\end{proof}
%
%
In order to track the range between two consecutive exit times we introduce the following random functions on $V$: 
for $k\in\N$, define 
$$
\psi_k: V\to \{0,1\}, \  x \mapsto \begin{cases}
1, & \textrm{if } \mathbf{W}_1\dots\mathbf{W}_{k-1}x\in \mathbf{R}_{\e{k}},\\
0, & \textrm{otherwise}.
\end{cases}
$$
That is, $\psi_k$ describes the elements visited by the random walk in $C(X_{\e{k-1}})$ up to time $\e{k}$, where the common first $k-1$ letters are deleted. A main key is the following:
\begin{proposition}\label{prop:pos-rec-process}
The stochastic process $(\mathbf{W}_k,\psi_k)_{k\in\N}$  forms an homogeneous, irreducible positive-recurrent Markov chain. 
\end{proposition}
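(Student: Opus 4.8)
The plan is to verify that $(\W{k},\psi_k)_{k\in\N}$ is a Markov chain by exhibiting its transition mechanism, and then to establish homogeneity, irreducibility, and positive recurrence in turn, the last of these being the crux.

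First I would argue the Markov property and homogeneity together. The key observation is that the future $(\W{k+1},\psi_{k+1}),(\W{k+2},\psi_{k+2}),\dots$ depends on the past only through $X_{\e{k}}$, and — because of the cone structure — only through the last letter $\W{k}$ of $X_{\e{k}}$, not through the whole word. Concretely, conditionally on $X_{\e{k}}=g_1\dots g_k$ with $g_k=\W{k}$, the random walk after time $\e{k}$ evolves inside $C(X_{\e{k}})$, and by Lemma \ref{lem:cone-probs} (cone-homogeneity of path probabilities) the law of the shifted-and-truncated trajectory inside $C(X_{\e{k}})$ depends only on the type $\t(\W{k})$, equivalently on which free factor the new letter $\W{k}$ lives in together with the vertex $\W{k}$ itself inside that factor. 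Since $\W{k+1}$ and $\psi_{k+1}$ are measurable functions of that shifted trajectory up to the next exit time, the conditional law of $(\W{k+1},\psi_{k+1})$ given the past equals a fixed kernel evaluated at $\W{k}$; iterating gives the full Markov property, and the kernel is manifestly independent of $k$, giving time-homogeneity. Here one must be slightly careful that $\psi_{k+1}$ also records visits to $C(X_{\e{k}})\setminus C(X_{\e{k+1}})$ made between $\e{k}$ and $\e{k+1}$ — but these are again determined by the same shifted trajectory, so the argument is unaffected. The state space is the (at most countable) set of pairs $(w,\varphi)$ with $w\in V_\ast^\times$ reachable as a first-exit letter and $\varphi:V\to\{0,1\}$ a finitely-supported function that arises with positive probability; one should note this state space is genuinely countable because $\psi_k$ has support of random but almost surely finite size.

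Next, irreducibility: I would show that from any reachable state $(w,\varphi)$ one can reach any other reachable state $(w',\varphi')$ in finitely many steps with positive probability. This follows from the fact that every finite path in $\mathcal{X}$ has strictly positive probability (stated in Section \ref{subsec:rw-on-fp}), combined with transience ($\varrho<1$): starting from a configuration with last letter $w$, one can prescribe a path that traverses exactly the vertices needed to realize the excursion shape $\varphi'$ relative to a new cone whose entering letter is $w'$, then escapes into that cone — this happens with positive probability, and the exit-time structure records precisely $(w',\varphi')$. One needs the observation that the set of "reachable" states is exactly one communicating class, which is where the definition of $\Omega_0$ and the positivity of all path probabilities do the work.

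The main obstacle is positive recurrence. The natural route is to exhibit an explicit invariant probability measure, or — more in the spirit of the paper — to use that the inter-exit-time increments $\e{k+1}-\e{k}$ have uniformly bounded expectation (which follows from $\varrho<1$ via $\ell=\lim k/\e{k}\in(0,1]$ in \eqref{equ:speed}, giving $\E[\e{k}]/k\to 1/\ell<\infty$, hence by a subadditivity/stationarity argument $\sup_k \E[\e{k+1}-\e{k}]<\infty$), together with the fact that $\psi_k$ has exponentially-tailed support size (since the walk spends only a geometric number of excursions near any given exit point, by $\xi_i<1$ from \eqref{equ:xi}). I would fix a convenient "regeneration" state — e.g. the state where $\psi_k$ is supported only on $\{o,\W{k}\}$, i.e. the walk made a clean single step past the exit point — show it is reached infinitely often with positive frequency (again using $\xi_i<1$ so that at each level there is a uniformly positive chance of a minimal excursion), and conclude via the standard criterion that an irreducible Markov chain visiting some state with positive asymptotic frequency is positive recurrent. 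Making the "uniformly positive chance of a minimal configuration at each level" precise — uniformly over the current state $\W{k}$ and over $k$ — is the delicate point, and it is exactly here that cone-homogeneity (Lemma \ref{lem:cone-probs}) and $\xi_i<1$ are indispensable, since they reduce the estimate to finitely many ($|\calI|=2$) type-dependent constants bounded away from $0$ and $1$.
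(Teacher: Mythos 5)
Your sketch of the Markov property contains a genuine error. You claim that the future $(\W{k+1},\psi_{k+1}),\dots$ depends on the past only through $\W{k}$, and later that ``$\W{k+1}$ and $\psi_{k+1}$ are measurable functions of that shifted trajectory up to the next exit time.'' The second statement is false for $\psi_{k+1}$: by definition, $\psi_{k+1}(x)=1$ iff $\W{1}\cdots\W{k}x\in\mathbf{R}_{\e{k+1}}$, and $\mathbf{R}_{\e{k+1}}\supseteq\mathbf{R}_{\e{k}}$, so $\psi_{k+1}$ also records every visit the walk made to $C(X_{\e{k}})$ \emph{before} time $\e{k}$. Such visits do occur with positive probability: $\e{k}$ is only the first time from which onward the walk remains in the cone, and the walk may have entered $C(X_{\e{k}})$ through its root, wandered inside, and left again at earlier times. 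Your caveat addresses only visits made between $\e{k}$ and $\e{k+1}$, which misses the real issue. Concretely, one always has $\psi_{k+1}(x)\geq\psi_k(\W{k}x)$, so the conditional law of $\psi_{k+1}$ depends on $\psi_k$ and not on $\W{k}$ alone. The correct statement (and the one that yields the Markov property, since $\psi_k$ is part of the state) is that the conditional law of $(\W{k+1},\psi_{k+1})$ given the full past depends only on the pair $(\W{k},\psi_k)$: the $\psi_k$-component supplies exactly the record of pre-$\e{k}$ visits to $C(X_{\e{k}})$, and Lemma~\ref{lem:cone-probs} handles the shifted future excursion. The argument you give does not establish this; it establishes a stronger claim that is not true, and so does not actually prove Markovianity.

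The positive recurrence sketch also has a soft spot: the jump from $\e{k}/k\to 1/\ell$ almost surely to $\E[\e{k}]/k\to 1/\ell$ and thence to $\sup_k\E[\e{k+1}-\e{k}]<\infty$ is not justified (you need a uniform integrability or a direct moment argument, not just pointwise convergence, and the increments are not stationary before you know positive recurrence, so ``subadditivity/stationarity'' begs the question). Likewise, showing that the ``minimal excursion'' state has positive asymptotic frequency uniformly over the current state requires controlling how $\psi_{k-1}$ constrains $\psi_k$ through the inheritance relation $\psi_k(x)\geq\psi_{k-1}(\W{k-1}x)$, which your one-step ``uniformly positive chance of a minimal configuration at each level'' argument glosses over. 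For comparison, the paper simply defers to \cite[Propositions 3.4 \& 3.10]{gilch:22}, which work out precisely these points, so there is no self-contained proof in the present article to compare against; but the two gaps above are what a self-contained argument must close.
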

\begin{proof}
See  \cite[Propositions 3.4 \& 3.10]{gilch:22}.
\end{proof}

This fact will play a crucial role in the following proposition which will serve as a key ingredient in the proof of Theorem \ref{th:capacity-existence}:

\begin{proposition} \label{prop:capacity-exit-decomposition}
Assume that $|V_2|\geq 3$. Then there exists a real number $\overline{\mathfrak{c}}>0$ such that
$$
\lim_{k\to\infty} \frac{\mathrm{Cap}(\mathbf{R}_{\e{k}})}{k} = \overline{\mathfrak{c}} \quad \textrm{almost surely.}
$$
\end{proposition}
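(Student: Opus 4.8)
The plan is to combine the decomposition of $\mathrm{Cap}(\mathbf{R}_{\e{k}})$ from Proposition \ref{prop:capacity-decomposition} with the positive-recurrence of the Markov chain $(\mathbf{W}_k,\psi_k)_{k\in\N}$ from Proposition \ref{prop:pos-rec-process}, and then apply an ergodic theorem. By Proposition \ref{prop:capacity-decomposition} we have $\mathrm{Cap}(\mathbf{R}_{\e{k}}) = \mathcal{C}_0^\ast + \sum_{i=0}^{k-1}\mathcal{C}_i + \mathcal{O}_k$ almost surely. By Corollary \ref{cor:C0ast-convergence} the term $\mathcal{C}_0+\mathcal{C}_0^\ast$ divided by $k$ vanishes, and by Corollary \ref{lem:Ok-convergence} the term $\mathcal{O}_k/k$ vanishes, both almost surely. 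Hence it suffices to prove that $\frac1k\sum_{i=1}^{k-1}\mathcal{C}_i$ converges almost surely to a strictly positive constant $\overline{\mathfrak{c}}$.

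The key observation is that each $\mathcal{C}_i$ should be expressible as a function of finitely many coordinates of the Markov chain $(\mathbf{W}_k,\psi_k)_{k\in\N}$. Indeed, recall that $\mathcal{R}_i^{(I)}$ consists of the vertices visited in $C(X_{\e{i}})\setminus(C(X_{\e{i+1}})\cup\{X_{\e{i}}\})$ up to time $\e{i+1}$; after deleting the common prefix $\mathbf{W}_1\dots\mathbf{W}_i$, this set is determined by $\psi_{i+1}$ (which records which vertices of $C(X_{\e{i}})$, relative to the prefix, are visited up to $\e{i+1}$) together with $\mathbf{W}_{i+1}$ (which specifies the next exit letter, hence the sub-cone $C(X_{\e{i+1}})$ that is excluded). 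The escape probabilities $\P_x[S_{\mathcal{R}_i}=\infty]$ appearing in $\mathcal{C}_i^{(I)}$ depend only on the relative geometry of $\mathcal{R}_i$ to the cone $C(X_{\e{i}})$ (by the cone-homogeneity in Lemma \ref{lem:cone-probs}), and the two boundary terms in $\mathcal{C}_i$ — the probabilities of escaping while staying in, respectively leaving, the cones — likewise depend only on $(\mathbf{W}_i,\psi_i)$ and $(\mathbf{W}_{i+1},\psi_{i+1})$ and the constants $\xi_j$ from \eqref{equ:xi}. Thus $\mathcal{C}_i = F\bigl((\mathbf{W}_i,\psi_i),(\mathbf{W}_{i+1},\psi_{i+1})\bigr)$ for a fixed measurable function $F\ge 0$. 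Since $(\mathbf{W}_k,\psi_k)_{k\in\N}$ is a positive-recurrent (hence ergodic) Markov chain with a unique stationary distribution $\pi$, Birkhoff's ergodic theorem applied to the function $F$ on the pair process yields $\frac1k\sum_{i=1}^{k-1}\mathcal{C}_i \to \overline{\mathfrak{c}} := \E_\pi\bigl[F\bigr]$ almost surely, provided the integrability $\E_\pi|F|<\infty$ holds. For integrability one bounds $\mathcal{C}_i \le |\mathcal{R}_i^{(I)}| + 2$, and $|\mathcal{R}_i^{(I)}|$ is controlled by the number of steps the walk takes between $\e{i}$ and $\e{i+1}$, which has finite expectation under $\pi$ by positive-recurrence (this is essentially the content of \cite{gilch:22} that $\ell>0$ and the increments $\e{i+1}-\e{i}$ have finite stationary mean).

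It remains to show $\overline{\mathfrak{c}}>0$, i.e. that $F$ is not $\pi$-a.s.\ zero. This is where the hypothesis $|V_2|\ge 3$ enters. The point is to exhibit a configuration of positive $\pi$-measure on which $\mathcal{C}_i$ is bounded below by a positive constant. Since $|V_2|\ge 3$, the factor $\mathcal{X}_2$ (or at least one factor) has a vertex $v\ne o_2$ from which there is a neighbour $w$ with $w\notin\{o_2\}$ lying "off" the eventual exit direction; concretely, when the walk sits at $X_{\e{i}}$ of a type so that the next letter comes from $V_2$, there is positive probability that it visits such a side-vertex $x$ in $C(X_{\e{i}})\setminus C(X_{\e{i+1}})$ exactly once and then never returns to it, so that $\P_x[S_{\mathcal{R}_i}=\infty]>0$ is bounded below uniformly. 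More robustly: the boundary term $\P_{X_{\e{i}}}[S_{\mathcal{R}_i}=\infty,\ \forall n\ge1:X_n\in C(X_{\e{i}})]$ is already bounded below by a positive constant depending only on the transition probabilities (the walk can, with positive probability, leave $X_{\e{i}}$ into the cone and never hit any previously-visited vertex), and this event has positive $\pi$-probability. I would phrase the argument via this boundary term to avoid case analysis on $|V_2|$, noting that $|V_2|\ge3$ guarantees the relevant cone is genuinely "fat" so that such escape-without-return is possible. The \textbf{main obstacle} I expect is the careful verification that $\mathcal{C}_i$ is genuinely a function of only two consecutive states of $(\mathbf{W}_k,\psi_k)$ — one must check that the escape events $\{S_{\mathcal{R}_i}=\infty\}$ only "see" the part of the trajectory determined by these states and, crucially, that they do not depend on the infinitely-deep future of the walk inside $C(X_{\e{i+1}})$; this requires using that $\mathcal{R}_i$ avoids $C(X_{\e{i+1}})$ and that a path from $\mathcal{R}_i^{(I)}$ to anything outside $\mathcal{R}_i$ must cross $X_{\e{i}}$ or $X_{\e{i+1}}$, exactly as exploited in the proof of Proposition \ref{prop:capacity-decomposition}.
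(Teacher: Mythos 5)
Your approach is essentially the paper's: reduce to $\frac1k\sum_{i=1}^{k-1}\mathcal{C}_i$ via Proposition \ref{prop:capacity-decomposition} and the two corollaries, observe that $\mathcal{C}_i$ is a function of the two consecutive states $(\mathbf{W}_i,\psi_i),(\mathbf{W}_{i+1},\psi_{i+1})$ (via Lemma \ref{lem:cone-probs}), and invoke the ergodic theorem for the positive-recurrent chain. However, two details are treated differently and your versions are under-justified.

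For integrability, you argue that $\mathcal{C}_i\le|\mathcal{R}_i^{(I)}|+2$ and claim the right-hand side ``has finite expectation under $\pi$ by positive-recurrence.'' Positive recurrence of $(\mathbf{W}_k,\psi_k)$ by itself gives a stationary distribution but says nothing about first moments of unbounded functionals such as $|\mathcal{R}_i^{(I)}|$; this needs a separate argument. The paper sidesteps the issue entirely with a short contradiction: if $\int\mathcal{C}_1\,d\pi=\infty$ then the ergodic theorem plus (\ref{equ:speed}) and (\ref{equ:convergence-e_k(n)}) would force $\frac1n\sum_{j\le\mathbf{k}(n)}\mathcal{C}_j\to\infty$, contradicting the trivial bound $\frac1n\sum_{j\le\mathbf{k}(n)}\mathcal{C}_j\le|\mathbf{R}_n|/n\le1$. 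You should use that argument rather than appeal to a finite-mean claim you have not established.

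For positivity, your claim that the boundary term $\P_{X_{\e{i}}}\bigl[S_{\mathcal{R}_i}=\infty,\forall n\ge1:X_n\in C(X_{\e{i}})\bigr]$ is ``bounded below by a positive constant depending only on the transition probabilities'' is false as a uniform statement: for states $(\mathbf{W}_{i+1},\psi_{i+1})$ where $\psi_{i+1}$ indicates that all vertices adjacent to $X_{\e{i}}$ inside the cone have already been visited, this escape probability can be zero. What is actually needed (and what the paper does) is to exhibit one concrete pair of consecutive states with positive $\pi$-mass on which $\mathcal{C}_1$ is bounded below. The paper fixes $g_1\in V_1^\times$, $g_2\in V_2^\times$ adjacent to the respective roots, works on the event $\mathcal{W}=[(\mathbf{W}_1,\psi_1)=(g_1,\mathds{1}_{\{o,g_1\}}),(\mathbf{W}_2,\psi_2)=(g_2,\mathds{1}_{\{o,g_2\}})]$ (which has positive probability and hence positive $\pi$-mass), and uses a third vertex $\bar g_2\in V_2\setminus\{o_2,g_2\}$ — here is where $|V_2|\ge3$ enters — to produce an explicit positive contribution involving $1-\xi_2$. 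Your last sentence shows you sense this, but as written the argument does not deliver a specific state with positive stationary mass and a positive lower bound on $\mathcal{C}_1$ there, which is the crux.
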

\begin{proof}
In view of Proposition \ref{prop:capacity-decomposition}, Corollaries \ref{cor:C0ast-convergence} and  \ref{lem:Ok-convergence} it suffices to show existence of a real number $\overline{\mathfrak{c}}>0$ such that
$$
\lim_{k\to\infty} \frac{1}{k}\sum_{i=1}^{k-1} \mathcal{C}_i  = \overline{\mathfrak{c}} \quad \textrm{ almost surely}.
$$
First, we claim that 
$\mathcal{C}_i$ can be rewritten as a function in $(\mathbf{W}_i,\psi_{i})$ and $(\mathbf{W}_{i+1},\psi_{i+1})$ for all $i\in\mathbb{N}$:  an analogous argument as in the proof of Proposition \ref{prop:capacity-decomposition} yields  for   all $\omega\in\Omega_0$ and $x\in  \mathcal{R}_k^{(I)}(\omega)$ that
\begin{equation}\label{equ:prop3.4-1}
\P_x\bigl[S_{\mathbf{R}_{\e{k}}(\omega)}=\infty\bigr] = \P_x\bigl[S_{\mathcal{R}_{k}(\omega)}=\infty\bigr] 
\end{equation}
and
\begin{eqnarray}
 \P_{X_{\e{k}}(\omega)}\bigl[S_{\mathbf{R}_{\e{k}}(\omega)}=\infty\bigr] &=& \P_{X_{\e{k}}(\omega)}\bigl[S_{\mathcal{R}_{k-1}(\omega)}=\infty,\forall m\geq 1: X_m\notin C\bigl(X_{\e{k}}(\omega)\bigr)\bigr]\label{equ:prop3.4-2} \\
&& \quad + \P_{X_{\e{k}}(\omega)}\bigl[S_{\mathcal{R}_{k}(\omega)}=\infty,\forall m\geq 1: X_m\in C\bigl(X_{\e{k}}(\omega)\bigr)\bigr];\nonumber
\end{eqnarray}
with the help of  Lemma \ref{lem:cone-probs} (remove the first $k-1$ letters of $X_{\e{k}}(\omega)$ and in each \mbox{$v\in \mathcal{R}_{k-1}(\omega)$,} $w\in \mathcal{R}_{k}(\omega)$) it is easy to check that the above probabilities on the right hand sides of (\ref{equ:prop3.4-1}) and (\ref{equ:prop3.4-2})
can be rewritten in terms of $\mathbf{W}_i(\omega),\mathbf{W}_{i+1}(\omega),\psi_{i}(\omega),\psi_{i+1}(\omega)$, that is,   $\mathcal{C}_i$ can be formulated as a function in $\mathbf{W}_i,\mathbf{W}_{i+1},\psi_{i},\psi_{i+1}$.
Letting $\pi$ be the equilibrium of the homogeneous, positive-recurrent Markov chain $\bigl((\mathbf{W}_k,\psi_k),(\mathbf{W}_{k+1},\psi_{k+1})\bigr)_{k\in\N}$, 
we can apply the ergodic theorem for positive recurrent Markov chains and obtain
$$
\lim_{k\to\infty}  \frac{1}{k}\sum_{i=1}^{k-1} \mathcal{C}_i =\int \mathcal{C}_1 \,d\pi =: \overline{\mathfrak{c}},
$$
where the integral is well-defined since $\mathcal{C}_1\geq 0$. 
\par
We now show that the integral is finite. To this end, assume that $\int \mathcal{C}_1 \,d\pi=\infty$. This implies together with (\ref{equ:speed}) and (\ref{equ:convergence-e_k(n)}):
$$
\frac1n \sum_{j=1}^{\mathbf{k}(n)}\mathcal{C}_j = \underbrace{\frac{\mathbf{e}_{\mathbf{k}(n)}}{n}}_{\to 1} \underbrace{\frac{\mathbf{k}(n)}{\mathbf{e}_{\mathbf{k}(n)}}}_{\to \ell}\underbrace{\frac{1}{\mathbf{k}(n)}\sum_{j=1}^{\mathbf{k}(n)}\mathcal{C}_j}_{\to \infty} \xrightarrow{n\to\infty} \infty \textrm{ almost surely.}
$$
On the other hand side, we have
$$
\limsup_{n\to\infty}\frac1n \sum_{j=1}^{\mathbf{k}(n)}\mathcal{C}_j \leq \limsup_{n\to\infty}\frac1n \cdot |\mathbf{R}_{n}|  \leq 1 \textrm{ almost surely},
$$
which yields a contradiction. Therefore, $\overline{\mathfrak{c}}=\int \mathcal{C}_1 \,d\pi<\infty$.
\par
Finally, it remains to show that $\overline{\mathfrak{c}}>0$. For this purpose, take $g_1\in V_1\setminus\{o_1\}$ and \mbox{$g_2\in V_2\setminus\{o_2\}$} with  $p_1(o_1,g_1)>0$ and   $p_2(o_2,g_2)>0$. Choose now any $\bar g_2\in V_2\setminus\{o_2,g_2\}$ such that $p(g_2,\bar g_2)>0$ or $p^{(2)}(g_2,\bar g_2)\geq p(g_2,o_2)\cdot p(o_2,\bar g_2)>0$; recall that this choice of $\bar g_2$ is possible since $|V_2|\geq 3$, due to non-existence of loops and stochasticity of $P_2$. 
For any \mbox{$M\subseteq V$,} denote by $\mathds{1}_{M}:V\to \{0,1\}$ the indicator function w.r.t. $M$, that is, $\mathds{1}_{M}(x)=1$ for $x\in V$ if and only if $x\in M$. Consider now the event that the random walk's first step goes from $o$ to $g_1$, followed by a step to $g_1g_2$ and staying inside the cone $C(g_1g_2)$ afterwards. This event has positive probability to occur, namely $p(o,g_1)\cdot p(g_1,g_1g_2)\cdot (1-\xi_2)>0$.
Moreover, this event is a subevent of  
$$
\mathcal{W}:=\bigl[(\mathbf{W}_1,\psi_1)=(g_1,\mathds{1}_{\{o,g_1\}}),(\mathbf{W}_2,\psi_2)=(g_2,\mathds{1}_{\{o,g_2\}})\bigr].
$$ 
On the event $\mathcal{W}$,  the term 
$$
\bigl( p(g_2,\bar g_2) + p(g_2,o)\cdot p(o,\bar g_2)\bigr)\cdot (1-\xi_2)>0
$$ 
contributes to $\mathcal{C}_1$, yielding
$$
\overline{\mathfrak{c}}=\int \mathcal{C}_1 \,d\pi \geq \bigl( p(g_2,\bar g_2) + p(g_2,o)\cdot p(o,\bar g_2)\bigr)\cdot (1-\xi_2) \cdot \pi\bigl((g_1,\mathds{1}_{\{o,g_1\}}),(g_2,\mathds{1}_{\{o,g_2\}}) \bigr) >0.
$$
This finishes the proof.
\end{proof}
Recall that the assumption $|V_2|\geq 3$ in the last proposition is just stated for completeness, since the case $|V_1|=|V_2|=2$ was excluded at the beginning of Subsection \ref{subsec:free-products}.
Finally, we can prove:
\begin{proof}[Proof of Theorem \ref{th:capacity-existence}]
By Proposition \ref{prop:capacity-exit-decomposition} and (\ref{equ:speed}), 
\begin{equation}\label{equ:ap-convergence-ek}
\lim_{k\to\infty} \frac{\mathrm{Cap}(\mathbf{R}_{\e{k}})}{\e{k}} = \lim_{k\to\infty} \frac{\mathrm{Cap}(\mathbf{R}_{\e{k}})}{k} \frac{k}{\e{k}} =  \overline{\mathfrak{c}}\cdot \ell \quad \textrm{almost-surely}.
\end{equation}
Furthermore, by Lemma \ref{lem:Cap-difference} and $\mathbf{R}_{\e{\mathbf{k}(n)}}\subseteq \mathbf{R}_n \subseteq \mathbf{R}_{\e{\mathbf{k}(n)+1}}$, we have:
\begin{eqnarray}
\mathrm{Cap}\bigl(\mathbf{R}_n\bigr)-\mathrm{Cap}\bigl(\mathbf{R}_{\e{\mathbf{k}(n)}}\bigr) &\leq & \bigl| \mathbf{R}_n\setminus \mathbf{R}_{\e{\mathbf{k}(n)}}\bigr| \leq \bigl|  \mathbf{R}_{\e{\mathbf{k}(n)+1}}\setminus \mathbf{R}_{\e{\mathbf{k}(n)}}\bigr|, \label{equ:Cap-estimate1}\\
\mathrm{Cap}\bigl(\mathbf{R}_{\e{\mathbf{k}(n)+1}}-\mathrm{Cap}\bigl(\mathbf{R}_n\bigr)\bigr) &\leq & \bigl| \mathbf{R}_{\e{\mathbf{k}(n)+1}}\setminus \mathbf{R}_n \bigr| \leq \bigl|  \mathbf{R}_{\e{\mathbf{k}(n)+1}}\setminus \mathbf{R}_{\e{\mathbf{k}(n)}}\bigr|. \label{equ:Cap-estimate2}
\end{eqnarray}
Moreover, we have
\begin{eqnarray*}
&&\frac{\bigl|  \mathbf{R}_{\e{\mathbf{k}(n)+1}}\setminus \mathbf{R}_{\e{\mathbf{k}(n)}}\bigr|}{\e{\mathbf{k}(n)}} 
= \frac{\bigl|  \mathbf{R}_{\e{\mathbf{k}(n)+1}}\bigr| - \bigl| \mathbf{R}_{\e{\mathbf{k}(n)}}\bigr|}{\e{\mathbf{k}(n)}} \\
&\stackrel{(\ref{equ:range}),(\ref{equ:speed})}{=}&   \underbrace{\frac{\bigl|\mathbf{R}_{\e{\mathbf{k}(n)+1}}\bigr|}{\e{\mathbf{k}(n)+1}}}_{\to \mathfrak{r}} \underbrace{\frac{\e{\mathbf{k}(n)+1}}{\mathbf{k}(n)+1}}_{\to1/\ell}\underbrace{\frac{\mathbf{k}(n)+1}{\mathbf{k}(n)}}_{\to 1} \underbrace{\frac{\mathbf{k}(n)}{\e{\mathbf{k}(n)}}}_{\to \ell}
-\underbrace{\frac{\bigl|\mathbf{R}_{\e{\mathbf{k}(n)}}\bigr|}{\e{\mathbf{k}(n)}}}_{\to\mathfrak{r}} \xrightarrow{n\to\infty} 0 \quad \textrm{almost surely.}
\end{eqnarray*}
Therefore, the bounds in (\ref{equ:Cap-estimate1}) and (\ref{equ:Cap-estimate2}) yield:
\begin{eqnarray*}
\limsup_{n\to\infty} \frac{\mathrm{Cap}(\mathbf{R}_n)-\mathrm{Cap}(\mathbf{R}_{\e{\mathbf{k}(n)}})}{\e{\mathbf{k}(n)}}  &\leq & 0 \quad \textrm{ and}\\
\limsup_{n\to\infty} \frac{\mathrm{Cap}(\mathbf{R}_{\e{\mathbf{k}(n)+1}})-\mathrm{Cap}(\mathbf{R}_n)}{\e{\mathbf{k}(n)}}  &\leq & 0 \quad \textrm{almost surely.}
\end{eqnarray*}
Together with (\ref{equ:ap-convergence-ek}) the above  inequalities imply 
\begin{eqnarray*}
\limsup_{n\to\infty} \frac{\mathrm{Cap}(\mathbf{R}_n)}{\e{\mathbf{k}(n)}}  &=& \limsup_{n\to\infty} \frac{\mathrm{Cap}(\mathbf{R}_n)-\mathrm{Cap}(\mathbf{R}_{\e{\mathbf{k}(n)}})}{\e{\mathbf{k}(n)}} + \underbrace{\frac{\mathrm{Cap}(\mathbf{R}_{\e{\mathbf{k}(n)}})}{\e{\mathbf{k}(n)}}}_{\to  \overline{\mathfrak{c}}\cdot \ell } \leq  \overline{\mathfrak{c}}\cdot \ell,\\
\liminf_{n\to\infty} \frac{\mathrm{Cap}(\mathbf{R}_n)}{\e{\mathbf{k}(n)}}  &=& \liminf_{n\to\infty} \frac{\mathrm{Cap}(\mathbf{R}_n)-\mathrm{Cap}(\mathbf{R}_{\e{\mathbf{k}(n)+1}})}{\e{\mathbf{k}(n)}} + \underbrace{\frac{\mathrm{Cap}(\mathbf{R}_{\e{\mathbf{k}(n)+1}})}{\e{\mathbf{k}(n)}}}_{\to  \overline{\mathfrak{c}}\cdot \ell } \geq \overline{\mathfrak{c}}\cdot \ell,
\end{eqnarray*}
that is, we have shown that
$$
\lim_{n\to\infty} \frac{\mathrm{Cap}(\mathbf{R}_n)}{\e{\mathbf{k}(n)}} =\overline{\mathfrak{c}}\cdot \ell  \quad \textrm{almost surely.}
$$
Finally, we obtain the proposed convergence statement with (\ref{equ:convergence-e_k(n)}):
\begin{eqnarray}
\frac{\mathrm{Cap}(\mathbf{R}_n)}{n} &= &  \underbrace{\frac{\mathrm{Cap}(\mathbf{R}_n)}{\e{\mathbf{k}(n)}}}_{\to \overline{\mathfrak{c}}\cdot \ell } \cdot \underbrace{\frac{\e{\mathbf{k}(n)}}{n}}_{\to 1} \label{equ:capacity-formula}
\xrightarrow{n\to\infty}  \mathfrak{c}:= \overline{\mathfrak{c}}\cdot \ell >0 \quad \textrm{almost surely}.\nonumber
\end{eqnarray}
\end{proof}

\begin{remark}
The capacity of the range of random walks on $\mathbb{Z}^d$ is linked with 
hitting probabilities from afar. In the case of random walks on free products we have some different behaviour which we want to discuss briefly.
\par
For simple random walk on $\mathbb{Z}^d$, a well-known formula for the capacity of a finite set $A\subset \mathbb{Z}^d$ is given by
$$
\mathrm{Cap}(A)=\lim_{\Vert x\Vert_2\to\infty} \frac{\P_x[S_A<\infty]}{G(x,0|1)},
$$
where $G(x,0|1)=\sum_{n\geq 0} p^{(n)}_d(x,0)$ is the Green function w.r.t. simple random walk on $\mathbb{Z}^d$ with $p^{(n)}_d(x,0)$ denoting the $n$-step transition probabilities of walking from $x\in\mathbb{Z}^d$ to $0$.
\par
In the case of random walks on free products we have a similar, but different connection between the hitting probabilities from afar and the probabilities $\P_x[S_A=\infty]$. For any $x\in V$, we obtain by decomposing according to the last visit of any  finite set $R\subset V$ (recall transience of our random walks) the following last passage decomposition for the random walk on the free product $V$:
$$
\P_x[S_R<\infty] = \sum_{y\in R} G(x,y|1)\cdot \P_y[S_R=\infty].
$$
Dividing this equation by $G(x,o|1)$ gives
$$
\frac{\P_x[S_R<\infty]}{G(x,o|1)} = \sum_{y\in R} \frac{G(x,y|1)}{G(x,o|1)}\cdot \P_y[S_R=\infty] \stackrel{(\ref{equ:G-F-L-equations})}{=} \sum_{y\in R} \frac{F(x,y|1)}{F(x,o|1)} \cdot \frac{G(y,y|1)}{G(o,o|1)}\cdot \P_y[S_R=\infty].
$$
The quotients $ \frac{F(x,y|1)}{F(x,o|1)}$ can be simplified as in the proof of Proposition \ref{prop:G-convergence1}. E.g., if $x=x_1\dots x_m$, $y=y_1\dots y_n\in V$ and if the common prefix of $x$ and $y$ of maximal length is given by $x_1\dots x_k=y_1\dots y_k$, where $k<\min\{m,n\}$, then $ \frac{F(x,y|1)}{F(x,o|1)}$ simplifies to
$$
\frac{F(x,y|1)}{F(x,o|1)} = \frac{\prod_{i=k+1}^m F(x_i,o|1) \cdot F(x_1\dots x_{k+1},y|1)}{\prod_{i=1}^m F(x_i,o|1)} = \frac{F(x_1\dots x_{k+1},y|1)}{\prod_{i=1}^k F(x_i,o|1)}.
$$
Similar simplifications can be performed if $k=m$ or $k=$n.
In contrast to $\mathrm{Cap}(R)$, where the summands $\P_y[S_R=\infty]$ have weight $1$ for all $y\in R$, these summands now get  the weights $\frac{G(x,y|1)}{G(x,o|1)}$, which depend only on the maximal common prefix of $x$ and $y$, but not on how far away $x$ is located from $R$.

\end{remark}

\section{Central Limit Theorem for the Capacity of the Range}
\label{sec:clt}

In this section we will prove the Central Limit Theorem \ref{th:clt}. To this end, we will follow a similar basic reasoning as in \cite[Section 4]{gilch:22} with the introduction of regeneration times whose properties we will use in our proofs. However,  several non-straightforward, essential extensions to the present setting are required. Throughout this section we assume that there exists $x_0\in V$ and $\kappa\in\N$ such that $p^{(\kappa)}(x_0,x_0)>0$, which excludes degenerate cases; see \mbox{Remark \ref{rem:degen-cases}.} This assumption will be needed in the proofs of Lemma \ref{lem:VarD>0} and  \mbox{Theorem \ref{th:clt}.} In particular, we still assume $\varrho<1$, which excludes the recurrent case $|V_1|=|V_2|=2$.
\par
For $x\in V$, denote by 
$$
T_x:=\inf\bigl\lbrace m\in\N_0 \mid X_m=x\bigr\rbrace,
$$
the stopping time of the first visit to $x$.
Choose and  fix now for the rest of this section any $\mathfrak{g}\in V_1^\times$. In the following we stop the Markov chain $(\mathbf{W}_k,\psi_k)_{k\in\N}$ at those intermediate random times $k$ when $X_{\e{k}}$ ends with letter $\mathfrak{g}$ (that is, when $\mathbf{W}_k=\mathfrak{g}$) and when the vertex $X_{\e{k}}$ is hit for the first time at time $\e{k}$ (that is, when $\e{k}=T_{X_{\e{k}}}$). 
More formally, define the random times
\begin{eqnarray*}
\tau_0 &:=& \inf\bigl\lbrace m\in\N \mid \mathbf{W}_{m}=\mathfrak{g}, \e{m}=T_{X_{\e{m}}}\bigr\rbrace,\\
\forall k\geq 1: \ \tau_k &:=& \inf\bigl\lbrace m> \tau_{k-1} \mid \mathbf{W}_{m}=\mathfrak{g}, \e{m}=T_{X_{\e{m}}}\bigr\rbrace;
\end{eqnarray*}
compare with \cite[Section 4]{gilch:22}.
Recall from Proposition \ref{prop:pos-rec-process} that $(\mathbf{W}_k,\psi_k)_{k\in\mathbb{N}_0}$ is positive recurrent; hence, the event $[\mathbf{W}_k=\mathfrak{g}]$ occurs for infinitely many indices $k$  with \mbox{probability $1$.} Each time when the random walk $(X_n)_{n\in\N_0}$ visits a word $w\in V$ ending  with $\mathfrak{g}$ for the first time, the random walk has strictly positive probability to remain in $C(w)$ for forever (namely with probability $1-\xi_1>0$ which is independent of $w$; see end of Subsection \ref{subsec:rw-on-fp}). Positive recurrence of $(\mathbf{W}_k,\psi_k)_{k\in\mathbb{N}_0}$ together with a standard geometric argument yields that  $\tau_k<\infty$ almost surely for all $k\in\N_0$. For $k\in\N_0$, set
$$
\TT_k:=\e{\tau_k}
$$
and define for $i\in\N$
\begin{eqnarray*}
\widetilde{\mathcal{C}}_i &:=& \sum_{j=\tau_{i-1}}^{\tau_i-1}\mathcal{C}_j,\\
\mathcal{D}_i &:=&  \widetilde{\mathcal{C}}_i - \mathfrak{c}\cdot (\TT_i-\TT_{i-1}).
\end{eqnarray*}
The following proposition will play an important role  in the proofs later:
\begin{proposition}\label{prop:TT-exp-moments}
$(\TT_i-\TT_{i-1})_{i\in\N}$ is an i.i.d. sequence of random variables. Furthermore,  $\TT_0$ and $\TT_i-\TT_{i-1}$, $i\in\N$, have exponential moments.
\end{proposition}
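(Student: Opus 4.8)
The plan is to establish both assertions by identifying the increments $\TT_i-\TT_{i-1}$ with return times of the positive-recurrent Markov chain $(\mathbf{W}_k,\psi_k)_{k\in\N}$ to the ``regeneration set'' consisting of those states where the random walk has a fresh copy of $C(X_{\e{k}})$ ahead of it. First I would make the regeneration structure precise: by the strong Markov property applied at the times $\tau_k$, the pieces of trajectory between consecutive $\TT_{k-1}$ and $\TT_k$ are independent, because after time $\TT_{k-1}=\e{\tau_{k-1}}$ the walk stays inside $C(X_{\TT_{k-1}})$, the prefix $X_{\TT_{k-1}}$ (which ends in $\mathfrak{g}$ and was hit for the first time exactly at $\TT_{k-1}$) is frozen forever, and by Lemma \ref{lem:cone-probs} the conditional law of the future, with that prefix deleted, is exactly the original law of the walk started at $o$ up to the analogous stopping construction. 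Hence $(\TT_i-\TT_{i-1})_{i\ge 1}$ is an i.i.d.\ sequence, and this is essentially the content of \cite[Section 4]{gilch:22}; I would cite the corresponding regeneration statement there and only indicate the (now routine) verification that the stopping rule $\{\mathbf{W}_m=\mathfrak{g},\ \e{m}=T_{X_{\e{m}}}\}$ produces genuine regeneration points in the present setting.

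The substantive part is the exponential moment bound. Here the plan is a two-step geometric-domination argument. Step one: between consecutive regeneration indices the number of intermediate exit levels $\tau_i-\tau_{i-1}$ has a geometric tail. Indeed, $(\mathbf{W}_k,\psi_k)_{k\in\N}$ is an irreducible positive-recurrent Markov chain on a countable state space (Proposition \ref{prop:pos-rec-process}), and the event $\{\mathbf{W}_m=\mathfrak{g},\ \e{m}=T_{X_{\e{m}}}\}$ — that at level $m$ the walk's $m$-th exit letter is $\mathfrak{g}$ \emph{and} the walk never backtracks below level $m$ from level $m$ — occurs, conditionally on the past, with probability bounded below by a constant $c_0>0$ (the probability of seeing letter $\mathfrak{g}$ appear times $1-\xi_1$, uniformly over the state, as explained in the text before the statement). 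A standard conditional Borel–Cantelli / geometric argument then gives $\P[\tau_i-\tau_{i-1}>n]\le (1-c_0)^{n}$, so $\tau_i-\tau_{i-1}$ has exponential moments, and the same for $\tau_0$. Step two: control $\TT_i-\TT_{i-1}=\e{\tau_i}-\e{\tau_{i-1}}=\sum_{m=\tau_{i-1}+1}^{\tau_i}(\e{m}-\e{m-1})$ by the increments of the exit-time process. By \cite[Proposition 2.5 / Theorem 3.3]{gilch:07} the single exit-time increments $\e{m}-\e{m-1}$ have exponential moments and (after the regeneration) are independent across the block and independent of $\tau_i-\tau_{i-1}$; a randomly-stopped sum of i.i.d.\ exponential-moment variables, the number of summands itself having a geometric tail, again has an exponential moment — concretely, choose $\lambda>0$ small enough that $\E[e^{\lambda(\e{1}-\e{0})}]\cdot(1-c_0)<1$ and apply Wald-type / generating-function estimates. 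This yields $\E[e^{\lambda(\TT_i-\TT_{i-1})}]<\infty$ for some $\lambda>0$, and the analogous bound for $\TT_0$.

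The main obstacle I expect is bookkeeping rather than conceptual: making the independence in Step two airtight requires checking that, at a regeneration time, not only the prefix but also the relevant portion of $\psi$ is reset, so that the block $(\e{m}-\e{m-1})_{\tau_{i-1}<m\le\tau_i}$ together with its (random) length is a measurable functional of an independent copy of the walk — this is where Lemma \ref{lem:cone-probs} and the precise definition of the $\tau_k$ do the work, and where one must be careful that the \emph{last} increment $\e{\tau_i}-\e{\tau_i-1}$ straddling the new regeneration point is still dominated in the right way. Once the block decomposition is set up correctly, the exponential-moment conclusion follows by combining the geometric tail of the block length with the exponential moments of the exit-time increments, and I would present only these two ingredients plus the elementary generating-function computation $\sum_{n\ge0}(1-c_0)^n\E[e^{\lambda(\e{1}-\e{0})}]^{\,n}<\infty$ in detail.
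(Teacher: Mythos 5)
Your plan follows the same regeneration-time route that underlies the references the paper invokes for this proposition (\cite[Prop.~4.2, Lemma~4.3, Prop.~4.5]{gilch:22}), so the approach itself is not a departure; but Step~two contains a genuine independence gap. The exit-time increments $\e{m}-\e{m-1}$, for $\tau_{i-1}<m\le\tau_i$, are \emph{not} an i.i.d.\ family and are \emph{not} independent of the block length $\tau_i-\tau_{i-1}$: both are measurable functionals of the same Markov chain $(\mathbf{W}_k,\psi_k)_k$, and the conditional law of $\e{m}-\e{m-1}$ depends on $(\mathbf{W}_{m-1},\psi_{m-1})$ (already, its law differs according to whether $\delta(\mathbf{W}_{m-1})$ is $1$ or $2$, since the walk must step into the other free factor to advance a level). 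Hence the Wald-type factorization and the estimate $\sum_{n\ge 0}(1-c_0)^n\,\E[e^{\lambda(\e{1}-\e{0})}]^{\,n}<\infty$ do not control $\E[e^{\lambda(\TT_1-\TT_0)}]$, and the conclusion of Step~two does not follow as written. There is also a smaller slip in Step~one: since consecutive letters of $X_{\e{m}}$ alternate between $V_1^\times$ and $V_2^\times$, the event $\{\mathbf{W}_m=\mathfrak{g}\}$ is impossible whenever $\delta(\mathbf{W}_{m-1})=\delta(\mathfrak{g})=1$, so a one-step uniform conditional lower bound $c_0$ cannot hold; one also has to rule out that the candidate target $X_{\e{m}}\mathfrak{g}$ was already visited before $\e{m}$, which is information carried by $\psi_m$.

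The way to close both gaps is a joint Doeblin-type minorization: a uniform $c_0>0$ such that, conditionally on the trajectory through $\e{m}$, with probability at least $c_0$ the walk regenerates within a bounded number of further exit levels \emph{and} the elapsed time stays below a fixed $K$; together with a uniform exponential-moment bound for the exit-time increment on the complementary event. That latter bound is not furnished by the rate-of-escape statements in \cite[Prop.~2.5, Thm.~3.3]{gilch:07} you cite (those are almost-sure limit theorems with no tail information); it comes from $\varrho<1$, i.e.\ from the radius of convergence of the Green function exceeding $1$, in the spirit of the generating-function estimates in Subsection~\ref{subsec:uniform-bounds-generating-functions}. Once such uniform estimates are in place the compound-geometric bookkeeping does deliver exponential moments; as written, however, Step~two is not a proof.
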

\begin{proof}
See \cite[Prop. 4.2, Lemma 4.3, Prop. 4.5]{gilch:22}
\end{proof}
Moreover, we have the following important inequality:
\begin{lemma} \label{lem:Ci-estimate}
For all $\omega\in\Omega_0$,
$$
\widetilde{\mathcal{C}}_1(\omega) \leq \TT_1(\omega)-\TT_0(\omega) +1.
$$
\end{lemma}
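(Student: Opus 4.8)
The plan is to bound $\widetilde{\mathcal{C}}_1 = \sum_{j=\tau_0}^{\tau_1-1}\mathcal{C}_j$ by first observing that each summand $\mathcal{C}_j$ is itself bounded by the number of fresh vertices contributing to it, and then summing these counts. Recall from the definition that $\mathcal{C}_j = \mathcal{C}_j^{(I)} + (\text{two boundary terms})$, where $\mathcal{C}_j^{(I)} = \sum_{x\in\mathcal{R}_j^{(I)}}\P_x[S_{\mathcal{R}_j}=\infty]$ and each boundary term is a probability, hence $\leq 1$. Since each term $\P_x[S_{\mathcal{R}_j}=\infty]\leq 1$, we get $\mathcal{C}_j \leq |\mathcal{R}_j^{(I)}| + 2$. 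However, $+2$ per index would be too lossy; the sharper bound comes from noticing that the two endpoint words $X_{\mathbf{e}_j}$ and $X_{\mathbf{e}_{j+1}}$ are \emph{shared} between consecutive $\mathcal{R}_j$'s, so when we sum over $j$ from $\tau_0$ to $\tau_1-1$ the boundary contributions telescope in the cardinality count.

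More precisely, I would argue as follows. For $\omega\in\Omega_0$, the interior sets $\mathcal{R}_j^{(I)}(\omega)$ for $j=\tau_0,\dots,\tau_1-1$ are pairwise disjoint (they live in disjoint annular regions $C(X_{\mathbf{e}_j})\setminus C(X_{\mathbf{e}_{j+1}})$), and together with the exit-point vertices $\{X_{\mathbf{e}_j}(\omega): \tau_0\leq j\leq \tau_1\}$ they are all distinct vertices visited by the walk between times $\mathbf{e}_{\tau_0} = \TT_0$ and $\mathbf{e}_{\tau_1} = \TT_1$. Indeed each such vertex lies in $\mathbf{R}_{\TT_1}\setminus \mathbf{R}_{\TT_0-1}$ (or one checks it is visited in the time window $(\TT_0,\TT_1]$), so the total count is at most $\TT_1-\TT_0$ new time steps, plus possibly the single vertex $X_{\TT_0}$ already counted at time $\TT_0$ — giving the "$+1$". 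Then
$$
\widetilde{\mathcal{C}}_1(\omega) = \sum_{j=\tau_0}^{\tau_1-1}\mathcal{C}_j(\omega) \leq \sum_{j=\tau_0}^{\tau_1-1}\Bigl(|\mathcal{R}_j^{(I)}(\omega)| + \mathbf{1}_{\{X_{\mathbf{e}_j}\text{ counted}\}} + \mathbf{1}_{\{X_{\mathbf{e}_{j+1}}\text{ counted}\}}\Bigr),
$$
where each indicator corresponds to a boundary probability $\leq 1$ attached to a specific vertex, and the right-hand side is at most $\bigl|\{X_m(\omega): \TT_0 < m \leq \TT_1\}\bigr| + 1 \leq (\TT_1(\omega)-\TT_0(\omega)) + 1$. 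The key point making the boundary terms harmless is that $X_{\mathbf{e}_{j+1}}$ as the "right endpoint" of $\mathcal{R}_j$ coincides with $X_{\mathbf{e}_{j+1}}$ as the "left endpoint" of $\mathcal{R}_{j+1}$, so the corresponding probabilities $\P_{X_{\mathbf{e}_{j+1}}}[\,\cdot\,,\ X_n\notin C(X_{\mathbf{e}_{j+1}})]$ and $\P_{X_{\mathbf{e}_{j+1}}}[\,\cdot\,,\ X_n\in C(X_{\mathbf{e}_{j+1}})]$ together sum to at most $\P_{X_{\mathbf{e}_{j+1}}}[S=\infty]\leq 1$, so they jointly charge the single vertex $X_{\mathbf{e}_{j+1}}$ at most once.

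The main obstacle I anticipate is the careful bookkeeping of which vertex gets charged by which boundary probability, and verifying that every vertex counted on the right-hand side is genuinely visited within the half-open interval $(\TT_0,\TT_1]$ so that the cardinality is dominated by $\TT_1-\TT_0$. In particular one must check that the "leftmost" endpoint $X_{\mathbf{e}_{\tau_0}} = X_{\TT_0}$ is the only vertex possibly lying outside this time window (it is visited at time $\TT_0$ itself), which accounts for the additive $+1$; all interior vertices of $\mathcal{R}_j^{(I)}$ for $j\geq\tau_0$ are visited strictly after $\TT_0$ and no later than $\mathbf{e}_{j+1}\leq\TT_1$, and likewise for the exit vertices. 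Once this is set up, the inequality follows from the trivial bound $|\mathbf{R}_b\setminus\mathbf{R}_{a}|\leq b-a$ applied with $a = \TT_0$, $b=\TT_1$.
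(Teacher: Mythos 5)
Your proposal is correct and follows essentially the same strategy as the paper's proof: bound the interior sums $\mathcal{C}_j^{(I)}$ by the cardinality of the disjoint annular sets $\mathcal{R}_j^{(I)}$, observe that the two boundary probabilities attached to each shared exit vertex $X_{\e{j}}$ ($\tau_0<j<\tau_1$) are conditioned on disjoint events and hence jointly charge that vertex at most once, and then count all contributing vertices within the time window $(\TT_0,\TT_1]$ plus the single vertex $X_{\TT_0}$. The paper organizes the arithmetic slightly differently (bounding $\sum_j\mathcal{C}_j^{(I)}\leq\TT_1-\TT_0-(\tau_1-\tau_0)$ first and then adding $\tau_1-\tau_0+1$ boundary contributions), but the underlying decomposition and cancellation are identical.
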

\begin{proof}
Let  $\omega\in\Omega_0$. Then:
\begin{eqnarray*}
\widetilde{\mathcal{C}}_1(\omega) 
&=& \underbrace{\sum_{j=\tau_0}^{\tau_1-1} C_j^{(I)}(\omega)}_{\leq \e{\tau_1}(\omega)-\e{\tau_0}(\omega)-\bigl(\tau_1(\omega)-\tau_0(\omega)\bigr)} + \P_{X_{\e{j}}(\omega)}\bigl[ S_{\mathcal{R}_j(\omega)}=\infty, \forall n\geq 1: X_n\in C\bigl(X_{\e{j}}(\omega)\bigr)\bigr] \\
&&\quad + \P_{X_{\e{j+1}}(\omega)}\bigl[ S_{\mathcal{R}_j(\omega)}=\infty, \forall n\geq 1: X_n\notin C\bigl(X_{\e{j}+1}(\omega)\bigr)\bigr] 
\end{eqnarray*}
\begin{eqnarray*}
&\leq & \TT_1(\omega)-\TT_0(\omega) -\bigl(\tau_1(\omega)-\tau_0(\omega)\bigr) \\
&&\quad + \sum_{j=\tau_0+1}^{\tau_1-1} \underbrace{P_{X_{\e{j}}(\omega)}\left[ \substack{S_{\mathcal{R}_j(\omega)}=\infty, \\ \forall n\geq 1: X_n\in C\bigl(X_{\e{j}}(\omega)\bigr)}\right]  + \P_{X_{\e{j}}(\omega)}\left[\substack{S_{\mathcal{R}_j(\omega)}=\infty, \\ \forall n\geq 1: X_n\notin C\bigl(X_{\e{j}}(\omega)\bigr)}\right]}_{\leq \mathbb{P}_{X_{\e{j}}(\omega)}[S_{\mathcal{R}_j(\omega)}=\infty]\leq 1} \\
&&\quad + P_{X_{\e{\tau_0}}(\omega)}\left[ \substack{S_{\mathcal{R}_{\tau_0}(\omega)}=\infty, \\ \forall n\geq 1: X_n\in C\bigl(X_{\e{\tau_0}}(\omega)\bigr)}\right]  + \P_{X_{\e{\tau_1}}(\omega)}\left[ \substack{S_{\mathcal{R}_{\tau_1}(\omega)}=\infty, \\ \forall n\geq 1: X_n\notin C\bigl(X_{\e{\tau_1}}(\omega)\bigr)}\right] \\
&\leq &\TT_1(\omega)-\TT_0(\omega) -\bigl(\tau_1(\omega)-\tau_0(\omega)\bigr) + \bigl(\tau_1(\omega)-\tau_0(\omega)-1\bigr)+2 =  \TT_1(\omega)-\TT_0(\omega)  +1.
\end{eqnarray*}
\end{proof}

In the following we collect some basic properties of the sequence $(\mathcal{D}_i)_{i\in\N_0}$.

\begin{lemma}
$\displaystyle  \mathrm{Var}(\mathcal{D}_1)<\infty$.
\end{lemma}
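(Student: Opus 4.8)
The plan is to bound $\mathcal{D}_1$ by an integrable random variable, exploiting the exponential moments already available from Proposition~\ref{prop:TT-exp-moments}. Recall $\mathcal{D}_1 = \widetilde{\mathcal{C}}_1 - \mathfrak{c}\cdot(\TT_1-\TT_0)$ with $\mathfrak{c}\in(0,1]$. On the one hand, $\widetilde{\mathcal{C}}_1\geq 0$ by construction (it is a sum of capacities of subsets, hence of nonnegative probabilities $\P_x[S_{\mathcal{R}_j}=\infty]$), so $\mathcal{D}_1 \geq -\mathfrak{c}\cdot(\TT_1-\TT_0) \geq -(\TT_1-\TT_0)$. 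On the other hand, Lemma~\ref{lem:Ci-estimate} gives $\widetilde{\mathcal{C}}_1 \leq \TT_1-\TT_0+1$, so $\mathcal{D}_1 \leq \TT_1-\TT_0+1 - \mathfrak{c}\cdot(\TT_1-\TT_0) \leq \TT_1-\TT_0+1$. Combining the two bounds yields
$$
|\mathcal{D}_1| \leq \TT_1-\TT_0+1
$$
pointwise on $\Omega_0$.

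From here the conclusion is immediate. By Proposition~\ref{prop:TT-exp-moments}, $\TT_1-\TT_0$ has exponential moments, in particular a finite second moment, hence $\TT_1-\TT_0+1\in L^2$. Since $|\mathcal{D}_1|$ is dominated by an $L^2$ random variable, $\mathcal{D}_1\in L^2$, and therefore $\mathrm{Var}(\mathcal{D}_1) = \E[\mathcal{D}_1^2] - (\E\mathcal{D}_1)^2 \leq \E[(\TT_1-\TT_0+1)^2] < \infty$. That completes the proof.

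I do not anticipate a genuine obstacle here: the two one-sided bounds (nonnegativity of $\widetilde{\mathcal{C}}_1$ and Lemma~\ref{lem:Ci-estimate}) together with the exponential-moment bound do all the work, and the only mild point to be careful about is that $\mathfrak{c}\in(0,1]$ so that $\mathfrak{c}\cdot(\TT_1-\TT_0)$ is itself bounded by $\TT_1-\TT_0$, keeping both sides under control. The slightly more delicate issue—if one wanted a cleaner writeup—is to note that the same domination argument shows $\mathcal{D}_1\in L^1$ as well, so $\E\mathcal{D}_1$ is well-defined and finite, which is what makes the variance formula meaningful; but this is subsumed by the $L^2$ bound. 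One could alternatively phrase everything in terms of $\widetilde{\mathcal{C}}_1 \le \TT_1 - \TT_0 + 1$ and $0 \le \widetilde{\mathcal{C}}_1$ directly, deducing $\mathrm{Var}(\widetilde{\mathcal{C}}_1)<\infty$ and $\mathrm{Var}(\TT_1-\TT_0)<\infty$ separately and using $\mathrm{Var}(\mathcal{D}_1)\leq 2\,\mathrm{Var}(\widetilde{\mathcal{C}}_1) + 2\mathfrak{c}^2\,\mathrm{Var}(\TT_1-\TT_0)$, but the pointwise domination route is shorter and cleaner.
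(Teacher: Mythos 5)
Your proof is correct and follows essentially the same route as the paper: bound $|\mathcal{D}_1|$ pointwise by $\TT_1-\TT_0+1$ using nonnegativity of $\widetilde{\mathcal{C}}_1$, Lemma~\ref{lem:Ci-estimate}, and $\mathfrak{c}\in(0,1]$, then invoke the exponential moments of $\TT_1-\TT_0$ from Proposition~\ref{prop:TT-exp-moments} to conclude $\mathcal{D}_1\in L^2$.
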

\begin{proof}
Since $\mathfrak{c}> 0$ and $\widetilde{\mathcal{C}}_1\geq 0$ we have  
$$
-\mathfrak{c} \cdot (\TT_1-\TT_{0}) \leq \mathcal{D}_1 \leq \widetilde{\mathcal{C}}_1.
$$
Lemma \ref{lem:Ci-estimate} together with $\mathfrak{c}\leq 1$ yields:
$$
\bigl|  \mathcal{D}_1\bigr| \leq \max\bigl\lbrace \widetilde{\mathcal{C}}_1,\mathfrak{c} \cdot (\TT_1-\TT_{0}) \bigr\rbrace \leq \TT_1-\TT_{0}+1\quad \textrm{almost surely.} 
$$
Since $ \TT_1-\TT_{0}$ has exponential moments, we get $\mathcal{D}_1\in\mathcal{L}_2$, that is, $\mathrm{Var}(\mathcal{D}_1)<\infty$.
\end{proof}

The reasoning in the proof of the following proposition is analogously to the proof of \mbox{\cite[Proposition 4.5]{gilch:22}.} Nonetheless, we give a proof since the proposition is another key ingredient in the proof of Theorem \ref{th:clt}.

\begin{proposition}
$\displaystyle (\mathcal{D}_i)_{i\in\N}$ is an i.i.d. sequence of random variables.
\end{proposition}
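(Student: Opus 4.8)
The plan is to represent $\mathcal{D}_i$ as a fixed measurable functional of the $i$-th regeneration block and then to conclude from the fact, established in \cite[Section~4]{gilch:22}, that these blocks form an i.i.d.\ sequence. For $i\in\N$ let $\mathcal{B}_i$ denote the piece of trajectory $(X_n)_{\TT_{i-1}\le n\le \TT_i}$ with the common prefix $X_{\TT_{i-1}}$ deleted from every visited vertex. The defining property of the times $\tau_k$ --- at $\TT_k=\e{\tau_k}$ the walk stands at a vertex $X_{\TT_k}$ whose last letter is the \emph{fixed} letter $\mathfrak{g}\in V_1^\times$, this vertex is visited for the first time, and the walk never leaves $C(X_{\TT_k})$ afterwards --- is exactly the regeneration condition used there. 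Since $\tau_i$ and $\TT_i$ are measurable with respect to the portion of the trajectory after time $\TT_{i-1}$ (the relevant events $[\e{m}=T_{X_{\e{m}}}]$ for $m>\tau_{i-1}$ concern vertices of $C(X_{\TT_{i-1}})$, which cannot have been visited before $\TT_{i-1}$), and since that portion, observed relative to $X_{\TT_{i-1}}$ via Lemma~\ref{lem:cone-probs} and the fact that all the $X_{\TT_k}$ end in the same letter $\mathfrak{g}$, is independent of the past and distributed like the portion after $\TT_0$ observed relative to $X_{\TT_0}$, the block $\mathcal{B}_i$ is independent of $\mathcal{B}_1,\dots,\mathcal{B}_{i-1}$ and has the same law as $\mathcal{B}_1$. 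This is precisely the content of the reasoning behind \cite[Prop.~4.2, Lemma~4.3, Prop.~4.5]{gilch:22}, cited here as Proposition~\ref{prop:TT-exp-moments}.

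It then remains to exhibit $\mathcal{D}_i=\widetilde{\mathcal{C}}_i-\mathfrak{c}\cdot(\TT_i-\TT_{i-1})$ as $F(\mathcal{B}_i)$ for a single measurable map $F$. The increment $\TT_i-\TT_{i-1}$ is just the length of $\mathcal{B}_i$. For $\widetilde{\mathcal{C}}_i=\sum_{j=\tau_{i-1}}^{\tau_i-1}\mathcal{C}_j$ I would first observe that the exit times $\e{\tau_{i-1}},\dots,\e{\tau_i}$ and exit letters $\mathbf{W}_{\tau_{i-1}},\dots,\mathbf{W}_{\tau_i}$ are readable from $\mathcal{B}_i$ --- within the block the $j$-th coordinate of the walk stabilises exactly at $\e{j}$ for $\tau_{i-1}\le j\le\tau_i$, because from time $\TT_i=\e{\tau_i}$ on the walk stays in $C(X_{\TT_i})$ and $j\le\tau_i$ --- and then recall from the proof of Proposition~\ref{prop:capacity-exit-decomposition} that every $\mathcal{C}_j$ is a fixed function of $(\mathbf{W}_j,\psi_j,\mathbf{W}_{j+1},\psi_{j+1})$, obtained by deleting the common prefix and applying Lemma~\ref{lem:cone-probs}; it is this prefix-deletion that renders the function independent of $i$. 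Summing over $j=\tau_{i-1},\dots,\tau_i-1$ thus exhibits $\widetilde{\mathcal{C}}_i$, and hence $\mathcal{D}_i$, as $F(\mathcal{B}_i)$. Consequently $(\mathcal{D}_i)_{i\in\N}=(F(\mathcal{B}_i))_{i\in\N}$ is the image of an i.i.d.\ sequence under a single measurable map, and is therefore i.i.d.

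I expect the main obstacle to be the verification in the second step that the partial ranges recorded by the functions $\psi_j$ for $\tau_{i-1}\le j\le\tau_i$ are genuinely functionals of $\mathcal{B}_i$ alone, i.e.\ that no vertex of the cones $C(X_{\e{j}})$, $j\ge\tau_{i-1}$, was visited before $\TT_{i-1}$, and that none of the range entering the $\mathcal{C}_j$ with $j\le\tau_i-1$ is created after $\TT_i$. The first point follows because every path into $C(X_{\TT_{i-1}})$ must pass through its root $X_{\TT_{i-1}}$, which by the regeneration condition is visited for the first time at $\TT_{i-1}$; the second holds because the sets $\mathcal{R}_j,\mathcal{R}_j^{(I)}$ with $j\le\tau_i-1$ only involve $\mathbf{R}_{\e{\tau_i}}=\mathbf{R}_{\TT_i}$. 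This bookkeeping has to be carried out carefully, along exactly the lines of \cite[Section~4]{gilch:22}.
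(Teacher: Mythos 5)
Your approach is correct in outline but organizes the argument differently from the paper, and one of its ingredients is stronger than what the paper's cited results actually assert.

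The paper does not abstract to regeneration blocks. Instead it works directly with $\mathcal{D}_i$ and carries out two explicit path-decomposition computations: first it writes $\P[\mathcal{D}_i=z]$ as a sum over paths before and after $\TT_{i-1}$, applies Lemma~\ref{lem:cone-probs} to shift the post-$\TT_{i-1}$ piece to the cone $C(\mathfrak{g})$, and observes that the resulting expression is independent of $i$; then it repeats this kind of decomposition (at the two random times $\TT_0,\TT_1$) to verify $\P[\mathcal{D}_1=d_1,\mathcal{D}_2=d_2]=\P[\mathcal{D}_1=d_1]\P[\mathcal{D}_2=d_2]$, and declares the general independence ``completely analogous''. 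Your proposal folds both computations into one conceptual step, namely the assertion that the deleted-prefix trajectory blocks $\mathcal{B}_i$ are themselves i.i.d.; given that, $\mathcal{D}_i = F(\mathcal{B}_i)$ for a single measurable $F$ immediately does the whole job. This is cleaner and would dispose of the ``analogous'' hand-wave for general $i$ in one stroke. The cost is that the block-i.i.d.\ statement is strictly stronger than what Proposition~\ref{prop:TT-exp-moments} in this paper asserts (that proposition only concerns the scalar increments $\TT_i-\TT_{i-1}$), and stronger than what the paper elsewhere uses; proving it would require exactly the kind of path decomposition and $\TT_1$-factorisation that the paper's proof carries out, so no work is actually saved — it is merely repackaged. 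If \cite{gilch:22} does contain the full block-i.i.d.\ statement, then your route is a genuine streamlining; otherwise you have relocated rather than removed the core argument.

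One imprecision worth flagging: you claim $\psi_j$ for $\tau_{i-1}\le j\le\tau_i$ is readable from $\mathcal{B}_i$, but $\psi_{\tau_{i-1}}$ records visits to $C(X_{\e{\tau_{i-1}-1}})$ that took place strictly before $\TT_{i-1}$ and are therefore not recoverable from $\mathcal{B}_i$. This does not break the argument: going back to the definitions, $\mathcal{C}_j$ depends on the range set $\mathcal{R}_j$, which is encoded by $(\mathbf{W}_{j+1},\psi_{j+1})$ alone; the appearance of $(\mathbf{W}_j,\psi_j)$ in the paper's phrasing is only because the relevant object is the two-step chain $\bigl((\mathbf{W}_k,\psi_k),(\mathbf{W}_{k+1},\psi_{k+1})\bigr)_k$. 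Since $\psi_{j+1}$ for $j\ge\tau_{i-1}$ records visits to cones contained in $C(X_{\TT_{i-1}})$, and since the regeneration condition $\e{\tau_{i-1}}=T_{X_{\e{\tau_{i-1}}}}$ means $X_{\TT_{i-1}}$ — hence all of $C(X_{\TT_{i-1}})$ — is first visited only from time $\TT_{i-1}$ onward, these $\psi_{j+1}$ are indeed functionals of $\mathcal{B}_i$. You already identified this as the key obstacle and gave essentially this resolution; the only adjustment is to argue through $\psi_{j+1}$ rather than $\psi_j$ at the left endpoint $j=\tau_{i-1}$.
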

\begin{proof}
We introduce some notation in order to decompose some set of paths accordingly.
Let  $i,m\in\N$, $j\in\N_0$ and $z\in\mathbb{R}$. For $x_0\in V$ with $\P[X_{\TT_j}=x_0]>0$, denote by $\mathcal{P}^{(1)}_{j,x_0,m}$ the set of  all paths \mbox{$(o,w_1,\dots,w_m=x_0)\in V^{m+1}$} of length $m$ such that 
$$
\mathbb{P}\Bigl[X_1=w_1,\dots,X_{m-1}=w_{m-1},X_m=x_0,\TT_j=m\Bigr]>0,
$$
that is, each path in $\mathcal{P}^{(1)}_{j,x_0,m}$ allows to generate $\TT_j$ with $X_{\TT_j}=x_0$ at time $m$ with positive probability. In particular, there exists such a path due to choice of $x_0$ with $\P[X_{\TT_j}=x_0]>0$.
\par
Furthermore, for $x_0\in V$ with $\P[X_{\TT_{i-1}}=x_0]>0$, denote by $\mathcal{P}^{(2)}_{i,x_0,n,z}$ the set of paths $(x_0,y_1,\dots,y_n)\in V^{n+1}$ of length $n\in\N$ such that 
$$
\mathbb{P}\Bigl[ \exists t\in\N: X_{t}=x_0,X_{t+1}=y_1,\dots,X_{t+n}=y_n,\TT_{i-1}=t,\TT_{i}=t+n,\mathcal{D}_i=z\Bigr]> 0,
$$
that is, each path in $\mathcal{P}^{(2)}_{i,x_0,n,z}$ allows to generate $X_{\TT_{i-1}}=x_0$, $\TT_{i}-\TT_{i-1}=n$ and $\mathcal{D}_i=z$. 
In particular, we have $y_i\in C(x_0)$, that is, we can write $y_i=x_0y_i'$.
\par
First, we show that $(\mathcal{D}_i)_{i\in\N}$ is a sequence of  identically distributed random variables. 
By decomposing all paths until time $\TT_{i}$ into the part until time $\TT_{i-1}$ and into the part between the random times $\TT_{i-1}$ and $\TT_{i}$  we obtain with Lemma \ref{lem:cone-probs}:
\begin{eqnarray*}
\P[\mathcal{D}_i=z] &=& \sum_{\substack{x_0\in V:\\ \P[X_{\TT_{i-1}}=x_0]>0}} \P\bigl[ X_{\TT_{i-1}}=x_0,\mathcal{D}_i=z\bigr]\\
&=& \sum_{\substack{x_0\in V:\\ \P[X_{\TT_{i-1}}=x_0]>0}}  \sum_{\substack{ n\in\N,\\ (x_0,x_0y_1',\dots,x_0y_n')\in \mathcal{P}^{(2)}_{i,x_0,n,z}}}  \P\left[ \substack{X_{\TT_{i-1}}=x_0,\\ X_{\TT_{i-1}+1}=x_0y_1',\dots, X_{\TT_{i-1}+n}=x_0y_n',\\ \forall l\geq 1: X_l \in C(x_0y_n')}
\right]\\
&=&  \sum_{\substack{x_0\in V:\\ \P[X_{\TT_{i-1}}=x_0]>0}} \sum_{m\geq 1} \sum_{(o,w_1,\dots,w_m)\in \mathcal{P}^{(1)}_{i-1,x_0,m}} \P\bigl[X_1=w_1,\dots,X_m=w_m\bigr] \\
&&\quad \cdot \sum_{n\geq 1} \sum_{(x_0,x_0y_1',\dots,x_0y_n')\in \mathcal{P}^{(2)}_{i,x_0,n,z}} \P_{x_0}\bigl[X_1=x_0y_1',\dots,X_n=x_0y_n'\bigr] \\
&&\quad \cdot \P_{x_0y_n'}\bigl[\forall l\geq 1: X_l \in C(x_0y_n')\bigr]\\
&\stackrel{L.\ref{lem:cone-probs}}{=}&\sum_{\substack{x_0\in V:\\ \P[X_{\TT_{i-1}}=x_0]>0}} \sum_{m\geq 1} \sum_{(o,w_1,\dots,w_m)\in \mathcal{P}^{(1)}_{i-1,x_0,m}} \P\bigl[X_1=w_1,\dots,X_m=w_m\bigr] \\
&&\quad \cdot \sum_{n\geq 1} \sum_{(x_0,x_0y_1',\dots,x_0y_n')\in \mathcal{P}^{(2)}_{i,x_0,n,z}} \P_{\mathfrak{g}}\bigl[X_1=\mathfrak{g}y_1,\dots,X_n=\mathfrak{g}y_n\bigr] \cdot \bigl(1-\xi_1\bigr).
\end{eqnarray*}
Note that each path $(x_0,x_0y_1',\dots,x_0y_n')\in \mathcal{P}^{(2)}_{i,x_0,n,z}$ lies completely in the cone $C(x_0)$ and that $x_0$ ends with letter $\mathfrak{g}$. Therefore, there is a natural 1-to-1 correspondence between paths in $\mathcal{P}^{(2)}_{i,x_0,n,z}$ and $\mathcal{P}^{(2)}_{1,\mathfrak{g},n,z}$ established by the vertex-wise shift \mbox{$C(x_0)\ni x_0g \mapsto \mathfrak{g}g\in C(\mathfrak{g})$,} that is, we can map
$$
\mathcal{P}^{(2)}_{i,x_0,n,z} \ni (x_0,x_0y_1',\dots,x_0y_n')\mapsto (\mathfrak{g},\mathfrak{g}y_1',\dots,\mathfrak{g}y_n')\in \mathcal{P}^{(2)}_{1,\mathfrak{g},n,z},
$$ 
which is a bijective and measure preserving mapping according to Lemma \ref{lem:cone-probs}.
At this point recall once again that $x_0$ ends with letter $\mathfrak{g}$ by choice of $x_0$ with $\P[X_{\TT_{j-1}}=x_0]>0$, which ensures that the words $\mathfrak{g}y_i'$ are well-defined. Furthermore, we have
\begin{eqnarray*}
&&\sum_{\substack{x_0\in V:\\ \P[X_{\TT_{i-1}}=x_0]>0}}  \sum_{\substack{m\geq 1,\\ (o,w_1,\dots,w_m)\in \mathcal{P}^{(1)}_{i-1,x_0,m}}} \P\bigl[X_1=w_1,\dots,X_m=w_m\bigr] \cdot \underbrace{(1-\xi_1)}_{=\mathbb{P}_{w_m}\bigl[\forall n\geq 1: X_n\in C(w_m)\bigr]}\hfill\\
&=& \sum_{\substack{x_0\in V:\\ \P[X_{\TT_{i-1}}=x_0]>0}} \P\bigl[X_{\TT_{i-1}}=x_0\bigr]
=\P[\TT_{i-1}<\infty]=1.
\end{eqnarray*}
Therefore,
\begin{eqnarray}\label{equ:Li-formula}
\P[\mathcal{D}_i=z] = \sum_{n\geq 1} \sum_{(\mathfrak{g},y_1,\dots,y_n)\in \mathcal{P}^{(2)}_{1,\mathfrak{g},n,z}} \P_{\mathfrak{g}}\bigl[X_1=y_1,\dots,X_n=y_n\bigr].
\end{eqnarray}
Since the probabilities on the right hand side do not depend on $i$ any more, we have proven that the $\mathcal{D}_i$'s all have the same distribution. 
\par
For the proof of independence of the sequence $(\mathcal{D}_i)_{i\in\N}$, we only show independence of $\mathcal{D}_1$ and $\mathcal{D}_2$; the general proof follows completely analogously, is however very lengthy and  therefore we omit it. Let  $d_1,d_2\in\mathbb{R}$. We make decomposition according to the values of $\TT_0,\TT_1,\TT_2$ and $X_{\TT_0}, X_{\TT_1},X_{\TT_2}$ and obtain:
\begin{eqnarray}
&&\P[\mathcal{D}_1=d_1,\mathcal{D}_2=d_2] \nonumber\\
&=& \sum_{\substack{x_0\in V:\\ \P[X_{\TT_{0}}=x_0]>0}}\sum_{m,n_1,n_2\geq 1} \sum_{\substack{(o,w_1,\dots,w_m)\in \mathcal{P}^{(1)}_{0,x_0,m}, \\ (x_0,y_1,\dots,y_{n_1})\in \mathcal{P}^{(2)}_{1,x_0,n_1,d_1},\\ (y_{n_1},z_1,\dots,z_{n_2})\in \mathcal{P}^{(2)}_{2,y_{n_1},n_2,d_2}}} \P\left[\begin{array}{c}
X_1=w_1,\dots,X_m=w_m,\\
X_{m+1}=y_1,\dots,X_{m+n_1}=y_{n_1},\\
X_{m+n_1+1}=z_1,\dots,\\ X_{m+n_1+n_2}=z_{n_2},\\
\forall l\geq 1: X_l\in C(z_{n_2})
\end{array}\right]\nonumber\\
&=& \sum_{\substack{x_0\in V:\\ \P[X_{\TT_{0}}=x_0]>0}} \sum_{m\geq 1} \sum_{(o,w_1,\dots,w_m)\in \mathcal{P}^{(1)}_{0,x_0,m}} \P\bigl[X_1=w_1,\dots,X_m=w_m\bigr]\nonumber \\
&&\quad \cdot \sum_{n_1\geq 1} \sum_{(x_0,y_1,\dots,y_{n_1})\in \mathcal{P}^{(2)}_{1,x_0,n_1,d_1}} \P_{x_0}\bigl[X_1=y_1,\dots,X_{n_1}=y_{n_1}\bigr] \nonumber\\
&& \quad \cdot \sum_{n_2\geq 1} \sum_{(y_{n_1},z_1,\dots,z_{n_2})\in \mathcal{P}^{(2)}_{2,y_{n_1},n_2,d_2}} \P_{y_{n_1}}\bigl[X_1=z_1,\dots,X_{n_2}=z_{n_2}\bigr]  \nonumber\\
&&\quad 
\cdot \P_{z_{n_2}}\bigl[\forall l\geq 1: X_l \in C(z_{n_2})\bigr].\label{equ:L1-L2}
\end{eqnarray}
For the last equation we recall the equation $\mathbb{P}_{z_{n_2}}\bigl[ \forall k\geq 1: X_k\in C(z_{n_2})\bigr]=1-\xi_1$.
Similarily, decomposing according to  the  values of $\TT_1$ and $X_{\TT_1}$, we get
\begin{eqnarray}
1 &=& \P[\TT_1<\infty ]\nonumber\\
&=& \sum_{\substack{z_0\in V:\\ \P[X_{\TT_{1}}=z_0]>0}} \sum_{m_1\geq 1} \sum_{(o,w_1,\dots,w_{m_1})\in \mathcal{P}^{(1)}_{1,z_0,m_1}} \P\left[ \begin{array}{c} X_1=w_1,\dots,X_{m_1}=w_{m_1},\\ \forall l>m: X_l\in C(z_0)\end{array}\right] \label{equ:e-tau1<infty}\\
&=& \sum_{\substack{z_0\in V:\\ \P[X_{\TT_{1}}=z_0]>0}} \sum_{m_1\geq 1} \sum_{(o,w_1,\dots,w_{m_1})\in \mathcal{P}^{(1)}_{1,z_0,m_1}} \P\bigl[X_1=w_1,\dots,X_{m_1}=w_{m_1}\bigr] \cdot (1-\xi_1).\nonumber
\end{eqnarray}
Now observe that, for $z_0\in V$ with $\P[X_{\TT_{1}}=z_0]>0$, the mapping
$$
\mathcal{P}^{(2)}_{2,y_{n_1},n_2,d_2} \ni (y_{n_1},z_1,\dots,z_{n_2}) \mapsto
(z_0,z_0z_1',\dots,z_0z_{n_2}')\in \mathcal{P}^{(2)}_{1,z_0,n_2,d_2},
$$
where $z_i:=y_{n_1}z_i'$ for $i\in\{1,\dots,n_2\}$, is measure-preserving (see Lemma \ref{lem:cone-probs}), that is, 
\begin{equation}\label{equ:path-shift}
\P_{y_{n_1}}[X_1=y_{n_1}z_1',\dots,X_{n_2}=y_{n_1}z_{n_2}']=\P_{z_0}[X_1=z_0z_1',\dots,X_{n_2}=z_0z_{n_2}'].
\end{equation}
Furthermore, we remark that
\begin{eqnarray*}
\P[\mathcal{D}_1=d_1] &=& \sum_{\substack{x_0\in V:\\ \P[X_{\TT_{0}}=x_0]>0}} \sum_{m\geq 1} \sum_{(o,w_1,\dots,w_m)\in \mathcal{P}^{(1)}_{0,x_0,m}} \P\bigl[X_1=w_1,\dots,X_m=w_m\bigr] \\
&&\quad \cdot \sum_{n_1\geq 1} \sum_{(x_0,y_1,\dots,y_{n_1})\in \mathcal{P}^{(2)}_{1,x_0,n_1,d_1}} \P_{x_0}\bigl[X_1=y_1,\dots,X_{n_1}=y_{n_1}\bigr] \cdot (1-\xi_1), \\
\P[\mathcal{D}_2=d_2] &=& \sum_{\substack{z_0\in V:\\ \P[X_{\TT_{1}}=z_0]>0}} \sum_{m_1\geq 1} \sum_{(o,w_1,\dots,w_{m_1})\in \mathcal{P}^{(1)}_{1,z_0,m_1}} \P\bigl[X_1=w_1,\dots,X_{m_1}=w_{m_1}\bigr]  \\
&&\quad \cdot \sum_{n_2\geq 1} \sum_{(z_0,z_1,\dots,z_{n_2})\in \mathcal{P}^{(2)}_{2,z_0,n_2,d_2}} \P_{z_0}\bigl[X_1=z_1,\dots,X_{n_2}=z_{n_2}\bigr] \cdot (1-\xi_1).
\end{eqnarray*}
The required independence equation follows now from  (\ref{equ:L1-L2}) with the help of  (\ref{equ:e-tau1<infty}) and (\ref{equ:path-shift}):
\begin{eqnarray*}
&&\P[\mathcal{D}_1=d_1,\mathcal{D}_2=d_2] \\
&=& \sum_{\substack{x_0\in V:\\ \P[X_{\TT_{0}}=x_0]>0}} \sum_{m\geq 1} \sum_{(o,w_1,\dots,w_m)\in \mathcal{P}^{(1)}_{0,x_0,m}} \P\bigl[X_1=w_1,\dots,X_m=w_m\bigr] \\
&&\quad \cdot \sum_{n_1\geq 1} \sum_{(x_0,y_1,\dots,y_{n_1})\in \mathcal{P}^{(2)}_{1,x_0,n_1,d_1}} \P_{x_0}\bigl[X_1=y_1,\dots,X_{n_1}=y_{n_1}\bigr] \\
&&\quad \cdot \underbrace{\sum_{\substack{z_0\in V:\\ \P[X_{\TT_{1}}=z_0]>0}} \sum_{m_1\geq 1} \sum_{(o,\bar w_1,\dots,\bar w_{m_1})\in \mathcal{P}^{(1)}_{1,z_0,m_1}} \P\bigl[X_1=\bar w_1,\dots,X_{m_1}=\bar w_{m_1}\bigr] \cdot (1-\xi_1)}_{=1} \\
&&\quad \cdot \sum_{n_2\geq 1} \sum_{(y_{n_1},z_1,\dots,z_{n_2})\in \mathcal{P}^{(2)}_{2,y_{n_1},n_2,d_2}} \P_{y_{n_1}}\bigl[X_1=z_1,\dots,X_{n_2}=z_{n_2}\bigr] \cdot (1-\xi_1)\\
\end{eqnarray*}
\pagebreak[5]
\begin{eqnarray*}
&\stackrel{(\ref{equ:path-shift})}{=}& \biggl(\sum_{\substack{x_0\in V:\\ \P[X_{\TT_{0}}=x_0]>0}} \sum_{m\geq 1} \sum_{(o,w_1,\dots,w_m)\in \mathcal{P}^{(1)}_{0,x_0,m}} \P\bigl[X_1=w_1,\dots,X_m=w_m\bigr] \\
&&\quad \cdot \sum_{n_1\geq 1} \sum_{(x_0,y_1,\dots,y_{n_1})\in \mathcal{P}^{(2)}_{1,x_0,n_1,d_1}} \P_{x_0}\bigl[X_1=y_1,\dots,X_{n_1}=y_{n_1}\bigr] \cdot (1-\xi_1)\biggr)\\
&&\quad \cdot \biggl( \sum_{\substack{z_0\in V:\\ \P[X_{\TT_{1}}=z_0]>0}} \sum_{m_1\geq 1} \sum_{(o,\bar w_1,\dots,\bar w_{m_1})\in \mathcal{P}^{(1)}_{1,z_0,m_1}} \P\bigl[X_1=\bar w_1,\dots,X_{m_1}=\bar w_{m_1}\bigr]  \\
&&\quad \cdot \sum_{n_2\geq 1} \sum_{(z_0,z_1,\dots,z_{n_2})\in \mathcal{P}^{(2)}_{2,z_0,n_2,d_2}} \P_{z_0}\bigl[X_1=z_1,\dots,X_{n_2}=z_{n_2}\bigr]\cdot (1-\xi_1)\biggr) \\
&=& \P[L_1=d_1]\cdot \P[L_2=d_2].
\end{eqnarray*} 
This finishes the proof of the proposition.
\end{proof}
The next step is to consider those times $\TT_m$ which occur until time $n\in\N$. For this purpose, define for $n\in\N_0$ 
$$
\mathbf{t}(n):= \sup\bigl\lbrace m\in\N_0 \mid \TT_m \leq n\bigr\rbrace.
$$
In \cite[(4.8)]{gilch:22} it is shown that
\begin{eqnarray}
\frac{\TT_{\mathbf{t}(n)}}{\mathbf{t}(n)}& \xrightarrow{n\to\infty} & \mathbb{E}\bigl[\TT_1-\TT_0\bigr] \quad \textrm{almost surely.}\label{equ:TT-conv}
\end{eqnarray}

\begin{lemma}\label{lem:VarD>0}
We have:
\begin{enumerate}
\item $\displaystyle \mathfrak{c}=\frac{\mathbb{E}\bigl[\widetilde{\mathcal{C}}_1 \bigr]}{\E[\TT_1-\TT_0]}$ almost surely.
\item $\mathbb{E}[\mathcal{D}_1]=0$.
\item Assume that there are $\bar g_0\in V$ and $\kappa\in\N$ such that $\mathbb{P}[X_\kappa=\bar g_0\mid X_0=\bar g_0]>0$. Then: $\mathrm{Var}(\mathcal{D}_1) >0$.
\end{enumerate}
\end{lemma}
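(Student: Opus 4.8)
For part (i), the plan is to evaluate the almost sure limit $\mathrm{Cap}(\mathbf{R}_n)/n\to\mathfrak{c}$ of Theorem~\ref{th:capacity-existence} along the a.s.\ strictly increasing (hence divergent) subsequence $n=\TT_m=\e{\tau_m}$. By Proposition~\ref{prop:capacity-decomposition} with $k=\tau_m$ and the telescoping identity $\sum_{l=1}^m\widetilde{\mathcal{C}}_l=\sum_{j=\tau_0}^{\tau_m-1}\mathcal{C}_j$,
\[
\mathrm{Cap}(\mathbf{R}_{\e{\tau_m}})=\mathcal{C}_0^\ast+\sum_{i=0}^{\tau_0-1}\mathcal{C}_i+\sum_{l=1}^{m}\widetilde{\mathcal{C}}_l+\mathcal{O}_{\tau_m},
\]
where $\mathcal{C}_0^\ast+\sum_{i=0}^{\tau_0-1}\mathcal{C}_i$ is a.s.\ finite and $0\le\mathcal{O}_{\tau_m}/\TT_m\le\mathcal{O}_{\tau_m}/\tau_m\to0$ by Corollary~\ref{lem:Ok-convergence} (note $\tau_m\le\e{\tau_m}=\TT_m$). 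Dividing by $\TT_m$ gives $\mathfrak{c}=\lim_{m\to\infty}\frac1{\TT_m}\sum_{l=1}^m\widetilde{\mathcal{C}}_l$. Writing $\widetilde{\mathcal{C}}_l=\mathcal{D}_l+\mathfrak{c}(\TT_l-\TT_{l-1})$ and applying the strong law of large numbers to the i.i.d.\ sequences $(\mathcal{D}_l)_{l\ge1}$ (integrable since $\mathrm{Var}(\mathcal{D}_1)<\infty$ by the preceding lemma) and $(\TT_l-\TT_{l-1})_{l\ge1}$ (integrable by Proposition~\ref{prop:TT-exp-moments}), together with $\TT_0/m\to0$, I get $\frac1m\sum_{l=1}^m\widetilde{\mathcal{C}}_l\to\E[\widetilde{\mathcal{C}}_1]$ (finite by Lemma~\ref{lem:Ci-estimate}) and $\frac1m\TT_m\to\E[\TT_1-\TT_0]\in(0,\infty)$. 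Hence $\mathfrak{c}=\E[\widetilde{\mathcal{C}}_1]/\E[\TT_1-\TT_0]$, which is (i). Part (ii) then follows at once: $\E[\mathcal{D}_1]=\E[\widetilde{\mathcal{C}}_1]-\mathfrak{c}\,\E[\TT_1-\TT_0]=0$ by (i).

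For part (iii), since $\E[\mathcal{D}_1]=0$ and $\mathrm{Var}(\mathcal{D}_1)<\infty$, it suffices to show that $\mathcal{D}_1$ is not a.s.\ constant, i.e.\ that it attains two distinct values with positive probability. Because $\mathcal{D}_1=\widetilde{\mathcal{C}}_1-\mathfrak{c}(\TT_1-\TT_0)$ with $\mathfrak{c}>0$, it is enough to exhibit two events of positive probability on which the first regeneration block produces the \emph{same} value of $\widetilde{\mathcal{C}}_1$ but durations $\TT_1-\TT_0$ differing by some $\kappa'>0$; on those events $\mathcal{D}_1$ differs by $\mathfrak{c}\,\kappa'\ne0$. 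The structural fact I would invoke is the one isolated in the proof of Proposition~\ref{prop:capacity-exit-decomposition}: each $\mathcal{C}_j$, hence $\widetilde{\mathcal{C}}_1=\sum_{j=\tau_0}^{\tau_1-1}\mathcal{C}_j$, is a deterministic function of the block data $\mathbf{W}_j,\psi_j$ ($\tau_0\le j\le\tau_1$) alone and is \emph{insensitive to timing}; so any modification of the block trajectory that preserves all letters $\mathbf{W}_j$, all occupation patterns $\psi_j$, and the number $\tau_1-\tau_0$ of intervening levels leaves $\widetilde{\mathcal{C}}_1$ unchanged.

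The modification is a ``dead loop''. The hypothesis $\mathbb{P}[X_\kappa=\bar g_0\mid X_0=\bar g_0]>0$ supplies a closed path $\bar g_0=u_0,u_1,\dots,u_\kappa=\bar g_0$; via the free--product structure and Lemma~\ref{lem:cone-probs} one transplants a sub-excursion of it (one that stays inside a single cone) to a vertex $w$ that the walk visits during the first block, inside that cone. With positive probability one realizes a first regeneration block in which the walk reaches $w$, traverses the transplanted loop once (thereby visiting its vertices), and then completes the block---the last step being possible with positive probability by positive recurrence of $(\mathbf{W}_k,\psi_k)_{k\in\N}$ (Proposition~\ref{prop:pos-rec-process}). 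The companion event is identical except that the loop is traversed \emph{twice}; the second traversal revisits only already-seen vertices, so it adds nothing to the range, changes no $\psi_j$, and---the loop being inserted within a single inter-exit-time interval---creates no new exit time and does not change $\tau_1-\tau_0$, while costing $\kappa'>0$ extra steps. Hence $\widetilde{\mathcal{C}}_1$ is unchanged but $\TT_1-\TT_0$ increases by $\kappa'$, so $\mathcal{D}_1$ takes two distinct values with positive probability and $\mathrm{Var}(\mathcal{D}_1)>0$.

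The main obstacle is the combinatorial bookkeeping in the last step: one must place the transplanted dead loop so that (a) it is admissible for Lemma~\ref{lem:cone-probs} (type-compatibility of the appended letters), (b) it lies within a single interval $[\e{k},\e{k+1})$, so no exit time is created or shifted and $\tau_1-\tau_0$ is unaffected, and (c) it leaves every $\psi_j$ literally unchanged. I expect (a) to be handled by first routing the walk one further step into a child cone and transplanting a sub-excursion living in that child cone---reducing, via Lemma~\ref{lem:cone-probs} and a case distinction on whether the $\bar g_0$-excursion stays inside $C(\bar g_0)$ or leaves through the parent of $\bar g_0$, to a short loop available within a cone---and (b)--(c) by inserting the loop at the very beginning of the block, just after the vertex $X_{\TT_0}$ and before the walk has descended to level $\tau_0+1$.
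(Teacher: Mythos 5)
Your proposal is correct and takes essentially the same approach as the paper: for (i)--(ii) you evaluate $\mathrm{Cap}(\mathbf{R}_n)/n$ along the regeneration subsequence $n=\TT_m$ and exploit the i.i.d.\ increment structure (where the paper appeals to ``analogously to Proposition~\ref{prop:capacity-exit-decomposition} with $\e{i}$ replaced by $\TT_i$'', you spell this out via the SLLN for $(\mathcal{D}_l)$ and $(\TT_l-\TT_{l-1})$, which is the cleaner rendering of that deferred step, and your handling of the boundary terms $\mathcal{C}_0^\ast+\sum_{i<\tau_0}\mathcal{C}_i$ and $\mathcal{O}_{\tau_m}$ is sound); for (iii) you exhibit two first-block realizations that traverse the same vertices but with a repeated dead loop, giving the same $\widetilde{\mathcal{C}}_1$ but different $\TT_1-\TT_0$, exactly the paper's idea. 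The only difference is that your (iii) flags, but does not fully execute, the bookkeeping you correctly identify as the crux (type-compatibility via Lemma~\ref{lem:cone-probs}, placement within a single inter-exit interval, invariance of the $\psi_j$'s), whereas the paper resolves it by writing down explicit paths $\Pi_1,\Pi_2$ from $\mathfrak{g}$ to $\mathfrak{g}\bar g_0\mathfrak{g}$ with a loop at $\mathfrak{g}\bar g_0$ traversed once versus twice.
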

\begin{proof}\,
\begin{enumerate}
\item Due to Lemma \ref{lem:Ci-estimate} together with existence of exponential moments of $\TT_1-\TT_0$ (see Proposition \ref{prop:TT-exp-moments}), we have $\E\bigl[ \widetilde{\mathcal{C}}_1 \bigr] <\infty$. Now we obtain, completely analogously to the proof of \mbox{Proposition \ref{prop:capacity-exit-decomposition}} (replace the exit times $\mathbf{e}_i$ by $\TT_i$),
$$
\lim_{n\to\infty} \frac{\mathrm{Cap}(\mathbf{R}_{\TT_k})}{k} = \E\bigl[ \widetilde{\mathcal{C}}_1 \bigr] \quad \textrm{almost surely.}
$$
Together with (\ref{equ:TT-conv}) we obtain
$$
  \mathfrak{c}=\lim_{n\to\infty} \frac{\mathrm{Cap}(\mathbf{R}_{\TT_{\mathbf{t}(n)}})}{\TT_{\mathbf{t}(n)}}
=\lim_{n\to\infty} \frac{\mathrm{Cap}(\mathbf{R}_{\TT_{\mathbf{t}(n)}})}{\mathbf{t}(n)}\frac{\mathbf{t}(n)}{\TT_{\mathbf{t}(n)}} = \frac{\mathbb{E}\bigl[\widetilde{\mathcal{C}}_1 \bigr]}{\E[\TT_1-\TT_0]} \  \textrm{ almost surely.}
$$
\item This follows now immediately from (i) with
$$
\mathbb{E}\bigl[\widetilde{\mathcal{C}}_1 \bigr] = \mathfrak{c}\cdot \E[\TT_1-\TT_0].
$$
and by definition of $\mathcal{D}_1$: 
$$
\E[\mathcal{D}_1] = \mathbb{E}\bigl[\widetilde{\mathcal{C}}_1 \bigr]  - \mathfrak{c}\cdot \E[\TT_1-\TT_0]=0.
$$
\item 
It suffices to show that $\mathcal{D}_1=\widetilde{\mathcal{C}}_1- \mathfrak{c}\cdot (\TT_1-\TT_0)$ is not almost surely constant by constructing two paths of different length but which visit the same vertices between time $\TT_0$ and $\TT_1$.
Assume now for a moment that $\bar g_0\in V_2^\times$ and take now any path inside $C(\mathfrak{g})$ from $\mathfrak{g}$ to $\mathfrak{g}\bar g_0 \mathfrak{g}$ which visits $\mathfrak{g}\bar g_0$ twice, say 
$$ 
\Pi_1 := (\mathfrak{g},g_1,\dots,g_{j-1},\mathfrak{g}\bar g_0,g_{j+1},\dots,g_{j+k-1},\mathfrak{g}\bar g_0,g_{j+k+1},\dots,g_{j+k+l-1},\mathfrak{g}\bar g_0 \mathfrak{g}).
$$
Consider also a second path, where we add another loop at $\mathfrak{g}\bar g_0$ as follows:
\begin{eqnarray*}
\Pi_2&:=& (\mathfrak{g},g_1,\dots,g_{j-1},\mathfrak{g}\bar g_0,g_{j+1},\dots,g_{j+k-1},\mathfrak{g}\bar g_0,\\
&& \quad g_{j+1},\dots,g_{j+k-1},\mathfrak{g}\bar g_0 ,g_{j+k+1},\dots,g_{j+k+l-1},\mathfrak{g}\bar g_0 \mathfrak{g}).
\end{eqnarray*}
Both paths visit the same elements of $V$, but have different lengths. Then a trajectory in $V^{\N_0}$ starting at $o$, which goes on a shortest path to $\mathfrak{g}$, followed by $\Pi_1$ and stays afterwards inside $C(\mathfrak{g}\bar g_0 \mathfrak{g})$ has strictly positive probability to occur and it leads to $\TT_1-\TT_0=k+l$ and to some value of $\widetilde{\mathcal{C}}_1=c_1\in\mathbb{R}$. Analogously, a trajectory in $V^{\N_0}$ starting at $o$, which goes on a shortest path to $\mathfrak{g}$, followed by $\Pi_2$ and stays afterwards inside $C(\mathfrak{g}\bar g_0 \mathfrak{g})$ has also strictly positive probability to occur and it leads to $\TT_1-\TT_0=2k+l$, but to the same value of $\widetilde{\mathcal{C}}_1=c_1$. Hence, both paths lead to different realizations of $\mathcal{D}_1$ due to $\mathfrak{c}>0$.
That is, there are $d_1,d_2\in\mathbb{R}$, $d_1\neq d_2$, such that $\P[\mathcal{D}_1=d_1],\P[\mathcal{D}_1=d_2]>0$, providing $\mathrm{Var}(\mathcal{D}_1)>0$.
\par
The cases $\bar g_0\in  V_1^\times$ and $\bar g_0=o$ can be handled completely analogously, and the case $\bar g_0 \in V\setminus (V_1^\ast \cup V_2^\ast)$ can be easily traced back to the case $\bar g_0\in V_1\cup V_2$.
%
%
%
%
\end{enumerate}
\end{proof}

For $k\in\N$, set 
$$
\mathfrak{S}_k :=  \sum_{i=1}^k \mathcal{D}_i \quad \textrm{ and }\quad 
\widetilde{\mathfrak{S}}_k  :=  \sum_{i=1}^k \widetilde{\mathcal{C}}_i.
$$

\begin{proposition}\label{prop:4.7-neu} We have:
$$
\frac{\mathrm{Cap}(\mathbf{R}_n)-\widetilde{\mathfrak{S}}_{\mathbf{t}(n)}}{\sqrt{n}} \xrightarrow{\Prob} 0.
$$
\end{proposition}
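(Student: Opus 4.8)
The plan is to compare $\mathrm{Cap}(\mathbf{R}_n)$ with $\widetilde{\mathfrak{S}}_{\mathbf{t}(n)}$ by squeezing $\mathbf{R}_n$ between $\mathbf{R}_{\TT_{\mathbf{t}(n)}}$ and $\mathbf{R}_{\TT_{\mathbf{t}(n)+1}}$ and applying the capacity decomposition of Proposition \ref{prop:capacity-decomposition} at the exit times $\TT_{\mathbf{t}(n)}=\e{\tau_{\mathbf{t}(n)}}$ and $\TT_{\mathbf{t}(n)+1}=\e{\tau_{\mathbf{t}(n)+1}}$. First I would record, using Proposition \ref{prop:capacity-decomposition} with $k=\tau_{\mathbf{t}(n)}$ together with the telescoping identity $\widetilde{\mathfrak{S}}_{\mathbf{t}(n)}=\sum_{i=1}^{\mathbf{t}(n)}\widetilde{\mathcal{C}}_i=\sum_{j=\tau_0}^{\tau_{\mathbf{t}(n)}-1}\mathcal{C}_j$, the identity
$$
\mathrm{Cap}(\mathbf{R}_{\TT_{\mathbf{t}(n)}})=\Bigl(\mathcal{C}_0^\ast+\sum_{i=0}^{\tau_0-1}\mathcal{C}_i\Bigr)+\widetilde{\mathfrak{S}}_{\mathbf{t}(n)}+\mathcal{O}_{\tau_{\mathbf{t}(n)}},
$$
and the analogous one at $\tau_{\mathbf{t}(n)+1}$. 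The bracketed quantity does not depend on $n$ and is bounded by $\mathrm{Cap}(\mathbf{R}_{\TT_0})\le|\mathbf{R}_{\TT_0}|\le\TT_0+1<\infty$ almost surely (here $\TT_0$ has exponential moments by Proposition \ref{prop:TT-exp-moments}), so after division by $\sqrt n$ it vanishes almost surely.

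The key structural observation is that $\mathcal{O}_{\tau_{\mathbf{t}(n)}}\le1$, and likewise $\mathcal{O}_{\tau_{\mathbf{t}(n)+1}}\le1$. Indeed, by the very definition of the times $\tau_k$ we have $\TT_{\mathbf{t}(n)}=T_{X_{\TT_{\mathbf{t}(n)}}}$, so the vertex $X_{\TT_{\mathbf{t}(n)}}$ is visited for the first time at time $\TT_{\mathbf{t}(n)}$; since any vertex of $C(X_{\TT_{\mathbf{t}(n)}})$ other than $X_{\TT_{\mathbf{t}(n)}}$ can only be reached after passing through $X_{\TT_{\mathbf{t}(n)}}$, it follows that $\mathbf{R}_{\TT_{\mathbf{t}(n)}}\cap C(X_{\TT_{\mathbf{t}(n)}})=\{X_{\TT_{\mathbf{t}(n)}}\}$, the ``interior'' sum in the definition of $\mathcal{O}_{\tau_{\mathbf{t}(n)}}$ is empty, and the remaining summand is a probability. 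This is precisely where working with the regeneration times $\tau_k$ rather than with arbitrary exit times is essential: Corollary \ref{lem:Ok-convergence} would only give an $o(n)$, not an $o(\sqrt n)$, bound on $\mathcal{O}_k$, which would be useless at the $\sqrt n$ scale.

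Next I would feed the two identities into the estimate of Lemma \ref{lem:Cap-difference}. Since $\TT_{\mathbf{t}(n)}\le n<\TT_{\mathbf{t}(n)+1}$ gives $\mathbf{R}_{\TT_{\mathbf{t}(n)}}\subseteq\mathbf{R}_n\subseteq\mathbf{R}_{\TT_{\mathbf{t}(n)+1}}$, Lemma \ref{lem:Cap-difference} yields $\mathrm{Cap}(\mathbf{R}_n)\le\mathrm{Cap}(\mathbf{R}_{\TT_{\mathbf{t}(n)}})+|\mathbf{R}_n\setminus\mathbf{R}_{\TT_{\mathbf{t}(n)}}|$ and $\mathrm{Cap}(\mathbf{R}_n)\ge\mathrm{Cap}(\mathbf{R}_{\TT_{\mathbf{t}(n)+1}})-|\mathbf{R}_{\TT_{\mathbf{t}(n)+1}}\setminus\mathbf{R}_n|$. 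Substituting the identities, discarding the non-negative terms $\mathcal{C}_0^\ast,\mathcal{C}_i,\mathcal{O}_{\tau_{\mathbf{t}(n)+1}}$, using $\widetilde{\mathfrak{S}}_{\mathbf{t}(n)+1}\ge\widetilde{\mathfrak{S}}_{\mathbf{t}(n)}$ (because $\widetilde{\mathcal{C}}_i\ge0$), and bounding $|\mathbf{R}_n\setminus\mathbf{R}_{\TT_{\mathbf{t}(n)}}|$ and $|\mathbf{R}_{\TT_{\mathbf{t}(n)+1}}\setminus\mathbf{R}_n|$ by $|\mathbf{R}_{\TT_{\mathbf{t}(n)+1}}\setminus\mathbf{R}_{\TT_{\mathbf{t}(n)}}|\le\TT_{\mathbf{t}(n)+1}-\TT_{\mathbf{t}(n)}$ (at most one fresh vertex per time step), I arrive at the two-sided bound
$$
-\bigl(\TT_{\mathbf{t}(n)+1}-\TT_{\mathbf{t}(n)}\bigr)\ \le\ \mathrm{Cap}(\mathbf{R}_n)-\widetilde{\mathfrak{S}}_{\mathbf{t}(n)}\ \le\ (\TT_0+2)+\bigl(\TT_{\mathbf{t}(n)+1}-\TT_{\mathbf{t}(n)}\bigr).
$$

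It then remains to prove $(\TT_{\mathbf{t}(n)+1}-\TT_{\mathbf{t}(n)})/\sqrt n\xrightarrow{\Prob}0$, which I regard as the only genuinely probabilistic and the main (though standard) step. Writing $Z_j:=\TT_j-\TT_{j-1}$ for $j\ge1$, these are i.i.d.\ with exponential moments by Proposition \ref{prop:TT-exp-moments}, in particular $\E[Z_1^2]<\infty$; and $\TT_m\ge m$ for all $m$ (a word of length $m$ requires at least $m$ steps, so $\e{k}\ge k$ and $\tau_m\ge m$), whence $\mathbf{t}(n)\le n$ and $\TT_{\mathbf{t}(n)+1}-\TT_{\mathbf{t}(n)}\le\max_{1\le j\le n+1}Z_j$. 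For $\eps>0$, a union bound plus Markov's inequality applied to $Z_1^2\mathds{1}_{\{Z_1^2>\eps^2n\}}$ give $\P[\max_{1\le j\le n+1}Z_j>\eps\sqrt n]\le(n+1)\,\P[Z_1^2>\eps^2n]\le\eps^{-2}\tfrac{n+1}{n}\,\E[Z_1^2\mathds{1}_{\{Z_1^2>\eps^2n\}}]\to0$ by dominated convergence. Combining this with the displayed two-sided bound and $\TT_0/\sqrt n\to0$ almost surely finishes the proof. Apart from keeping the decomposition bookkeeping straight and isolating the observation $\mathcal{O}_{\tau_{\mathbf{t}(n)}}\le1$, I do not anticipate any serious obstacle.
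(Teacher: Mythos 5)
Your proof is correct, but the route to the deterministic sandwich bound is genuinely different from the paper's. The paper directly compares, summand by summand, the probabilities $\P_x[S_{\mathcal{R}_k(\omega)}=\infty]$ appearing in $\widetilde{\mathfrak{S}}_{\mathbf{t}(n)}$ with the probabilities $\P_x[S_{\mathbf{R}_n(\omega)}=\infty]$ appearing in $\mathrm{Cap}(\mathbf{R}_n)$ (using, as in the proof of Proposition \ref{prop:capacity-decomposition}, that paths between successive cones must pass through cone roots), and thereby obtains the one-sided inequality
$$
0\ \le\ \mathrm{Cap}(\mathbf{R}_n)-\widetilde{\mathfrak{S}}_{\mathbf{t}(n)}\ \le\ \TT_{\mathbf{t}(n)+1}-\TT_{\mathbf{t}(n)}+\TT_0+2.
$$
You instead invoke Proposition \ref{prop:capacity-decomposition} at $k=\tau_{\mathbf{t}(n)}$ and $k=\tau_{\mathbf{t}(n)+1}$, combine it with Lemma \ref{lem:Cap-difference}, and isolate the nice structural observation that $\mathcal{O}_{\tau_k}\le1$ because the regeneration condition $\e{\tau_k}=T_{X_{\e{\tau_k}}}$ forces $\mathbf{R}_{\TT_k}\cap C(X_{\TT_k})=\{X_{\TT_k}\}$; this is cleaner and more modular, but it only gives the two-sided bound with lower endpoint $-(\TT_{\mathbf{t}(n)+1}-\TT_{\mathbf{t}(n)})$ rather than $0$. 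That nonnegativity is actually reused in the paper's proof of Theorem \ref{th:clt}, so the paper's direct comparison buys a fact your argument would have to re-derive there (or replace by a symmetric bound, which still works). The probabilistic step is the same union-bound-plus-Markov idea as in the paper (together with the same implicit use of $\mathbf{t}(n)\le n$), except you use second moments with a dominated-convergence/uniform-integrability truncation whereas the paper uses fourth moments; both are legitimate given the exponential moments from Proposition \ref{prop:TT-exp-moments}, and yours is marginally more economical.
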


\begin{proof}
Let  $\omega\in\Omega_0$. Observe that $\mathbf{R}_{\e{j}}(\omega)\subseteq \mathbf{R}_{\e{\mathbf{t}(n)}}(\omega)\subseteq \mathbf{R}_{n}(\omega)$ for $j\in\{1,\dots,\mathbf{t}(n)\}$. If $x\in \mathcal{R}_{k}^{(I)}(\omega)$, $k\in\{1,\dots,\e{\mathbf{t}(n)-1}\}$, then every path from $x$ to $C\bigl(X_{\e{k+1}}(\omega)\bigr)$ (or vice versa) has to pass through $X_{\e{k+1}}(\omega)$; therefore, for $k\in\{1,\dots,\e{\mathbf{t}(n)-1}\}$,
\begin{eqnarray*}
\Prob_x\bigl[S_{\mathcal{R}_{k}(\omega)}=\infty\bigr] &=& \Prob_x\bigl[S_{\mathbf{R}_{n}(\omega)}=\infty\bigr],\\
\Prob_{X_{\e{k}}(\omega)}\left[\begin{array}{c} S_{\mathcal{R}_{k-1}(\omega)}=\infty,\\ \forall n\geq 1: X_n\notin C\bigl(X_{\e{k}}(\omega)\bigr)\end{array}\right] &=& \Prob_{X_{\e{k}}(\omega)}\left[\begin{array}{c}S_{\mathbf{R}_{n}(\omega)}=\infty,\\ \forall n\geq 1: X_n\notin C\bigl(X_{\e{k}}(\omega)\bigr)\end{array}\right],\\
\Prob_{X_{\e{k}}(\omega)}\left[\begin{array}{c} S_{\mathcal{R}_{k}(\omega)}=\infty,\\ \forall n\geq 1: X_n\in C\bigl(X_{\e{k}}(\omega)\bigr)\end{array}\right] &=& \Prob_{X_{\e{k}}(\omega)}\left[\begin{array}{c}S_{\mathbf{R}_{n}(\omega)}=\infty,\\ \forall n\geq 1: X_n\in C\bigl(X_{\e{k}}(\omega)\bigr)\end{array}\right].
\end{eqnarray*}

Comparing the summands in $\mathrm{Cap}\bigl(\mathbf{R}_n(\omega)\bigr)$ and $\widetilde{\mathfrak{S}}_{\mathbf{t}(n)}(\omega)$ and using the above 
 equations we get for $n$ large enough:
\begin{eqnarray*}
 \mathrm{Cap}\bigl(\mathbf{R}_n(\omega)\bigr)-\widetilde{\mathfrak{S}}_{\mathbf{t}(n)}(\omega) 
&=& \sum_{x\in \mathbf{R}_n(\omega)\cap C\bigl(X_{\e{\mathbf{t}(n)}}(\omega)\bigr)\setminus\{ X_{\e{\mathbf{t}(n)}}(\omega)\}} \Prob_x\bigl[ S_{\mathbf{R}_n(\omega)}=\infty\bigr]\\
&&\quad + \Prob_{X_{\e{\mathbf{t}(n)}}(\omega)}\Bigl[S_{\mathbf{R}_n(\omega)}=\infty, \forall n\geq 1: X_n\in C\bigl(X_{\e{\mathbf{t}(n)}}(\omega)\bigr)\Bigr]\\
&&\quad + \sum_{x\in  R_{\TT_0}(\omega): x\notin C\bigl(X_{\TT_0}(\omega)\bigr)} \Prob_x\bigl[ S_{\mathbf{R}_{\TT_0}(\omega)}=\infty\bigr] \\
&&\quad +  \Prob_{X_{\TT_0}(\omega)}\Bigl[S_{\mathbf{R}_{\TT_0}(\omega)}=\infty, \forall n\geq 1: X_n\notin C\bigl(X_{\TT_0}(\omega)\bigr)\Bigr] \\
&\leq & \TT_{\mathbf{t}(n)+1}(\omega) - \TT_{\mathbf{t}(n)}(\omega) + \TT_0(\omega) + 2.
\end{eqnarray*}
In particular, the first equation shows that $ \mathrm{Cap}\bigl(\mathbf{R}_n(\omega)\bigr)-\widetilde{\mathfrak{S}}_{\mathbf{t}(n)}(\omega) \geq 0$. Recall from Proposition \ref{prop:TT-exp-moments} that  $\TT_i-\TT_{i-1}$, $i\in\N$, are i.i.d. and that $\TT_0$ and $\TT_i-\TT_{i-1}$ have exponential moments.
For any $\varepsilon >0$ and $n\in\mathbb{N}$, we obtain then:
\begin{eqnarray*}
&&\Prob\Bigl[ \mathrm{Cap}(\mathbf{R}_n)-\widetilde{\mathfrak{S}}_{\mathbf{t}(n)} > \varepsilon \sqrt{n}, \mathbf{t}(n)\geq 1\Bigr]\\
&\leq &  \Prob\Bigl[ \TT_{\mathbf{t}(n)+1}-  \TT_{\mathbf{t}(n)}+ \TT_0 + 2>\varepsilon \sqrt{n}, \mathbf{t}(n)\geq 1 \Bigr] \\
&\leq & \Prob\Bigl[ \TT_{\mathbf{t}(n)+1}-  \TT_{\mathbf{t}(n)}+ \TT_0 >\frac{\varepsilon}{2} \sqrt{n}, \mathbf{t}(n)\geq 1 \Bigr] +  \Prob\Bigl[  2>\frac{\varepsilon}{2} \sqrt{n}, \mathbf{t}(n)\geq 1 \Bigr] \\
&\leq & \Prob\Bigl[ \exists k\in\{1,\dots,n\}: \TT_{k+1}-  \TT_{k}+ \TT_0 >\frac{\varepsilon}{2} \sqrt{n} \Bigr] +  \Prob\Bigl[  4>\varepsilon \sqrt{n}, \mathbf{t}(n)\geq 1 \Bigr] \\
&\leq & \Prob\Bigl[ \exists k\in\{1,\dots,n\}: \TT_{k+1}-  \TT_{k}>\frac{\varepsilon}{4} \sqrt{n} \Bigr] + \Prob\Bigl[ \TT_0 >\frac{\varepsilon}{4} \sqrt{n}\Bigr]+  \Prob\Bigl[ \substack{ 4>\varepsilon \sqrt{n}, \\ \mathbf{t}(n)\geq 1 }\Bigr] \\
&\stackrel{\textrm{Prop.\ref{prop:TT-exp-moments}}}{\leq} & n\cdot \Prob\Bigl[ \TT_{1}-  \TT_{0}>\frac{\varepsilon}{4} \sqrt{n} \Bigr] + \Prob\Bigl[ \TT_0 >\frac{\varepsilon}{4} \sqrt{n}\Bigr]+  \Prob\Bigl[  4>\varepsilon \sqrt{n}, \mathbf{t}(n)\geq 1 \Bigr] \\
&\leq & n\cdot \Prob\Bigl[ (\TT_{1}-  \TT_{0})^4>\frac{\varepsilon^4}{4^4} n^2 \Bigr] + \Prob\Bigl[ \TT_0 >\frac{\varepsilon}{4} \sqrt{n}\Bigr]+  \Prob\Bigl[  4>\varepsilon \sqrt{n}, \mathbf{t}(n)\geq 1 \Bigr] \\
&\leq & n\cdot \frac{\mathbb{E}\bigl[ (\TT_{1}-  \TT_{0})^4\bigr]}{\frac{\varepsilon^4}{4^4} n^2 } + \frac{\mathbb{E}[\TT_0]}{\frac{\varepsilon}{4} \sqrt{n}} + \Prob\Bigl[  4>\varepsilon \sqrt{n}, \mathbf{t}(n)\geq 1 \Bigr] 
\xrightarrow{n\to\infty} 0.
\end{eqnarray*}
The last inequality applied Markov's Inequality twice. Since $\mathbf{t}(n)\to \infty$  almost surely as $n\to\infty$, we have proven the claim.
\end{proof}

\begin{proof}[Proof of Theorem \ref{th:clt}]
By Billingsley \cite[Theorem 14.4]{billingsley:99}, we get the following convergence in distribution:
$$
\frac{\mathfrak{S}_{\mathbf{t}(n)}}{\sqrt{\mathrm{Var}(\mathcal{D}_1)} \cdot  \sqrt{\mathbf{t}(n)}} \xrightarrow{\textrm{d}} N(0,1).
$$
In \cite[p. 398]{gilch:22} it is shown that
$$
\frac{n}{\mathbf{t}(n)} \xrightarrow{n\to\infty} \E[\TT_1-\TT_0] \quad \textrm{almost surely}.
$$
An application of the Lemma of Slutsky yields
\begin{equation}\label{equ:N(0,1)}
\frac{\mathfrak{S}_{\mathbf{t}(n)}}{\sqrt{n}} = \frac{\mathfrak{S}_{\mathbf{t}(n)}}{\sqrt{\mathrm{Var}(\mathcal{D}_1)} \sqrt{\mathbf{t}(n)}}\frac{\sqrt{\mathbf{t}(n)}}{\sqrt{n}}\sqrt{\mathrm{Var}(\mathcal{D}_1)} \xrightarrow{\textrm{d}} N(0,\sigma^2),
\end{equation}
where 
\begin{equation}\label{equ:sigma}
\sigma^2 := \frac{\mathrm{Var}(\mathcal{D}_1) }{\E[\TT_1-\TT_0] } = \frac{\E\bigl[\bigl( \widetilde{\mathcal{C}}_1-\mathfrak{c}\cdot (\TT_1-\TT_0)\bigr)^2\bigr] }{\E[\TT_1-\TT_0] }>0.
\end{equation}
The next goal is to show that
$$
\frac{\bigl(\mathrm{Cap}(\mathbf{R}_n)-n\cdot \mathfrak{c}\bigr) -\mathfrak{S}_{\mathbf{t}(n)}}{\sqrt{n}} \xrightarrow{\Prob} 0.
$$
In order to prove this convergence we note that
$$
\mathfrak{S}_{\mathbf{t}(n)} = \widetilde{\mathfrak{S}}_{\mathbf{t}(n)} - \bigl(\TT_{\mathbf{t}(n)}-\TT_0\bigr)\cdot \mathfrak{c}.
$$
This equation together with  $\mathrm{Cap}(\mathbf{R}_n) - \widetilde{\mathfrak{S}}_{\mathbf{t}(n)}\geq 0$ almost surely (see proof of \mbox{Proposition \ref{prop:4.7-neu})} and $n\geq \TT_{\mathbf{t}(n)} -\TT_0$ yields:
\begin{eqnarray*}
&&\Prob\Bigl[ \Bigl|\bigl(\mathrm{Cap}(\mathbf{R}_n)-n\cdot \mathfrak{c}\bigr) - \mathfrak{S}_{\mathbf{t}(n)} \Bigr|>\varepsilon   \cdot \sqrt{n}, \mathbf{t}(n)\geq 1\Bigr]\\
&\leq & \Prob\biggl[\begin{array}{c} \bigl| \mathrm{Cap}(\mathbf{R}_n)- \widetilde{\mathfrak{S}}_{\mathbf{t}(n)} \bigr|> \frac{\varepsilon}{2} \cdot  \sqrt{n},\\ \mathbf{t}(n)\geq 1\end{array}\biggr]
+ \Prob\biggl[\begin{array}{c} \mathfrak{c} \cdot \bigl(n - (\TT_{\mathbf{t}(n)}-\TT_0)\bigr) > \frac{\varepsilon}{2} \cdot  \sqrt{n},\\ \mathbf{t}(n)\geq 1\end{array}\biggr] \\
&\leq & \underbrace{\Prob\biggl[ \begin{array}{c}\mathrm{Cap}(\mathbf{R}_n) - \widetilde{\mathfrak{S}}_{\mathbf{t}(n)} > \frac{\varepsilon}{2}\cdot   \sqrt{n},\\ \mathbf{t}(n)\geq 1\end{array}\biggr]}_{(\ast)}
+ \underbrace{\Prob\biggl[\begin{array}{c} \mathfrak{c} \cdot \bigl(n - (\TT_{\mathbf{t}(n)}-\TT_0)\bigr) > \frac{\varepsilon}{2} \cdot  \sqrt{n},\\  \mathbf{t}(n)\geq 1\end{array}\biggr]}_{(\ast\ast)}.
\end{eqnarray*}
By Proposition \ref{prop:4.7-neu}, $(\ast)$ tends to $0$ as $n\to\infty$. Furthermore, in \cite[p. 398]{gilch:22} it is shown that $(\ast\ast)$ tends also to $0$ as $n\to\infty$. Hence, since $\mathbf{t}(n)\to\infty$ almost surely,
\begin{equation}\label{equ:convergence-in-prob-clt}
\Prob\Bigl[ \Bigl|\bigl(\mathrm{Cap}(\mathbf{R}_n)-n\cdot \mathfrak{c}\bigr) -\mathfrak{S}_{\mathbf{t}(n)} \Bigr| >\varepsilon \cdot   \sqrt{n}\Bigr] \xrightarrow{n\to\infty} 0.
\end{equation}

Another application of the Lemma of Slutsky together with  (\ref{equ:N(0,1)})  and (\ref{equ:convergence-in-prob-clt}) yields the proposed central limit theorem:
$$
\frac{\mathrm{Cap}(\mathbf{R}_n)-n\cdot \mathfrak{c}}{\sigma \cdot \sqrt{n}} \xrightarrow{d} N(0,1).
$$

\end{proof}

\begin{remark}\label{rem:degen-cases}
In this section we have assumed that there are $x_0\in V$ and $\kappa\in\N$ such that $p^{(\kappa)}(x_0,x_0)>0$. We present an example in which this assumption does not hold and where we have $\mathrm{Var}(\mathcal{D}_1)=0$ such that a central limit theorem becomes redundant.
\par
Let  $V_1=\{o_1=g_0,g_1,g_2,\dots\}$, $V_2=\{o_2=h_0,h_1,h_2,\dots\}$ be infinite, but countable sets, and set the transition probabilities $p_1(g_n,g_{n+1}):=1$, $p_2(h_n,h_{n+1}):=1$ for $n\in\N_0$. Set $\alpha:=\frac12$. It is easy to check that $\mathbf{R}_n(\omega)=n+1$ and $\P_x[S_{\mathbf{R}_n(\omega)}=\infty]=\frac12$ for all $\omega\in\Omega_0$ and $x\in\mathbf{R}_n(\omega)\setminus\{X_n(\omega)\}$. This implies $\mathfrak{c}=\frac12$. 
\par
Furthermore, set $\mathfrak{g}:=g_1$. Then:
\begin{eqnarray*}
\widetilde{\mathcal{C}}_1(\omega) &= &  \bigl(\TT_1(\omega)-\TT_0(\omega)-1\bigr)\cdot \frac12 \\
&&\quad + 
\P_{X_{\TT_0(\omega)}}\left[ \substack{
S_{\mathcal{R}_{\tau_0}(\omega)}=\infty, \\
\forall n\geq 1: X_n\in C\bigl(X_{\TT_0(\omega)}\bigr)
}\right]
+
\P_{X_{\TT_1(\omega)}}\left[ \substack{
S_{\mathcal{R}_{\tau_1}(\omega)}=\infty, \\
\forall n\geq 1: X_n\notin C\bigl(X_{\TT_1(\omega)}\bigr)
}\right] \\
&=& \bigl(\TT_1(\omega)-\TT_0(\omega)-1\bigr)\cdot \frac12 
+ \frac12+0= \bigl(\TT_1(\omega)-\TT_0(\omega)\bigr)\cdot \frac12 \\
&=& \bigl(\TT_1(\omega)-\TT_0(\omega)\bigr)\cdot \mathfrak{c},
\end{eqnarray*}
that is, $\mathcal{D}_1=0$ almost surely, implying $\mathrm{Var}(\mathcal{D}_1)=0$. A central limit theorem is redundant in this example.
\end{remark}

\section{Analyticity of the Asymptotic Capacity}
\label{sec:analyticity}

In this section we prove that $\mathfrak{c}$ varies real-analytically if the transition matrix of the underlying  random walk depends  on finitely many parameters only. For this purpose, we 
fix graphs $\mathcal{X}_1,\mathcal{X}_2$ (arising from any given transition matrices $P_1,P_2$) from Section \ref{sec:rw-on-fp} and set \mbox{$E_i:=\{(x,y)\in V_i^2\mid p_i(x,y)>0\}$} for $i\in\calI$, the set of oriented edges of $\mathcal{X}_i$. We assume from now on that  each non-negative single-step transition probability of the random walk on $V$ takes one out of finitely many parameters $p_1,\dots,p_d$, $d\in\mathbb{N}$, which take values in $(0,1)$. That is, if $p(x,y)>0$ for $x,y\in V$ then $p(x,y)=p_j$ for some $j\in\{1,\dots,d\}$. 
\par 
The idea is now to vary the values of the parameters slightly such that we still obtain a well-defined random walk on $V$, and to study the behaviour of $\mathfrak{c}$ as a function in $(p_1,\dots,p_d)$. For this purpose, let $\eta: E_1\cup E_2 \to \{p_1,\dots,p_d\}$ be a mapping. Then a parameter vector $\underline{p}:=(p_1,\dots,p_d)\in (0,1)^d$ gives rise to a \textit{well-defined random walk} on $V$ if
\begin{eqnarray*}
\forall x\in V_1: \ \sum_{y\in V_1: (x,y)\in E_1} \eta(x,y) + \sum_{z\in V_2: (o_2,z)\in E_2} \eta(o_2,z)&=&1 \quad \textrm{ and}\\
\forall x\in V_2: \ \sum_{y\in V_2: (x,y)\in E_2} \eta(x,y) + \sum_{z\in V_1: (o_1,z)\in E_1} \eta(o_1,z)&=&1.
\end{eqnarray*}
In other words, the probabilities of the outgoing edges at each $x\in V_1\cup V_2$ must sum up to $1$. Then $\alpha:=\sum_{y\in V_1: (o_1,y)\in E_1}\eta(o_1,y)$, and for $x_1,y_1\in V_1$, $x_2,y_2\in V_2$
$$
p_1(x_1,y_1):=\begin{cases}
\frac{\eta(x_1,y_1)}{\alpha}, & \textrm{if } (x_1,y_1)\in E_1,\\
0, & \textrm{otherwise,}
\end{cases}
\ \textrm{ and } \
p_2(x_2,y_2):=\begin{cases}
\frac{\eta(x_2,y_2)}{1-\alpha}, & \textrm{if } (x_2,y_2)\in E_2,\\
0, & \textrm{otherwise}.
\end{cases}
$$
Denote by
$$
\mathcal{P}:=\bigl\lbrace \underline{p}=(p_1,\dots,p_d)\in (0,1)^d\mid \underline{p} \textrm{ defines a well-defined random walk on $V$}\bigr\rbrace
$$
the set of parameter vectors which allow well-defined random walks of constant support induced by $E_1\cup E_2$.
Observe that each random walk defined by any $\underline{p}\in\mathcal{P}$  leads to the same transition graph $\mathcal{X}$. 
Moreover, we may then regard $\mathfrak{c}$ as a mapping
$$
\mathfrak{c}: \mathcal{P} \to [0,1], (p_1,\dots,p_d)\mapsto \mathfrak{c}=\mathfrak{c}(p_1,\dots,p_d).
$$
We want to show that $\mathfrak{c}(p_1,\dots,p_d)$ varies real-analytically in $(p_1,\dots,p_d)$, that is, one can extend $\mathfrak{c}(p_1,\dots,p_d)$ as a multivariate power series in $(p_1,\dots,p_d)$ in a neighbourhood of any $\underline{p}_0\in\mathcal{P}$. To this end, we use the formula from Lemma \ref{lem:VarD>0}.(i) given by
\begin{equation}\label{equ:c-formula}
\mathfrak{c}=\lim_{n\to\infty} \frac{\mathrm{Cap}(\mathbf{R}_n)}{n}=\frac{\mathbb{E}[\widetilde{\mathcal{C}}_1]}{\mathbb{E}[\TT_1-\TT_0]}\quad \textrm{almost surely.}
\end{equation}
Hence, it suffices to  show that both numerator and denominator vary real-analytically in $(p_1,\dots,p_d)$. We already have the following result:
\begin{lemma}\label{lem:T1-T0-analytic}
The mapping
$$
\mathcal{P}\ni (p_1,\dots,p_d)\mapsto \mathbb{E}[\TT_1-\TT_0]
$$
varies real-analytically in $(p_1,\dots,p_d)$.
\end{lemma}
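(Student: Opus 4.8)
The plan is to express $\E_{\p}[\TT_1-\TT_0]$ as a derivative, evaluated at $z=1$, of a probability generating function depending jointly analytically on $z$ and on the parameter vector $\p=(p_1,\dots,p_d)$, and then to use the exponential moments of Proposition~\ref{prop:TT-exp-moments} to push the evaluation point strictly into the interior of the region of analyticity. First I would observe that, since $(\TT_i-\TT_{i-1})_{i\in\N}$ is i.i.d.\ with exponential moments, the series $\Phi_{\p}(z):=\E_{\p}\bigl[z^{\TT_1-\TT_0}\bigr]=\sum_{k\geq1}\P_{\p}[\TT_1-\TT_0=k]\,z^k$ converges on a disc of some radius $\vartheta>1$; moreover the geometric estimates behind \cite[Lemma~4.3, Prop.~4.5]{gilch:22} vary continuously (indeed analytically) with the transition probabilities, so $\vartheta$ may be taken uniform over a small neighbourhood $U$ of any fixed $\p_0\in\mathcal P$. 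Since $\E_{\p}[\TT_1-\TT_0]=\Phi_{\p}'(1)$, by Cauchy's estimates it suffices to show that $(\p,z)\mapsto\Phi_{\p}(z)$ is jointly real-analytic on $U\times\{|z|<\vartheta\}$.

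Next I would set up a functional equation for $\Phi_{\p}(z)$ via the free-product generating function machinery. Recall that $\TT_0=\e{\tau_0}$ records the first exit time at which the last syllable $\W{m}$ equals the fixed letter $\mathfrak g\in V_1^\times$, the vertex $X_{\e{m}}$ is visited for the first time, and the walk never leaves $C(X_{\e{m}})$ afterwards; the same description applied between consecutive regeneration points, together with Lemma~\ref{lem:cone-probs} and the strong Markov property, produces a renewal-type fixed-point equation for $\Phi_{\p}(z)$. As in \cite{gilch:07,gilch:11,gilch:22}, all ingredients of this equation --- the first-passage and Green generating functions $F(x,y\,|\,z)$, $G(x,y\,|\,z)$, the cone-return functions, and a $z$-deformed version of the escape probabilities $\xi_i$ of (\ref{equ:xi}) --- are built multiplicatively from the corresponding generating functions $F_i(\cdot,\cdot\,|\,z)$, $G_i(\cdot,\cdot\,|\,z)$ on the free factors $V_i$, which themselves solve finite systems of polynomial equations with coefficients polynomial in $z$ and in the entries of $P_1,P_2$; through the map $\p\mapsto(\alpha,P_1,P_2)$ these coefficients are rational, hence real-analytic, functions of $\p$. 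Consequently $\Phi_{\p}(z)$ is the distinguished probabilistic branch --- the one which is minimal nonnegative at $(\p_0,1)$ and equals $1$ there --- of a polynomial system $\Psi(\p,z,w)=0$ with analytic coefficients.

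Finally I would invoke the analytic implicit function theorem. At $(\p_0,1)$ the Jacobian $\partial_w\Psi$ is invertible, which is exactly where the standing hypothesis $\varrho<1$ (equivalently $\mathscr R>1$) enters: it places $z=1$ strictly inside the radius of convergence of all the involved generating functions, so the relevant fixed-point map is a strict contraction and its linearisation is non-degenerate (the square-root--type singularity does not occur before $z=\mathscr R$). Hence $\Phi_{\p}(z)$ is jointly real-analytic near $(\p_0,1)$, and after shrinking $U$ it is analytic on $U\times\{|z|<\vartheta\}$; differentiating in $z$ at $z=1$ then yields analyticity of $\p\mapsto\E_{\p}[\TT_1-\TT_0]=\Phi_{\p}'(1)$ near $\p_0$, and since $\p_0\in\mathcal P$ was arbitrary the lemma follows.

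The step I expect to be the main obstacle is the second one: one must check carefully that the renewal equation for $\Phi_{\p}(z)$ genuinely closes into a \emph{finite} polynomial system with analytic coefficients and that its probabilistic branch has a non-vanishing Jacobian at $z=1$. This is cleanest if one works exclusively with the increment $\TT_1-\TT_0$, which carries a genuine multiplicative/renewal structure, rather than with $\TT_0$ or $\TT_1$ individually, and if one quotes the finiteness and analyticity of the free-factor generating functions from \cite{gilch:07,gilch:11} instead of rederiving them. Alternatively, the statement can be extracted from \cite{gilch:22}, where the identical regeneration times $\TT_i$ are used and this analyticity is already present, at least implicitly, in the proof that the asymptotic range $\mathfrak r$ varies real-analytically.
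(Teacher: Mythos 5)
Your first paragraph is fine: $\mathbb{T}(z):=\E\bigl[z^{\TT_1-\TT_0}\bigr]$ has radius of convergence strictly larger than $1$ by Proposition~\ref{prop:TT-exp-moments}, and $\E[\TT_1-\TT_0]=\mathbb{T}'(1)$. The paper's own proof is in fact just a citation of \cite[Lemma 5.1]{gilch:22}, which your closing remark correctly anticipates. The concrete argument you interpolate, however, has a structural gap. Your second and third paragraphs hinge on the claim that the free-factor generating functions $F_i,G_i$ (and hence the regeneration generating function you call $\Phi_{\p}(z)$) satisfy a \emph{finite} polynomial system $\Psi(\p,z,w)=0$ with coefficients analytic in $\p$, so that the analytic implicit function theorem can be applied. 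That is true when $V_1,V_2$ are finite, but this paper explicitly allows $V_1,V_2$ to be countably infinite; in that generality the Green and first-passage functions on the free factors need not be algebraic at all (already a transient one-dimensional walk with a spread-out step has a transcendental Green function), so the finite polynomial system you posit simply does not exist and the IFT step cannot be launched. The heuristic that "$z=1$ inside the radius of convergence forces the Jacobian to be nondegenerate'' is also an assertion rather than a proof, but the more fundamental problem is the nonexistence of the finite system.

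The route that works, and is the one actually used in \cite[Lemma 5.1]{gilch:22} and mirrored by Remark~\ref{rem:procedure-analyticity} and Propositions~\ref{prop:multivariate1}--\ref{prop:multivariate} of the present paper, replaces algebraicity by positivity and degree-homogeneity of the power-series coefficients. After decomposing at $X_{\TT_0}$ and applying Lemma~\ref{lem:cone-probs} to shift the middle stretch into $C(\mathfrak{g})$ (exactly as in the proof of Proposition~\ref{prop:multivariate1}), the event $[\TT_1-\TT_0=m]$ becomes a $\p$-independent set of length-$m$ paths, so $\P[\TT_1-\TT_0=m]=\sum_{n_1+\cdots+n_d=m}a(n_1,\dots,n_d)\,p_1^{n_1}\cdots p_d^{n_d}$ with $a(n_1,\dots,n_d)\in\N_0$ not depending on $\p$. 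Consequently $z\,\mathbb{T}'(z)=\sum_{m\geq1}m\,\P[\TT_1-\TT_0=m]\,z^m$ has the same homogeneous nonnegative-integer structure and, by the exponential moments, still converges at some real $z=1+\delta>1$. The rearrangement of Remark~\ref{rem:procedure-analyticity} then represents $\E[\TT_1-\TT_0]=\mathbb{T}'(1)$ as an absolutely convergent multivariate power series in a polydisc around $\p_0$, which is real-analyticity. Your first paragraph already supplies everything this argument needs; the polynomial-system and IFT machinery should be dropped and replaced by this combinatorial coefficient count, which in particular works uniformly over finite and countable free factors.
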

\begin{proof}
See \cite[Lemma 5.1]{gilch:22}. 
\end{proof}
Thus, it remains to show that also $\mathbb{E}[\widetilde{\mathcal{C}}_1]$ varies real-analytically.

\begin{remark}\label{rem:procedure-analyticity}
The idea is to construct a  power series in the form $T(z)=\sum_{n\geq 0} a_n z^n$  such that $\mathbb{E}[\widetilde{\mathcal{C}}_1]=T(1)$, and that $a_n$ is a sum of monomials of the form $a(n_1,\dots,n_d) \cdot p_1^{n_1}\cdot \ldots \cdot p_d^{n_d}$ with $n_1,\dots,n_d,a(n_1,\dots,n_d)\in\mathbb{N}_0$, $n_1+\ldots + n_d=n$.
If $T(z)$ has radius of convergence bigger than $1$, then there exists $\delta>0$ small enough such that
$$
\infty > T(1+\delta)= \sum_{n\geq 0} a_n \cdot (1+\delta)^n = \sum_{n\geq 0} \sum_{\substack{n_1,\dots,n_d\in\N_0: \\ n_1+\ldots + n_d=n}} a(n_1,\dots,n_d) \bigl(p_1(1+\delta)\bigr)^{n_1}\cdot \ldots \cdot \bigl(p_d(1+\delta)\bigr)^{n_d}.
$$
From this follows  analyticity  of  $\mathbb{E}[\widetilde{\mathcal{C}}_1]$  in a neighbourhood of any \mbox{$(p_1,\dots,p_d)\in\mathcal{P}$,} provided existence of $T(z)$. 
\end{remark}

Before we can show that such a power series $T(z)$ as in Remark \ref{rem:procedure-analyticity} exists, we have to introduce further generating functions and we will prove essential properties of them in the following subsection.
%


\subsection{Uniform Bounds of some Generating Functions}
\label{subsec:uniform-bounds-generating-functions}

We introduce further generating functions, summarize some essential properties and prove some important uniform bounds in this subsection. For finite $R\subset V$, $x,y\in V$ and $z\in\mathbb{C}$, set 
\begin{eqnarray*}
G(x,y|z)&:=& \sum_{n\geq 0}\P_{x}[X_n=y]z^n \quad \textrm{(Green function)},\\
U(x,y|z)&:=& \sum_{n\geq 1}\P_{x}[S_{y}=n]z^n,\\
U(x,R|z) &:=& \sum_{n\geq 1}\P_x[S_R=n]z^n,\\
\overline{U}(x,R|z)&:= & 1-\sum_{n\geq 1}\P_x[S_R=n]z^n =1-U(x,R|z).
\end{eqnarray*}
In particular, we have $\overline{U}(x,R|1)=\P_x[S_R=\infty]$. Moreover, define the \textit{first visit generating function} by
$$
F(x,y|z)=\sum_{n\geq 0} \mathbb{P}_x[T_y=n]\,z^n, \textrm{ where } T_y:=\inf\{m\in\mathbb{N}_0\mid X_m=y\},
$$
and the \textit{last visit generating function} is defined by
$$
L(x,y|z)=\sum_{n\geq 0} \mathbb{P}_x\bigl[X_n=y,\forall k<n: X_k\neq x\bigr]\,z^n.
$$
If $x\neq y$, then $F(x,y|z)=U(x,y|z)$. Analogously, we denote by $F_i$ and $L_i$, $i\in\calI$, the associated first visit and last visit generating functions of the random walks on $V_i$ governed by $P_i$.
If all paths from $x\in V$ to $y\in V$ have to pass through $w\in V$, then
\begin{equation}\label{equ:F-L-decomposition}
F(x,y|z)=F(x,w|z)\cdot F(w,y|z) \quad \textrm{ and }\quad L(x,y|z)=L(x,w|z)\cdot L(w,y|z);
\end{equation}
these equations are  obtained by conditioning with respect to the first/last visit of $w$, which must be visited before finally walking to $y$ (see, e.g. Woess \cite[Prop. 1.43]{woess:09}). Similarily, one can easily show that
\begin{equation}\label{equ:G-F-L-equations}
G(x,y|z)=F(x,y|z)\cdot G(y,y|z) \ \textrm{ and } \ G(x,y|z)=G(x,x|z)\cdot L(x,y|z);
\end{equation}
see, e.g. Woess \cite[Thm. 1.38]{woess:09}.
Furthermore, there are probability generating functions 
$$
\xi_i(z):=\sum_{n\geq 1} \mathbb{P}[S_{V_i^\times}=n]z^n,\ i\in\calI,
$$
such that for all $x,y\in V_i$
\begin{equation}\label{equ:F-L-projection}
F(x,y|z) = F_i\bigl(x,y| \xi_i(z)\bigr) \quad \textrm{ and }\quad L(x,y|z) = L_i\bigl(x,y| \xi_i(z)\bigr).
\end{equation}
In particular, $\xi_i(z)$ converges for all $|z|<\mathscr{R}$ and we have $0<\xi_i(1)<1$;  see \cite[Proposition 9.18 (c)]{woess}, \cite[Proposition 2.7]{gilch} and \cite[Lemma 2.3]{gilch:07}. Obviously,  $\xi_i(1)=\xi_i$; compare with (\ref{equ:xi}). Due to the tree-like structure of $\mathcal{X}$, we have further important identities: for each $w\in V$ and $g\in V_1^\times \cup V_2^\times$ with $g\notin V_{\delta(w)}$, we have 
\begin{equation}\label{equ:F-projection}
\begin{array}{rcl}
F(wg,w|z) &=& F(g,o|z) = F_{\delta(g)}\bigl(g,o_{\delta(g)}|\xi_{\delta(g)}(z)\bigr) \textrm{ and } \\[1ex]
L(w,wg|z) &= & L(o,g|z) = L_{\delta(g)}\bigl(o_{\delta(g)},g|\xi_{\delta(g)}(z)\bigr);
\end{array}
\end{equation}
these equations follow directly from the fact that $F(wg,w|z)$ and $L(w,wg|z)$ consider only paths inside the cone $C(w)$, the rest follows with Lemma \ref{lem:cone-probs}.
\par
In the following we will derive uniform upper bounds for some important generating functions.
\begin{lemma}\label{lem:sup-L-L}
There exists $\varrho_0>1$ such that
$$
\sup_{x\in V_1^\times, y\in V_2^\times} L(o,x|\varrho_0)L(o,y|\varrho_0)< 1.
$$
\end{lemma}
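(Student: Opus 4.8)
The plan is to exploit the projection identity (\ref{equ:F-L-projection}), which says $L(o,x|z)=L_i\bigl(o_i,x|\xi_i(z)\bigr)$ for $x\in V_i^\times$, together with the fact that $\xi_i(z)$ is a probability generating function with $\xi_i(1)<1$ and radius of convergence $\mathscr{R}>1$. By continuity of $\xi_i$ at $z=1$, there is $\varrho_1\in(1,\mathscr{R})$ with $\xi_i(\varrho_1)<1$ for both $i\in\calI$; fix such a $\varrho_1$ and abbreviate $s_i:=\xi_i(\varrho_1)\in(0,1)$. So it remains to bound $\sup_{x\in V_1^\times}L_1(o_1,x|s_1)\cdot\sup_{y\in V_2^\times}L_2(o_2,y|s_2)$ and show this product is strictly less than $1$; then we set $\varrho_0:=\varrho_1$.

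The first reduction is to control $\sup_{x\in V_i^\times}L_i(o_i,x|s_i)$ for a single factor. Here the key is the identity $G_i(o_i,x|z)=G_i(o_i,o_i|z)\cdot L_i(o_i,x|z)$ from (\ref{equ:G-F-L-equations}), rewritten as $L_i(o_i,x|z)=G_i(o_i,x|z)/G_i(o_i,o_i|z)$. Summing over $x\in V_i$ gives $\sum_{x\in V_i}L_i(o_i,x|z)=\bigl(\sum_n \P_{o_i}[X_n\in V_i]z^n\bigr)/G_i(o_i,o_i|z)$; but on $V_i$ the walk stays in $V_i$, so the numerator is $\sum_n z^n=1/(1-z)$ for $z<1$ — that diverges as $z\to 1$, so this crude bound is not enough. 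Instead I would argue more carefully: since $z=s_i<1$ lies strictly inside the radius of convergence of $G_i(o_i,o_i|\cdot)$ and $\xi_i$ is strictly increasing, we can choose $\varrho_1$ so close to $1$ that $s_i<\mathscr R_i$ (the radius of convergence of the $V_i$-walk's Green function, which is $\geq 1$), and then use that $L_i(o_i,x|s_i)\le \sum_{x\in V_i}L_i(o_i,x|s_i)<\infty$. The finiteness of this sum alone does not give a bound below $1$, so the honest route is: $L_i(o_i,o_i|s_i)=1$ by definition (the $n=0$ term), hence for $x\neq o_i$ we use $L_i(o_i,x|s_i)=F_i(o_i,x|s_i)\,L_i(x,x|s_i)/\bigl(\cdots\bigr)$ — more cleanly, bound $L_i(o_i,x|s_i)\le G_i(o_i,x|s_i)/G_i(o_i,o_i|s_i)$ and note $G_i(o_i,o_i|s_i)\ge 1$ while $G_i(o_i,x|s_i)$ can be made uniformly small in $x$ only if we also shrink $s_i$ toward the value $\xi_i(1)$.

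The cleanest argument, and the one I would actually write: take $\varrho_1\downarrow 1$ so that $s_i=\xi_i(\varrho_1)\downarrow\xi_i(1)=\xi_i<1$. At $z=1$ we have $L(o,x|1)=\P_o[X_n=x\text{ for some }n\text{ before returning to }o]\le 1$ trivially, but in fact $L(o,x|1)=G(o,x|1)/G(o,o|1)$ and $\sum_{x\in V_1^\times}L(o,x|1)+\sum_{y\in V_2^\times}L(o,y|1)<\infty$ with each term $<1$; the real point is that $L(o,x|1)L(o,y|1)\le L(o,x|1)<1$ strictly for all $x\in V_1^\times$, $y\in V_2^\times$ because $L(o,x|1)=\P[\,\exists n: X_n=x,\ \forall k<n\ X_k\ne o\,]$ and this is bounded away from $1$ by $1-\xi_{\delta(x)}>0$ uniformly (a path realizing "last visit to $x$" must, on its way, have a uniformly positive chance of the walk leaving the cone $C(x)$ so that it never contributes). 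Hence $\sup_{x,y}L(o,x|1)L(o,y|1)=:q<1$. Then by continuity of $\xi_i$ and monotonicity of $L_i$ in its argument, for $\varrho_0>1$ sufficiently close to $1$ we get $\sup_{x,y}L(o,x|\varrho_0)L(o,y|\varrho_0)\le q+\tfrac{1-q}{2}<1$. The main obstacle is making the "$L(o,x|1)$ is uniformly bounded away from $1$" step rigorous and uniform in $x$: I expect to handle it via the projection $L(o,x|1)=L_{i}(o_i,x|\xi_i)$ and the inequality $L_i(o_i,x|\xi_i)\le \sum_{x\in V_i}L_i(o_i,x|\xi_i)<\infty$ combined with the tail estimate that, since this sum is finite, only finitely many $x$ have $L_i(o_i,x|\xi_i)$ close to the supremum, and none equals $1$ because $L_i(o_i,x|z)<1$ for $x\ne o_i$ whenever $z<1$ (as $L_i(o_i,x|z)\le F_i(o_i,x|z)\le U_i(o_i,x|z)<1$); and the sup is attained or approached only on this finite set, so it is itself $<1$.
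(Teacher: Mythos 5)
Your overall plan — establish $\sup_{x,y} L(o,x|1)L(o,y|1)<1$ and then lift to some $\varrho_0>1$ by continuity — is the same plan the paper follows, and the observation that $\sum_{x\in V_i^\times} L_i(o_i,x|\xi_i(\bar\varrho_0))<\infty$ (so that only finitely many $x$ have $L$-value near the supremum) is exactly how the paper handles the uniformity of the continuity step. But the core step, showing the product at $z=1$ is strictly below $1$, has a genuine gap, and the reason is a misidentification of what $L$ is.

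You repeatedly assert that $L(o,x|1)<1$, justifying this with ``$L(o,x|1)=\P_o[\exists n: X_n=x,\ \forall k<n:\ X_k\neq o]$'' and ``$L_i(o_i,x|z)\le F_i(o_i,x|z)\le 1$''. Neither is correct. The last-exit generating function is $L(o,x|1)=\sum_{n\geq 0}\P_o[X_n=x,\ X_1,\dots,X_n\neq o]$, which is the \emph{expected number} of visits to $x$ before returning to $o$, not a probability; it can and in general does exceed $1$. And $L$ and $F$ are not comparable in general: from (\ref{equ:G-F-L-equations}) one gets $L(o,x|z)=F(o,x|z)\,G(x,x|z)/G(o,o|z)$, and the ratio $G(x,x|z)/G(o,o|z)$ can be larger than $1$. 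So ``$L\le F\le 1$'' fails, and consequently your argument never actually establishes the strict bound $L(o,x|1)L(o,y|1)<1$ — this is the missing idea, not a technicality.

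What is true, and what the paper exploits, is that the \emph{product} $L(o,x|1)L(o,y|1)$ is controlled even though the factors individually need not be. The paper considers the alternating words $w_k=(xy)^k$, applies a last-exit decomposition to $\P[X_\infty\text{ starts with }w_k]$ to write it as $G(o,o|1)\bigl(L(o,x|1)L(o,y|1)\bigr)^k$ times a factor bounded below uniformly in $k$, and then uses $\P[X_\infty\text{ starts with }w_k]\le 1$ for all $k$ to force $L(o,x|1)L(o,y|1)\le 1$. Strictness is then obtained by a second counting argument: pick a third letter $x_0$ and observe that the events ``$X_\infty$ starts with $w_j x_0$'' for $j=1,\dots,k$ are disjoint, each with probability bounded below by a constant if equality held, giving a contradiction for large $k$. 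Your proposal never produces a substitute for this mechanism, so as written the proof does not go through.
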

\begin{proof}
For $z\in\mathbb{C}$ and $i\in\calI$, define
$$
\mathcal{L}^+_i(z):= \sum_{x\in V_i^\times} L(o,x|z) \stackrel{(\ref{equ:F-L-projection})}{=}  \sum_{x\in V_i^\times} L_i\bigl(o_i,x|\xi_i(z)\bigr). 
$$
Since $\xi_i(z)$ is continuous and monotonously increasing for real $z>0$, $\xi_i(1)<1$ and $\xi_i(z)$ has radius of convergence of at least $\mathscr{R}$, there exists some $\bar\varrho_0\in (1,\mathscr{R})$ such that $\xi_1(\bar\varrho_0)<1$ and $\xi_2(\bar\varrho_0)<1$, implying
$$
\mathcal{L}^+_i(\bar\varrho_0) \leq \sum_{x\in V_i^\times} \sum_{n\geq 1} p_i^{(n)}(o_i,x) \cdot \xi_i(\bar\varrho_0)^n \leq \sum_{n\geq 1}\xi_i(\bar\varrho_0)^n < \frac{1}{1-\xi_i(\bar\varrho_0)} <\infty,
$$
where $p_i^{(n)}(x,y)$, $x,y\in V_i$, denotes the $n$-step transition probabilities of the random walk on $V_i$ governed by $P_i$.
Consequently, there are at most finitely many $x\in V_i^\times$ with \mbox{$L(o,x|\bar\varrho_0)\geq 1$}, and we have
\begin{equation*}\label{equ:L(o,x)-bound}
\sup_{x\in V_1^\times\cup V_2^\times} L(o,x|\bar\varrho_0) \leq \max\Bigl\lbrace \frac{1}{1-\xi_1(\bar\varrho_0)},\frac{1}{1-\xi_2(\bar\varrho_0)}\Bigr\rbrace<\infty.
\end{equation*}
Choose now any $x\in V_1^\times$ and $y\in V_2^\times$
and set $w_k:=xy\dots xy$, where $xy$ is repeated $k\in\N$ times. By decomposition of all paths from $o$ to $w_k$ with respect to the last visit of $o$ and $w_k$ before finally staying in $C(w_k)\setminus\{w_k\}$, we obtain for all $k\in\N$:
\begin{eqnarray*}
1 &\geq & \mathbb{P}\bigl[ X_\infty \textrm{ starts with prefix } w_k\bigr] \\
&=& G(o,w_k|1) \cdot \mathbb{P}_{w_k}\bigl[\forall n\geq 1: X_n\in C(w_k)\setminus\{w_k\}\bigr] \\
&\stackrel{(\ref{equ:G-F-L-equations})}{=}& G(o,o|1)\cdot L(o,w_k|1)\cdot \mathbb{P}_{w_k}\bigl[\forall n\geq 1: X_n\in C(w_k)\setminus\{w_k\}\bigr] \\
&\stackrel{(\ref{equ:F-L-decomposition}),(\ref{equ:F-projection})}{=}& G(o,o|1) \cdot L(o,x|1)^k \cdot L(o,y|1)^k\cdot  \underbrace{\mathbb{P}_{w_k}\bigl[\forall n\geq 1: X_n\in C(w_k)\setminus\{w_k\}\bigr]}_{\geq \mathbb{P}[X_1\in V_1^\times,\forall n>1: X_n\notin V_1]} \\
&\geq & G(o,o|1) \cdot L(o,x|1)^k \cdot L(o,y|1)^k \cdot \alpha \cdot \bigl(1-\xi_1(1)\bigr)>0.
\end{eqnarray*}
In the last step we have used that $1-\xi_1(1)$ is the probability that $V_1^\times$ is not visited any more when starting at any state in $V_1^\times$.
Since $k$ can be chosen arbitrarily large, we must have $L(o,x|1)\cdot L(o,y|1)\leq 1$ for all $x\in V_1^\times$, $y\in V_2^\times$. Assume now for a moment that $L(o,x|1)\cdot L(o,y|1)= 1$ would hold for some $x\in V_1^\times$ and $y\in V_2^\times$. 
If $|V_1|>2$, then there is some $x_0\in V_1^\times\setminus\{x\}$ such that we obtain analogously as above for every $j\in\N$:
\begin{eqnarray*}
&&\mathbb{P}\bigl[X_\infty \textrm{ starts with prefix } w_jx_0\bigr] \\
&\geq & G(o,o|1) \cdot L(o,x|1)^j \cdot L(o,y|1)^j \cdot L(o,x_0|1) \cdot (1-\alpha)\cdot \bigl(1-\xi_2(1)\bigr)\\
&= & G(o,o|1) \cdot L(o,x_0|1) \cdot (1-\alpha)\cdot \bigl(1-\xi_2(1)\bigr) =:C_{x_0}>0.
\end{eqnarray*}
If we choose $k$ large enough, we get a contradiction due to
\begin{eqnarray*}
1 &\geq & \mathbb{P}\bigl[ X_\infty \textrm{ starts with a prefix in } \{w_1x_0,\dots,w_kx_o\}\bigr] \\
&=& \sum_{j=1}^k\mathbb{P}\bigl[ X_\infty \textrm{ starts with prefix } w_jx_0\bigr]  \geq k\cdot C_{x_0} >1.
\end{eqnarray*}
Therefore, we must have $L(o,x|1)\cdot L(o,y|1)< 1$ for all $x\in V_1^\times$, $y\in V_2^\times$. If $|V_1|=2$, then we must have $|V_2|>2$ and the reasoning follows analogously by exchanging the roles of $x$ and $y$.
\par
As mentioned above there are at most finitely many $x\in V_1^\times$ with $ L(o,x|\bar\varrho_0)\geq 1$. For each such $x\in V_1^\times$ there are also at most finitely many $y\in V_2^\times$ with $L(o,y|\bar\varrho_0)\geq L(o,x|\bar\varrho_0)^{-1}$. Analogously, there are finitely many $y\in V_2^\times$ with $L(o,y|\bar\varrho_0)\geq 1$ and finitely many $x\in V_1^\times$ with $L(o,x|\bar\varrho_0)\geq L(o,y|\bar\varrho_0)^{-1}$.

Since $L(o,x|1)\cdot L(o,y|1)< 1$ and by continuity of the involved generating functions, there exists $\varrho_0\in (1,\bar\varrho_0)$ such that 
$$
L(o,x|\varrho_0)\cdot L(o,y|\varrho_0)< 1
$$ 
for all $x\in V_1^\times$ and $y\in V_2^\times$. This finishes the proof.
\end{proof}

From the last lemma follows immediately that there exists $\varrho_0>1$ such that
\begin{equation}\label{equ:sup-L}
\sup_{x\in V_1\cup V_2} L(o,x|\varrho_0)<\infty.
\end{equation}
We will use this fact in the next lemma.
\begin{lemma}\label{lem:sup-Lxy}
There exists $\varrho_0>1$ such that
\begin{equation}\label{equ:L(x,y)}
\sup_{x\in V, y\in C(x)} L(x,y|\varrho_0)<\infty.
\end{equation}
\end{lemma}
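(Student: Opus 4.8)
The plan is to reduce $L(x,y|z)$, for $y\in C(x)$, to a product of single-letter generating functions of the form $L(o,\cdot|\varrho_0)$ along the prefix chain joining $x$ to $y$, and then to control that product by grouping the letters into consecutive pairs of opposite type and invoking Lemma \ref{lem:sup-L-L}. First I would fix $\varrho_0>1$ exactly as produced in the proof of Lemma \ref{lem:sup-L-L}; since each $L(o,\cdot|z)$ is a power series with non-negative coefficients and hence non-decreasing for real $z>0$, this same $\varrho_0$ may be taken to satisfy simultaneously the conclusion of Lemma \ref{lem:sup-L-L} and the bound (\ref{equ:sup-L}). Accordingly, set
$$
\theta:=\sup_{u\in V_1^\times,\,v\in V_2^\times}L(o,u|\varrho_0)\,L(o,v|\varrho_0)<1,\qquad C_0:=\sup_{u\in V_1\cup V_2}L(o,u|\varrho_0)<\infty .
$$

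Next, fix $x\in V$ and $y\in C(x)$. The case $y=x$ is trivial, since $L(x,x|z)=1$, so assume $y\neq x$, and write $x=x_1\cdots x_m$ (with $m=0$ if $x=o$) and $y=x_1\cdots x_m\,y_{m+1}\cdots y_n$ with $n>m$, where $y_{m+1},\dots,y_n\in V^\times_\ast$ are the additional letters. By the definition of the free product these alternate between $V_1^\times$ and $V_2^\times$, and $y_{m+1}\notin V_{\delta(x)}^\times$. Every path from $x$ to $y$ must pass through each intermediate prefix $x_1\cdots x_m\,y_{m+1}\cdots y_k$ (the only edge joining $C$ of such a prefix to its complement is incident to that prefix), so iterating (\ref{equ:F-L-decomposition}) along this chain yields
$$
L(x,y|\varrho_0)=\prod_{k=m+1}^{n}L\bigl(x_1\cdots x_m\,y_{m+1}\cdots y_{k-1},\ x_1\cdots x_m\,y_{m+1}\cdots y_k\,\big|\,\varrho_0\bigr),
$$
and by (\ref{equ:F-projection}) each factor equals $L(o,y_k|\varrho_0)$, using that $y_k$ has type different from the preceding prefix. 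Hence $L(x,y|\varrho_0)=\prod_{k=m+1}^{n}L(o,y_k|\varrho_0)$.

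Finally I would group the letters $y_{m+1},\dots,y_n$ into $\lfloor(n-m)/2\rfloor$ consecutive pairs; since they alternate in type, each pair consists of one letter from $V_1^\times$ and one from $V_2^\times$, so the corresponding product of two factors is at most $\theta<1$. If $n-m$ is even this gives $L(x,y|\varrho_0)\le\theta^{(n-m)/2}\le 1$, and if $n-m$ is odd the single unpaired factor is at most $C_0$, so $L(x,y|\varrho_0)\le\theta^{(n-m-1)/2}\,C_0\le C_0$. In all cases $L(x,y|\varrho_0)\le\max\{1,C_0\}<\infty$, with a bound independent of $x$ and of $y\in C(x)$, which is (\ref{equ:L(x,y)}). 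The only points requiring care are verifying that (\ref{equ:F-L-decomposition}) and (\ref{equ:F-projection}) genuinely apply letter by letter along the prefix chain from $x$ to $y$ — this is exactly where the tree-like structure of the free product is used — and that the single $\varrho_0$ can be made to serve both Lemma \ref{lem:sup-L-L} and (\ref{equ:sup-L}); beyond this bookkeeping there is no substantial analytic obstacle.
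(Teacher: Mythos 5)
Your proof is correct and follows essentially the same route as the paper's: reduce $L(x,y|\varrho_0)$ to a product of single-letter last-visit generating functions $L(o,y_k|\varrho_0)$ using the cone/prefix factorisation (\ref{equ:F-L-decomposition})--(\ref{equ:F-projection}), group consecutive letters (which alternate in type) into pairs controlled by Lemma~\ref{lem:sup-L-L}, and bound the possible leftover factor by (\ref{equ:sup-L}). The only cosmetic difference is that the paper first shifts the whole path to $C(o)$ via Lemma~\ref{lem:cone-probs} and then factors $L(o,y_1\cdots y_k|\varrho_0)$, whereas you factor directly along the prefix chain; the resulting product and the pairing argument are identical.
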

\begin{proof}
From Lemma \ref{lem:sup-L-L} follows existence of $\varrho_0>1$ with 
$$
\sup_{x\in V_1^\times, y\in V_2^\times} L(o,x|\varrho_0)L(o,y|\varrho_0)< 1.
$$
Now take any $x\in V$ and $y\in C(x)\setminus\{x\}$, that is, we can rewrite $y$ as $y=xy_1\dots y_k$ in the form of (\ref{equ:free-product}). Then:
\begin{eqnarray*}
&& L(x,y|\varrho_0) \stackrel{\textrm{Lemma } \ref{lem:cone-probs}}{=} L(o,y_1\dots  y_k|\varrho_0) 
\stackrel{(\ref{equ:F-L-decomposition}),(\ref{equ:F-L-projection})}{=} \prod_{j=1}^k L_{\delta(y_i)} \bigl(o_{\delta(y_i)},y_i| \xi_{\delta(y_i)}(\varrho_0)\bigr)\\
&=& \prod_{j=1}^{\lfloor k/2 \rfloor} \underbrace{\Bigl( L_{\delta(y_{2i-1})} \bigl(o_{\delta(y_{2i-1})},y_{2i-1}| \xi_{\delta(y_{2i-1})}(\varrho_0)\bigr)L_{\delta(y_{2i})} \bigl(o_{\delta(y_{2i})},y_{2i}| \xi_{\delta(y_{2i})}(\varrho_0)\bigr)\Bigr)}_{< 1 \textrm{ (by Lemma \ref{lem:sup-L-L})}}  \cdot L_0,
\end{eqnarray*}
where $L_0:=L(o,y_k|\varrho_0)=L_{\delta(y_k)} \bigl(o_{\delta(y_k)},y_k| \xi_{\delta(y_k)}(\varrho_0)\bigr)$, if $k$ is odd, and $L_0:=1$, if $k$ is even. In both cases, since $L(w,w|\varrho_0)=1$ for all $w\in V$, (\ref{equ:sup-L}) yields. 
$$
L(x,y|\varrho_0)\leq   \sup_{x\in V_1\cup V_2}L(o,x|\varrho_0)<\infty.
$$
\end{proof}
We show another uniform upper bound for some family of Green functions:
\begin{lemma}\label{lem:r1}
There exists $\varrho_1\in (1,\mathscr{R})$ such that 
\begin{equation}\label{equ:G(wx,wy)}
\sup \Bigl\lbrace G(wx,wy|\varrho_1) \,\Bigl|\, w\in V, x,y\in V_i^\times \textrm{ with } i\in\calI\setminus \{\delta(w)\}\Bigr\rbrace <\infty.
\end{equation}
\end{lemma}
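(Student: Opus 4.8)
The plan is to expand $G(wx,wy\,|\,z)$ by a last-exit decomposition over the ``topmost'' vertex visited by a path from $wx$ to $wy$, and then to estimate the three resulting factors using the identities (\ref{equ:F-L-decomposition})--(\ref{equ:F-projection}) together with the uniform bounds on last-visit functions already obtained in Lemma~\ref{lem:sup-L-L} and (\ref{equ:sup-L}). Fix real $z\in(1,\mathscr{R})$, $w\in V$ and $x,y\in V_i^\times$ with $i\neq\delta(w)$, and let $u_0$ be the longest common prefix of $wx$ and $wy$ (so $u_0=w$ if $x\neq y$ and $u_0=wx$ if $x=y$; in all cases $u_0$ is a prefix of $wx$). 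I would first observe that any path from $wx$ to $wy$ lies inside $C(u)$ for a unique deepest vertex $u$; this $u$ is a common ancestor of $wx$ and $wy$, hence a prefix of $u_0$, and the path necessarily visits $u$. Splitting the path at its first and its last visit to $u$, and using that $u$ is a cut vertex (every path leaving $C(u)$ passes through $u$), gives
\begin{equation*}
G(wx,wy\,|\,z)=\sum_{u\preceq u_0}F(wx,u\,|\,z)\cdot\widehat G_u(z)\cdot L(u,wy\,|\,z),\qquad \widehat G_u(z):=\sum_{n\ge0}\P_u[X_n=u,\ X_0,\dots,X_n\in C(u)]\,z^n,
\end{equation*}
the sum running over the finitely many prefixes $u$ of $u_0$.

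Next I would fix $\varrho_1\in(1,\mathscr{R})$, using Lemma~\ref{lem:sup-L-L} and (\ref{equ:sup-L}), so that $\beta:=\sup_{x'\in V_1^\times,\,y'\in V_2^\times}L(o,x'\,|\,\varrho_1)L(o,y'\,|\,\varrho_1)<1$ and $B:=\sup_{x'\in V_1\cup V_2}L(o,x'\,|\,\varrho_1)<\infty$, and (after shrinking $\varrho_1$ towards $1$) also $\xi_1(\varrho_1),\xi_2(\varrho_1)<1$; then I work at $z=\varrho_1$. Writing $wx=u\,s_1\cdots s_p$ and $wy=u\,t_1\cdots t_q$, repeated use of (\ref{equ:F-L-decomposition}), (\ref{equ:F-projection}) and (\ref{equ:F-L-projection}) yields $F(wx,u\,|\,\varrho_1)=\prod_{l=1}^{p}F_{\delta(s_l)}\!\bigl(s_l,o_{\delta(s_l)}\,\big|\,\xi_{\delta(s_l)}(\varrho_1)\bigr)\le1$ (because $\xi_i(\varrho_1)<1$ and $F_i(\cdot,\cdot\,|\,s)\le1$ for $s\le1$) and $L(u,wy\,|\,\varrho_1)=\prod_{l=1}^{q}L(o,t_l\,|\,\varrho_1)$. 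Since consecutive letters of an element of $V$ have opposite type, this last product splits into $\lfloor q/2\rfloor$ blocks $L(o,x'\,|\,\varrho_1)L(o,y'\,|\,\varrho_1)$ with $x'\in V_1^\times,y'\in V_2^\times$ (each $<\beta$), times at most one leftover factor $\le B$, so $L(u,wy\,|\,\varrho_1)\le B\,\beta^{\lfloor(\Vert wy\Vert-\Vert u\Vert)/2\rfloor}$. Finally, for $u\neq o$ the killed walk can leave $u$ only into the unique fresh copy of $\mathcal{X}_{i'}$ with $i'\neq\delta(u)$ attached at $u$, so by Lemma~\ref{lem:cone-probs} and (\ref{equ:F-projection}) its first-return generating function equals $\psi_{i'}(z):=\alpha^{(i')}z\sum_{h\in V_{i'}^\times}p_{i'}(o_{i'},h)F_{i'}(h,o_{i'}\,|\,\xi_{i'}(z))$ (with $\alpha^{(1)}=\alpha$, $\alpha^{(2)}=1-\alpha$), while for $u=o$ it equals $\psi_1(z)+\psi_2(z)$; hence $\widehat G_o(z)=G(o,o\,|\,z)=(1-\psi_1(z)-\psi_2(z))^{-1}$ and, for every $u$, $\widehat G_u(\varrho_1)=(1-\psi_{i'(u)}(\varrho_1))^{-1}\le(1-\psi_1(\varrho_1)-\psi_2(\varrho_1))^{-1}=G(o,o\,|\,\varrho_1)<\infty$ (the inequality since $\psi_j\ge0$, the finiteness since $\varrho_1<\mathscr{R}$).

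Putting these estimates into the decomposition and summing the geometric series then gives
\begin{equation*}
G(wx,wy\,|\,\varrho_1)\ \le\ G(o,o\,|\,\varrho_1)\sum_{u\preceq u_0}B\,\beta^{\lfloor(\Vert wy\Vert-\Vert u\Vert)/2\rfloor}\ \le\ G(o,o\,|\,\varrho_1)\,B\sum_{p\ge0}\beta^{\lfloor p/2\rfloor}\ =\ \frac{2\,B\,G(o,o\,|\,\varrho_1)}{1-\beta},
\end{equation*}
which is finite and independent of $w$, $x$, $y$, $i$, proving (\ref{equ:G(wx,wy)}).

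I expect the main obstacle to be the decomposition in the first step rather than the estimates: one has to verify carefully that the deepest cone containing a path is well defined and is a prefix of $u_0$, and that after splitting at the first and last visit to $u$ the three pieces are governed \emph{exactly} by $F(wx,u\,|\,z)$, $\widehat G_u(z)$ and $L(u,wy\,|\,z)$ — which is true precisely because $u$ is a cut vertex, so that the ``restricted-to-$C(u)$'' generating functions coincide with the unrestricted ones, and because (\ref{equ:F-projection})/(\ref{equ:F-L-projection}) let one identify $\widehat G_u$ and the factorized $F$ and $L$. This bookkeeping is routine for tree-like free products, but it is where the substance lies; once it is in place the whole estimate is a geometric sum built on the already-established uniform control of the last-visit functions $L(o,\cdot\,|\,\varrho_1)$ and on $G(o,o\,|\,\varrho_1)<\infty$.
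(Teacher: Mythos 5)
Your decomposition of $G(wx,wy|z)$ over all prefixes $u$ of $u_0=wx\wedge wy$ is a genuinely different route from the paper, which instead splits into the two cases ``the path visits $w$'' / ``the path avoids $w$'' and then invokes $\sup_{v\in V}G(v,v|\bar\varrho_1)<\infty$ from \cite[Lemma 3.6]{gilch:22}. Unfortunately, the key identity on which your whole estimate rests is false, because the assertion ``the path necessarily visits $u$'' (where $u=u(\pi)$ is the deepest prefix of $u_0$ with $\pi\subset C(u)$) does not hold in a free product of graphs.

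The point is that the free factors $\mathcal{X}_i$ can have nontrivial internal edges, so a path can wander laterally among the children of $u$ inside one $\mathcal{X}_i$-copy without ever descending to $u$ itself. Concretely, in Example \ref{ex:free-product} take $w=a$, $x=b$, $y=c$, so $u_0=wx\wedge wy=a$. The one-step path $ab\to ac$ (using the orange edge $b\leftrightarrow c$ of $\mathcal{X}_2$) stays inside $C(a)$ and leaves $C(ab)$, so $u(\pi)=a$; yet it never visits $a$. This path contributes to $G(ab,ac|z)$ but to neither term of your sum: not to $F(ab,a|z)\widehat G_a(z)L(a,ac|z)$ (all those paths visit $a$), and not to $F(ab,o|z)\widehat G_o(z)L(o,ac|z)$ (those visit $o$). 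More starkly, specialize to $wx=wy=g$ with $g\in V_1^\times$ and $|V_1^\times|\geq 2$: your identity would force every path from $g$ to $g$ that avoids $o$ to stay in $C(g)$, which is wrong whenever $\mathcal{X}_1$ lets the walker move from $g$ to a sibling $g'\in V_1^\times$ and back without passing through $o_1$. So your identity strictly undercounts, and the inequality $G(wx,wy|\varrho_1)\leq 2BG(o,o|\varrho_1)/(1-\beta)$ derived from it is not justified.

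The intuition that ``the root of the deepest cone containing a path lies on the path'' is valid for a genuine tree but fails for free products of graphs, whose block pieces $\mathcal{X}_i$ are only attached in a tree-like pattern. To fix the argument along your lines you would have to further condition on the cut vertex $u$ actually being hit (and handle the residual paths that cross between siblings of $u$ within one $\mathcal{X}_i$-copy), which quickly becomes as involved as the paper's own two-term split $G(wx,wy|z)=F(wx,w|z)G(w,wy|z)+\check F(wx,wy|z)\check G(wy,wy|z)$; the latter avoids this pitfall entirely because ``visits $w$'' versus ``avoids $w$'' is an honest dichotomy, no tree structure required.
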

\begin{proof}
Let  $w\in V$ and $x,y\in V_i^\times$, where $i\in\calI\setminus \{\delta(w)\}$. For $z\in\mathbb{C}$, define
\begin{eqnarray*}
\check{F}(wx,wy|z)&:=&\sum_{n\geq 0} \mathbb{P}_{wx}\bigl[ T_{wy}=n,\forall j<n: X_j\neq w\bigr]\, z^n,\\
\check{G}(wy,wy|z)&:=&\sum_{n\geq 0} \mathbb{P}_{wx}\bigl[ X_n=wy,\forall j<n: X_j\neq w\bigr]\, z^n.
\end{eqnarray*}
Then, by distinguishing whether $w$ is visited on a path from $wx$ to $wy$ or not, we obtain
\begin {eqnarray}
&&G(wx,wy|z) =F(wx,w|z)\cdot G(w,wy|z)+  \check{F}(wx,wy|z)\cdot \check{G}(wy,wy|z)\nonumber \\
&\stackrel{(\ref{equ:G-F-L-equations})}{=}& F(wx,w|z)\cdot G(w,w|z) \cdot L(w,wy|z) +  \check{F}(wx,wy|z)\cdot \check{G}(wy,wy|z)\nonumber \\
&\stackrel{(\ref{equ:F-projection})}{=}& F_{i}\bigl(x,o_i |\xi_i(z)\bigr) \cdot G(w,w| z)\cdot L_i\bigl(o_i, y| \xi_i(z)\bigr) + \check{F}(wx,wy|z)\cdot \check{G}(wy,wy|z).\label{equ:G-decomposition}
\end{eqnarray}
From  \cite[Lemma 3.6]{gilch:22} follows existence of  $\bar\varrho_1\in (1,\mathscr{R})$ such that
$$
\sup_{v\in V} G(v,v|\bar\varrho_1)<\infty.
$$
Choose $\varrho_1\in(1,\bar\varrho_1)$ such that $\xi_j(\varrho_1)<1$ for all $j\in\calI$. Then $F_{i}\bigl(x,o_i |\xi_i(\varrho_1)\bigr)\leq 1$. Furthermore,
$$
L_i\bigl(o_i, y| \xi_i(\varrho_1)\bigr) \leq \sum_{n\geq 1} \xi_i(\varrho_1)^n <\frac{1}{1-\xi_i(\varrho_1)}<\infty.
$$
Moreover, 
\begin{eqnarray*}
 \check{F}(wx,wy|\varrho_1) & =&\sum_{n\geq 0} \mathbb{P}_{wx}\bigl[ T_{wy}=n,\forall j<n: X_j\neq w\bigr]\, \varrho_1^n,\\
 &\stackrel{\textrm{Lemma }\ref{lem:cone-probs}}{=} & \sum_{n\geq 0} \mathbb{P}_x\bigl[ T_{y}=n,\forall j<n: X_j\neq o\bigr]\, \varrho_1^n,\\
 &\leq &  \sum_{n\geq 0} \mathbb{P}_x\bigl[ T_{y}=n\bigr]\, \varrho_1^n =F(x,y|\varrho_1)\\
 &=&F_i\bigl(x,y| \xi_i(\varrho_1)\bigr) \leq \frac{1}{1-\xi_i(\varrho_1)}<\infty.
 \end{eqnarray*}
Since $\check{G}(wy,wy|\varrho_1)\leq G(wy,wy|\varrho_1)$, we have shown -- in view of Equation (\ref{equ:G-decomposition}) -- that  the values $G(wx,wy|\varrho_1)$ are uniformly bounded.
\end{proof}

\begin{proposition}\label{prop:G-convergence1}
There exists a real number $\varrho>1$ such that
$$
\sup_{x,y\in V} G(x,y|\varrho)<\infty.
$$
\end{proposition}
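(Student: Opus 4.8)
The plan is to reduce the desired uniform bound for $G(x,y|\varrho)$ to the three uniform bounds already available: $\sup_{v\in V}G(v,v|\bar\varrho_1)<\infty$ from \cite[Lemma 3.6]{gilch:22}, the bound on $L(v,v'|\varrho_0)$ for $v'\in C(v)$ from Lemma \ref{lem:sup-Lxy}, and the bound on the ``horizontal'' Green functions $G(vg,vg'|\varrho_1)$ from Lemma \ref{lem:r1}. These get glued together along the tree-of-cones structure of $\mathcal{X}$ via the factorization identities (\ref{equ:F-L-decomposition}), (\ref{equ:G-F-L-equations}) and (\ref{equ:F-projection}).

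First I would fix $\varrho$. Since $\xi_1,\xi_2$ are power series with non-negative coefficients, continuous and increasing on $[0,\mathscr{R})$ with $\xi_i(1)<1$, there is $\varrho'\in(1,\mathscr{R})$ with $\xi_i(\varrho')<1$ for both $i\in\calI$. Put $\varrho:=\min\{\varrho',\varrho_0,\varrho_1,\bar\varrho_1\}>1$. As $G(v,v|\cdot)$, $L(v,v'|\cdot)$ and $G(vg,vg'|\cdot)$ all have non-negative Taylor coefficients, evaluating at $\varrho$ rather than at the respective larger threshold preserves finiteness of the suprema; so we obtain finite constants $C_G:=\sup_{v\in V}G(v,v|\varrho)\ge 1$, $C_L:=\sup_{v\in V,\,v'\in C(v)}L(v,v'|\varrho)\ge 1$, and $C_F:=\sup\{G(vg,vg'|\varrho)\mid v\in V,\ g,g'\in V_i^\times,\ i\in\calI\setminus\{\delta(v)\}\}\ge 1$, while still $\xi_i(\varrho)<1$.

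Next, fix $x,y\in V$, write $x=u_1\cdots u_m$ and $y=v_1\cdots v_n$ in reduced form, and let $w$ be their longest common prefix $u_1\cdots u_k$. The crucial point, and the place that needs care, is that $w$ is \emph{not} automatically a cut vertex separating $x$ from $y$: if $u_{k+1}$ and $v_{k+1}$ lie in the \emph{same} free factor $V_i^\times$, a path may run from $wu_{k+1}$ to $wv_{k+1}$ inside the copy of $\mathcal{X}_i$ attached at $w$ without ever visiting $w$. I would therefore split into cases. If $m=k$ (so $x=w$ and $y\in C(x)$), then $G(x,y|\varrho)=G(x,x|\varrho)\,L(x,y|\varrho)\le C_GC_L$ by (\ref{equ:G-F-L-equations}) and Lemma \ref{lem:sup-Lxy}. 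If $n=k$ (so $y=w$ is a prefix of $x$), then iterating (\ref{equ:F-L-decomposition}) along $u_{k+1},\dots,u_m$ and applying (\ref{equ:F-projection}) gives $F(x,y|\varrho)=\prod_{j=k+1}^{m}F_{\delta(u_j)}\bigl(u_j,o_{\delta(u_j)}\mid\xi_{\delta(u_j)}(\varrho)\bigr)\le 1$, since each factor is a subprobability generating function evaluated at a point of $[0,1)$; hence $G(x,y|\varrho)=F(x,y|\varrho)\,G(y,y|\varrho)\le C_G$ by (\ref{equ:G-F-L-equations}). If $m,n>k$ and $u_{k+1},v_{k+1}$ lie in different free factors, then $w$ \emph{is} a cut vertex, so $F(x,y|\varrho)=F(x,w|\varrho)\,F(w,y|\varrho)$: the first factor is $\le 1$ as just explained, and the second satisfies $F(w,y|\varrho)=G(w,y|\varrho)/G(y,y|\varrho)\le G(w,w|\varrho)\,L(w,y|\varrho)\le C_GC_L$ by (\ref{equ:G-F-L-equations}), $G(y,y|\varrho)\ge 1$, $y\in C(w)$ and Lemma \ref{lem:sup-Lxy}; thus $G(x,y|\varrho)\le C_G^2C_L$. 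Finally, if $m,n>k$ and $a:=u_{k+1}$, $b:=v_{k+1}$ lie in the same factor $V_i^\times$ (so $a\ne b$ and $i\ne\delta(w)$), then $wa$ and $wb$ are cut vertices lying in series between $x$ and $y$, so iterating (\ref{equ:F-L-decomposition}) twice gives $F(x,y|\varrho)=F(x,wa|\varrho)\,F(wa,wb|\varrho)\,F(wb,y|\varrho)$; the first factor is $\le 1$, the last is $\le G(wb,wb|\varrho)\,L(wb,y|\varrho)\le C_GC_L$ as before, and the middle one is $F(wa,wb|\varrho)=G(wa,wb|\varrho)/G(wb,wb|\varrho)\le G(wa,wb|\varrho)\le C_F$ by Lemma \ref{lem:r1}; hence $G(x,y|\varrho)\le C_FC_G^2C_L$.

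Combining the four cases yields $G(x,y|\varrho)\le C_FC_G^2C_L$ for all $x,y\in V$, which proves the proposition. The only genuine obstacle is the observation that the confluent of $x$ and $y$ may fail to be a single cut vertex when the two words branch into the same free factor; Lemma \ref{lem:r1}, whose statement is precisely tailored to this situation, supplies the missing uniform bound, and everything else is routine bookkeeping with the standard generating-function identities.
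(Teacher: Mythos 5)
Your proof is correct and takes essentially the same route as the paper: a case analysis on the longest common prefix $w$ of $x$ and $y$, reducing everything to the three uniform bounds (the diagonal Green functions from \cite[Lemma 3.6]{gilch:22}, the cone-interior last-visit functions from Lemma \ref{lem:sup-Lxy}, and the sibling Green functions from Lemma \ref{lem:r1}) glued together via the $F$--$G$--$L$ factorization identities (\ref{equ:F-L-decomposition}), (\ref{equ:G-F-L-equations}) and (\ref{equ:F-projection}). The one place you genuinely diverge --- and improve --- is the explicit separation of the sub-case $m,n>k$ with $u_{k+1},v_{k+1}$ in \emph{different} free factors (which forces $w=o$): there $w$ is itself a cut vertex, so no ``sibling'' Green factor is needed, and this matters because Lemma \ref{lem:r1} only bounds $G(va,vb|\varrho_1)$ when $a,b$ lie in the \emph{same} factor $V_i^\times$, whereas the paper's single formula for $k<\min\{m,n\}$ silently invokes that lemma in both sub-cases.
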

\begin{proof}
Let  $x,y\in V$ and write $x=x_1\dots x_m$ and $y=y_1\dots y_n$ in the form of (\ref{equ:free-product}), where $m,n\in\N_0$. Let  $k\in\N_0$ be maximal such that $w:=x_1\dots x_k=y_1\dots y_k$ (if $x_1\neq y_1$ then $k=0$), that is,  $x_1\dots x_k$ is the common prefix of $x$ and $y$ of maximal length. First, we consider the case $k<\min\{m,n\}$. Observe that each path from $x$ to $y$ has to pass through $wx_{k+1}$ and $wy_{k+1}$.Therefore, we obtain for all real $z\in (0,\mathscr{R})$:
\begin{eqnarray}
&&G(x,y|z)\nonumber\\ 
&\stackrel{(\ref{equ:G-F-L-equations})}{=}& F(x,wx_{k+1}|z)\cdot G(wx_{k+1},y|z)\nonumber \\
&\stackrel{(\ref{equ:G-F-L-equations})}{=}& F(x,wx_{k+1}|z)\cdot G(wx_{k+1},wx_{k+1}|z)\cdot L(wx_{k+1},y|z)\nonumber \\
&\stackrel{(\ref{equ:F-L-decomposition})}{=}& F(x,wx_{k+1}|z)\cdot G(wx_{k+1},wx_{k+1}|z)\cdot L(wx_{k+1},wy_{k+1}|z)\cdot  L(wy_{k+1},y|z)\nonumber\\
&\stackrel{(\ref{equ:G-F-L-equations})}{=}& F(x,wx_{k+1}|z)\cdot G(wx_{k+1},wy_{k+1}|z) \cdot L(wy_{k+1},y|z) \nonumber\\
&\stackrel{(\ref{equ:F-L-decomposition})}{=}& \prod_{i=k+2}^{m} F(x_1\dots x_i,x_1\dots x_{i-1}|z) \cdot G(wx_{k+1},wy_{k+1}|z) \cdot   L(wy_{k+1},y|z) \nonumber\\
&\stackrel{(\ref{equ:F-projection})}{=}& \prod_{i=k+2}^{m} F_{\delta(x_i)}\bigl(x_i,o_{\delta(x_i)}|\xi_{\delta(x_i)}(z)\bigr) \cdot G(wx_{k+1},wy_{k+1}|z) \cdot   L(wy_{k+1},y|z).\label{equ:G-decomposition-in-Prop}
%
%
%
%
%
\end{eqnarray}
Let $\varrho_0\in(1,\mathscr{R})$ be a real number satisfying (\ref{equ:L(x,y)}) and let  $\varrho_1\in(1,\mathscr{R})$ satisfying (\ref{equ:G(wx,wy)}). 
Furthermore, choose any $\bar\varrho_1\in(1,\mathscr{R})$ such that $\sup_{v\in V}G(v,v|\bar\varrho_1)<\infty$, which exists due to  \cite[Lemma 3.6]{gilch:22}.
Now take any $\varrho\in \bigl(1,\min\{\varrho_0,\varrho_1,\bar\varrho_1\}\bigr)$ such that $\xi_i(\varrho)<1$ for all $i\in\calI$. Then:
$$
\prod_{i=k+2}^{m} F_{\delta(x_i)}\bigl(x_i,o_{\delta(x_i)}|\xi_{\delta(x_i)}(\varrho)\bigr)<1.
$$
By Lemma \ref{lem:r1}, $G(wx_{k+1},wy_{k+1}|\varrho)$ is uniformly bounded. Moreover, since $y\in C(wy_{k+1})$, Lemma \ref{lem:sup-Lxy} guarantees that $L(wy_{k+1},y|\varrho)$ is also uniformly bounded.
In view of (\ref{equ:G-decomposition-in-Prop}) we have proven the claim in the case $k<\min\{m,n\}$.
\par
If $k=m<n$, we have $x=w$, $y\in C(x)$ and
$$
G(x,y|\varrho) \stackrel{(\ref{equ:G-F-L-equations})}{=} G(x,x|\varrho) \cdot L(x,y|\varrho).
$$
The claim follows now directly from $\varrho<\min\{\varrho_0,\bar\varrho_1\}$ and Lemma \ref{lem:sup-Lxy}.
\par
Similarily, in the case $k=n<m$, we have $y=w$ and
$$
G(x,y|z) = \prod_{i=k+1}^{m} F_{\delta(x_i)}\bigl(x_i,o_{\delta(x_i)}|\xi_{\delta(x_i)}(z)\bigr) \cdot G(y,y|z).
$$
The claim follows now directly from $\varrho <\bar\varrho_1$ and  $\xi_i(\varrho)<1$ for all $i\in\calI$.
\par
Finally, if $k=m=n$, then $x=y$ and the claim follows immediately from the choice of $\varrho<\bar\varrho_1$. This finishes the proof.
\end{proof}

\begin{proposition}\label{prop:U-convergence2}
There exists $\varrho>1$  such that, 
for all finite $R\subset V$ and for all $x\in V$,
\begin{equation}\label{equ:U(x,R)}
U(x,R|\varrho)<\infty.
\end{equation}
In particular, there exists a finite constant $M_0>0$ such that
$$
U(x,R|\varrho)\leq |R|\cdot M_0 \quad \textrm{ for all finite $R\subset V$ and for all $x\in V$.}
$$
\end{proposition}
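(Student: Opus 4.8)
The plan is to deduce the statement directly from the uniform Green function bound of Proposition \ref{prop:G-convergence1}. So I would fix once and for all a real number $\varrho>1$ with $M_0:=\sup_{x,y\in V}G(x,y|\varrho)<\infty$, exactly the $\varrho$ produced there; note that $M_0\geq G(y,y|\varrho)\geq 1$, so $M_0$ is a genuine positive finite constant depending neither on $x$ nor on $R$.

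The one real step is a first-entrance decomposition of the hitting time $S_R$. Since $\{S_R=n\}\subseteq\{X_n\in R\}$ and the events $\{X_n=y\}$, $y\in R$, are pairwise disjoint, we get for every $n\in\mathbb{N}$ and every $x\in V$
$$
\Prob_x[S_R=n]=\sum_{y\in R}\Prob_x[S_R=n,X_n=y]\leq\sum_{y\in R}\Prob_x[X_n=y].
$$
Multiplying by $\varrho^n$, summing over $n\geq 1$, and interchanging the two nonnegative sums yields
$$
U(x,R|\varrho)\leq\sum_{y\in R}\sum_{n\geq 1}\Prob_x[X_n=y]\,\varrho^n\leq\sum_{y\in R}G(x,y|\varrho)\leq|R|\cdot M_0,
$$
where the last inequality is Proposition \ref{prop:G-convergence1}. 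This simultaneously gives $U(x,R|\varrho)<\infty$ and the quantitative bound $U(x,R|\varrho)\leq|R|\cdot M_0$, uniformly in $x\in V$ and in the finite set $R\subset V$. (A slightly sharper variant would instead decompose over the vertex of $R$ hit first, obtaining $U(x,R|\varrho)\leq\sum_{y\in R}U(x,y|\varrho)$, and then bound $U(x,y|\varrho)=F(x,y|\varrho)\leq F(x,y|\varrho)\,G(y,y|\varrho)=G(x,y|\varrho)$ via (\ref{equ:G-F-L-equations}) for $y\neq x$, while for $y=x$ one uses $U(x,x|\varrho)\leq G(x,x|\varrho)$ directly; but the cruder estimate above already suffices.)

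There is essentially no obstacle here beyond already having Proposition \ref{prop:G-convergence1}: all the difficulty — obtaining a single $\varrho>1$ for which the Green functions $G(x,y|\varrho)$ are bounded \emph{uniformly} over all $x,y\in V$ — has been absorbed into that proposition via Lemmas \ref{lem:sup-L-L}, \ref{lem:sup-Lxy} and \ref{lem:r1}. The only point requiring attention is that $\varrho$ and $M_0$ are fixed independently of $R$ and $x$, so that the resulting bound $|R|\cdot M_0$ is truly uniform, as claimed.
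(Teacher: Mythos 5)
Your proof is correct and follows essentially the same approach as the paper: decompose $\mathbb{P}_x[S_R=n]$ over which point $y\in R$ is hit at time $n$, then invoke the uniform Green function bound $M_0=\sup_{x,y\in V}G(x,y|\varrho)<\infty$ from Proposition~\ref{prop:G-convergence1}. The only cosmetic difference is that the paper's main chain first passes through $\sum_{y\in R}U(x,y|\varrho)$ and then uses $U(x,y|\varrho)\leq G(x,y|\varrho)$, whereas you bound $\mathbb{P}_x[S_R=n,X_n=y]\leq\mathbb{P}_x[X_n=y]$ directly to get $\sum_{y\in R}G(x,y|\varrho)$ — a distinction you yourself already note in your parenthetical.
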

\begin{proof}
Let  $R\subset V$ be finite and $x\in V$. Choose $\varrho >1$ which satisfies Proposition \ref{prop:G-convergence1}.  Then:
\begin{eqnarray*}
U(x,R|\varrho) &=& \sum_{n\geq 1} \mathbb{P}_x[S_R=n]\varrho^n 
= \sum_{n\geq 1}  \mathbb{P}_x\bigl[X_1,\dots,X_{n-1}\notin R, X_n\in R\bigr]\varrho^n \\
&=& \sum_{n\geq 1}  \sum_{y\in R}\mathbb{P}_x\bigl[X_1,\dots,X_{n-1}\notin R, X_n=y\bigr]\varrho^n \\
&\leq & \sum_{n\geq 1}  \sum_{y\in R}\mathbb{P}_x\bigl[X_1,\dots,X_{n-1}\neq  y, X_n=y\bigr]\varrho^n \\
&=& \sum_{y\in R} \sum_{n\geq 1} \P_x[T_y=n]\varrho^n
= \sum_{y\in R} U(x,y|\varrho) \leq |R| \cdot \sup_{x_1,x_2\in V} U(x_1,x_2|\varrho).
\end{eqnarray*}
Obviously, $U(x_1,x_2|\varrho)\leq G(x_1,x_2|\varrho)$ for all $x_1,x_2\in V$.
From Proposition \ref{prop:G-convergence1} follows now
$$
U(x,R|\varrho) \leq |R| \cdot \sup_{x_1,x_2\in V} U(x_1,x_2|\varrho)\leq  |R| \cdot \sup_{x_1,x_2\in V} G(x_1,x_2|\varrho)<\infty.
$$
Setting $M_0:=\sup_{x_1,x_2\in V} G(x_1,x_2|\varrho)<\infty$ yields the second part of the claim.
\end{proof}
An almost immediate consequence is the following:
\begin{corollary}
$\overline{U}(x,R|z)$ has radius of convergence strictly bigger than $1$. 
\end{corollary}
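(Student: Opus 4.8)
The plan is short. Since $\overline{U}(x,R|z) = 1 - U(x,R|z)$ by definition, the two power series $\overline{U}(x,R|\,\cdot\,)$ and $U(x,R|\,\cdot\,)$ differ only by the constant term $1$ and hence have exactly the same radius of convergence. It therefore suffices to show that $U(x,R|z) = \sum_{n\geq 1}\P_x[S_R=n]\,z^n$ has radius of convergence strictly larger than $1$.

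To this end I would simply invoke Proposition \ref{prop:U-convergence2}, which provides a real number $\varrho>1$ (in fact one that may be chosen independently of $x\in V$ and of the finite set $R\subset V$) such that $U(x,R|\varrho)<\infty$. Because all coefficients $\P_x[S_R=n]$ are non-negative, for every complex $z$ with $|z|\leq\varrho$ one has $\sum_{n\geq 1}\P_x[S_R=n]\,|z|^n \leq \sum_{n\geq 1}\P_x[S_R=n]\,\varrho^n = U(x,R|\varrho)<\infty$, so the series defining $U(x,R|z)$ converges absolutely on the closed disc of radius $\varrho$. Consequently its radius of convergence is at least $\varrho>1$, and the same bound holds for $\overline{U}(x,R|z)$, which is the assertion. (This also makes precise the earlier observation that $\overline{U}(x,R|1)=\P_x[S_R=\infty]$ is the value at $z=1$ of a function analytic in a neighbourhood of $1$.)

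There is no genuine obstacle here: the whole content has been placed into Proposition \ref{prop:U-convergence2}, whose uniform estimate $U(x,R|\varrho)\leq |R|\cdot M_0$ is the decisive input, and the passage from convergence at a real point $\varrho>1$ to a lower bound on the radius of convergence is the standard Abel/Pringsheim-type fact for series with non-negative coefficients.
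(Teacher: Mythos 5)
Your proposal is correct and takes essentially the same route as the paper: both rely on Proposition \ref{prop:U-convergence2} to produce a real $\varrho>1$ with $U(x,R|\varrho)<\infty$, and then conclude that the radius of convergence is at least $\varrho$. If anything your write-up is a little more careful than the paper's, which invokes ``Pringsheim's Theorem'' for what is really just the elementary comparison argument for power series with non-negative coefficients that you spell out explicitly.
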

\begin{proof}
Let $\varrho>1$ be a suitable real number which satisfies (\ref{equ:U(x,R)}). 
Pringsheim's Theorem yields that $U(x,R|z)$ has radius of convergence of at least $\varrho$. 
\end{proof}


\subsection{Analyticity of $\mathbb{E}[\widetilde{\mathcal{C}}_1]$}
In this subsection we prove the real-analytic behaviour of $\mathbb{E}[\widetilde{\mathcal{C}}_1]$ as a function in $(p_1,\dots,p_d)\in\mathcal{P}$. Define 
$$
\widetilde{\mathcal{R}}_1:=\bigcup_{j=\tau_0}^{\tau_1-1} \mathcal{R}_j = \mathbf{R}_{\TT_1} \cap \Bigl( C(X_{\TT_0}) \setminus \bigl( C(X_{\TT_1})\setminus\{X_{\TT_1}\}\bigr)\Bigr), \quad \widetilde{\mathcal{R}}_1^{(I)}:=\widetilde{\mathcal{R}}_1 \setminus\{X_{\TT_0},X_{\TT_1}\}.
$$
Then for all $\omega\in\Omega_0$:
\begin{eqnarray}
\widetilde{\mathcal{C}}_1(\omega)&=&\sum_{x\in \widetilde{\mathcal{R}}_1^{(I)}\setminus\{X_{\TT_0},X_{\TT_1}\}} \P_x[S_{\widetilde{\mathcal{R}}_1(\omega)}=\infty] 
 + \P_{X_{\TT_0}(\omega)}\biggl[\substack{S_{\widetilde{\mathcal{R}}_1(\omega)}=\infty, \\ \forall n\geq 1: X_n\in C\bigl(X_{\TT_0}(\omega)\bigr)}\biggr] \nonumber\\
 &&+  \P_{X_{\TT_1}(\omega)}\biggl[\substack{S_{\widetilde{\mathcal{R}}_1(\omega)}=\infty,\\ \forall n\geq 1: X_n\notin C\bigl(X_{\TT_1}(\omega)\bigr)}\biggr];\label{equ:C1}
\end{eqnarray}
compare with the calculations in Section \ref{sec:existence}.
The next step is to construct a ``normalized version'' of $\widetilde{\mathcal{R}}_1$ and $X_{\TT_1}$ by removing the common prefix $X_{\TT_0}$: set
$$
\widetilde{\mathcal{R}}_{\textrm{norm}}:= \big\lbrace w \in V \,\bigl|\, X_{\TT_0}w\in \widetilde{\mathcal{R}}_1\bigr\rbrace.
$$
Moreover, define
$$
V_{\mathfrak{g}}:=\bigl\lbrace v_1\dots v_k\in V \mid k\in\N, v_k=\mathfrak{g}\bigr\rbrace,
$$ 
the set of words in $V$ which end up with the letter $\mathfrak{g}$ with no further occurrence of this letter except at the end; these are the possible values of $X_{\TT_0}$. Furthermore, define
$$
V_{2,\mathfrak{g}}:=\bigl\lbrace v_1\dots v_k\in V \mid k\in\N, v_1\in V_2^\times, v_k=\mathfrak{g}\bigr\rbrace,
$$ 
the set of words in $V$ which start with a letter in $V_2^\times$ and end up with the letter $\mathfrak{g}$ with no further occurrence of $\mathfrak{g}$ except at the end; then $X_{\TT_1}$ has the form $w_1w_2$, where $w_1\in V_{\mathfrak{g}}$ and $w_2\in V_{2,\mathfrak{g}}$. The increment (a ``normalized version'' of $X_{\TT_1}$) between $X_{\TT_0}=w_1$ and $X_{\TT_1}=w_1w_2$ is given by
$$
\mathbf{i}:=w_2.
$$
By definition, $\mathbf{i}$ takes values in $V_{2,\mathfrak{g}}$, while $\widetilde{\mathcal{R}}_{\textrm{norm}}$ takes almost surely values in a set
$$
\mathcal{W} := \bigl\lbrace R\subset V\,\bigl|\, \mathbb{P}[\widetilde{\mathcal{R}}_{\textrm{norm}}=R]>0\bigr\rbrace.
$$
For all $x_0\in V_{\mathfrak{g}}$ and $R\in \mathcal{W}$,  the set $x_0R:=\{x_0w| w\in R\}$ is well-defined 
since all words in $\mathcal{W}$ start with a letter in $V_2$ while $x_0$ ends with the letter $\mathfrak{g}\in V_1$.
\par
In a next step we introduce further generating functions which will play a crucial role in our proof. Recall that $\alpha=\sum_{w\in V_1} p_i(o_i,w)$.
Define for finite $R\in \mathcal{W}$, $x\in V_{2,\mathfrak{g}}$, and $z\in\mathbb{C}$
\begin{eqnarray*}
g(x,R|z)&:=& \sum_{m\geq 1} \P\bigl[\Rn =R,\mathbf{i}=x,\TT_1-\TT_0=m\bigr]z^m,\\
U_0(\mathfrak{g},\mathfrak{g}R|z) &:=& \sum_{n\geq 1} \P_{\mathfrak{g}}\bigl[ S_{\mathfrak{g}R}=n,\forall m\leq n: X_m\in C(\mathfrak{g})\bigr]\, z^n, \\
\overline{U}_0(\mathfrak{g},\mathfrak{g}R|z) &:=& (1-\alpha)\cdot z- U_0(\mathfrak{g},\mathfrak{g}R|z),\\
U_1(\mathfrak{g}x,\mathfrak{g}R|z) &:=& \sum_{n\geq 1} \P_{\mathfrak{g}x}\bigl[ S_{\mathfrak{g}R}=n,\forall m\in\{1,\dots,n-1\}: X_m\notin C(\mathfrak{g}x)\bigr]\, z^n, \\
\overline{U}_1(\mathfrak{g}x,\mathfrak{g}R|z) &:=& \alpha\cdot z- U_1(\mathfrak{g}x,\mathfrak{g}R|z).
\end{eqnarray*}
Observe that $g(x,R|1)=\mathbb{P}[\Rn=R,\ii=x]$ and, for real $z>0$,
$$
g(x,R|z) \leq  \sum_{m\geq 1} \P\bigl[\TT_1-\TT_0=m\bigr]z^m =:\mathbb{T}(z).
$$
Note that the power series  $\mathbb{T}(z)$ has radius of convergence strictly bigger than $1$ due to existence of exponential moments of $\TT_1-\TT_0$; see Proposition \ref{prop:TT-exp-moments}. This together with Proposition \ref{prop:U-convergence2} yields that there exists some $r_1>1$ such that, for all  finite $R\in\mathcal{W}$, all $w\in V$, $x\in V_{2,\mathfrak{g}}$ and for all real  $z\in(0,r_1)$, $U(w,R|z)<\infty$ and $g(x,R|z)<\infty$.
\par
Furthermore, we have:
\begin{lemma}
For all $R\in \mathcal{W}$ and $x\in V_{2,\mathfrak{g}}$ with $\P\bigl[\Rn=R,\mathbf{i}=x\bigr]>0$,
\begin{enumerate}
\item $\displaystyle \overline{U}_0(\mathfrak{g},\mathfrak{g}R|1)= \P_{\mathfrak{g}}\bigl[ S_{\mathfrak{g}R}=\infty ,\forall n\geq 1: X_n\in C(\mathfrak{g})\bigr].$
\item  $\displaystyle \overline{U}_1(\mathfrak{g}x,\mathfrak{g}R|1)= \P_{\mathfrak{g}x}\bigl[ S_{\mathfrak{g}R}=\infty ,\forall n\geq 1: X_n\notin C(\mathfrak{g}x)\bigr].$
\end{enumerate}
\end{lemma}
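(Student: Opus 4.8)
The plan is to prove both identities by the same first–step decomposition, exploiting that $\mathfrak g$ (respectively $\mathfrak gx$) is the unique cut vertex of $\mathcal X$ separating the cone $C(\mathfrak g)$ (respectively $C(\mathfrak gx)$) from its complement, and that these cones each contain exactly one point of $\mathfrak gR$. For part (i) I first record the structural facts. Since $\mathfrak g\in V_1^\times$ has word length one and the chain has no loops, a single step from $\mathfrak g$ governed by $\bar P_1$ (probability $\alpha$) always lands on a word of length at most one, hence \emph{outside} $C(\mathfrak g)$, whereas a step governed by $\bar P_2$ (probability $1-\alpha$) moves into the freshly attached copy of $\mathcal X_2$ at $\mathfrak g$ and therefore stays in $C(\mathfrak g)$; consequently $\P_{\mathfrak g}[X_1\in C(\mathfrak g)]=1-\alpha$. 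Moreover, the starting point $X_{\TT_0}$ always lies in $\widetilde{\mathcal R}_1$, so its normalisation $o$ belongs to every $R\in\mathcal W$ and hence $\mathfrak g=\mathfrak g o\in\mathfrak gR$. Since any path leaving $C(\mathfrak g)$ must pass through $\mathfrak g\in\mathfrak gR$, on the event $\{X_1\in C(\mathfrak g)\}$ the walk cannot exit $C(\mathfrak g)$ strictly before time $S_{\mathfrak gR}$; this gives, for every $n\ge1$, the set identity $\{S_{\mathfrak gR}=n,\ \forall m\le n:X_m\in C(\mathfrak g)\}=\{X_1\in C(\mathfrak g),\ S_{\mathfrak gR}=n\}$ and likewise $\{S_{\mathfrak gR}=\infty,\ \forall n\ge1:X_n\in C(\mathfrak g)\}=\{X_1\in C(\mathfrak g),\ S_{\mathfrak gR}=\infty\}$. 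Summing the first family over $n$ identifies $U_0(\mathfrak g,\mathfrak gR|1)$ with $\P_{\mathfrak g}[X_1\in C(\mathfrak g),\ S_{\mathfrak gR}<\infty]$, so that
$$
\overline U_0(\mathfrak g,\mathfrak gR|1)=(1-\alpha)-U_0(\mathfrak g,\mathfrak gR|1)=\P_{\mathfrak g}\bigl[X_1\in C(\mathfrak g),\ S_{\mathfrak gR}=\infty\bigr]=\P_{\mathfrak g}\bigl[S_{\mathfrak gR}=\infty,\ \forall n\ge1:X_n\in C(\mathfrak g)\bigr],
$$
which is (i).

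For part (ii) I argue symmetrically with the cone $C(\mathfrak gx)$ and its cut vertex $\mathfrak gx$, using two further facts inherited from the definition of $\widetilde{\mathcal R}_1$: on $\{\Rn=R,\mathbf i=x\}$ the endpoint $X_{\TT_1}=X_{\TT_0}x$ lies in $\widetilde{\mathcal R}_1$, so $x\in R$ and $\mathfrak gx\in\mathfrak gR$; and $\widetilde{\mathcal R}_1$ contains no element of $C(X_{\TT_1})$ other than $X_{\TT_1}$, whence $R\cap C(x)=\{x\}$ and so $\mathfrak gR\cap C(\mathfrak gx)=\{\mathfrak gx\}$. As before, a step from $\mathfrak gx$ leaves $C(\mathfrak gx)$ with probability $\alpha$ (the $\bar P_1$-step, which changes the last block) and stays with probability $1-\alpha$, so $\P_{\mathfrak gx}[X_1\notin C(\mathfrak gx)]=\alpha$. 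On $\{X_1\notin C(\mathfrak gx)\}$ the walk can re-enter $C(\mathfrak gx)$ only through $\mathfrak gx\in\mathfrak gR$; combined with $\mathfrak gR\cap C(\mathfrak gx)=\{\mathfrak gx\}$ and the absence of loops (which forbids $X_1=\mathfrak gx$), this yields $\{S_{\mathfrak gR}=n,\ \forall m\in\{1,\dots,n-1\}:X_m\notin C(\mathfrak gx)\}=\{X_1\notin C(\mathfrak gx),\ S_{\mathfrak gR}=n\}$ for every $n\ge1$, and $\{S_{\mathfrak gR}=\infty,\ \forall n\ge1:X_n\notin C(\mathfrak gx)\}=\{X_1\notin C(\mathfrak gx),\ S_{\mathfrak gR}=\infty\}$. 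Summing over $n$ gives $U_1(\mathfrak gx,\mathfrak gR|1)=\P_{\mathfrak gx}[X_1\notin C(\mathfrak gx),\ S_{\mathfrak gR}<\infty]$, and hence $\overline U_1(\mathfrak gx,\mathfrak gR|1)=\alpha-U_1(\mathfrak gx,\mathfrak gR|1)=\P_{\mathfrak gx}[S_{\mathfrak gR}=\infty,\ \forall n\ge1:X_n\notin C(\mathfrak gx)]$, which is (ii).

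I expect the only genuinely delicate step to be the verification of these set identities at the ``boundary'' instants — for $n=1$ in part (ii) and at the time $S_{\mathfrak gR}$ is attained — where one must rule out a first entry into $\mathfrak gR$ occurring inside the cone at a positive time other than at the cone's root; this is precisely where $R\cap C(x)=\{x\}$ (from the definition of $\widetilde{\mathcal R}_1$) and the loop-free assumption $p(v,v)=0$ enter. Everything else is routine bookkeeping based on the partial-concatenation structure of $V$ and on the cone localisation of Lemma~\ref{lem:cone-probs}.
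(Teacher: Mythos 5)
Your proposal is correct and follows essentially the same route as the paper: a first-step decomposition writing $\overline U_0(\mathfrak g,\mathfrak gR|1)=\P_{\mathfrak g}[X_1\in C(\mathfrak g)]-U_0(\mathfrak g,\mathfrak gR|1)$ (and its analogue for $\overline U_1$), combined with the cut-vertex property of the cone root and the facts $o,x\in R$, hence $\mathfrak g,\mathfrak gx\in\mathfrak gR$. The only cosmetic difference is that you verify per-$n$ set identities rather than one global identity as the paper does, and you make explicit the $n=1$ boundary check in part (ii) via $\mathfrak gR\cap C(\mathfrak gx)=\{\mathfrak gx\}$ together with the no-loop assumption, which the paper's argument uses implicitly when it subtracts $U_1$ from $\alpha$.
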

\begin{proof}
Let $R\in \mathcal{W}$ and $x\in V_{2,\mathfrak{g}}$ with $\P\bigl[\Rn=R,\mathbf{i}=x\bigr]>0$. Then $o,x\in R$ by definition of $\Rn$ and $\mathbf{i}$.
\begin{enumerate}
\item In the following we consider only trajectories $\omega=(w_0=\mathfrak{g},w_1,\dots)\in V^{\N_0}$ starting at $\mathfrak{g}$ with $p(w_i,w_{i+1})>0$ for all $i\in\N_0$, that is, we condition on the event $[X_0=\mathfrak{g}]$.
We claim:
\begin{eqnarray*}
A_0&:=&\bigl\lbrace X_1\in C(\mathfrak{g})\bigr\rbrace \setminus \bigcup_{n\geq 1}\bigl\lbrace S_{\mathfrak{g}R}=n ,\forall m\leq n: X_m\in C(\mathfrak{g})\bigr\rbrace \\
&=& \bigl\lbrace S_{\mathfrak{g}R}=\infty ,\forall n\geq 1: X_n\in C(\mathfrak{g})\bigr\rbrace=:B_0.
\end{eqnarray*}
Indeed, if $\omega\in A_0$, then $X_1(\omega) \in C(\mathfrak{g})$ and we either must have $S_{\mathfrak{g}R}(\omega)=\infty$ with $w_i\in C(\mathfrak{g})$ for all $i\in\N_0$ (because otherwise we would have $S_{\mathfrak{g}R}(\omega)<\infty$ since $\mathfrak{g}\in \mathfrak{g}R$) or there exists some $n\in\mathbb{N}$ with $S_{\mathfrak{g}R}(\omega)=n$ and $X_{m}\notin C(\mathfrak{g})$ for some $m\leq n$; in the latter case $\mathfrak{g}\in \mathfrak{g}R$ must be visited before leaving $C(\mathfrak{g})$ implying $S_{\mathfrak{g}R}(\omega)<n$, a contradiction. Consequently, $\omega\in B_0$. Vice versa, if $\omega\in B_0$, then we have $X_1(\omega) \in C(\mathfrak{g})$ and $\omega$ is obviously contained in the set $A_0$.
\par
The above equation of sets implies:
\begin{eqnarray*}
&&\P_{\mathfrak{g}}\bigl[ S_{\mathfrak{g}R}=\infty ,\forall n\geq 1: X_n\in C(\mathfrak{g})\bigr]\\
&=& \P_{\mathfrak{g}}\bigl[ X_1\in C(\mathfrak{g})\bigr] - \sum_{n\geq 1} \P_{\mathfrak{g}}\bigl[ S_{\mathfrak{g}R}=n,\forall m\leq n: X_m\in C(\mathfrak{g})\bigr] \\
&=& (1-\alpha) - U_0(\mathfrak{g},\mathfrak{g}R|1)=\overline{U}_0(\mathfrak{g},\mathfrak{g}R|1).
\end{eqnarray*}
\item The proof works analogously to the first part. In the following we consider only trajectories $\omega=(w_0=\mathfrak{g}x,w_1,\dots)\in V^{\N_0}$ starting at $\mathfrak{g}x$ with $p(w_i,w_{i+1})>0$ for all $i\in\N_0$, that is, we condition on the event $[X_0=\mathfrak{g}x]$. We claim:
\begin{eqnarray*}
A_1&:= &\bigl\lbrace X_1\notin C(\mathfrak{g}x)\bigr\rbrace \setminus \bigcup_{n\geq 1}\bigl\lbrace S_{\mathfrak{g}R}=n ,\forall m\in\{1,\dots,n-1\}: X_m\notin C(\mathfrak{g}x)\bigr\rbrace \\
&=& \bigl\lbrace S_{\mathfrak{g}R}=\infty ,\forall n\geq 1: X_n\notin C(\mathfrak{g}x)\bigr\rbrace=:B_1.
\end{eqnarray*}
Indeed, if $\omega\in A_1$, then $X_1(\omega) \notin C(\mathfrak{g}x)$. If we would have some $n\in\N$ with $S_{\mathfrak{g}R}(\omega)=n$ and $m\in\{1,\dots,n-1\}$ with $X_m\in C(\mathfrak{g}x)$, then there is some $m'\leq m$ with $X_{m'}=\mathfrak{g}x\in \mathfrak{g}R$, a contradiction to $S_{\mathfrak{g}R}(\omega)=n>m'$. Therefore,  
 we must have $S_{\mathfrak{g}R}(\omega)=\infty$ with  $w_i\notin C(\mathfrak{g}x)$ for all $i\in\N$ (because otherwise we would have $S_{\mathfrak{g}R}(\omega)<\infty$ due to $\mathfrak{g}x\in\mathfrak{g}R$)). Hence, $\omega\in B_1$. Vice versa, if $\omega\in B_1$, then we have $X_1(\omega) \notin C(\mathfrak{g}x)$ and $\omega$ is obviously contained in the set $A_1$.
\par
The above equation of set implies:
\begin{eqnarray*}
&&\P_{\mathfrak{g}x}\bigl[ S_{\mathfrak{g}R}=\infty ,\forall n\geq 1: X_n\notin C(\mathfrak{g}x)\bigr]\\
&=& \P_{\mathfrak{g}x}\bigl[ X_1\notin C(\mathfrak{g}x)\bigr] - \sum_{n\geq 1} \P_{\mathfrak{g}x}\bigl[ S_{\mathfrak{g}R}=n,\forall m\in \{1,\dots,n-1\}: X_m\notin C(\mathfrak{g}x)\bigr] \\
&=& \alpha   - U_1(\mathfrak{g}x,\mathfrak{g}R|1) =\overline{U}_1(\mathfrak{g}x,\mathfrak{g}R|1).
\end{eqnarray*}
%
%
%
%
%
\end{enumerate}
\end{proof}

We will rewrite $\mathbb{E}[\widetilde{\mathcal{C}}_1]$ with the help of the above introduced generating functions: 
for $z\in\mathbb{C}$, set
\begin{eqnarray*}
\mathcal{E}^{(I)}(z) &:=& \sum_{\substack{R\in \mathcal{W}, \\ x_1\in V_{2,\mathfrak{g}}}} \sum_{x\in R\setminus\{o,x_1\}} \overline{U}(x,R|z) \cdot g(x_1,R|z),\\
\mathcal{E}^{(0)}(z) &:=& \sum_{\substack{R\in \mathcal{W}, \\ x_1\in V_{2,\mathfrak{g}}}} \overline{U}_0(\mathfrak{g},\mathfrak{g}R|z) \cdot g(x_1,R|z),\\
\mathcal{E}^{(1)}(z) &:=& \sum_{\substack{R\in \mathcal{W}, \\ x_1\in V_{2,\mathfrak{g}}}}\  \overline{U}_1(\mathfrak{g}x_1,\mathfrak{g}R|z) \cdot g(x_1,R|z), \\
\mathcal{E}(z) & := & \mathcal{E}^{(I)}(z)+ \mathcal{E}^{(0)}(z)+\mathcal{E}^{(1)}(z).
\end{eqnarray*}
The following proposition plays a key role in the proofs later:
\begin{proposition}\label{prop:C1-decomposition}
$$
\mathbb{E}[\widetilde{\mathcal{C}}_1] = \mathcal{E}(1).
$$
\end{proposition}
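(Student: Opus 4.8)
The plan is to express $\mathbb{E}[\widetilde{\mathcal{C}}_1]$ by conditioning on the two pieces of data that determine $\widetilde{\mathcal{C}}_1$ after normalization, namely the increment $\mathbf{i}$ and the normalized range $\Rn$, and then to recognize each resulting probability as the value at $z=1$ of one of the generating functions $g$, $\overline{U}$, $\overline{U}_0$, $\overline{U}_1$. Concretely, I would start from the formula (\ref{equ:C1}) for $\widetilde{\mathcal{C}}_1(\omega)$ and take expectations. The key structural fact is that, by Lemma \ref{lem:cone-probs}, all the probabilities appearing in (\ref{equ:C1}) depend only on the positions of the visited vertices \emph{relative to} the common prefix $X_{\TT_0}$: if $X_{\TT_0}(\omega)=x_0\in V_{\mathfrak{g}}$ and $\widetilde{\mathcal{R}}_1(\omega)=x_0 R$ with $R\in\mathcal{W}$, and $X_{\TT_1}(\omega)=x_0 x$ with $x\in V_{2,\mathfrak{g}}$, then
$$
\P_{x_0 w}[S_{x_0 R}=\infty] = \overline{U}(w,R|1) \quad\text{for } w\in R\setminus\{o,x\},
$$
and similarly
$$
\P_{X_{\TT_0}}\bigl[S_{\widetilde{\mathcal{R}}_1}=\infty,\ \forall n\geq 1: X_n\in C(X_{\TT_0})\bigr] = \overline{U}_0(\mathfrak{g},\mathfrak{g}R|1),
$$
$$
\P_{X_{\TT_1}}\bigl[S_{\widetilde{\mathcal{R}}_1}=\infty,\ \forall n\geq 1: X_n\notin C(X_{\TT_1})\bigr] = \overline{U}_1(\mathfrak{g}x,\mathfrak{g}R|1),
$$
the last two equalities being exactly the content of the preceding lemma (after shifting by $x_0$, which is legitimate because $x_0$ ends in $\mathfrak{g}\in V_1^\times$ while $R$ starts with a letter in $V_2^\times$, so $\mathfrak{g}R$ and $x_0 R$ are well-defined and isomorphic as rooted configurations).

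Next I would perform the summation. Write
$$
\mathbb{E}[\widetilde{\mathcal{C}}_1] = \sum_{x_0\in V_{\mathfrak{g}}}\ \sum_{\substack{R\in\mathcal{W}\\ x\in V_{2,\mathfrak{g}}}} \mathbb{E}\bigl[\widetilde{\mathcal{C}}_1 \cdot \mathbf{1}_{\{X_{\TT_0}=x_0,\ \Rn=R,\ \mathbf{i}=x\}}\bigr],
$$
and on the event $\{X_{\TT_0}=x_0,\Rn=R,\mathbf{i}=x\}$ the value of $\widetilde{\mathcal{C}}_1$ is the deterministic quantity $\sum_{w\in R\setminus\{o,x\}}\overline{U}(w,R|1) + \overline{U}_0(\mathfrak{g},\mathfrak{g}R|1) + \overline{U}_1(\mathfrak{g}x,\mathfrak{g}R|1)$. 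Therefore the inner expectation factors as this deterministic quantity times $\P[X_{\TT_0}=x_0,\Rn=R,\mathbf{i}=x]$. Summing over $x_0\in V_{\mathfrak{g}}$ collapses the $X_{\TT_0}$-marginal: $\sum_{x_0}\P[X_{\TT_0}=x_0,\Rn=R,\mathbf{i}=x]=\P[\Rn=R,\mathbf{i}=x]=g(x,R|1)$ by definition of $g$. Distributing the three-term bracket over the product $g(x,R|1)$ and regrouping the sums over $(R,x)$ gives exactly $\mathcal{E}^{(I)}(1)+\mathcal{E}^{(0)}(1)+\mathcal{E}^{(1)}(1)=\mathcal{E}(1)$.

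A few points need care. First, one must check that $\widetilde{\mathcal{C}}_1$ really is $\mathcal{F}_{\{X_{\TT_0}=x_0,\Rn=R,\mathbf{i}=x\}}$-measurable in the strong sense that it is a \emph{constant} on that event; this is where one invokes that $S_{\widetilde{\mathcal{R}}_1}=\infty$ restricted to the relevant cone structure depends only on $(R,x)$ up to the shift by $x_0$, i.e. the same ``every path must pass through $X_{\TT_0}$ or $X_{\TT_1}$'' argument used in the proof of Proposition \ref{prop:capacity-decomposition}, combined with Lemma \ref{lem:cone-probs}. Second, one should justify interchanging the sum with the expectation; since $\widetilde{\mathcal{C}}_1\geq 0$ everything is a sum of non-negative terms, so Tonelli applies and no integrability issue arises at this stage (finiteness of $\mathcal{E}(1)=\mathbb{E}[\widetilde{\mathcal{C}}_1]$ having already been established via Lemma \ref{lem:Ci-estimate} and Proposition \ref{prop:TT-exp-moments}). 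The main obstacle is the bookkeeping in the first point: making fully precise that, on the event $\{X_{\TT_0}=x_0\}$, the set $\widetilde{\mathcal{R}}_1$ equals $x_0\Rn$ with $\Rn$ taking values in $\mathcal{W}$, that $X_{\TT_1}=x_0\mathbf{i}$ with $\mathbf{i}\in V_{2,\mathfrak{g}}$, and that the three conditional ``escape'' probabilities transform under the shift $x_0 v\mapsto \mathfrak{g}v$ exactly into the generating-function values $\overline{U}(\cdot,R|1)$, $\overline{U}_0(\mathfrak{g},\mathfrak{g}R|1)$, $\overline{U}_1(\mathfrak{g}x,\mathfrak{g}R|1)$ — i.e. aligning the stopping-time/cone descriptions in (\ref{equ:C1}) with the definitions of $U_0,U_1$ and the preceding lemma. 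Once that identification is in place, the proof is a two-line reindexing.
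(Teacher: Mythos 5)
Your proposal is correct and follows essentially the same route as the paper's proof: both start from formula (\ref{equ:C1}), decompose the expectation by conditioning on $(X_{\TT_0},\Rn,\mathbf{i})$, invoke Lemma \ref{lem:cone-probs} to replace the prefix $X_{\TT_0}$ by $\mathfrak{g}$ so the conditional value of $\widetilde{\mathcal{C}}_1$ depends only on $(\Rn,\mathbf{i})$, collapse the $x_0$-sum to obtain $g(x_1,R|1)$, and then identify the three resulting terms with $\mathcal{E}^{(I)}(1)$, $\mathcal{E}^{(0)}(1)$, $\mathcal{E}^{(1)}(1)$ via the preceding lemma. The "bookkeeping" you flag as the main obstacle is handled in the paper at the same level of detail — a term-by-term comparison justified by Lemma \ref{lem:cone-probs} — so nothing essential is missing.
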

\begin{proof}
Recall once again that we can shift paths $\bigl(C(X_{\TT_0}),v_1,\dots,v_m\bigr)\in C(X_{\TT_0})^{m+1}$ in a measure-preserving way to paths in $C(\mathfrak{g})$ by substituting the common prefix $X_{\TT_0}$ with $\mathfrak{g}$; compare with Lemma \ref{lem:cone-probs}.
By (\ref{equ:C1}), we can then rewrite $\mathbb{E}[\widetilde{\mathcal{C}}_1]$ in the following way:
\begin{eqnarray}
&&\mathbb{E}[\widetilde{\mathcal{C}}_1] \nonumber\\
&=& \sum_{\substack{R\in\mathcal{W},\\ x_0\in V_{\mathfrak{g}}, x_1\in V_{2,\mathfrak{g}}}}  \P\bigl[\Rn=R,X_{\TT_0}=x_0,X_{\TT_1}=x_0x_1\bigr] \nonumber \\
&& \cdot 
\biggl(  \sum_{x\in x_0R\setminus\{x_0,x_0x_1\}} \mathbb{P}_x[S_{x_0R}=\infty]+ 
 \P_{x_0}\biggl[\substack{S_{x_0R}=\infty,\\ \forall n\geq 1: X_n\in C(x_0)}\biggr] + \P_{x_0x_1}\biggl[\substack{S_{x_0R}=\infty,\\ \forall n\geq 1: X_n\notin C(x_0x_1)}\biggr] \biggr)\nonumber\\
&\stackrel{\textrm{L.}\ref{lem:cone-probs}}{=}& \sum_{\substack{R\in\mathcal{W},\\ x_0\in V_{\mathfrak{g}}, x_1\in V_{2,\mathfrak{g}}}}  \P\bigl[\Rn=R,X_{\TT_0}=x_0,X_{\TT_1}=x_0x_1\bigr] \nonumber\\
&& \cdot 
\biggl(  \sum_{x\in R\setminus\{o,x_1\}} \mathbb{P}_x[S_R=\infty]+ 
 \P_{\mathfrak{g}}\biggl[\substack{S_{\mathfrak{g}R}=\infty,\\ \forall n\geq 1: X_n\in C(\mathfrak{g})}\biggr] + \P_{\mathfrak{g}x_1}\biggl[\substack{S_{\mathfrak{g}R}=\infty,\\ \forall n\geq 1: X_n\notin C(\mathfrak{g}x_1)}\biggr] \biggr)\nonumber\\
&=& \sum_{\substack{R\in\mathcal{W},\\ x_1\in V_{2,\mathfrak{g}}}}  \P\bigl[\Rn=R,\ii=x_1\bigr] \nonumber\\
&& \cdot 
\biggl(  \sum_{x\in R\setminus\{o,x_1\}} \mathbb{P}_x[S_R=\infty]+ 
\P_{\mathfrak{g}}\biggl[\substack{S_{\mathfrak{g}R}=\infty,\\ \forall n\geq 1: X_n\in C(\mathfrak{g})}\biggr] + \P_{\mathfrak{g}x_1}\biggl[\substack{S_{\mathfrak{g}R}=\infty,\\ \forall n\geq 1: X_n\notin C(\mathfrak{g}x_1)}\biggr] \biggr)\nonumber\\
&=& \sum_{\substack{R\in\mathcal{W},\\ x_1\in V_{2,\mathfrak{g}}}}  \sum_{m\geq 1}\P\bigl[\Rn=R,\ii=x_1,\TT_1-\TT_0=m\bigr]
 \nonumber \\
&&\quad \cdot \biggl(  \sum_{x\in R\setminus\{o,x_1\}}\overline{U}(x,R|1)+ \overline{U}_0(\mathfrak{g},\mathfrak{g}R|1) + \overline{U}_1(\mathfrak{g}x_1,\mathfrak{g}R|1)  \biggr)\nonumber\\
%
%
%
%
&=&   \mathcal{E}^{(I)}(1)+ \mathcal{E}^{(0)}(1)+ \mathcal{E}^{(1)}(1)= \mathcal{E}(1). \label{equ:E-formula}
\end{eqnarray}
\end{proof}
Set
\begin{eqnarray*}
\mathcal{E}^{(I)}_0(z)&:=& \sum_{\substack{R\in\mathcal{W},\\ x_1\in V_{2,\mathfrak{g}}}}\  \sum_{x\in R\setminus\{o,x_1\}} U(x,R|z) g(x_1,R|z) \quad \textrm{ and }\\ 
\mathcal{E}_i^\ast(z)&:=&\sum_{\substack{R\in\mathcal{W},\\ x_1\in V_{2,\mathfrak{g}}}}\  (|R|-2)^i  g(x_1,R|z) \quad \textrm{ for } i\in\{0,1\}.
\end{eqnarray*}
In the following proofs we will make use of the observation that
 $\TT_1-\TT_0=m\in\N$ implies $|\Rn|=|\widetilde{\calR}_1|\leq m+1 \leq 2m$.
We have:
\begin{lemma}\label{lem:E-ast}
The power series $\mathcal{E}_i^\ast(z)$, $i\in\{0,1\}$, have radii of convergence strictly bigger than $1$.
\end{lemma}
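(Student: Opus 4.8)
The plan is to bound the coefficients of $\mathcal{E}_i^\ast(z)$ directly in terms of the coefficients of the power series $\mathbb{T}(z)$, which has radius of convergence strictly bigger than $1$ by Proposition \ref{prop:TT-exp-moments}. The key observation, already noted in the excerpt, is that the event $\TT_1-\TT_0 = m$ forces $|\widetilde{\mathcal{R}}_{\textrm{norm}}| = |\widetilde{\mathcal{R}}_1| \le m+1 \le 2m$, so both the cardinality factor $(|R|-2)^i$ and the number of admissible pairs $(R,x_1)$ contributing to a given power of $z$ are polynomially controlled by $m$. First I would write, for real $z > 0$,
\begin{eqnarray*}
\mathcal{E}_i^\ast(z) &=& \sum_{\substack{R\in\mathcal{W},\\ x_1\in V_{2,\mathfrak{g}}}} (|R|-2)^i \sum_{m\ge 1} \P\bigl[\Rn=R,\ii=x_1,\TT_1-\TT_0=m\bigr] z^m \\
&=& \sum_{m\ge 1} z^m \sum_{\substack{R\in\mathcal{W},\, x_1\in V_{2,\mathfrak{g}}:\\ \P[\Rn=R,\ii=x_1,\TT_1-\TT_0=m]>0}} (|R|-2)^i\, \P\bigl[\Rn=R,\ii=x_1,\TT_1-\TT_0=m\bigr].
\end{eqnarray*}
On the event $\TT_1-\TT_0 = m$ we have $|R| \le m+1$, hence $(|R|-2)^i \le (m-1)^i \le m$ for $i\in\{0,1\}$. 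Pulling this bound out of the inner sum gives
$$
\mathcal{E}_i^\ast(z) \le \sum_{m\ge 1} m\, z^m \sum_{\substack{R,x_1}} \P\bigl[\Rn=R,\ii=x_1,\TT_1-\TT_0=m\bigr] = \sum_{m\ge 1} m\,\P[\TT_1-\TT_0=m]\, z^m,
$$
since the events $[\Rn=R,\ii=x_1,\TT_1-\TT_0=m]$ partition $[\TT_1-\TT_0=m]$ over the finitely (in fact countably, but summing to a probability) many relevant pairs $(R,x_1)$.

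Next I would observe that the right-hand side is $z\,\mathbb{T}'(z)$, the term-by-term derivative of $\mathbb{T}(z)$, and that a power series and its derivative have the same radius of convergence. Since $\mathbb{T}(z)$ has radius of convergence strictly bigger than $1$ by Proposition \ref{prop:TT-exp-moments} (existence of exponential moments of $\TT_1-\TT_0$), so does $z\,\mathbb{T}'(z)$; concretely, if $\mathbb{T}(r_0)<\infty$ for some $r_0>1$, then for any $\varrho\in(1,r_0)$ we have $\sum_{m\ge1} m\,\P[\TT_1-\TT_0=m]\varrho^m < \infty$ because $m\le C(\varrho/r_0)^{-m}$-type comparisons, or more simply $m\varrho^m = o(r_0^m)$, make the series converge by comparison with $\mathbb{T}(r_0)$. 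Therefore $\mathcal{E}_i^\ast(\varrho) < \infty$ for such $\varrho$, and since $\mathcal{E}_i^\ast$ has non-negative coefficients, Pringsheim's theorem yields that its radius of convergence is at least $\varrho > 1$.

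I do not expect a genuine obstacle here; the only point requiring a little care is the interchange of the two summations in the first display, which is justified because all terms are non-negative (Tonelli), and the verification that the events indexed by $(R,x_1)$ for fixed $m$ genuinely partition $[\TT_1-\TT_0 = m]$ up to a null set — this is immediate from the definitions of $\widetilde{\mathcal{R}}_{\textrm{norm}}$ and $\ii$ as deterministic functions of the trajectory. The same argument applies verbatim to both $i=0$ and $i=1$ since the factor $(|R|-2)^i$ is in both cases dominated by $m$ on the relevant event.
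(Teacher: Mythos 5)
Your proposal is correct and follows essentially the same route as the paper's proof: bound $(|R|-2)^i$ by a polynomial in $m$ using $|R|\le m+1$ on $[\TT_1-\TT_0=m]$, then compare with $z\,\mathbb{T}'(z)$ and invoke the exponential moments of $\TT_1-\TT_0$ from Proposition \ref{prop:TT-exp-moments}. The only cosmetic differences are that you bound $(|R|-2)^i\le m$ (the paper uses $|R|\le 2m$ so gets the constant $2$), and you finish by citing Pringsheim's theorem rather than concluding directly from finiteness of the majorant at a real $\varrho>1$.
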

\begin{proof}
Let  $z\in(0,r_1)$. Then:
\begin{eqnarray*}
\mathcal{E}_1^\ast(z)&=&\sum_{\substack{R\in\mathcal{W},\\ x_1\in V_{2,\mathfrak{g}}}}\  (|R|-2) \cdot  g(x_1,R|z)  
\leq  \sum_{\substack{R\in\mathcal{W},\\ x_1\in V_{2,\mathfrak{g}}}} |R|\cdot  g(x_1,R|z)  \\
&= &  \sum_{\substack{R\in\mathcal{W},\\ x_1\in V_{2,\mathfrak{g}}}}\ \sum_{m\geq 1} |R| \cdot \P\Bigl[\substack{\Rn=R,\ii=x_1, \\ \TT_1-\TT_0=m}\Bigr]\cdot z^m
\leq    \sum_{\substack{R\in\mathcal{W},\\ x_1\in V_{2,\mathfrak{g}}}}\ \sum_{m\geq 1} 2m \cdot \P\Bigl[\substack{\Rn=R,\ii=x_1, \\ \TT_1-\TT_0=m}\Bigr]\cdot z^m\\
&=& 2\cdot  \sum_{m\geq 1} m \cdot \P\bigl[\TT_1-\TT_0=m\bigr]z^m 
= 2 z\cdot \mathbb{T}'(z)<\infty,
\end{eqnarray*}
where finiteness follows from existence of exponential moments of $\TT_1-\TT_0$ (see Proposition \ref{prop:TT-exp-moments}). Thus, $\mathcal{E}_1^\ast(z)$ has radius of convergence strictly bigger than $1$. The same holds for $\mathcal{E}_0^\ast(z)$ since $|\widetilde{\mathcal{R}}_{\mathrm{norm}}|\geq 2$ almost surely and  
$$
\mathcal{E}_0^\ast(z) = \sum_{\substack{R\in\mathcal{W},\\ x_1\in V_{2,\mathfrak{g}}}}\    g(x_1,R|z) \leq \sum_{\substack{R\in\mathcal{W},\\ x_1\in V_{2,\mathfrak{g}}}}\  |R| \cdot  g(x_1,R|z) <\infty.
$$
\end{proof}
Now we are able to prove:
\begin{proposition}\label{prop:E(I)}
$\mathcal{E}^{(I)}(z)$ has radius of convergence strictly bigger than $1$.
\end{proposition}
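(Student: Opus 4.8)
The plan is to exploit the identity $\overline{U}(x,R|z)=1-U(x,R|z)$ in order to split $\mathcal{E}^{(I)}(z)$ into a piece already controlled by Lemma \ref{lem:E-ast} and a remainder that can be dominated by the uniform bound of Proposition \ref{prop:U-convergence2} together with the exponential moments of $\TT_1-\TT_0$. First I would note that whenever $g(x_1,R|z)\not\equiv 0$ we have $o,x_1\in R$ (by definition of $\Rn$ and $\mathbf{i}$), and $o\neq x_1$ since $x_1\in V_{2,\mathfrak{g}}$ is a non-empty word; hence $|R\setminus\{o,x_1\}|=|R|-2$ on the relevant index set. Substituting $\overline{U}=1-U$ into the definition of $\mathcal{E}^{(I)}$ and performing the sum over $x\in R\setminus\{o,x_1\}$ of the constant term then gives
$$
\mathcal{E}^{(I)}(z)=\mathcal{E}_1^\ast(z)-\mathcal{E}^{(I)}_0(z).
$$
By Lemma \ref{lem:E-ast} the radius of convergence of $\mathcal{E}_1^\ast$ exceeds $1$, so it suffices to prove the same for $\mathcal{E}^{(I)}_0$; the assertion then follows because a coefficient-wise difference of two power series convergent on $\{|z|<\rho\}$ is again convergent there (for real $z>0$ the two sums $\sum g$ and $\sum Ug$ have non-negative terms, so the splitting of the triple sum is legitimate by Tonelli).

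To control $\mathcal{E}^{(I)}_0(z)$, let $\varrho>1$ and $M_0>0$ be as in Proposition \ref{prop:U-convergence2}, so that $U(x,R|\varrho)\le |R|\,M_0$ for every finite $R\subset V$ and every $x\in V$, and recall that $\mathbb{T}(z)$ — hence also $\mathbb{T}'(z)$ and $\mathbb{T}''(z)$ — has radius of convergence strictly bigger than $1$ by Proposition \ref{prop:TT-exp-moments}. For real $z$ with $1<z<\min\{\varrho,r_1,r_{\mathbb{T}}\}$, where $r_{\mathbb{T}}>1$ is the radius of convergence of $\mathbb{T}$, monotonicity of $z\mapsto U(x,R|z)$ on $(0,\varrho]$ yields $U(x,R|z)\le |R|\,M_0$, and the deterministic bound $|R|=|\Rn|\le 2m$ on the event $\{\TT_1-\TT_0=m\}$ (the observation preceding Lemma \ref{lem:E-ast}) lets me estimate, by Tonelli,
$$
\mathcal{E}^{(I)}_0(z)\le M_0\sum_{\substack{R\in\mathcal{W},\\ x_1\in V_{2,\mathfrak{g}}}}|R|^2\,g(x_1,R|z)\le 4M_0\sum_{m\ge 1}m^2\,\P[\TT_1-\TT_0=m]\,z^m=4M_0\bigl(z\mathbb{T}'(z)+z^2\mathbb{T}''(z)\bigr)<\infty.
$$
Since $\mathcal{E}^{(I)}_0$ has non-negative coefficients and is finite for all real $z$ in a right-neighbourhood of $1$, Pringsheim's theorem shows that its radius of convergence exceeds $1$, which together with the displayed identity completes the proof.

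The algebraic splitting and the appeal to Lemma \ref{lem:E-ast} are routine; the one load-bearing step is the last display, where the factor $|R|$ coming from the number of inner points $x\in R$, the bound $U(x,R|\varrho)\le |R|M_0$ from Proposition \ref{prop:U-convergence2}, and the length bound $|R|\le 2(\TT_1-\TT_0)$ must be combined so that the troublesome $|R|^2$ is converted into $4m^2$ on the support of the law of $(\Rn,\mathbf{i},\TT_1-\TT_0)$, after which the exponential moments of $\TT_1-\TT_0$ close the estimate. I expect checking the legitimacy of this combination (in particular the interchanges of summation, all of which Tonelli handles since the terms involved are non-negative) to be the only genuine obstacle.
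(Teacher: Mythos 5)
Your proof is correct and follows essentially the same route as the paper: split $\mathcal{E}^{(I)}=\mathcal{E}_1^\ast-\mathcal{E}^{(I)}_0$ via $\overline{U}=1-U$, invoke Lemma~\ref{lem:E-ast} for $\mathcal{E}_1^\ast$, and dominate $\mathcal{E}^{(I)}_0$ by $4M_0(z\mathbb{T}'(z)+z^2\mathbb{T}''(z))$ using $U(x,R|z)\le|R|M_0$ and $|R|\le 2(\TT_1-\TT_0)$. The additional justifications you supply (the count $|R\setminus\{o,x_1\}|=|R|-2$, the Tonelli interchanges, and the explicit appeal to Pringsheim) are valid and merely make explicit what the paper leaves implicit.
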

\begin{proof}
Let  $z\in (0,r_1)$.
By Proposition \ref{prop:U-convergence2}, there is some constant $M_0$ such that, for all finite $R\subset V$ and every $x\in R$, $U(x,R|z)\leq |R|\cdot M_0$. 
This yields:
\begin{eqnarray*}
\mathcal{E}^{(I)}_0(z)&=&\sum_{\substack{R\in\mathcal{W},\\ x_1\in V_{2,\mathfrak{g}}}}\  \sum_{x\in R\setminus\{o,x_1\}} U(x,R|z) g(x_1,R|z)  \\
&\leq& \sum_{\substack{R\in\mathcal{W},\\ x_1\in V_{2,\mathfrak{g}}}}\  \sum_{x\in R\setminus\{o,x_1\}} |R|\cdot M_0 \cdot  g(x_1,R|z)\\ 
&\leq & M_0\cdot  \sum_{\substack{R\in\mathcal{W},\\ x_1\in V_{2,\mathfrak{g}}}} |R|^2\cdot  g(x_1,R|z)\\
&\leq & M_0\cdot  \sum_{\substack{R\in\mathcal{W},\\ x_1\in V_{2,\mathfrak{g}}}}\ \sum_{m\geq 1} |R|^2 \cdot \P\Bigl[\substack{\Rn=R,\ii=x_1, \\ \TT_1-\TT_0=m}\Bigr]\cdot z^m\\
&\leq &  4M_0\cdot \sum_{\substack{R\in\mathcal{W},\\ x_1\in V_{2,\mathfrak{g}}}}\ \sum_{m\geq 1} m^2 \cdot \P\Bigl[\substack{\Rn=R,\ii=x_1, \\ \TT_1-\TT_0=m}\Bigr]\cdot z^m\\
&=& 4M_0\cdot  \sum_{m\geq 1} m^2 \cdot \P\bigl[\TT_1-\TT_0=m\bigr]\cdot z^m \\
&=& 4M_0z^2\cdot \mathbb{T}''(z) + 4M_0z\cdot \mathbb{T}'(z) <\infty,
\end{eqnarray*}
where finiteness follows once again from existence of exponential moments of $\TT_1-\TT_0$. Hence, $\mathcal{E}^{(I)}_0(z)$ has radius of convergence strictly bigger than $1$. 
Finally, we get the proposed claim with Lemma \ref{lem:E-ast} due the equation
$$
\mathcal{E}^{(I)}(z)=\mathcal{E}_1^\ast(z)-\mathcal{E}^{(I)}_0(z).
$$
%
\end{proof}
Furthermore:
\begin{proposition}\label{prop:E0-E1}
$\mathcal{E}^{(0)}(z)$ and $\mathcal{E}^{(1)}(z)$ have radii of convergence strictly bigger than $1$.
\end{proposition}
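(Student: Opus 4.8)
The plan is to adapt the proof of Proposition~\ref{prop:E(I)} almost verbatim, exploiting the decompositions already built into the definitions of $\overline{U}_0$ and $\overline{U}_1$. Writing $\overline{U}_0(\mathfrak{g},\mathfrak{g}R|z)=(1-\alpha)z-U_0(\mathfrak{g},\mathfrak{g}R|z)$ and $\overline{U}_1(\mathfrak{g}x_1,\mathfrak{g}R|z)=\alpha z-U_1(\mathfrak{g}x_1,\mathfrak{g}R|z)$, one obtains for real $z>0$
\begin{eqnarray*}
\mathcal{E}^{(0)}(z) &=& (1-\alpha)z\cdot \mathcal{E}_0^\ast(z) \;-\; \sum_{\substack{R\in\mathcal{W},\\ x_1\in V_{2,\mathfrak{g}}}} U_0(\mathfrak{g},\mathfrak{g}R|z)\,g(x_1,R|z),\\
\mathcal{E}^{(1)}(z) &=& \alpha z\cdot \mathcal{E}_0^\ast(z) \;-\; \sum_{\substack{R\in\mathcal{W},\\ x_1\in V_{2,\mathfrak{g}}}} U_1(\mathfrak{g}x_1,\mathfrak{g}R|z)\,g(x_1,R|z).
\end{eqnarray*}
These rearrangements are legitimate because, for $z>0$ real, every summand on the right is nonnegative, the series $(1-\alpha)z\,\mathcal{E}_0^\ast(z)$ and $\alpha z\,\mathcal{E}_0^\ast(z)$ converge near $z=1$ by Lemma~\ref{lem:E-ast}, and the two subtracted series are finite near $z=1$ by the estimate below. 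Since $\mathcal{E}_0^\ast(z)$ has radius of convergence strictly bigger than $1$ by Lemma~\ref{lem:E-ast}, it suffices to bound the two subtracted series.

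For this I would use that $U_0$ and $U_1$ are dominated coefficientwise by the unrestricted first-passage generating function: the events defining $U_0(\mathfrak{g},\mathfrak{g}R|\cdot)$ and $U_1(\mathfrak{g}x_1,\mathfrak{g}R|\cdot)$ are subevents of those defining $U(\mathfrak{g},\mathfrak{g}R|\cdot)$ and $U(\mathfrak{g}x_1,\mathfrak{g}R|\cdot)$ respectively, so $U_0(\mathfrak{g},\mathfrak{g}R|z)\le U(\mathfrak{g},\mathfrak{g}R|z)$ and $U_1(\mathfrak{g}x_1,\mathfrak{g}R|z)\le U(\mathfrak{g}x_1,\mathfrak{g}R|z)$ for real $z>0$ (here one uses that $\mathfrak{g}R$ is a well-defined finite subset of $V$ — as already noted in the paper, words in $\mathcal{W}$ start with a letter of $V_2^\times$ while $\mathfrak{g}\in V_1^\times$ — so that $|\mathfrak{g}R|=|R|$). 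Now fix $\varrho>1$ small enough that $\varrho$ satisfies Proposition~\ref{prop:U-convergence2} with the corresponding constant $M_0$, that $\varrho<r_1$ (so $g(x_1,R|\varrho)<\infty$ for all $R\in\mathcal{W}$, $x_1\in V_{2,\mathfrak{g}}$), and that the power series $\mathbb{T}(z)$ converges at $z=\varrho$ (possible by Proposition~\ref{prop:TT-exp-moments}). Then $U_0(\mathfrak{g},\mathfrak{g}R|\varrho)\le|R|\cdot M_0$ and $U_1(\mathfrak{g}x_1,\mathfrak{g}R|\varrho)\le|R|\cdot M_0$, whence both subtracted series, evaluated at $\varrho$, are bounded above by
$$
M_0\sum_{\substack{R\in\mathcal{W},\\ x_1\in V_{2,\mathfrak{g}}}} |R|\,g(x_1,R|\varrho)
\;\le\; M_0\sum_{\substack{R\in\mathcal{W},\\ x_1\in V_{2,\mathfrak{g}}}}\sum_{m\ge1} 2m\cdot \P\bigl[\Rn=R,\ii=x_1,\TT_1-\TT_0=m\bigr]\varrho^m
\;=\; 2M_0\varrho\,\mathbb{T}'(\varrho)<\infty,
$$
using $|\Rn|\le m+1\le 2m$ whenever $\TT_1-\TT_0=m$, exactly as in the proof of Lemma~\ref{lem:E-ast}, and finiteness of $\mathbb{T}'(\varrho)$ from the exponential moments of $\TT_1-\TT_0$. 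Hence each subtracted series has radius of convergence strictly bigger than $1$, and being the difference of two such power series, so do $\mathcal{E}^{(0)}(z)$ and $\mathcal{E}^{(1)}(z)$.

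There is no real obstacle here: the statement is essentially a corollary of the uniform Green-function bounds of Subsection~\ref{subsec:uniform-bounds-generating-functions} (Propositions~\ref{prop:G-convergence1} and~\ref{prop:U-convergence2}), which is where the genuine work was done, combined with the exponential-moment bookkeeping already carried out in Lemma~\ref{lem:E-ast} and Proposition~\ref{prop:E(I)}. The only mild care needed is the coefficientwise domination $U_0,U_1\le U$ and the selection of a single $\varrho>1$ accommodating the three independent convergence requirements ($U$, $g$, and $\mathbb{T}'$).
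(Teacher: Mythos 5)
Your proof is correct and follows essentially the same route as the paper: you decompose $\mathcal{E}^{(0)}=(1-\alpha)z\,\mathcal{E}_0^\ast-\mathcal{E}^{(0)}_0$ and $\mathcal{E}^{(1)}=\alpha z\,\mathcal{E}_0^\ast-\mathcal{E}^{(1)}_0$, bound $U_0,U_1\leq U\leq |R|\cdot M_0$ via Proposition~\ref{prop:U-convergence2}, and close with the $|R|\leq 2m$ estimate and $\mathbb{T}'(\varrho)<\infty$ exactly as in the paper and in Lemma~\ref{lem:E-ast}. The only difference is that you spell out the coefficientwise domination $U_0,U_1\leq U$ and the choice of a single $\varrho$ satisfying the several constraints, which the paper leaves implicit.
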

\begin{proof}
The proof works analogously to Lemma \ref{lem:E-ast}. By Proposition \ref{prop:U-convergence2}, we have $U_0(x,R|z)\leq U(x,R|z)\leq |R|\cdot M_0$.
Let  $z\in (0,r_1)$. Then:
\begin{eqnarray*}
\mathcal{E}^{(0)}_0(z)&:=&\sum_{\substack{R\in\mathcal{W},\\ x_1\in V_{2,\mathfrak{g}}}} \  U_0(\mathfrak{g},\mathfrak{g}R|z) \cdot   g(x_1,R|z)  
\leq  \sum_{\substack{R\in\mathcal{W},\\ x_1\in V_{2,\mathfrak{g}}}} |R|\cdot M_0\cdot  g(x_1,R|z)  \\
&= &  M_0\cdot \sum_{\substack{R\in\mathcal{W},\\ x_1\in V_{2,\mathfrak{g}}}}\ \sum_{m\geq 1} |R| \cdot \P\Bigl[\substack{\Rn=R,\ii=x_1, \\ \TT_1-\TT_0=m}\Bigr]\cdot z^m\\
&\leq &   M_0\cdot \sum_{\substack{R\in\mathcal{W},\\ x_1\in V_{2,\mathfrak{g}}}}\ \sum_{m\geq 1} 2m \cdot \P\Bigl[\substack{\Rn=R,\ii=x_1, \\ \TT_1-\TT_0=m}\Bigr]\cdot z^m\\
&=& 2M_0\cdot  \sum_{m\geq 1} m \cdot \P\bigl[\TT_1-\TT_0=m\bigr]z^m 
= 2M_0 z\cdot \mathbb{T}'(z)<\infty.
\end{eqnarray*}
Since $\mathcal{E}^{(0)}(z)= (1-\alpha) z  \mathcal{E}_0^\ast(z)-\mathcal{E}^{(0)}_0(z)$ and by Lemma \ref{lem:E-ast}, we have shown that $\mathcal{E}^{(0)}(z)$ has radius of convergence strictly bigger than $1$.
\par
Moreover, since
$$
\mathcal{E}^{(1)}_0(z):=\sum_{\substack{R\in\mathcal{W},\\ x_1\in V_{2,\mathfrak{g}}}} \  U_1(\mathfrak{g}x_1,\mathfrak{g}R|z)\cdot    g(x_1,R|z)  
\leq  M_0\cdot \sum_{\substack{R\in\mathcal{W},\\ x_1\in V_{2,\mathfrak{g}}}} |R|\cdot  g(x_1,R|z)<\infty,
$$
the same calculus as above and  Lemma \ref{lem:E-ast} show that \mbox{$\mathcal{E}^{(1)}(z)=\alpha z \mathcal{E}_0^\ast(z)-\mathcal{E}^{(1)}_0(z)$} has also radius of convergence strictly bigger than $1$.
\end{proof}

\begin{corollary}\label{cor:E-radius-convergence}
$\mathcal{E}(z)$ has radius of convergence strictly bigger than $1$.
\end{corollary}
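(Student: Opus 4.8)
The plan is to combine the three preceding results. By definition $\mathcal{E}(z) = \mathcal{E}^{(I)}(z) + \mathcal{E}^{(0)}(z) + \mathcal{E}^{(1)}(z)$, so the claim follows from the elementary fact that a finite sum of power series has radius of convergence at least the minimum of the radii of its summands.

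Concretely, I would invoke Proposition \ref{prop:E(I)} to obtain a radius $r^{(I)}>1$ of convergence for $\mathcal{E}^{(I)}(z)$, and Proposition \ref{prop:E0-E1} to obtain radii $r^{(0)},r^{(1)}>1$ for $\mathcal{E}^{(0)}(z)$ and $\mathcal{E}^{(1)}(z)$ respectively. Putting $r:=\min\{r^{(I)},r^{(0)},r^{(1)}\}$, we have $r>1$ since the minimum of finitely many numbers all exceeding $1$ again exceeds $1$, and all three series converge absolutely on the disc $\{|z|<r\}$; hence so does their termwise sum $\mathcal{E}(z)$, and its radius of convergence is at least $r>1$. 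There is no genuine obstacle here: the substantive work was already carried out in Propositions \ref{prop:E(I)} and \ref{prop:E0-E1} (which in turn rest on the uniform bounds of Subsection \ref{subsec:uniform-bounds-generating-functions} and the exponential moments of $\TT_1-\TT_0$ from Proposition \ref{prop:TT-exp-moments}), and this corollary merely packages their conclusions into the single statement that is needed next. Indeed, combined with Proposition \ref{prop:C1-decomposition} it yields $\mathbb{E}[\widetilde{\mathcal{C}}_1]=\mathcal{E}(1)$ with $\mathcal{E}(z)$ admitting a power series expansion of the type required in Remark \ref{rem:procedure-analyticity} (take $T(z)=\mathcal{E}(z)$), from which the real-analytic dependence of $\mathbb{E}[\widetilde{\mathcal{C}}_1]$, and therefore of $\mathfrak{c}$, on the parameters $(p_1,\dots,p_d)$ will follow.
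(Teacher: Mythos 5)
Your proposal is correct and matches the paper's own proof, which simply cites Propositions \ref{prop:E(I)} and \ref{prop:E0-E1} together with the definition $\mathcal{E}=\mathcal{E}^{(I)}+\mathcal{E}^{(0)}+\mathcal{E}^{(1)}$. Your spelling out of the minimum-of-radii argument and the forward-looking remarks about Remark \ref{rem:procedure-analyticity} are fine but not required.
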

\begin{proof}
This follows immediately from Propositions \ref{prop:E(I)}, \ref{prop:E0-E1} and by definition of $\mathcal{E}(z)$. 
\end{proof}
Now we have constructed a power series $\mathcal{E}(z)$ having radius of convergence strictly bigger than $1$ and satisfying $\mathcal{E}(1)=\mathbb{E}[\widetilde{\mathcal{C}}_1]$. It remains to show that the coefficients of $z^m$, $m\in\mathbb{N}$, in $\mathcal{E}(z)$ have the form as requested in Remark \ref{rem:procedure-analyticity}, that is, 
the coefficients of $z^m$ are sums of monomials of the form $a(n_1,\dots,n_d) \cdot p_1^{n_1}\cdot \ldots \cdot p_d^{n_d}$ with \mbox{$n_1,\dots,n_d,a(n_1,\dots,n_d)\in\mathbb{N}_0$} and $n_1+\ldots + n_d=m$. Once this is shown, we have proven analyticity of $\mathbb{E}[\widetilde{\mathcal{C}}_1]$ in $(p_1,\dots,p_d)$ according to Remark  \ref{rem:procedure-analyticity}.
%
%
%
%

\begin{proposition}\label{prop:multivariate1}
For all $R\in\mathcal{W}$, $x\in V_{2,\mathfrak{g}}$ and $m\in\N$, the probability \mbox{$\P\bigl[\substack{\Rn=R,\ii=x,\\ \TT_1-\TT_0=m}\bigr]$} can be rewritten in the form
$$
\P\Bigl[\substack{\Rn=R,\ii=x,\\ \TT_1-\TT_0=m}\Bigr] = \sum_{\substack{n_1,\dots,n_d\in\mathbb{N}: \\ n_1+\ldots + n_d=m}} a_{R,x,m}(n_1,\dots,n_d)\cdot p_1^{n_1}\cdot \ldots \cdot p_d^{n_d}, \quad a_{R,x,m}(n_1,\dots,n_d)\in\N_0.
$$
\end{proposition}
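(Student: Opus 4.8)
The plan is to collapse the event $\{\Rn=R,\ \ii=x,\ \TT_1-\TT_0=m\}$ to a finite union of random walk paths of length exactly $m$, and then to expand each path probability as a product of $m$ single-step transition probabilities, each of which equals one of $p_1,\dots,p_d$.

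First I would decompose according to the value of $X_{\TT_0}$, exactly as in the proof of Proposition~\ref{prop:C1-decomposition}. The point is that the infinite-horizon conditions hidden in the exit times $\TT_0=\e{\tau_0}$ and $\TT_1=\e{\tau_1}$ are forced by $X_{\TT_0}$ and $\ii$: since every word $x_0\in V_{\mathfrak{g}}$ contains the letter $\mathfrak{g}$ only at its end, one checks that $\{X_{\TT_0}=x_0\}=\{T_{x_0}<\infty,\ \forall n\ge T_{x_0}:X_n\in C(x_0)\}$ (no level below $\|x_0\|$ can be selected because $\mathbf{W}_\ell\ne\mathfrak{g}$ there, and on $\{T_{x_0}<\infty\}$ the exit time at level $\|x_0\|$ equals $T_{x_0}$ once the walk stays in $C(x_0)$). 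In particular $\P[X_{\TT_0}=x_0]=\P[T_{x_0}<\infty]\cdot(1-\xi_1)$. Likewise, on $\{X_{\TT_0}=x_0,\ \ii=x\}$ one has $\TT_1=T_{x_0x}$, no level strictly between $\tau_0$ and $\tau_1$ is selected (because $x\in V_{2,\mathfrak{g}}$ contains $\mathfrak{g}$ only at its end), and --- since every path reaching $C(x_0x)$ must pass through its root $x_0x$ --- the walk stays inside $C(x_0)\setminus\bigl(C(x_0x)\setminus\{x_0x\}\bigr)$ on the whole time interval $[\TT_0,\TT_1)$. Applying the strong Markov property at $T_{x_0}$ and at $T_{x_0x}$, and shifting the common prefix $x_0$ to $\mathfrak{g}$ via Lemma~\ref{lem:cone-probs}, gives
\[
\P\bigl[\Rn=R,\ \ii=x,\ \TT_1-\TT_0=m\bigr]=\Bigl(\sum_{x_0\in V_{\mathfrak{g}}}\P[T_{x_0}<\infty]\Bigr)\cdot(1-\xi_1)\cdot\P_{\mathfrak{g}}\bigl[E_{R,x,m}\bigr],
\]
where $E_{R,x,m}$ is the event that a walk started at $\mathfrak{g}$ follows some path of length $m$ inside $C(\mathfrak{g})$ that reaches $\mathfrak{g}x$ for the first time at time $m$ and whose vertex set, after deleting the prefix $\mathfrak{g}$, equals $R$. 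Since $\sum_{x_0\in V_{\mathfrak{g}}}\P[X_{\TT_0}=x_0]=\P[\TT_0<\infty]=1$ yields $\sum_{x_0}\P[T_{x_0}<\infty]=(1-\xi_1)^{-1}$, the factor $1-\xi_1$ cancels and
\[
\P\bigl[\Rn=R,\ \ii=x,\ \TT_1-\TT_0=m\bigr]=\P_{\mathfrak{g}}\bigl[E_{R,x,m}\bigr].
\]

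Next, by the cone observation above every path realising $E_{R,x,m}$ visits only vertices of the finite set $\mathfrak{g}R$, so $E_{R,x,m}$ is a finite disjoint union of length-$m$ paths; call this set $\mathcal{P}_{R,x,m}$ (it has at most $|R|^m$ elements). Hence
\[
\P_{\mathfrak{g}}\bigl[E_{R,x,m}\bigr]=\sum_{(\mathfrak{g}=w_0,w_1,\dots,w_m)\in\mathcal{P}_{R,x,m}}\ \prod_{k=1}^{m}p(w_{k-1},w_k).
\]
Each factor $p(w_{k-1},w_k)$ is strictly positive and therefore, by the standing assumption of this section, equals one of $p_1,\dots,p_d$. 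Collecting the factors of a path $\pi$ into $\prod_{k=1}^m p(w_{k-1},w_k)=p_1^{n_1(\pi)}\cdots p_d^{n_d(\pi)}$, where $n_j(\pi)$ counts the steps of $\pi$ with transition probability $p_j$ and $n_1(\pi)+\dots+n_d(\pi)=m$, and grouping paths with equal exponent vector, gives the claimed expansion with
\[
a_{R,x,m}(n_1,\dots,n_d)=\#\bigl\{\pi\in\mathcal{P}_{R,x,m}:n_j(\pi)=n_j\text{ for all }j\bigr\}\in\mathbb{N}_0.
\]

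The main obstacle is the first step: one must show that the seemingly infinite-horizon event $\{\Rn=R,\ \ii=x,\ \TT_1-\TT_0=m\}$ really is a finite union of length-$m$ cylinders times a constant that cancels. The two features that make this work are that $\mathfrak{g}$ appears only at the ends of the words in $V_{\mathfrak{g}}$ and $V_{2,\mathfrak{g}}$, which rigidly pins down the regeneration levels $\tau_0,\tau_1$ (so no further randomness enters from the choice of these levels), and that the unique surviving ``stay in the cone forever'' factor $1-\xi_1$ is exactly compensated by the normalisation $\sum_{x_0}\P[T_{x_0}<\infty]=(1-\xi_1)^{-1}$; beyond that it is only the shift argument of Lemma~\ref{lem:cone-probs} already used in Proposition~\ref{prop:C1-decomposition}.
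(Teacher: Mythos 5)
Your proposal is correct and takes essentially the same route as the paper: decompose according to the value of $X_{\TT_0}$, use the strong Markov property together with the cone--shift identity of Lemma~\ref{lem:cone-probs} to isolate the length-$m$ middle piece, absorb the leftover $(1-\xi_1)$ against the normalisation $\sum_{x_0}\P[X_{\TT_0}=x_0]=\P[\TT_0<\infty]=1$, and read off each surviving path probability as a degree-$m$ monomial in $p_1,\dots,p_d$. The paper phrases the same cancellation by summing over the explicit value $k=\TT_0$ rather than via the stopping time $T_{x_0}$, but that is only a cosmetic difference, and your extra explanation of why the regeneration levels are pinned down by the "$\mathfrak{g}$ only at the end" convention makes the collapse of the infinite-horizon conditions more transparent than in the paper.
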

\begin{proof}
For $R\in\mathcal{W}$, $x_0\in V_{\mathfrak{g}}$, $x\in V_{2,\mathfrak{g}}$ and $m\in\mathbb{N}$, write $\mathcal{Q}_{x_0,x,R,m}$ for the set of paths $(x_0,w_1,\dots,w_m=x_0x)\in V^{m+1}$ such that 
$$
\mathbb{P}\bigl[ X_{\TT_0}=x_0,X_{\TT_0+1}=w_1,\dots,X_{\TT_0+m}=w_m,\mathcal{R}_1=x_0R,\TT_1-\TT_0=m\bigr]>0.
$$
By decomposing according to the values of $\TT_0$ and $X_{\TT_0}$ we obtain with Lemma \ref{lem:cone-probs} (by shifting paths inside $C(w), w\in V_{\mathfrak{g}}$ to paths inside $C(\mathfrak{g})$ by replacing the common prefix $w$ with $\mathfrak{g}$):
\begin{eqnarray*}
&&\P\bigl[\Rn=R,\ii=x,\TT_1-\TT_0=m\bigr] \\
&=&\sum_{x_0\in V_{\mathfrak{g}}} \sum_{k\geq 1} \P\bigl[\widetilde{\calR}_1=x_0R,\TT_0=k, \TT_1-\TT_0=m, X_{\TT_0}=x_0,X_{\TT_1}=x_0x\bigr]  \\
&=& \sum_{x_0\in V_{\mathfrak{g}}} \sum_{k\geq 1}  \sum_{(x_0,w_1,\dots,w_m)\in Q_{x_0,x,R,m}} \mathbb{P}\left[ \substack{X_k=x_0, \forall j<k: X_j\notin C(x_0)\\ X_{k+1}=w_1,\dots,X_{k+m-1}=w_{m-1},X_{k+m}=x_0x,\\ \forall t\geq 1: X_{k+m+t}\in C(x_0x)}\right] \\
&=& \sum_{x_0\in V_{\mathfrak{g}}}  \sum_{k\geq 1}  \mathbb{P}\bigl[  X_k=x_0, \forall j<k: X_j\notin C(x_0)\bigr] \\
&& \quad \cdot  \sum_{(x_0,w_1,\dots,w_m)\in Q_{x_0,x,R,m}} \mathbb{P}_{x_0}\bigl[ X_{1}=w_1,\dots,X_{m-1}=w_{m-1},X_{m}=x_0x\bigr]\\
&&\quad \cdot \underbrace{\mathbb{P}_{x_0x} \bigl[ \forall t\geq 1: X_{t}\in C(x_0x)\bigr]}_{=(1-\xi_1) \textrm{ (since $\delta(x_0x)=\delta(\mathfrak{g})=1$)}} \\
&\stackrel{\textrm{L.\ref{lem:cone-probs}}}{=}& \underbrace{\sum_{x_0\in V_{\mathfrak{g}}} \sum_{k\geq 1}  \underbrace{ \mathbb{P}\bigl[  X_k=x_0, \forall j<k: X_j\notin C(x_0)\bigr] \cdot (1-\xi_1)}_{=\P[X_{\TT_0}=x_0,\TT_0=k]}}_{=\mathbb{P}[\TT_0<\infty]=1} \\
&&\quad \cdot \sum_{(\mathfrak{g},w_1,\dots,w_m)\in Q_{\mathfrak{g}, x,R,m}} \mathbb{P}_\mathfrak{g}\bigl[ X_{1}=w_1,\dots,X_{m-1}=w_{m-1},X_{m}=w_m\bigr]\\
&=& \sum_{(\mathfrak{g},w_1,\dots,w_m)\in Q_{\mathfrak{g}, x,R,m}} \mathbb{P}_\mathfrak{g}\bigl[ X_{1}=w_1,\dots,X_{m-1}=w_{m-1},X_{m}=w_m\bigr].
\end{eqnarray*}
In the latter sum, each summand is obviously a product of factors $p_1,\dots,p_d$ of degree $m$. That is, we have shown that $\P\bigl[\Rn=R,\ii=x,\TT_1-\TT_0=m\bigr]$ can be rewritten as a sum of multivariate monomials in $p_1,\dots,p_d$ of degree $m$.
\end{proof}
%
\begin{lemma}\label{lem:multivariate1}
The coefficient of $z^m$, $m\in\N$, in $\mathcal{E}^{(I)}_0(z)$ can be rewritten as a sum of multivariate monomials in $(p_1,\dots,p_d)$ of degree $m$.
\end{lemma}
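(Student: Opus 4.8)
The plan is to reduce the assertion to Proposition~\ref{prop:multivariate1} together with the elementary analogue of it for the generating functions $U(x,R|z)$, and then to push the homogeneity through a Cauchy product and a \emph{finite} outer summation; throughout, write $[z^k]f$ for the coefficient of $z^k$ in a power series $f$. First I would record a local-finiteness observation: since every strictly positive single-step transition probability equals one of $p_1,\dots,p_d\in(0,1)$, the out-degree of each vertex of $\mathcal{X}$ is bounded by $1/\min_j p_j<\infty$, so $\mathcal{X}$ is locally finite and, for every $N\in\N$, only finitely many vertices can be reached from a given vertex in at most $N$ steps. From this I would deduce that, for a fixed $m\in\N$, only finitely many triples $(R,x_1,x)$ with $R\in\mathcal{W}$, $x_1\in V_{2,\mathfrak{g}}$ and $x\in R\setminus\{o,x_1\}$ contribute a nonzero term to $[z^m]\mathcal{E}^{(I)}_0(z)$: such a contribution requires $\P\bigl[\Rn=R,\ii=x_1,\TT_1-\TT_0=j\bigr]>0$ for some $j\in\{1,\dots,m-1\}$, which forces $|R|=|\widetilde{\mathcal{R}}_1|\le j+1\le m$ and every element of $R$ to be reachable from $\mathfrak{g}$ within $m$ steps, leaving only finitely many admissible sets $R$ and, for each, finitely many $x_1,x\in R$. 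Hence $[z^m]\mathcal{E}^{(I)}_0(z)$ is a finite sum of quantities of the form $[z^m]\bigl(U(x,R|z)\,g(x_1,R|z)\bigr)$, and it suffices to analyse one such quantity.

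Next I would establish the monomial structure of the two factors. For fixed $x\in V$ and finite $R\subset V$ the coefficient $[z^k]U(x,R|z)=\P_x[S_R=k]$ is the sum, over the finitely many paths $(x=v_0,v_1,\dots,v_k)$ with $v_1,\dots,v_{k-1}\notin R$ and $v_k\in R$, of the product $\prod_{i=0}^{k-1}p(v_i,v_{i+1})$; since each factor lies in $\{p_1,\dots,p_d\}$, this coefficient is a sum of monomials in $(p_1,\dots,p_d)$ of degree exactly $k$ with coefficients in $\N_0$. Proposition~\ref{prop:multivariate1} yields the same for $[z^j]g(x_1,R|z)=\P\bigl[\Rn=R,\ii=x_1,\TT_1-\TT_0=j\bigr]$, a sum of degree-$j$ monomials with $\N_0$-coefficients. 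As both $U(x,R|z)$ and $g(x_1,R|z)$ start at order $z^1$, the Cauchy product gives
$$
[z^m]\bigl(U(x,R|z)\,g(x_1,R|z)\bigr)=\sum_{k=1}^{m-1}\bigl([z^k]U(x,R|z)\bigr)\cdot\bigl([z^{m-k}]g(x_1,R|z)\bigr),
$$
and each summand is a product of a homogeneous degree-$k$ polynomial with a homogeneous degree-$(m-k)$ polynomial, hence a sum of degree-$m$ monomials with $\N_0$-coefficients. This property is preserved by the finite sum over $k$ and, by the first step, by the finite outer sum over the triples $(R,x_1,x)$; together with the fact (from the proof of Proposition~\ref{prop:E(I)}) that $\mathcal{E}^{(I)}_0(z)$ has radius of convergence strictly bigger than $1$, so that these coefficients are genuinely well defined, this proves the lemma.

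The point requiring care — the ``hard part'' — is the first step, namely verifying that for each fixed $m$ only finitely many terms of the triple sum over $\mathcal{W}\times V_{2,\mathfrak{g}}\times R$ contribute to $[z^m]\mathcal{E}^{(I)}_0(z)$, which is what makes it legitimate to regard that coefficient as a genuine finite sum of monomials rather than an infinite combination. The remaining steps amount to routine bookkeeping of degrees under multiplication of power series.
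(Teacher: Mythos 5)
Your proof is correct and follows essentially the same route as the paper: extract the $z^m$ coefficient via a Cauchy product, use the elementary fact that $\P_x[S_R=k]$ is a finite $\mathbb{N}_0$-linear combination of degree-$k$ monomials in $(p_1,\dots,p_d)$, and invoke Proposition~\ref{prop:multivariate1} for the factor $g(x_1,R|z)$. The one thing you make explicit that the paper leaves implicit is that only finitely many triples $(R,x_1,x)$ contribute to $[z^m]\mathcal{E}^{(I)}_0(z)$; your local-finiteness bound (out-degree $\leq 1/\min_j p_j$, hence $|R|\leq m$ and $R$ contained in the finite ball of radius $m$ about $o$) is a clean way to justify this, though one can also argue more passively that convergence of $\mathcal{E}^{(I)}_0(z)$ for some $z>1$ forces each of the finitely many degree-$m$ monomial coefficients, being a convergent series of non-negative integers, to be a finite non-negative integer.
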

\begin{proof}
Let  $m\in\N$. Then the coefficient of $z^m$ in $\mathcal{E}^{(I)}_0(z)$ is given by
\begin{equation}\label{equ:coefficient}
\sum_{\substack{R\in\mathcal{W},\\ x_1\in V_{2,\mathfrak{g}}}}\ \sum_{x\in R\setminus\{o,x_1\}} \sum_{k=1}^{m-1} \P_x[S_R=k]\cdot \P\Bigl[\substack{\Rn=R, \ii=x_1, \\ \TT_1-\TT_0=m-k}\Bigr].
\end{equation}
Observe that, for all $k\in\N$, $\P_x[S_R=k]$ can obviously be written in the form
$$
\sum_{n_1,\dots,n_d\in\N: n_1+\dots +n_d=k} b(n_1,\dots,n_d)\cdot  p_1^{n_1} \cdot \ldots \cdot p_d^{n_d},
$$
where $b(n_1,\dots,n_d)\in\mathbb{N}_0$. By Proposition \ref{prop:multivariate1}, the same representation as a sum of multivariate monomials of degree $m-k$ holds for the probability  \mbox{$\P_x\Bigl[\substack{\Rn=R,\ii=x_1,\\ \TT_1-\TT_0=m-k}\Bigr]$} for all  $R\in\mathcal{W}$, $x_1\in V_{2,\mathfrak{g}}$ and $k\in\{1,\dots,m-1\}$. In view of (\ref{equ:coefficient}) the claim is proven.
\end{proof}
\begin{lemma}\label{lem:multivariate2}
The coefficient of $z^m$, $m\in\N$, in $\mathcal{E}_i^\ast(z)$, $i\in\{0,1\}$, can be rewritten as a sum of multivariate monomials in $(p_1,\dots,p_d)$ of degree $m$.
\end{lemma}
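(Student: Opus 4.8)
The plan is to read off the coefficient of $z^m$ in $\mathcal{E}_i^\ast(z)$ straight from the definitions and then feed each summand into Proposition~\ref{prop:multivariate1}. Since $g(x_1,R|z)=\sum_{m\ge 1}\P\bigl[\Rn=R,\ii=x_1,\TT_1-\TT_0=m\bigr]z^m$, the coefficient of $z^m$ in $\mathcal{E}_i^\ast(z)$ equals
$$
\sum_{\substack{R\in\mathcal{W},\\ x_1\in V_{2,\mathfrak{g}}}} (|R|-2)^i\cdot \P\bigl[\Rn=R,\ii=x_1,\TT_1-\TT_0=m\bigr].
$$
By Proposition~\ref{prop:multivariate1} each probability occurring here is a sum of multivariate monomials in $(p_1,\dots,p_d)$ of degree $m$ with coefficients in $\N_0$, so the claim follows once one observes that the prefactor $(|R|-2)^i$ is itself a non-negative integer: for $i=0$ it equals $1$, and for $i=1$ it equals $|R|-2\ge 0$, because every $R\in\mathcal{W}$ contains the two distinct elements $o$ and $x_1$ (recall $x_1\in V_{2,\mathfrak{g}}\subset V\setminus\{o\}$), hence $|R|\ge 2$. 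Multiplying a sum of degree-$m$ monomials with coefficients in $\N_0$ by a fixed element of $\N_0$ again produces such a sum, and adding up the resulting expressions over the pairs $(R,x_1)$ preserves this form.

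The one routine point to address is that this sum over $(R,x_1)$ yields a well-defined sum of monomials. Since there are only finitely many monomials $p_1^{n_1}\cdots p_d^{n_d}$ of degree $m$ in $d$ variables, grouping the (non-negative) contributions by monomial gives, for each composition $n_1+\dots+n_d=m$, a coefficient $a(n_1,\dots,n_d)\in\N_0\cup\{\infty\}$; and because $\mathcal{E}_i^\ast(z)$ has radius of convergence strictly bigger than $1$ by Lemma~\ref{lem:E-ast}, its $z^m$-coefficient is finite, which forces each $a(n_1,\dots,n_d)$ to be finite. (Alternatively, in the present setting each vertex has out-degree at most $1/\min_j p_j$, so only finitely many paths of length $m$ exist and hence only finitely many pairs $(R,x_1)$ contribute.) Either way, the coefficient of $z^m$ in $\mathcal{E}_i^\ast(z)$ is a genuine finite sum of degree-$m$ monomials in $(p_1,\dots,p_d)$ with non-negative integer coefficients.

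I do not foresee a real obstacle: this is pure bookkeeping on top of Proposition~\ref{prop:multivariate1}, the only inputs being the integrality and non-negativity of $(|R|-2)^i$ and the finiteness of the $z^m$-coefficient already established via Lemma~\ref{lem:E-ast}. The argument is the exact analogue of Lemma~\ref{lem:multivariate1}, only simpler, since no convolution of two generating functions is involved.
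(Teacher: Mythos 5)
Your proof is correct and follows essentially the same route as the paper: extract the $z^m$-coefficient of $\mathcal{E}_i^\ast(z)$ from the definition of $g(x_1,R|z)$ and invoke Proposition~\ref{prop:multivariate1}. The paper's proof is terser and omits the explicit justifications you supply (integrality and non-negativity of $(|R|-2)^i$, finiteness of the coefficient via Lemma~\ref{lem:E-ast}), but these are the same tacit ingredients.
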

\begin{proof}
The coefficient of $z^m$, $m\in\mathbb{N}$, in $\mathcal{E}_i^\ast(z)$, $i\in\{0,1\}$, is given by
$$
\sum_{\substack{R\in\mathcal{W},\\ x_1\in V_{2,\mathfrak{g}}}} (|R|-2)^i \cdot \P_x\Bigl[\Rn=R, \ii=x_1, \\ \TT_1-\TT_0=m\Bigr].
$$
The claim follows now immediately from Proposition \ref{prop:multivariate1}. 
\end{proof}

\begin{lemma}\label{lem:multivariate3}
The coefficient of $z^m$, $m\in\N$, in $\mathcal{E}^{(0)}(z)$ can be rewritten as a sum of multivariate monomials in $(p_1,\dots,p_d)$ of degree $m$.
\end{lemma}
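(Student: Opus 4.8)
The plan is to work from the identity $\mathcal{E}^{(0)}(z)=(1-\alpha)\,z\,\mathcal{E}_0^\ast(z)-\mathcal{E}^{(0)}_0(z)$ recorded in the proof of Proposition~\ref{prop:E0-E1}, and to show that the coefficient of $z^m$ in each of the two summands is a sum of monomials $a(n_1,\dots,n_d)\,p_1^{n_1}\cdots p_d^{n_d}$ with $n_1,\dots,n_d,a(n_1,\dots,n_d)\in\N_0$ and $n_1+\dots+n_d=m$; the coefficient of $z^m$ in $\mathcal{E}^{(0)}(z)$ is then their difference, a homogeneous polynomial of degree $m$ in $(p_1,\dots,p_d)$. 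For the first summand, the coefficient of $z^m$ in $(1-\alpha)\,z\,\mathcal{E}_0^\ast(z)$ equals $(1-\alpha)$ times the coefficient of $z^{m-1}$ in $\mathcal{E}_0^\ast(z)$; by Lemma~\ref{lem:multivariate2} the latter is already a sum of degree-$(m-1)$ monomials with coefficients in $\N_0$, and the well-definedness constraint at $o_1$ gives $\alpha=\sum_{(o_1,y)\in E_1}\eta(o_1,y)$, hence $1-\alpha=\sum_{(o_2,z)\in E_2}\eta(o_2,z)$ is a sum of some of the values $p_j$, i.e. a homogeneous polynomial of degree $1$ with coefficients in $\N_0$; the product is then a sum of degree-$m$ monomials with coefficients in $\N_0$.

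For the second summand I would expand the Cauchy product in $\mathcal{E}^{(0)}_0(z)=\sum_{R\in\mathcal{W},\,x_1\in V_{2,\mathfrak{g}}} U_0(\mathfrak{g},\mathfrak{g}R|z)\,g(x_1,R|z)$. Since neither $U_0$ nor $g$ has a constant term, the coefficient of $z^m$ is a sum over $R\in\mathcal{W}$, $x_1\in V_{2,\mathfrak{g}}$ and $k\in\{1,\dots,m-1\}$ of the products $\P_{\mathfrak{g}}\bigl[S_{\mathfrak{g}R}=k,\ \forall j\le k: X_j\in C(\mathfrak{g})\bigr]\cdot\P\bigl[\Rn=R,\ \ii=x_1,\ \TT_1-\TT_0=m-k\bigr]$. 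The first factor is a finite sum, over length-$k$ paths $(\mathfrak{g},w_1,\dots,w_k)$ inside $C(\mathfrak{g})$ meeting $\mathfrak{g}R$ for the first time at step $k$, of the products $\prod_{i=1}^{k}p(w_{i-1},w_i)$; each of these transition probabilities lies in $\{p_1,\dots,p_d\}$, so the first factor is a sum of degree-$k$ monomials with coefficients in $\N_0$, and the second factor is a sum of degree-$(m-k)$ monomials with coefficients in $\N_0$ by Proposition~\ref{prop:multivariate1}. Thus each inner summand is a sum of degree-$m$ monomials with coefficients in $\N_0$. Finally, for fixed $m$ only finitely many pairs $(R,x_1)$ contribute: a nonzero term requires a positive-probability realization with $\TT_1-\TT_0=m-k\le m$, so the walk between times $\TT_0$ and $\TT_1$ performs at most $m$ steps and $\Rn$ lies on a path of length $\le m$ issuing from $\mathfrak{g}$ inside $C(\mathfrak{g})$; since only the finitely many values $p_1,\dots,p_d$ occur as positive transition probabilities and they sum to $1$ at each vertex, every vertex of $\mathcal{X}$ has finite out-degree, so there are only finitely many such paths, hence finitely many relevant $(R,x_1)$. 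Therefore the coefficient of $z^m$ in $\mathcal{E}^{(0)}_0(z)$ is a sum of degree-$m$ monomials with coefficients in $\N_0$, which completes the comparison of the two summands.

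The one delicate point is the sign. Because $\overline{U}_0(\mathfrak{g},\mathfrak{g}R|z)=(1-\alpha)z-U_0(\mathfrak{g},\mathfrak{g}R|z)$ carries a genuine subtraction, the coefficients of $z^m$ in $\mathcal{E}^{(0)}(z)$ need not themselves be nonnegative for $m\ge 2$; what matters for the subsequent analyticity argument — and what the proof really records — is that $\mathcal{E}^{(0)}(z)$ is a \emph{difference} of the two series $(1-\alpha)z\,\mathcal{E}_0^\ast(z)$ and $\mathcal{E}^{(0)}_0(z)$, each of which has coefficients of the form demanded in Remark~\ref{rem:procedure-analyticity} and, by Lemma~\ref{lem:E-ast} and the proof of Proposition~\ref{prop:E0-E1}, radius of convergence strictly bigger than $1$; hence each evaluates at $z=1$ to a function that is real-analytic in $(p_1,\dots,p_d)$ near any $\underline{p}_0\in\mathcal{P}$, and so does their difference. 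The only other place requiring attention is the finiteness of the outer sum over $(R,x_1)$, handled above via the finite-out-degree observation; the rest is routine bookkeeping with Cauchy products.
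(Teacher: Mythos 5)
Your proof is correct and follows essentially the same route as the paper: you use the identity $\mathcal{E}^{(0)}(z)=(1-\alpha)z\,\mathcal{E}_0^\ast(z)-\mathcal{E}^{(0)}_0(z)$, invoke Lemma~\ref{lem:multivariate2} for the first piece, expand the second piece as a Cauchy product, write $\P_{\mathfrak{g}}[S_{\mathfrak{g}R}=k,\forall j\le k: X_j\in C(\mathfrak{g})]$ as a path sum of degree-$k$ monomials, and handle the other factor via Proposition~\ref{prop:multivariate1}. The two things you add beyond what the paper records explicitly — the finite-out-degree argument that makes the outer sum over $(R,x_1)$ a finite one for each fixed $m$, and the remark that, because of the subtraction in $\overline{U}_0$, what is really being established (and what Remark~\ref{rem:procedure-analyticity} actually uses) is that $\mathcal{E}^{(0)}(z)$ is a difference of two power series each having nonnegative-integer monomial coefficients and radius of convergence bigger than $1$ — are both correct and are useful clarifications of points the paper leaves implicit.
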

\begin{proof}
For $z\in\mathbb{C}$, set 
$$
\mathcal{E}^{(0)}_0(z):=\sum_{\substack{R\in\mathcal{W},\\ x_1\in V_{2,\mathfrak{g}}}} \  U_0(\mathfrak{g},\mathfrak{g}R|z) \cdot   g(x_1,R|z).  
$$
Then $\mathcal{E}^{(0)}(z)=(1-\alpha)z \mathcal{E}_0^\ast(z)-\mathcal{E}^{(0)}_0(z)$. By Lemma \ref{lem:multivariate2}, it suffices to show that the coefficients of $\mathcal{E}^{(0)}_0(z)$ have the requested form. The coefficient of $z^m$, $m\in\N$, in $\mathcal{E}^{(0)}_0(z)$ is given by
\begin{equation}
\sum_{\substack{R\in\mathcal{W},\\ x_1\in V_{2,\mathfrak{g}}}}\ \sum_{k=1}^{m-1} \P_{\mathfrak{g}}\bigl[S_{\mathfrak{g}R}=k,\forall m\leq k: X_m\in C(\mathfrak{g})\bigr]\cdot \P\Bigl[\substack{\Rn=R,\ii=x_1,\\ \TT_1-\TT_0=m-k}\Bigr].
\end{equation}
Obviously, for all $k\in\N$, $\P_{\mathfrak{g}}\bigl[S_{\mathfrak{g}R}=k,\forall m\leq k: X_m\in C(\mathfrak{g})\bigr]$ can  be written in the form
$$
\sum_{n_1,\dots,n_d\in\N: n_1+\dots +n_d=k} b(n_1,\dots,n_d)\cdot  p_1^{n_1} \cdot \ldots \cdot p_d^{n_d}, \quad b(n_1,\dots,n_d)\in\N_0.
$$
The claim follows now directly with Proposition \ref{prop:multivariate1} and the fact that $1-\alpha$ is a sum of some values out of $p_1,\dots, p_d$. 
\end{proof}

\begin{lemma}\label{lem:multivariate4}
The coefficient of $z^m$, $m\in\N$, in $\mathcal{E}^{(1)}(z)$ can be rewritten as a sum of multivariate monomials in $(p_1,\dots,p_d)$ of degree $m$.
\end{lemma}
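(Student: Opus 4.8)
The plan is to mirror the proof of Lemma~\ref{lem:multivariate3} almost verbatim, replacing the generating function $U_0$ by $U_1$ and the factor $(1-\alpha)$ by $\alpha$. Recall that $\mathcal{E}^{(1)}(z)=\alpha z\,\mathcal{E}_0^\ast(z)-\mathcal{E}^{(1)}_0(z)$, where
$$
\mathcal{E}^{(1)}_0(z):=\sum_{\substack{R\in\mathcal{W},\\ x_1\in V_{2,\mathfrak{g}}}} U_1(\mathfrak{g}x_1,\mathfrak{g}R|z)\cdot g(x_1,R|z).
$$
By Lemma~\ref{lem:multivariate2} the coefficients of $\mathcal{E}_0^\ast(z)$ already have the requested form, so by the decomposition above it suffices to treat $\mathcal{E}^{(1)}_0(z)$.

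First I would extract the coefficient of $z^m$ in $\mathcal{E}^{(1)}_0(z)$ via the Cauchy product of the two power series $U_1(\mathfrak{g}x_1,\mathfrak{g}R|z)$ and $g(x_1,R|z)$: it equals
$$
\sum_{\substack{R\in\mathcal{W},\\ x_1\in V_{2,\mathfrak{g}}}}\ \sum_{k=1}^{m-1}\P_{\mathfrak{g}x_1}\bigl[S_{\mathfrak{g}R}=k,\ \forall t\in\{1,\dots,k-1\}: X_t\notin C(\mathfrak{g}x_1)\bigr]\cdot \P\Bigl[\substack{\Rn=R,\ \ii=x_1,\\ \TT_1-\TT_0=m-k}\Bigr].
$$
Then I would argue, exactly as before, that for each fixed $k\in\N$ the probability $\P_{\mathfrak{g}x_1}[S_{\mathfrak{g}R}=k,\dots]$ is a finite sum over paths of length $k$, hence a sum of multivariate monomials $b(n_1,\dots,n_d)\,p_1^{n_1}\cdots p_d^{n_d}$ with $n_1+\dots+n_d=k$ and $b(n_1,\dots,n_d)\in\N_0$; meanwhile $\P[\Rn=R,\ \ii=x_1,\ \TT_1-\TT_0=m-k]$ has the analogous form of degree $m-k$ by Proposition~\ref{prop:multivariate1}. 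Multiplying a monomial of degree $k$ by one of degree $m-k$ gives a monomial of degree $m$ with non-negative integer coefficient, and summing preserves this structure. Finally, since $\alpha=\sum_{y\in V_1:(o_1,y)\in E_1}\eta(o_1,y)$ is itself a sum of some of the parameters $p_1,\dots,p_d$, multiplication by $\alpha z$ shifts and re-weights coefficients while keeping them sums of multivariate monomials of the correct degree; combining this with the contribution from $\mathcal{E}^{(1)}_0$ (subtracted, which is harmless since the requested form is closed under the relevant operations, the overall coefficients being differences that are again integer combinations of monomials of the right degree) yields the claim.

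There is no serious obstacle here; the only mild point to be careful about is that subtracting $\mathcal{E}^{(1)}_0(z)$ from $\alpha z\,\mathcal{E}_0^\ast(z)$ could in principle produce negative coefficients, so the statement should be read (as in the companion lemmas) as asserting that the coefficient is an \emph{integer} combination $a(n_1,\dots,n_d)\in\Z$ of monomials $p_1^{n_1}\cdots p_d^{n_d}$ of degree $m$ — which is all that is needed for the power-series argument of Remark~\ref{rem:procedure-analyticity} to go through on $\mathcal{E}(z)=\mathcal{E}^{(I)}(z)+\mathcal{E}^{(0)}(z)+\mathcal{E}^{(1)}(z)$ once one invokes its radius of convergence from Corollary~\ref{cor:E-radius-convergence}. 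Hence the proof is a direct adaptation of Lemma~\ref{lem:multivariate3}, and I would simply write ``The proof is completely analogous to the proof of Lemma~\ref{lem:multivariate3}, using $U_1(\mathfrak{g}x_1,\mathfrak{g}R|z)$ in place of $U_0(\mathfrak{g},\mathfrak{g}R|z)$ and the identity $\mathcal{E}^{(1)}(z)=\alpha z\,\mathcal{E}_0^\ast(z)-\mathcal{E}^{(1)}_0(z)$.''
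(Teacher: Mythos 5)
Your proof is correct and follows the paper's argument almost verbatim: the same decomposition $\mathcal{E}^{(1)}(z)=\alpha z\,\mathcal{E}_0^\ast(z)-\mathcal{E}^{(1)}_0(z)$, the same Cauchy-product extraction of the $z^m$-coefficient in $\mathcal{E}^{(1)}_0(z)$, the same path-counting argument for $\P_{\mathfrak{g}x_1}[S_{\mathfrak{g}R}=k,\dots]$, and the same appeals to Proposition \ref{prop:multivariate1} and to $\alpha$ being a sum of parameters. Your side remark about the subtraction possibly producing negative (but still integer) monomial coefficients is a fair reading which the paper leaves implicit; it does not affect the power-series argument in Remark \ref{rem:procedure-analyticity}, since each of $\alpha z\,\mathcal{E}_0^\ast(z)$ and $\mathcal{E}^{(1)}_0(z)$ separately has nonnegative monomial coefficients and radius of convergence strictly bigger than $1$.
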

\begin{proof}
For $z\in\mathbb{C}$, set  
$$
\mathcal{E}^{(1)}_0(z):=\sum_{\substack{R\in\mathcal{W},\\ x_1\in V_{2,\mathfrak{g}}}} \  U_1(\mathfrak{g}x_1,\mathfrak{g}R|z) \cdot   g(x_1,R|z).  
$$
Then $\mathcal{E}^{(1)}(z)=\alpha z \mathcal{E}_0^\ast(z)-\mathcal{E}^{(1)}_0(z)$. By Lemma \ref{lem:multivariate2}, it suffices to show  that the coefficients of $\mathcal{E}^{(1)}_0(z)$ have the requested form. The coefficient of $z^m$, $m\in\N$,  in $\mathcal{E}^{(1)}_0(z)$ is given by
\begin{equation}
\sum_{\substack{R\in\mathcal{W},\\ x_1\in V_{2,\mathfrak{g}}}}\ \sum_{k=1}^{m-1} \P_{\mathfrak{g}x_1}\bigl[S_{\mathfrak{g}R}=k,\forall m\in\{1,\dots,k-1\}: X_m\notin C(\mathfrak{g}x_1)\bigr]\cdot \P\Bigl[\substack{\Rn=R,\ii=x_1,\\ \TT_1-\TT_0=m-k}\Bigr].
\end{equation}
Obviously, for all $k\in\N$, $\P_{\mathfrak{g}x_1}\bigl[S_{\mathfrak{g}R}=k,\forall m\in\{1,\dots,k-1\}: X_k\notin C(\mathfrak{g}x_1)\bigr]$ can  be written in the form
$$
\sum_{n_1,\dots,n_d\in\N: n_1+\dots +n_d=k} b(n_1,\dots,n_d)\cdot  p_1^{n_1} \cdot \ldots \cdot p_d^{n_d}, \quad b(n_1,\dots,n_d)\in\N_0.
$$
The claim follows now directly with Proposition \ref{prop:multivariate1} and the fact that $\alpha$ is a sum of some values out of $p_1,\dots,p_d$.
\end{proof}
Finally, we have proven the remaining part that the coefficients of $\mathcal{E}(z)$ have the  form as requested in Remark \ref{rem:procedure-analyticity}:
\begin{proposition}\label{prop:multivariate}
The coefficient of $z^m$, $m\in\N$, in $\mathcal{E}(z)$ can be rewritten as a sum of multivariate monomials in $(p_1,\dots,p_d)$ of degree $m$.
\end{proposition}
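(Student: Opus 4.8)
The plan is to deduce Proposition \ref{prop:multivariate} directly from the decomposition of $\mathcal{E}(z)$ already set up above together with Lemmas \ref{lem:multivariate1}--\ref{lem:multivariate4}, since all the genuine combinatorics has been carried out in those lemmas (and ultimately rests on Proposition \ref{prop:multivariate1}). Recall that $\mathcal{E}(z)=\mathcal{E}^{(I)}(z)+\mathcal{E}^{(0)}(z)+\mathcal{E}^{(1)}(z)$ by definition, and that in the proofs of Propositions \ref{prop:E(I)} and \ref{prop:E0-E1} we wrote $\mathcal{E}^{(I)}(z)=\mathcal{E}_1^\ast(z)-\mathcal{E}^{(I)}_0(z)$, $\mathcal{E}^{(0)}(z)=(1-\alpha)z\,\mathcal{E}_0^\ast(z)-\mathcal{E}^{(0)}_0(z)$ and $\mathcal{E}^{(1)}(z)=\alpha z\,\mathcal{E}_0^\ast(z)-\mathcal{E}^{(1)}_0(z)$. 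Fixing $m\in\N$, I would first handle $\mathcal{E}^{(I)}(z)$: by Lemma \ref{lem:multivariate2} (with $i=1$) the coefficient of $z^m$ in $\mathcal{E}_1^\ast(z)$ is a finite sum of multivariate monomials in $(p_1,\dots,p_d)$ of degree $m$, and by Lemma \ref{lem:multivariate1} the same holds for the coefficient of $z^m$ in $\mathcal{E}^{(I)}_0(z)$; taking the difference, the $z^m$-coefficient of $\mathcal{E}^{(I)}(z)$ is again a finite sum of multivariate monomials of degree $m$. The corresponding statements for $\mathcal{E}^{(0)}(z)$ and $\mathcal{E}^{(1)}(z)$ are precisely Lemmas \ref{lem:multivariate3} and \ref{lem:multivariate4}. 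Adding the three contributions and observing that a sum of multivariate monomials of degree $m$ is again of that form yields the claim.

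A point worth recording inside the proof, although not strictly part of the stated conclusion, is that the building blocks $\mathcal{E}_1^\ast$, $z\,\mathcal{E}_0^\ast$, $\mathcal{E}^{(I)}_0$, $\mathcal{E}^{(0)}_0$, $\mathcal{E}^{(1)}_0$ each have $z^m$-coefficients that are sums of multivariate monomials of degree $m$ with \emph{non-negative integer} coefficients $a(n_1,\dots,n_d)\in\N_0$; this is because $\P_x[S_R=k]$, $\P_{\mathfrak{g}}[S_{\mathfrak{g}R}=k,\dots]$, $\P_{\mathfrak{g}x_1}[S_{\mathfrak{g}R}=k,\dots]$ and (by Proposition \ref{prop:multivariate1}) $\P[\Rn=R,\ii=x_1,\TT_1-\TT_0=m']$ all expand into monomials with coefficients in $\N_0$, while $|R|-2$ and the weights $1-\alpha$, $\alpha$ are non-negative. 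The coefficients of $\mathcal{E}(z)$ itself may of course be signed because of the three subtractions, but grouping as $\mathcal{E}(z)=\bigl(\mathcal{E}_1^\ast(z)+z\,\mathcal{E}_0^\ast(z)\bigr)-\bigl(\mathcal{E}^{(I)}_0(z)+\mathcal{E}^{(0)}_0(z)+\mathcal{E}^{(1)}_0(z)\bigr)$ exhibits $\mathcal{E}(z)$ as a difference of two power series, each with non-negative integer coefficients of the required monomial form and (by Corollary \ref{cor:E-radius-convergence} and Propositions \ref{prop:E(I)}, \ref{prop:E0-E1}) radius of convergence strictly bigger than $1$. This is exactly the input needed for Remark \ref{rem:procedure-analyticity}: applying that argument to each group separately shows that $\mathcal{E}_1^\ast(1)+\mathcal{E}_0^\ast(1)$ and $\mathcal{E}^{(I)}_0(1)+\mathcal{E}^{(0)}_0(1)+\mathcal{E}^{(1)}_0(1)$ vary real-analytically in $(p_1,\dots,p_d)$, hence so does $\mathbb{E}[\widetilde{\mathcal{C}}_1]=\mathcal{E}(1)$, and then (\ref{equ:c-formula}) together with Lemma \ref{lem:T1-T0-analytic} gives Theorem \ref{th:analyticity}.

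I do not expect any real obstacle here: the statement is a bookkeeping corollary of the preceding lemmas. The only subtlety deserving an explicit sentence is that $\mathcal{E}^{(I)}(z)$, $\mathcal{E}^{(0)}(z)$, $\mathcal{E}^{(1)}(z)$ are each defined via a subtraction of two series with $\N_0$-coefficients, so the coefficients of $\mathcal{E}(z)$ need not themselves lie in $\N_0$; since Proposition \ref{prop:multivariate} asserts only that the $z^m$-coefficient is a sum of degree-$m$ multivariate monomials (without a sign constraint), this causes no difficulty, and the finer non-negativity is kept at the level of the individual summands as described above.
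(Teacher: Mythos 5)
Your first paragraph reproduces the paper's proof exactly: the paper also simply invokes Lemmas~\ref{lem:multivariate1}--\ref{lem:multivariate4} and closes, so for the proposition as stated you are done, and your spelled-out bookkeeping (tracking $\mathcal{E}^{(I)}=\mathcal{E}_1^\ast-\mathcal{E}^{(I)}_0$, etc.) is a faithful expansion of the same argument.

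Your second paragraph raises a genuine and worthwhile point that the paper glosses over: Remark~\ref{rem:procedure-analyticity} requires the monomial coefficients $a(n_1,\dots,n_d)$ to lie in $\mathbb{N}_0$ (so that finiteness of $T(1+\delta)$ transfers to absolute convergence of the rearranged multivariate series), whereas the coefficients of $\mathcal{E}(z)$ itself are signed because of the subtractions built into $\overline{U}$, $\overline{U}_0$, $\overline{U}_1$. Your proposed fix --- split $\mathcal{E}$ as a difference of two series, each with $\mathbb{N}_0$-monomial coefficients and radius of convergence $>1$, and apply the Remark to each --- is the right idea. However, there is a small slip in the way you group: the $z^m$-coefficient of $z\,\mathcal{E}_0^\ast(z)$ equals the $z^{m-1}$-coefficient of $\mathcal{E}_0^\ast(z)$, hence is a sum of monomials of degree $m-1$, not $m$, so your claim that all five building blocks have degree-$m$ coefficients at $z^m$ fails for $z\,\mathcal{E}_0^\ast$. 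This matters because the degree-homogeneity $n_1+\dots+n_d=m$ is precisely what lets Remark~\ref{rem:procedure-analyticity} convert $(1+\delta)^m$ into $(1+\delta)^{n_1}\cdots(1+\delta)^{n_d}$. The cure is easy: either keep $(1-\alpha)z\,\mathcal{E}_0^\ast$ and $\alpha z\,\mathcal{E}_0^\ast$ as separate summands (the extra factor $\alpha$ or $1-\alpha$, being a sum of some $p_j$'s, restores the missing degree), exactly as the paper's definitions of $\mathcal{E}^{(0)}$ and $\mathcal{E}^{(1)}$ already do; or simply apply Remark~\ref{rem:procedure-analyticity} to $\mathcal{E}_0^\ast$ itself and use that $z\,\mathcal{E}_0^\ast(z)\big|_{z=1}=\mathcal{E}_0^\ast(1)$. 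With that correction, your refinement closes the small gap cleanly.
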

\begin{proof}
The claim follows now immediately from Lemmas \ref{lem:multivariate1}, \ref{lem:multivariate2}, \ref{lem:multivariate3} and \ref{lem:multivariate4}.
\end{proof}

\begin{proof}[Proof of Theorem \ref{th:analyticity}]
In view of formulas (\ref{equ:c-formula}) and (\ref{equ:E-formula}) the claim follows now directly with Lemma \ref{lem:T1-T0-analytic}, Corollary \ref{cor:E-radius-convergence} and Proposition \ref{prop:multivariate} together with the explanations in Remark \ref{rem:procedure-analyticity}. 
%
%
\end{proof}


\subsection{Open Problems}

Finally, we formulate some open questions in connection with the analyticity results in \mbox{Section \ref{sec:analyticity}:}
\begin{remark}\ 
\begin{itemize}
\item Does the asymptotic capacity vary real-analytically for nearest-neighbour (or finite-range) random walks on $\mathbb{Z}^d$ for $d\geq 2$?
\item For which classes of  finitely generated groups do finite-range random walks vary real-analytically in terms of probability measures of constant support?
\end{itemize}
\end{remark}

\section*{Acknowledgement}
The author is grateful to both anonymous referees for their suggestions and hints regarding content and exposition, and also for the proposed simplification in the  proof of Theorem 1.1.




\end{document}